\newtheorem{theorem}{Theorem}[section]
\newtheorem{lemma}[theorem]{Lemma}
\newtheorem{corollary}[theorem]{Corollary}
\theoremstyle{definition}
\theoremstyle{remark}
\numberwithin{equation}{section}
\newcommand{\mmod}[1]{\,\,(\text{mod}\,\,#1)}
\def\bfa{{\mathbf a}}
\def\bfb{{\mathbf b}}
\def\bfc{{\mathbf c}}
\def\bfd{{\mathbf d}} \def\bfe{{\mathbf e}} \def\bff{{\mathbf f}} \def\bfF{{\mathbf F}}
 \def\bfG{{\mathbf G}}
\def\bfh{{\mathbf h}} \def\bfH{{\mathbf H}}
\def\bfi{{\mathbf i}}
\def\bfj{{\mathbf j}}  \def\bfl{{\mathbf l}}
\def\bfm{{\mathbf m}}
\def\bfn{{\mathbf n}}
\def\bft{{\mathbf t}}
\def\bfu{{\mathbf u}} \def\vbfu{\overline{\bfu}}
\def\bfv{{\mathbf v}} \def\vbfv{\overline{\bfv}}
\def\bfw{{\mathbf w}} \def\vbfw{\overline{\bfw}}
\def\bfx{{\mathbf x}} \def\vbfx{\overline{\bfx}} \def\bfX{{\mathbf X}}
\def\bfy{{\mathbf y}} \def\vbfy{\overline{\bfy}}
\def\bfz{{\mathbf z}} \def\vbfz{\overline{\bfz}}
\def\calA{{\mathscr A}}  
\def\calB{{\mathscr B}} 
\def\calC{{\mathscr C}} 
\def\calD{{\mathscr D}}
\def\calF{{\mathscr F}}
\def\calH{{\mathscr H}}
\def\calI{{\mathscr I}}
\def\calJ{{\mathscr J}}
\def\calL{{\mathscr L}}
\def\calM{{\mathscr M}}
\def\calN{{\mathscr N}}
\def\calP{{\mathscr P}}
\def\calR{{\mathscr R}} 
\def\calS{{\mathscr S}}
\def\calT{{\mathscr T}}
\def\dbC{{\mathbb C}}\def\dbF{{\mathbb F}}\def\dbN{{\mathbb N}}
\def\dbR{{\mathbb R}}
\def\dbZ{{\mathbb Z}}\def\dbQ{{\mathbb Q}}
\def\grA{{\mathfrak A}}
\def\grB{{\mathfrak B}}
\def\grC{{\mathfrak C}}
\def\grf{{\mathfrak f}}\def\grF{{\mathfrak F}}
\def\grG{{\mathfrak G}}
\def\grJ{{\mathfrak J}}
\def\grm{{\mathfrak m}}\def\grM{{\mathfrak M}}
\def\grS{{\mathfrak S}}
\def\grB{{\mathfrak B}}\def\grC{{\mathfrak C}}
\def\alp{{\alpha}} \def\bfalp{{\boldsymbol \alpha}}
\def\bet{{\beta}}  \def\bfbet{{\boldsymbol \beta}}
\def\gam{{\gamma}} \def\bfgam{{\boldsymbol \gamma}} \def\vbfgam{{\overline \bfgam}} 
\def\del{{\delta}} \def\Del{{\Delta}}
\def\zet{{\zeta}} \def\bfzet{{\boldsymbol \zeta}} \def\vbfzet{{\overline {\boldsymbol \zeta}}}
\def\bfeta{{\boldsymbol \eta}}
\def\tet{{\theta}} \def\bftet{{\boldsymbol \theta}} \def\vbftet{\overline{\bftet}} 
\def\kap{{\kappa}}
\def\lam{{\lambda}}  
\def\bfmu{{\boldsymbol \mu}} \def\vbfmu{\overline {\bfmu}}
\def\bfnu{{\boldsymbol \nu}} \def\vbfnu{\overline {\bfnu}}
\def\bfxi{{\boldsymbol \xi}} \def\vbfxi{\overline {\bfxi}}
\def\sig{{\sigma}} \def\Sig{{\Sigma}} \def\bfsig{{\boldsymbol \sig}}
\def\bftau{{\boldsymbol \tau}}
\def\Ups{{\Upsilon}} 
\def\ome{{\omega}} \def\Ome{{\Omega}}
\def\d{{\partial}}
\def\eps{\varepsilon}
\def\le{\leqslant} \def\ge{\geqslant}
\def\d{{\,{\rm d}}}
\def\llbracket{\lbrack\;\!\!\lbrack} \def\rrbracket{\rbrack\;\!\!\rbrack}
\begin{document}
\title[Multidimensional Weyl sums]{Near-optimal mean value estimates\\ for multidimensional Weyl sums}
\author[S. T. Parsell]{S. T. Parsell}
\address{STP: Department of Mathematics, West Chester University, 25 University Ave., West Chester, PA 19383, U.S.A.}
\email{sparsell@wcupa.edu}
\author[S. M. Prendiville]{S. M. Prendiville}
\address{SMP:\,  School of Mathematics, University of Bristol, University Walk, Clifton, Bristol BS8 1TW, United Kingdom}
\email{sean.prendiville@gmail.com}
\author[T. D. Wooley]{T. D. Wooley$^*$}
\address{TDW: School of Mathematics, University of Bristol, University Walk, Clifton, Bristol BS8 1TW, United Kingdom}
\email{matdw@bristol.ac.uk}
\thanks{STP was supported by National Security Agency Grant H98230-11-1-0190, SMP by an EPSRC doctoral training grant through the University of Bristol, and TDW by a Royal Society Wolfson Research Merit Award.}
\subjclass[2010]{11L15, 11L07, 11D45, 11D72, 11P55}
\keywords{Exponential sums, Hardy-Littlewood method, Diophantine equations}
\date{}
\begin{abstract} We obtain sharp estimates for multidimensional generalisations of Vinogradov's mean value theorem for arbitrary translation-dilation invariant systems, achieving constraints on the number of variables approaching those conjectured to be the best possible. Several applications of our bounds are discussed.\end{abstract}
\maketitle

\section{Introduction}\label{sec:1} The investigation of Diophantine problems of large degree is in general fraught with difficulties only partially mollified by the presence of intrinsic diagonal structure. Indeed, such analyses as are made available via the Hardy-Littlewood (circle) method, when successful, involve complicated exponential sum estimates widely considered to be amongst the most challenging in the subject. The application of Weyl differencing by Davenport, Birch and Schmidt, on the one hand, involves a delicate interplay between the singular locus associated with the problem and the quality of ensuing exponential sum estimates (see \cite{Bir1961}, \cite{Dav1963}, \cite{Sch1985}). As an inescapable feature of such approaches, the number of variables required in a successful treatment grows exponentially with the degree of the problem at hand. The extension of Vinogradov's methods to exponential sums in many variables, on the other hand, is notoriously complicated. The work of Arkhipov, Karatsuba and Chubarikov \cite{AKC2004}, \cite{AKC1980}, for example, permits substantially sharper conclusions to be drawn when partial diagonal structure is present. However, the complexity of the underlying methods has deterred a consideration of all but the simplest model situations (see \cite{AKC1980} and \cite{Par2005}). In addition, the available conclusions fail to achieve their conjectured potential by a factor growing roughly like the logarithm of the total degree of the associated translation-invariant Diophantine system.\par

Our goal in this paper is to extend the efficient congruencing method introduced by the third author \cite{Woo2012a} so as to accommodate the generalised Vinogradov systems of Arkhipov, Karatsuba and Chubarikov (see \cite{AKC2004}, \cite{AKC1980}). It transpires that for systems of large degree, the bounds that we thereby derive miss those conjectured to hold by a factor of only $2$ or thereabouts, transforming the previous state of the art. Moreover, our methods are of such flexibility that they may be successfully applied to translation-invariant systems of wide generality, and in particular to systems closely related to those subject to recent investigations by quantitative arithmetic geometers studying the Manin-Peyre conjectures (see \cite[\S4.15]{Tsch2009}, \cite{KVV2011}). Since our methods yield estimates no less striking for such systems, we take the opportunity to derive rather general estimates again coming within a constant factor of those conjectured to hold. There are consequences of all of this work for exponential sum estimates of Weyl-type, for the solubility of systems of Diophantine equations and related problems, and for certain problems in additive combinatorics, and these we also explore herein.\par

Rather than encumber the reader at this point with the substantial notational prerequisites entailed by a discussion of our most general conclusions, we instead offer the more easily digestible corollaries particular to the model problem considered in earlier work \cite{Par2005} of the first author. Let $s$, $k$ and $d$ be natural numbers, and let $X$ be a positive number. We focus attention on the system of simultaneous Diophantine equations
\begin{equation}\label{1.1}
\sum_{j=1}^sx_{j1}^{i_1}x_{j2}^{i_2}\cdots x_{jd}^{i_d}=\sum_{j=1}^sy_{j1}^{i_1}y_{j2}^{i_2}\cdots y_{jd}^{i_d}\quad (1\le i_1+\ldots +i_d\le k).
\end{equation}
Here, the indices $i_m$ are non-negative integers, so that a modest computation reveals the total number of equations in the system (\ref{1.1}) to be $r=r_{d,k}$, where
\begin{equation}\label{1.2}
r_{d,k}=\binom{k+d}{d}-1.
\end{equation}
Meanwhile, the total degree of the system (\ref{1.1}), which is to say the sum of the degrees of the equations comprising the system, is equal to $K=K_{d,k}$, where
\begin{equation}\label{1.3}
K_{d,k}=\sum_{l=1}^kl\binom{l+d-1}{l}=\frac{d}{d+1}(r+1)k.
\end{equation}
In particular, in the familiar classical Vinogradov system with $d=1$, one has $r=k$ and $K=\frac{1}{2}k(k+1)$. Finally, we write $J_{s,k,d}(X)$ for the number of integral solutions of the system (\ref{1.1}) with $1\le x_{jm},y_{jm}\le X$ $(1\le j\le s,\,1\le m\le d)$.\par

In \S\ref{sec:2}, as a special case of Theorem \ref{theorem2.1}, we derive an estimate for $J_{s,k,d}(X)$ that is in many respects close to the best possible. Here and throughout, implicit constants in Vinogradov's notation $\ll$ and $\gg$ depend at most on $s$, $k$, $d$ and $\eps$, unless otherwise indicated. The letter $X$ should be interpreted as a positive number sufficiently large in terms of $s$, $k$, $d$ and $\eps$.

\begin{theorem}\label{theorem1.1} Suppose that $s$, $k$ and $d$ are natural numbers with $k\ge 2$ and $s\ge r(k+1)$. Then for each $\eps>0$, one has $J_{s,k,d}(X)\ll X^{2sd-K+\eps}$.
\end{theorem}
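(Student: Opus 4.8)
The plan is to deduce Theorem~\ref{theorem1.1} from the forthcoming general estimate (Theorem~\ref{theorem2.1}) by interpreting the system \eqref{1.1} as a translation-dilation invariant system in $d$ sets of variables, to which the multidimensional efficient congruencing apparatus applies. First I would set up the notation: a monomial $x_{j1}^{i_1}\cdots x_{jd}^{i_d}$ with $1\le i_1+\cdots+i_d\le k$ is one of the $r=r_{d,k}$ basis exponentials, and $J_{s,k,d}(X)$ counts solutions weighted trivially, so it equals $\oint |f_d(\bfalp;X)|^{2s}\,\d\bfalp$ for the obvious generating function $f_d$ over the box $[1,X]^d$. The system is invariant under the shifts $x_{jm}\mapsto x_{jm}+\xi_m$ (which is exactly why the full collection of equations with $i_1+\cdots+i_d\le k$, rather than just $=k$, must be present), and this translation-dilation invariance is the structural hypothesis under which Theorem~\ref{theorem2.1} operates. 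The point is then to check that the invariant $\sig$ (or whatever degree-counting parameter Theorem~\ref{theorem2.1} uses to express its critical number of variables) specialises, for this particular system, to the value that makes the hypothesis $s\ge r(k+1)$ sufficient, and that the exponent $2sd-K$ matches the general exponent once $K=K_{d,k}$ is substituted from~\eqref{1.3}.

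The key steps, in order, are: (i) verify that \eqref{1.1} is a genuine translation-dilation invariant system of the type admitted by Theorem~\ref{theorem2.1}, with the number of equations $r$ given by \eqref{1.2} and the total degree $K$ given by \eqref{1.3}; (ii) identify the ``expected'' main-term exponent for the mean value $J_{s,k,d}(X)$, namely $X^{2sd-K}$, by a standard heuristic — the diagonal contribution is $\gg X^{sd}$, while the total number of tuples is $X^{2sd}$ and the number of constraints weighted by degree is $K$, so the anticipated (and, by a lower-bound argument, necessary up to the diagonal term) size is $X^{2sd-K+\eps}$; (iii) quote Theorem~\ref{theorem2.1} to obtain $J_{s,k,d}(X)\ll X^{2sd-K+\eps}$ in the range $s\ge r(k+1)$, after checking that the general theorem's hypothesis on the number of variables, when particularised to this system, reads precisely $s\ge r(k+1)$ (or something slightly weaker, which would still give the stated range a fortiori). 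Step (iii) is really a matter of unwinding definitions once Theorem~\ref{theorem2.1} is in hand.

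I expect the main obstacle to be step (i) combined with the bookkeeping in step (iii): one must be careful that the family of monomials appearing in \eqref{1.1} is closed under the differencing operations used in the efficient congruencing iteration — that is, that differencing a monomial $x_{j1}^{i_1}\cdots x_{jd}^{i_d}$ in any one of the $d$ variables produces a $\dbZ$-linear combination of monomials again lying in the admissible family $\{x_{j1}^{a_1}\cdots x_{jd}^{a_d}: 1\le a_1+\cdots+a_d\le k\}$ (together with the constant, which drops out of the translation-invariant system). This is what guarantees that the system is translation-dilation invariant in the precise sense required, so that Theorem~\ref{theorem2.1} applies with no loss. The numerology $K_{d,k}=\sum_{l=1}^k l\binom{l+d-1}{l}=\tfrac{d}{d+1}(r+1)k$ in \eqref{1.3} then has to be inserted carefully into the general exponent to recover $2sd-K$, and the condition $s\ge r(k+1)$ has to be matched against the general lower bound on $s$; modulo these verifications, the theorem follows directly from Theorem~\ref{theorem2.1}.
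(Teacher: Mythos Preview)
Your proposal is correct and matches the paper's own derivation essentially line for line: in \S\ref{sec:2}, example (b), the paper verifies that the monomial system $\bfF=(\bfz^\bfi:1\le |\bfi|\le k)$ is a reduced translation-dilation invariant system of dimension $d$, rank $r=\binom{k+d}{d}-1$, degree $k$ and weight $K=\frac{d}{d+1}(r+1)k$, and then applies Theorem~\ref{theorem2.1} directly to obtain $J_{s,k,d}(X)\ll X^{2sd-K+\eps}$ for $s\ge r(k+1)$. Your step (ii) is not needed for the deduction itself (it belongs to the lower-bound discussion in \S\ref{sec:3}), but steps (i) and (iii) are exactly what the paper does, and your anticipated ``obstacle'' --- closure of the monomial family under translation --- is precisely the content of the translation-dilation invariance check carried out there.
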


The special case of Theorem \ref{theorem1.1} with $d=1$ is equivalent to \cite[Theorem 1.1]{Woo2012a}, a conclusion which has very recently been sharpened in \cite[Theorem 1.1]{Woo2012b}, so that for $k\ge 3$ and $s\ge k^2-1$, one has
$$J_{s,k,1}(X)\ll X^{2s-\frac{1}{2}k(k+1)+\eps}.$$
When $d>1$, meanwhile, one may compare the conclusion of Theorem \ref{theorem1.1} with work of Arkhipov, Karatsuba and Chubarikov \cite{AKC1980}. Although set up slightly differently, it is clear that the methods of the latter authors have the potential to establish a bound of the shape
$$J_{s,k,d}(X)\ll X^{2sd-K+\Del_s},$$
where $\Del_s$ decays with $s$ roughly like $rke^{-s/(rk)}$. In \cite[Theorem 1.1]{Par2005} this conclusion was improved by the first author for sufficiently large values of $k$, so that on writing
$$s_0={\textstyle{\frac{1}{2}}}rk(\log k-2\log \log k),$$
one may take
$$\Del_s=\begin{cases}rke^{2-2s/(rk)},&\text{for $1\le s\le s_0$,}\\
r(\log k)^2e^{3-3(s-s_0)/(2rk)},&\text{for $s>s_0$.}
\end{cases}$$
The estimate supplied by Theorem \ref{theorem1.1} is substantially sharper. Thus, provided only that $s\ge r(k+1)$, one may take $\Del_s=\eps$ for any positive number $\eps$. The number of variables required in typical applications, as we discuss in due course, is thereby reduced by a factor of order $\log (rk)$.\par

In order to discern the strength of the estimate supplied by Theorem \ref{theorem1.1}, we must consider available lower bounds for the mean value $J_{s,k,d}(X)$, and thereby infer plausible conjectures for corresponding upper bounds. In \S\ref{sec:3} we establish the lower bound for $J_{s,k,d}(X)$ contained in the following theorem.

\begin{theorem}\label{theorem1.2} Suppose that $s$, $k$ and $d$ are natural numbers. Then one has
$$J_{s,k,d}(X)\gg X^{sd}+\sum_{j=1}^dX^{(2s-1)j+d-K_{j,k}}.$$
\end{theorem}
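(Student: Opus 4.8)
The plan is to exhibit two families of solutions to the system \eqref{1.1} that, by inclusion-exclusion or simply by taking a maximum, force the stated lower bound. The first term $X^{sd}$ is the diagonal contribution: setting $y_{jm}=x_{jm}$ for all $j$ and $m$ gives a solution for any choice of the $sd$ variables $x_{jm}$ with $1\le x_{jm}\le X$, and there are $\gg X^{sd}$ such choices. This handles the first summand with no further work.

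For the sum $\sum_{j=1}^d X^{(2s-1)j+d-K_{j,k}}$, the idea is to fix an integer $j$ with $1\le j\le d$ and produce solutions in which the first $j$ coordinate-directions are used ``freely'' while the remaining $d-j$ directions are frozen to the constant value $1$. Concretely, I would set $x_{jm}=y_{jm}=1$ for all the tuples whenever $m>j$, and also impose $x_{sm}=y_{sm}=1$ in one distinguished block $j=s$ of variables (or more precisely pin down the last of the $s$ summands so as to gain the single power encoded in the ``$-1$'' of the exponent $2s-1$). Restricting attention to the surviving variables $x_{lm},y_{lm}$ with $1\le l\le s$ and $1\le m\le j$, the system \eqref{1.1} collapses to the $j$-dimensional analogue of \eqref{1.1} of degree $k$, whose number of equations is $r_{j,k}$ and whose total degree is $K_{j,k}$. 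One then invokes the standard Vinogradov-type lower bound for such a $j$-dimensional mean value — namely that the number of its solutions is $\gg X^{2sj - K_{j,k}}$ coming from the expected main term of the associated circle-method heuristic — and multiplies by the $X^{d-j}$ (respectively the extra $X^{-j}$ saving) bookkeeping factor arising from the frozen and pinned coordinates to arrive at $X^{(2s-1)j + d - K_{j,k}}$. Summing over $1\le j\le d$, or rather taking the largest term, yields the claim.

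The technical heart is therefore the $j$-dimensional lower bound $\gg X^{2sj-K_{j,k}}$, which is itself proved by the classical congruence/interval-subdivision argument: partition $[1,X]$ into $\asymp X/q$ intervals of length $q$ along each of the $j$ active directions (for a suitable parameter $q$, e.g.\ $q\asymp X$ up to the implicit $\eps$), observe that if the $x$-tuple and the $y$-tuple lie in translates of a common box then the defining quantities agree modulo appropriate powers, and count the number of ways to match them up; alternatively one counts solutions of the companion congruence system and lifts. Either route gives a lower bound of the shape (number of free parameters) minus (total degree), which is exactly $2sj-K_{j,k}$ after accounting for the $2sj$ variables and the $K_{j,k}$ constraints weighted by degree. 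I expect the main obstacle to be purely notational: organising the freezing of the $d-j$ inactive directions and the single pinned summand so that the exponents line up precisely as $(2s-1)j+d-K_{j,k}$ rather than off by a bounded power of $X$, and checking that the degenerate cases (small $k$, or $j$ close to $d$) do not spoil the bound. None of this requires any genuinely new idea beyond the well-known lower-bound heuristics for Vinogradov-type systems, so the argument should be short.
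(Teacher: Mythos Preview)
Your overall plan---diagonal solutions for the $X^{sd}$ term, and projection to a $j$-dimensional subsystem for each summand $X^{(2s-1)j+d-K_{j,k}}$---is exactly the strategy the paper uses. However, your execution of the projection step is misconfigured and, as written, would not produce the claimed exponent.

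First, rewrite the target exponent as
\[
(2s-1)j+d-K_{j,k}=(2sj-K_{j,k})+(d-j).
\]
So what you actually need is the $j$-dimensional lower bound $J_{s,k,j}(X)\gg X^{2sj-K_{j,k}}$ together with a factor $X^{d-j}$. There is no ``$-1$'' in $2s-1$ to be explained separately by pinning a block of variables; that idea is spurious and should be dropped entirely.

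Second, freezing the inactive coordinates to the specific value $1$ yields no extra solutions and hence no factor of $X^{d-j}$. The correct move is to set $x_{l,m}=y_{l,m}=a_m$ for all $1\le l\le s$ and each $m>j$, where $a_{j+1},\ldots,a_d$ are \emph{arbitrary} integers in $[1,X]$. For the Parsell system the monomial $\bfx^{\bfi}$ factors as $\bigl(\prod_{m\le j}x_{l,m}^{i_m}\bigr)\bigl(\prod_{m>j}a_m^{i_m}\bigr)$, so the common nonzero factor cancels from both sides and the system reduces to its $j$-dimensional analogue. Summing over the $[X]^{d-j}$ choices of $(a_{j+1},\ldots,a_d)$ supplies the missing $X^{d-j}$. (The paper phrases this via translation invariance---translate by $(0,\ldots,0,a)$ and then set the last coordinate to zero---which works for general translation-dilation invariant systems; for the specific monomial system either argument is fine.)

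Finally, the bound $J_{s,k,j}(X)\gg X^{2sj-K_{j,k}}$ is not obtained by a congruence or interval-subdivision argument but by the elementary counting/orthogonality trick: for each of the $r_{j,k}$ equations the integer on the left of \eqref{1.1} lies in an interval of length $O(X^{\text{degree}})$, so the $[X]^{2sj}$ tuples are spread over $O(X^{K_{j,k}})$ possible value-vectors, and by positivity of $|f|^{2s}$ the zero value receives at least the average share. That is a two-line argument; there is no need to invoke congruences or lifting.
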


It seems reasonable to conjecture that whenever $\eps>0$, one has the allied upper bound
$$J_{s,k,d}(X)\ll X^\eps \Bigl(X^{sd}+\sum_{j=1}^dX^{(2s-1)j+d-K_{j,k}}\Bigr).$$
Thus, when $s$ is sufficiently large in terms of $k$ and $d$, one expects that
\begin{equation}\label{1.4}
J_{s,k,d}(X)\ll X^{2sd-K+\eps},
\end{equation}
as is confirmed by Theorem \ref{theorem1.1} for $s\ge r(k+1)$. We emphasise that here, and throughout the introduction, we abbreviate $r_{d,k}$ to $r$ and $K_{d,k}$ to $K$. As a consequence of Theorem \ref{theorem1.2}, we show in Theorem \ref{theorem3.2} that when $\del$ is a real number with $2d/k<\del<1$, and
$$s\le \frac{d}{2d+2}(1-\del) r(k+1),$$
then there is a positive number $\eta=\eta(d,k)$ with the property that
$$J_{s,k,d}(X)\gg X^{2sd-K+\eta}.$$
When $d\ge 2$, therefore, it follows that the conclusion of Theorem \ref{theorem1.1} comes within a factor $2+2/d+O(d/k)$ of the least value of $s$ for which the conjectured upper bound (\ref{1.4}) might conceivably hold. In such multidimensional Weyl sums, a near-optimal conclusion of this type, merely a constant factor away from the best possible, has hitherto been wholly beyond our grasp.\par

We next consider upper bounds for exponential sums of Weyl-type, the discussion of which is much facilitated by the introduction of additional notation. It is convenient to abbreviate a monomial of the shape $x_1^{i_1}x_2^{i_2}\cdots x_d^{i_d}$ to $\bfx^\bfi$, in which $\bfi=(i_1,i_2,\ldots ,i_d)$. Likewise, we may write $\bfx_m^\bfi$ for $x_{m1}^{i_1}x_{m2}^{i_2}\cdots x_{md}^{i_d}$. In such circumstances, we put
$$|\bfi|=i_1+\ldots +i_d.$$
Also, in place of the $s$-tuple $(\bfx_1,\ldots ,\bfx_s)$ we write $\vbfx$, and we adopt the convention that $a\le \bfv\le b$ is to mean that each coordinate $v_l$ of the vector $\bfv$ satisfies $a\le v_l\le b$. Equipped with these conventions, the Diophantine system (\ref{1.1}) assumes the compact shape
$$\bfx_1^\bfi+\ldots +\bfx_s^\bfi=\bfy_1^\bfi+\ldots+\bfy_s^\bfi\quad (1\le |\bfi|\le k),$$
and $J_{s,k,d}(X)$ counts the number of integral solutions of this system with $1\le \vbfx,\vbfy\le X$.\par

Define the exponential sum $f(\bfalp)=f_{d,k}(\bfalp;X)$ by
$$f_{d,k}(\bfalp;X)=\sum_{1\le \bfx\le X}e(\psi(\bfx;\bfalp)),$$
where
$$\psi(\bfx;\bfalp)=\sum_{1\le |\bfi|\le k}\alp_\bfi\bfx^\bfi,$$
and, as usual, we write $e(z)$ for $e^{2\pi iz}$. Here, the subscript $d$ that identifies $\bfx$ as the $d$-tuple $(x_1,\ldots ,x_d)$ may usually be omitted without leading to confusion. As we have already noted, the number of coefficients $\alp_\bfi$ is $r$. We adopt the convention that, when $G:[0,1)^n\rightarrow \dbC$ is measurable, then
$$\oint G(\bfbet)\d\bfbet =\int_{[0,1)^n}G(\bfbet)\d\bfbet .$$
It then follows from orthogonality that
\begin{equation}\label{1.5}
J_{s,k,d}(X)=\oint|f_{d,k}(\bfalp;X)|^{2s}\d\bfalp .
\end{equation}

\par Upper bounds for mean values of exponential sums such as $f(\bfalp)$ may be converted into Weyl-type estimates by means of variants of the large sieve inequality. Before announcing such an estimate, which is a consequence of the more general result recorded in Theorem \ref{theorem10.3}, we pause to record a further notational convention. When $\bfa\in \dbZ^n$, we write $(q,\bfa)$ for the greatest common divisor $(q,a_1,\ldots ,a_n)$.

\begin{theorem}\label{theorem1.3}
Suppose that $d$ and $k$ are natural numbers with $k\ge 2$. Let $\sig$ be any real number with $$\sig^{-1}\ge \left(2k\binom{k+d-1}{d}-2k+1\right)(d+1).$$
Then whenever $|f_{d,k}(\bfalp;X)|\ge X^{d-\sig +\eps}$, for some $\eps>0$, it follows that there exist $q\in \dbN$ and $a_\bfj\in \dbZ$ $(1\le |\bfj|\le k)$ satisfying
$$(q,\bfa)=1,\quad 1\le q\le X^{k\sig}\quad \text{and}\quad |q\alp_\bfj-a_\bfj|\le X^{k\sig -|\bfj|}\quad (1\le |\bfj|\le k).$$
\end{theorem}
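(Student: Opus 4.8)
The plan is to obtain Theorem~\ref{theorem1.3} by feeding the mean value bound of Theorem~\ref{theorem1.1} into a large sieve inequality, exactly as signalled in the sentence preceding the statement; the cleanest route is first to isolate the general principle --- that a sharp mean value estimate for a translation-dilation invariant system implies a corresponding Weyl-type bound --- as Theorem~\ref{theorem10.3}, and then to read off Theorem~\ref{theorem1.3} as the specialisation to the model system (\ref{1.1}). Concretely, fix $s=r(k+1)$, so that Theorem~\ref{theorem1.1} supplies
$$J_{s,k,d}(X)=\oint|f_{d,k}(\bfalp;X)|^{2s}\d\bfalp\ll X^{2sd-K+\eps},$$
and set $Q=X^{k\sig}$. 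The goal is to show that $|f_{d,k}(\bfalp;X)|\ge X^{d-\sig+\eps}$ forces $\bfalp$ into the major arc $\grM(Q)=\bigcup_{q\le Q}\bigcup_{(q,\bfa)=1}\{\bfbet:|q\bet_\bfj-a_\bfj|\le QX^{-|\bfj|}\ (1\le|\bfj|\le k)\}$; the coprimality asserted in the conclusion is then automatic on reducing $\bfa/q$ to lowest terms.

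The mechanism I would follow is this. Since $|f_{d,k}(\bfbet;X)|$ changes by at most a bounded factor as $\bfbet$ ranges over a suitable box about a given point, a large sieve (equivalently, duality) argument bounds the number of well-separated points at which $|f_{d,k}(\cdot;X)|$ attains the size $X^{d-\sig+\eps}$ by a power of $X$ depending only on $\sig$ and $s$; reconciling this count with the abundance of Farey fractions of bounded height localises any such point near a rational of controlled denominator. Dirichlet's theorem then furnishes an explicit simultaneous approximation $\bfa/q$, while the classical complete-sum (Gauss sum) estimate $|f_{d,k}(\bfa/q;X)|\ll X^{d+\eps}q^{-c}$, with a saving $c=c(k,d)>0$ driven by the degree-$k$ part of $\psi$, forces $q$ to be small as soon as $|f_{d,k}(\bfalp;X)|$ is large; and the companion singular-integral estimate in the displacement $\bfalp-\bfa/q$ is what delivers the error terms $|q\alp_\bfj-a_\bfj|\le X^{k\sig-|\bfj|}$. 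A final ingredient promotes approximations of the top-degree coefficients $\alp_\bfj$ ($|\bfj|=k$) to a simultaneous approximation of all $\alp_\bfj$ with $1\le|\bfj|\le k$ carrying a single denominator: shifting $\bfx\mapsto\bfx+\bfh$ and taking finite differences reads the lower coefficients off the top ones through Vandermonde-type relations, and a common multiple of the few denominators that appear serves as $q$. The $\eps$ in the hypothesis is precisely the reservoir absorbing the several factors $X^{\eps}$ that accumulate en route, leaving a clean conclusion.

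The step I expect to be the main obstacle is the numerology that pins down the admissible range of $\sig$. One must reconcile the loss incurred in the large sieve step (governed by the number $r$ of coefficients and by $K=\tfrac{d}{d+1}(r+1)k$), the powers of $Q=X^{k\sig}$ issuing from the Farey dissection, the complete-sum saving, and the cost of manufacturing a common denominator across the $\binom{k+d-1}{d-1}$ monomials of total degree $k$ --- equivalently across the $r_{d,k-1}=\binom{k+d-1}{d}-1$ equations that survive one differencing of (\ref{1.1}). On substituting $s=r(k+1)$ and simplifying by means of (\ref{1.2}) and (\ref{1.3}), the requirement that all these contributions combine into a strictly negative power of $X$ collapses to the stated inequality $\sig^{-1}\ge\bigl(2k\binom{k+d-1}{d}-2k+1\bigr)(d+1)$, the quantity in the outer parentheses being $2kr_{d,k-1}+1$; since any slack here would enlarge $\grM(Q)$ beyond what is claimed, this constant has to be tracked with care throughout.
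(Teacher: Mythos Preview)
Your plan has the right philosophy but the wrong input, and the numerology you promise will not emerge from the argument you sketch. You propose to feed Theorem~\ref{theorem1.1}, the mean value for the full degree-$k$ system at $s=r(k+1)$, into a large-sieve step. The paper does not do this. It applies Theorem~\ref{theorem2.1} to the degree-$(k-1)$ system $\bfF'=(\bfz^{\bfi}:1\le|\bfi|\le k-1)$ of rank $r'=\binom{k+d-1}{d}-1$, taking $s=r'k$. The reason is that the conversion mechanism behind \cite[Theorem~5.1]{Par2005} (quoted here as Theorem~\ref{theorem10.1}) shifts the phase once, producing a family of degree-$(k-1)$ sums whose $2s$-th moment governs the pointwise estimate; the Weyl exponent that emerges is $\sig=1/(2r'k)$, and this is precisely the origin of the factor $2k\binom{k+d-1}{d}-2k=2r'k$ in the stated constraint. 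Using the full-system mean value as you propose would give an exponent governed by $r(k+1)$ rather than $r'k$, which is strictly larger and does not reduce to the constant in Theorem~\ref{theorem1.3}; your closing claim that ``substituting $s=r(k+1)$ and simplifying'' collapses to $(2kr_{d,k-1}+1)(d+1)$ cannot be correct, since $r_{d,k-1}$ never entered your argument.

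The mechanism you describe also diverges from the paper in its details. The paper uses neither complete-sum estimates of the shape $|f(\bfa/q)|\ll X^{d+\eps}q^{-c}$ nor singular-integral bounds in the displacement; rather, Theorem~\ref{theorem10.1} supplies directly the pointwise Weyl bound $|f(\bfalp)|\ll X^{d+\eps}(q^{-1}+X^{-1}+qX^{-|\bfj|})^{1/(2r'k)}$ for each individual coefficient $\alp_\bfj$ with $|\bfj|\ge2$. Theorem~\ref{theorem10.2} then combines Dirichlet approximation with this bound to produce, for each coordinate direction $l$, a common denominator $q_0^{(l)}$ controlling all $\alp_\bfj$ with $j_l\ge1$. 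Theorem~\ref{theorem10.3} sets $q_0$ equal to the least common multiple of $q_0^{(1)},\ldots,q_0^{(d)}$; the multiplicative loss in passing to this single denominator across $d$ directions, together with a Baker-style final-coefficient lemma \cite[Lemma~5.4]{Par2005} handling the linear coefficients $|\bfj|=1$, is exactly what manufactures the factors $+1$ and $(d+1)$ in the bound $\sig^{-1}\ge(2r'k+1)(d+1)$. Your finite-difference scheme for descending from top-degree coefficients to lower ones is a different device and does not deliver a single denominator with this quality of control.
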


The special case of Theorem \ref{theorem1.3} with $d=1$ is very slightly weaker than \cite[Theorem 1.6]{Woo2012a}, a conclusion which has recently been sharpened in \cite[Theorem 11.2]{Woo2012b}. Thus, when $d=1$, the conclusion of Theorem \ref{theorem1.3} holds whenever $k\ge 4$ and $\sig^{-1}\ge 4k(k-2)$. The work of Arkhipov, Karatsuba and Chubarikov \cite{AKC1980}, as interpreted and sharpened by the first author, yields a conclusion similar to Theorem \ref{theorem1.3}. Indeed, it follows from a corrected version\footnote{See the discussion following the proof of Theorem \ref{theorem10.2} below for an explanation of the need for a modest correction in \cite[Theorem 1.2]{Par2005}.} of \cite[Theorem 1.2]{Par2005} that a conclusion of similar form holds for sufficiently large values of $k$, though with the constraint on the exponent $\sig$ replaced by a condition of the shape $\sig^{-1}\ge \frac{4}{3}(d+1)rk\log (rk)$. On noting that $\binom{k+d-1}{d}-1=r_{d,k-1}\le r_{d,k}$, the superiority of our new bound is clear.\par

We next consider the application of our new estimates to Diophantine problems. When $s$, $k$ and $d$ are natural numbers, and $a_{\bfi j}$ is a non-zero integer for $1\le |\bfi|\le k$ and $1\le j\le s$, write
$$\phi_\bfi (\vbfx)=\sum_{j=1}^sa_{\bfi j}\bfx_j^\bfi\quad (1\le |\bfi|\le k).$$
In \S\ref{sec:11}, we consider the Diophantine system
\begin{equation}\label{1.6}
\phi_\bfi(\vbfx)=0\quad (1\le |\bfi|\le k),
\end{equation}
consisting of $r$ equations of total degree $K$. Let $N(B)$ denote the number of integral solutions of the system (\ref{1.6}) with $|\vbfx|\le B$. We follow Schmidt \cite{Sch1985} when defining the (formal) real and $p$-adic densities associated with the system (\ref{1.6}). When $L>0$, define
$$\lam_L(\eta)=\begin{cases} L(1-L|\eta|),&\text{when $|\eta|\le L^{-1}$,}\\
0,&\text{otherwise,}\end{cases}$$
and put
$$\mu_L=\int_{|\vbfxi|\le 1}\prod_{1\le |\bfi|\le k}\lam_L(\phi_\bfi (\vbfxi))\d\vbfxi .$$
The limit $\sig_\infty={\displaystyle{\lim_{L\rightarrow \infty}}}\mu_L$, when it exists, is called the {\it real density}. Meanwhile, given a natural number $q$, we write
$$M(q)=\text{card} \left\{ \vbfx\in (\dbZ/q\dbZ)^{sd}:\phi_\bfi(\vbfx)\equiv 0\mmod{q}\ (1\le |\bfi|\le k)\right\} .$$
For each prime number $p$, we then put
$$\sig_p=\lim_{h\rightarrow \infty}p^{h(r-sd)}M(p^h),$$
provided that this limit exists, and refer to $\sig_p$ as the {\it $p$-adic density}.\par

As a special case of Theorem \ref{theorem11.1}, we establish an asymptotic formula for $N(B)$ valid whenever $s\ge 2r(k+1)+1$.

\begin{theorem}\label{theorem1.4} Suppose that $s$, $k$ and $d$ are natural numbers with $k\ge 2$ and $s\ge 2r(k+1)+1$. In addition, let $a_{\bfi j}$ $(1\le |\bfi|\le k, 1\le j\le s)$ be non-zero integers. Then provided that the system of equations (\ref{1.6}) possesses non-singular real and $p$-adic solutions for each prime number $p$, one has
\begin{equation}\label{1.7}
N(B)\sim \sig_\infty \left( \prod_p \sig_p\right) B^{sd-K}.
\end{equation}
In particular, the system (\ref{1.6}) satisfies the Hasse Principle.
\end{theorem}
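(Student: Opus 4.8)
The plan is to establish \eqref{1.7} by the Hardy--Littlewood method, with the mean value estimate of Theorem \ref{theorem1.1} and the Weyl-type estimate of Theorem \ref{theorem1.3} supplying the two analytic inputs. For $1\le j\le s$ set
$$F_j(\bfalp)=\sum_{|\bfx|\le B}e\Bigl(\sum_{1\le|\bfi|\le k}a_{\bfi j}\alp_\bfi\bfx^\bfi\Bigr),$$
so that orthogonality yields $N(B)=\oint\prod_{j=1}^sF_j(\bfalp)\d\bfalp$. Each $F_j$ is a Weyl sum of precisely the shape of $f_{d,k}(\cdot;B)$, modified only through the non-zero integer coefficients $a_{\bfi j}$ and the symmetric box $|\bfx|\le B$ (a union of $O(1)$ translates of $[1,B]^d$, hence harmless). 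Choosing $\sig$ as permitted by Theorem \ref{theorem1.3}, and shrinking it further if convenient, put $\delta=2k\sig$ and let $\grM$ be the union over $1\le q\le B^\delta$ and $\bfa$ with $(q,\bfa)=1$ of the boxes $\{\bfalp:|\alp_\bfi-a_\bfi/q|\le B^{\delta-|\bfi|}\ (1\le|\bfi|\le k)\}$, writing $\grm=[0,1)^r\setminus\grM$.

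On $\grm$ I would extract a single factor in the supremum norm and treat the remainder by the mean value theorem. If $|F_j(\bfalp)|\ge B^{d-\sig+\eps}$ for some $j$, then one of the $O(1)$ orthant subsums comprising $F_j$ is comparably large, and Theorem \ref{theorem1.3}, applied with the $a_{\bfi j}\alp_\bfi$ in the role of the frequencies, forces simultaneous rational approximations to each $a_{\bfi j}\alp_\bfi$, hence to each $\alp_\bfi$ with denominator inflated by a bounded factor, placing $\bfalp$ in $\grM$ provided $B$ is large. Hence $\sup_j|F_j(\bfalp)|\ll B^{d-\sig+\eps}$ for $\bfalp\in\grm$. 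Separately, the $(s-1)$-th moment $\oint|F_j(\bfalp)|^{s-1}\d\bfalp$ is $\ll B^{(s-1)d-K+\eps}$ by Theorem \ref{theorem1.1} — the hypothesis $s\ge 2r(k+1)+1$ ensuring the relevant moment is high enough, and the non-vanishing of $a_{\bfi j}$ permitting the reduction to the system \eqref{1.1} — so H\"older's inequality gives $\oint\prod_{j=2}^s|F_j(\bfalp)|\d\bfalp\ll B^{(s-1)d-K+\eps}$. Multiplying these bounds, $\int_\grm\prod_{j=1}^s|F_j(\bfalp)|\d\bfalp\ll B^{sd-K-\sig+2\eps}$, a saving of a positive power of $B$ once $\eps$ is chosen small.

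On $\grM$ the familiar local approximation $F_j(\bfa/q+\bfbet)=q^{-d}S_j(q,\bfa)B^dv_j(\bfbet)+O\bigl(q^{1+\eps}B^{d-1+\delta}\bigr)$, in which $S_j(q,\bfa)=\sum_{\bfz\bmod q}e\bigl(q^{-1}\sum_\bfi a_{\bfi j}a_\bfi\bfz^\bfi\bigr)$ and $v_j$ is an oscillatory integral over $[-1,1]^d$, leads after substitution and integration — the error terms being controllable because $\delta$ is small and $s$ is large — to
$$\int_\grM\prod_{j=1}^sF_j(\bfalp)\d\bfalp=\grS(B^\delta)\grJ(B^\delta)B^{sd-K}+O(B^{sd-K-\delta_1})$$
for some $\delta_1>0$, where $\grS(Q)=\sum_{q\le Q}A(q)$ with $A(q)=q^{-sd}\sum_{(\bfa,q)=1}\prod_jS_j(q,\bfa)$, and $\grJ(X)=\int_{|\bfgam|\le X}\prod_jv_j(\bfgam)\d\bfgam$ is the singular integral in suitably rescaled variables. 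Completing the singular integral to $\sig_\infty$ rests on the convergence of $\int_{\dbR^r}\prod_j|v_j(\bfgam)|\d\bfgam$, which follows from decay estimates for the $v_j$ valid once $s$ exceeds a multiple of $rk$, together with the positive lower bound for the truncated integrals supplied by the non-singular real point. Completing the singular series to $\prod_p\sig_p$ uses the multiplicativity of $A$, the bound $|A(q)|\ll q^{-K+\eps}$ — obtained from a local (congruence) analogue of Theorem \ref{theorem1.1}, again via $s\ge 2r(k+1)+1$ — so that $\sum_q|A(q)|$ converges since $K\ge 3$, and the non-singular $p$-adic hypothesis, which through Hensel's lemma renders each factor $\sig_p$ positive while the bound just noted gives $\sig_p=1+O(p^{-K+\eps})$ and hence a positive product.

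Adding the major and minor arc contributions yields $N(B)=\bigl(\sig_\infty\prod_p\sig_p+o(1)\bigr)B^{sd-K}$, and since both densities are positive under the stated hypotheses, the main term has exact order $B^{sd-K}$; in particular $N(B)\to\infty$, so the system \eqref{1.6} satisfies the Hasse principle. I expect the principal labour to lie in the major arc analysis — in particular the estimation of the singular series for \eqref{1.6} with arbitrary non-zero coefficients $a_{\bfi j}$, and the calibration of the width $B^\delta$ of $\grM$ so as to be at once narrow enough for the Weyl bound of Theorem \ref{theorem1.3} to evacuate $\grm$ and wide enough to recover the complete singular series and singular integral — whereas the minor arc estimate, though resting on the two substantial theorems, is otherwise a routine application of H\"older's inequality.
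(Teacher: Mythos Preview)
Your proposal is correct and follows essentially the same Hardy--Littlewood route as the paper, which proves the more general Theorem \ref{theorem11.1} and then specialises to $\bfF=(\bfx^\bfi:1\le|\bfi|\le k)$: minor arcs handled by combining the sup bound from Theorem \ref{theorem1.3} with the mean value from Theorem \ref{theorem1.1}, and major arcs by the standard local approximation and singular series/integral analysis of \cite{Par2005}. One technical caveat: your claimed bound $|A(q)|\ll q^{-K+\eps}$ via a ``local analogue of Theorem \ref{theorem1.1}'' is stronger than such an argument actually delivers---the paper instead obtains absolute convergence of $\grS$ from the Weyl-type estimates on the complete sums $S(q,\bfa)$ in \cite[\S\S5--6]{Par2005}---but the weaker bound available there is still sufficient, so this does not affect the overall strategy.
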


We note that \cite[Theorem 9.1]{Woo2012a} delivers the same conclusion as Theorem \ref{theorem1.4} in the special case $d=1$. As is apparent from the lower bound supplied by Theorem \ref{theorem1.2} and the ensuing discussion, there exist choices of coefficients $\bfa$ for which the asymptotic formula (\ref{1.7}) necessarily fails when $k$ is large and
\begin{equation}\label{1.8}
s\le \frac{d}{d+1}(1+O(d/k))r(k+1).
\end{equation}
Consequently, the bound on the number of variables in the hypotheses of Theorem \ref{theorem1.4} is within a factor $2+2/d+O(d/k)$ of the best possible bound for such systems. Indeed, the argument underlying the proof of Theorem \ref{theorem1.2} shows that such remains true in wider generality. The point here is that special subvarieties contain the bulk of the set of integral solutions whenever the bound (\ref{1.8}) holds on the number $s$ of blocks of $d$ variables. Conclusions available hitherto of the type presented in Theorem \ref{theorem1.4} impose bounds on the number of blocks of variables weaker than our own by a factor of order $\log (rk)$.\par

As a special case of Corollary \ref{corollary11.2}, we obtain an asymptotic formula for the mean value $J_{s,k,d}(X)$.

\begin{theorem}\label{theorem1.5} Let $k$ and $d$ be natural numbers with $k\ge 2$. Then whenever $s\ge r(k+1)+1$, there exist positive constants $\calC=\calC(s,k,d)$ and $\del=\del(k,d)$ such that
$$J_{s,k,d}(X)=\calC X^{2sd-K}+O(X^{2sd-K-\del}).$$
\end{theorem}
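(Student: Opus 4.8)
The plan is to establish the asymptotic formula for $J_{s,k,d}(X)$ by the Hardy--Littlewood method, using the near-optimal mean value bound of Theorem \ref{theorem1.1} as input to control the minor arcs. We work with the exponential sum $f_{d,k}(\bfalp;X)$ and the identity \eqref{1.5}, namely $J_{s,k,d}(X)=\oint|f(\bfalp)|^{2s}\d\bfalp$. The hypothesis $s\ge r(k+1)+1$ gives us one variable to spare beyond the critical threshold $s_0=r(k+1)$ of Theorem \ref{theorem1.1}, and it is precisely this surplus that converts a bound of strength $X^{2sd-K+\eps}$ into a genuine power-saving asymptotic. First I would fix a parameter $\del_0>0$ small in terms of $k$ and $d$, and dissect $[0,1)^r$ into major arcs $\grM$ consisting of boxes centred at rationals $\bfa/q$ with $q\le X^{\del_0}$ and $|q\alp_\bfj-a_\bfj|\le X^{\del_0-|\bfj|}$ for $1\le|\bfj|\le k$, together with the complementary minor arcs $\grm$.

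On the minor arcs, the strategy is to extract $2(s-s_0)$ copies of a pointwise bound $|f(\bfalp)|\ll X^{d-\tau}$ valid on $\grm$ for some $\tau=\tau(k,d)>0$ — this follows from Theorem \ref{theorem1.3} (or rather its more general form Theorem \ref{theorem10.3}), since on $\grm$ the sum $f(\bfalp)$ cannot be as large as $X^{d-\sig+\eps}$ for the admissible $\sig$ — and then to estimate the remaining $2s_0$-th moment by Theorem \ref{theorem1.1} applied with $s_0=r(k+1)$ variables. This yields
$$
\int_{\grm}|f(\bfalp)|^{2s}\d\bfalp\ll\bigl(X^{d-\tau}\bigr)^{2(s-s_0)}\oint|f(\bfalp)|^{2s_0}\d\bfalp\ll X^{2(s-s_0)(d-\tau)}X^{2s_0d-K+\eps}=X^{2sd-K-2(s-s_0)\tau+\eps},
$$
which is acceptable with $\del$ any number slightly smaller than $2(s-s_0)\tau$, on taking $\eps$ small enough. (If one wishes to avoid any $X^\eps$ at all in the error term, one instead replaces Theorem \ref{theorem1.1} by a slightly weaker $\eps$-free bound with a few extra variables, which the surplus permits.)

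On the major arcs, the plan is the standard one: approximate $f(\bfalp)$ near $\bfa/q$ by the product of a complete exponential sum $q^{-d}S(q,\bfa)$ and an integral $I(\bfbet)=\int_{[0,X]^d}e(\psi(\bfx;\bfbet))\d\bfx$, control the approximation error using Weyl-type differencing or partial summation, and integrate the resulting main term over $\grM$ to produce the singular series $\grS(X)=\sum_{q\le X^{\del_0}}\sum_{(q,\bfa)=1}|q^{-d}S(q,\bfa)|^{2s}$ and singular integral $\grI(X)=\int_{\dbR^r}|I(\bfbet)|^{2s}\d\bfbet$. The condition $s\ge r(k+1)+1>\tfrac12 r(k+1)\cdot 2$ comfortably guarantees absolute convergence of both $\grS=\lim\grS(X)$ and $\grI$, together with the truncation estimates $\grS(X)=\grS+O(X^{-\del})$ and $\grI(X)=\grI+O(X^{-\del})$ after rescaling; one then sets $\calC=\grS\,\grI\cdot X^{-(2sd-K)}\grI(X)$ — more precisely $\calC=\grS\cdot\grJ$ where $\grJ$ is the scale-invariant form of the singular integral — and checks $\calC>0$ by exhibiting non-singular real and $p$-adic points for the translation-invariant system \eqref{1.1}, which are trivially present (e.g. from the diagonal solution) since the system is its own family of complete symmetric functions.

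The main obstacle is the minor-arc pruning and, relatedly, the passage between the Weyl-estimate form of Theorem \ref{theorem1.3} and the pointwise bound on $\grm$ needed above. The delicate point is that Theorem \ref{theorem1.3} localises large values of $f$ to a \emph{narrow} set of rationals of height up to $X^{k\sig}$, not merely to height $X^{\del_0}$; so to conclude that $|f(\bfalp)|\ll X^{d-\tau}$ on the minor arcs as defined, one must either choose $\del_0$ and $\sig$ compatibly (taking $\del_0\ge k\sig$ and noting the box-sizes $X^{k\sig-|\bfj|}$ match those in $\grM$), or run an intermediate pruning over "intermediate arcs" with $X^{\del_0}<q\le X^{k\sig}$, on which one combines the mean value bound with the rational-approximation structure to save a power of $X$. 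Provided this is set up carefully the argument goes through, and the surplus variable ensures there is room to absorb the $X^\eps$ losses inherent in Theorems \ref{theorem1.1} and \ref{theorem1.3}; everything else is the routine machinery of the circle method for translation-invariant systems, following the template of \cite[\S9]{Woo2012a}.
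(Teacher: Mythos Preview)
Your proposal is correct and follows essentially the same route as the paper, which obtains Theorem \ref{theorem1.5} as the special case of Corollary \ref{corollary11.2} (itself derived from Theorem \ref{theorem11.1}): a Hardy--Littlewood dissection in which the minor-arc contribution is bounded by extracting $2(s-r(k+1))$ copies of a Weyl-type sup-norm estimate (Theorem \ref{theorem10.4}) and applying Theorem \ref{theorem2.1} to the remaining $2r(k+1)$th moment, while the major arcs are handled by the standard approximation $f\approx q^{-d}S(q,\bfa)v(\bfalp-\bfa/q)$ to produce an absolutely convergent singular series and integral. Your concern about intermediate arcs is unnecessary: the paper simply takes the major-arc parameter $\tet=\tfrac{1}{3}$, so that the rational approximations produced by Theorem \ref{theorem10.4} (with $Y\ll X^{1/4}$) already lie inside $\grM_{1/3}$; and positivity of $\calC$ is deduced directly from the lower bound $J_s(X;\bfF)\gg X^{2sd-K}$ of Theorem \ref{theorem3.1} rather than by exhibiting non-singular local solutions.
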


A conclusion analogous to that of Theorem \ref{theorem1.5} is obtained in \cite[Theorem 1.3]{Par2005}, subject to the condition that $k$ be sufficiently large and
$$s\ge rk({\textstyle{\frac{2}{3}}}\log r+{\textstyle{\frac{1}{2}}}\log k+\log \log k+2d+4).$$
Again, the conclusion of Theorem \ref{theorem1.5} is much superior. When $d=1$ and $k\ge 3$, meanwhile, the conclusion of Theorem \ref{theorem1.5} is a consequence of \cite[Theorem 1.2]{Woo2012a}.\par

As a penultimate application of the bounds supplied by Theorem \ref{theorem1.1}, in \S\ref{sec:11} we consider the rational linear spaces of projective dimension $d-1$ lying on the diagonal hypersurface
\begin{equation}\label{1.9}
c_1z_1^k+\ldots +c_sz_s^k=0,
\end{equation}
with $c_1,\ldots ,c_s$ fixed non-zero integers. Such a linear space may be written in the form
$$\calL(\bfx_1,\ldots ,\bfx_d)=\{ t_1\bfx_1+\ldots +t_d\bfx_d:t_1,\ldots ,t_d\in \dbQ\},$$
for suitable linearly independent vectors $\bfx_1,\ldots ,\bfx_d\in \dbZ^s$. As noted in \cite{Par2005}, by substituting into (\ref{1.9}) and using the multinomial theorem to collect together coefficients of $\bft^\bfi$, one finds that the linear space $\calL(\vbfx)$ corresponds to a solution $\bfy_1,\ldots ,\bfy_s\in \dbZ^d$ of the Diophantine system
\begin{equation}\label{1.10}
c_1\bfy_1^\bfi+\ldots +c_s\bfy_s^\bfi=0\quad (|\bfi|=k).
\end{equation}
This correspondence is made explicit by means of the simple relation
$$x_{ij}=y_{ji}\quad (1\le i\le d,\, 1\le j\le s).$$
Write $N_{s,k,d}(X)$ for the number of integral solutions of the system (\ref{1.10}) with $|\vbfy|\le X$, and put
$$L=k\binom{k+d-1}{k}.$$

\par In \S11 we indicate how to prove an asymptotic formula for $N_{s,k,d}(X)$ subject to the condition that $s\ge 2r(k+1)+1$. In this context, we say that the integral $s$-tuple $\bfc$ is a {\it non-singular choice of coefficients for $k$ and $d$} when the system of equations (\ref{1.10}) has non-singular real and $p$-adic solutions, for every prime number $p$.

\begin{theorem}\label{theorem1.6}
Suppose that $s$, $k$ and $d$ are natural numbers with $k\ge 2$ and $s\ge 2r(k+1)+1$. Suppose further that $\bfc\in (\dbZ\setminus\{ {\mathbf 0}\})^s$ is a non-singular choice of coefficients for $k$ and $d$. Then there exist positive constants $\calD=\calD(s,k,d;\bfc)$ and $\nu=\nu(k,d)$ such that
$$N_{s,k,d}(X)=\calD X^{sd-L}+O(X^{sd-L-\nu}).$$
\end{theorem}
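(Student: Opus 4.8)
The plan is to apply the Hardy--Littlewood circle method directly to the system (\ref{1.10}), following the template used for Theorem~\ref{theorem11.1} but with the complete translation-dilation invariant system of degree at most $k$ replaced by the subsystem consisting of the forms of degree exactly~$k$. Write $r^\ast=\binom{k+d-1}{k}$ for the number of equations in (\ref{1.10}) and put
$$g(\bfbet;X)=\sum_{-X\le \bfy\le X}e\Bigl(\sum_{\abs{\bfi}=k}\bet_\bfi\bfy^\bfi\Bigr),$$
so that $N_{s,k,d}(X)=\oint\prod_{j=1}^s g(c_j\bfbet;X)\d\bfbet$ by orthogonality, the integral being taken over $[0,1)^{r^\ast}$. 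Set $t=r(k+1)$; the hypothesis $s\ge 2r(k+1)+1$ then supplies the inequality $s\ge 2t+1$ that drives the minor arc estimate. I would dissect $[0,1)^{r^\ast}$ into major arcs $\grM$, comprising the usual neighbourhoods $\abs{\bet_\bfi-a_\bfi/q}\le X^{\del-k}$ of rationals with $1\le q\le X^\del$ and $(q,\bfa)=1$ (taken wide enough in $q$ to absorb the dilations by the fixed integers $c_j$), together with complementary minor arcs $\grm$, where $\del=\del(k,d)>0$ is a small auxiliary parameter.

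The key mean value input is the bound
$$T_t(X):=\oint\abs{g(\bfbet;X)}^{2t}\d\bfbet\ll X^{2td-L+\eps},$$
which I would deduce from Theorem~\ref{theorem1.1} as follows. After splitting the range of each variable into $O(1)$ subintervals one may assume $1\le\bfy_j,\bfz_j\le X$; for any solution $(\vbfy,\vbfz)$ counted by $T_t(X)$ the integers $m_\bfi=\sum_{j=1}^t(\bfy_j^\bfi-\bfz_j^\bfi)$ then satisfy $m_\bfi=0$ whenever $\abs{\bfi}=k$ and $\abs{m_\bfi}\le tX^{\abs{\bfi}}$ whenever $1\le\abs{\bfi}<k$. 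By orthogonality the number of solutions realising a prescribed tuple $(m_\bfi)_{1\le\abs{\bfi}\le k}$ is at most $J_{t,k,d}(X)$, the diagonal tuple maximising; and the number of admissible tuples $(m_\bfi)_{\abs{\bfi}<k}$ is $\ll X^{K-L}$, since $\sum_{1\le\abs{\bfi}<k}\abs{\bfi}=K_{d,k}-k\binom{k+d-1}{k}=K-L$. Hence $T_t(X)\ll X^{K-L}J_{t,k,d}(X)\ll X^{K-L}X^{2td-K+\eps}=X^{2td-L+\eps}$, the middle step being Theorem~\ref{theorem1.1} with $t=r(k+1)$.

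On the minor arcs I would invoke the Weyl-type estimate furnished by Theorem~\ref{theorem10.3} (the general result underlying Theorem~\ref{theorem1.3}), applied to the leading coefficients, to conclude that $\abs{g(c_j\bfbet;X)}\ll X^{d-\rho+\eps}$ for $\bfbet\in\grm$ and each $j$, where $\rho=\rho(k,d)>0$. Estimating $s-2t\ge 1$ of the factors in this way, and the remaining $2t$ via H\"older's inequality together with the change of variables $\bfbet\mapsto c_j\bfbet$ (under which $\oint\abs{g(c_j\bfbet;X)}^{2t}\d\bfbet=T_t(X)$), one obtains
$$\int_\grm\prod_{j=1}^s\abs{g(c_j\bfbet;X)}\d\bfbet\ll X^{(d-\rho+\eps)(s-2t)}T_t(X)\ll X^{sd-L-\rho(s-2t)+\eps}\ll X^{sd-L-\nu}$$
for a suitable $\nu=\nu(k,d)>0$, on noting the identity $(d-\rho)(s-2t)+2td=sd-\rho(s-2t)$. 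On the major arcs the usual pruning procedure --- approximating $g(c_j\bfbet;X)$ by $q^{-d}S_j(q,\bfa)v_j(\bfgam)$, completing the resulting exponential sums and integrals, and absorbing the pruning error by means of $T_t(X)\ll X^{2td-L+\eps}$ --- delivers the main term $\calD X^{sd-L}$, where $\calD=\sig_\infty\prod_p\sig_p$ is assembled from the singular integral and singular series attached to (\ref{1.10}). The hypothesis that $\bfc$ is a non-singular choice of coefficients ensures $\sig_\infty>0$ and $\sig_p>0$ for every prime $p$, while $s\ge 2r(k+1)+1$ is amply large enough to force absolute convergence of $\prod_p\sig_p$ and of the singular integral, whence $\calD>0$.

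I expect the principal obstacle to lie in the minor arc estimate. Theorem~\ref{theorem1.3} is stated only for the complete sum $f_{d,k}$, whose coefficients of degree less than~$k$ are free, whereas in $g$ these are set to zero, so a genuine power saving for $g$ off the major arcs must be extracted either from Theorem~\ref{theorem10.3} restricted to the leading coefficients, or --- failing a clean reduction --- by classical Weyl differencing in each of the $d$ variables in turn, which takes some care to convert bounds in the individual leading coefficients into one holding uniformly on $\grm$. Since any fixed $\rho>0$ suffices, this is a matter of technique rather than of the strength of our estimates. The remaining ingredients --- the mean value bound $T_t(X)\ll X^{2td-L+\eps}$, the pruning, and the convergence and positivity of $\calD$ --- are routine given Theorem~\ref{theorem1.1} and the apparatus already assembled for Theorem~\ref{theorem11.1}.
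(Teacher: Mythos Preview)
Your approach is essentially the same as the paper's. The paper's own argument is a two-line sketch: it introduces the partial Weyl sum $g_j(\bfalp)$, records the mean value estimate $\oint|g_j(\bfalp)|^{2s}\d\bfalp\ll X^{2sd-L+\eps}$ for $s\ge r(k+1)$ as a consequence of Theorem~\ref{theorem1.1}, and then refers the reader to \cite[\S6]{Par2005} for the remaining circle-method machinery. Your derivation of the mean value bound via completion to the full system (summing over the $O(X^{K-L})$ admissible values of the lower-degree moments and invoking Theorem~\ref{theorem1.1}) is exactly the mechanism behind that one-line deduction, and your minor/major arc treatment mirrors the template of Theorem~\ref{theorem11.1}.

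Your stated ``principal obstacle'' is not in fact one. The sum $g(\bfbet;X)$ is, after splitting into $2^d$ orthants and exploiting the degree-$k$ homogeneity under sign changes, a bounded sum of specialisations of $f_{d,k}(\bfalp;X)$ with $\alp_\bfi=\pm\bet_\bfi$ for $|\bfi|=k$ and $\alp_\bfi=0$ for $|\bfi|<k$. Theorem~\ref{theorem10.3} applies to $f_{d,k}$ for arbitrary real $\bfalp$, so if $|g(\bfbet;X)|$ is large then some orthant contribution is large, and the theorem yields a common denominator $q\ll Y$ with $|q\bet_\bfi-a_\bfi|\ll YX^{-k}$ for every $\bfi$ with $|\bfi|=k$ (the approximations it also produces for the zero coefficients are vacuous). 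This places $\bfbet$ on your major arcs, so the pointwise minor-arc saving $|g(c_j\bfbet;X)|\ll X^{d-\rho}$ follows directly; no separate Weyl differencing is needed.
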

 
In particular, one finds that whenever $s\ge 2r(k+1)+1$ and appropriate local solubility conditions are met, then the hypersurface defined by (\ref{1.9}) contains an abundance of rational linear spaces of projective dimension $d-1$. A perusal of \cite{Par2005} reveals that, for sufficiently large values of $k$, a similar conclusion is asserted by Theorem \ref{theorem1.4} of the latter source, subject instead to the more stringent condition
$$s\ge rk({\textstyle{\frac{4}{3}}}\log r+\log k+2\log \log k+4d+8).$$
We remark that the lower bound $s\ge 2r(k+1)+1$ in Theorem \ref{theorem1.6} should be susceptible to some small improvement by adapting the methods of \cite{Woo2012} to the present multidimensional setting. Moreover, when the degree $k$ is very small, an approach of the first author \cite{Par2012} motivated by a method of Hua proves superior in some situations.\par

Further applications within the orbit of our methods and bounds include the generalised Waring problem of representing a given polynomial
$$\Phi(t_1,\ldots ,t_d)\in \dbZ[\bft]$$
in the form
$$\Phi(\bft)=\sum_{j=1}^s(x_{1j}t_1+\ldots +x_{dj}t_d)^k,$$
and also results concerning the number of integral solutions of Diophantine inequalities modulo $1$. We refer the reader to \cite{AKC1980} for a discussion of some such problems, and leave to the reader the satisfaction of incorporating our new bounds into the established methods so as to make similarly striking improvements over the previous state of knowledge.\par

As a final application of our new bounds for multidimensional Weyl sums, we announce an application in additive combinatorics based on the second author's recent work \cite{Pre2011} on translation invariant systems of equations devoid of solutions in multidimensional sets. In Theorem \ref{theorem11.3} below we present a conclusion more general than the one we presently record in Theorem \ref{theorem1.7}. For the purpose at hand, we describe the integral $s$-tuple $\bfc$ as an {\it extended non-singular choice of coefficients for $k$ and $d$} when (i) one has $c_1+\ldots +c_s=0$, and (ii) the system of equations
\begin{equation}\label{1.11}
c_1\bfy_1^\bfi+\ldots +c_s\bfy_s^\bfi=0\quad (1\le |\bfi|\le k)
\end{equation}
has non-singular real and $p$-adic solutions, for every prime number $p$.\par

Certain solutions of the system (\ref{1.11}) are atypically simple to obtain, such as the trivial solutions lying on the diagonal $\bfy_1=\bfy_2=\ldots =\bfy_s$. We formalise this notion by distinguishing two types of special solutions $\vbfy$ of (\ref{1.11}). We describe $\vbfy$ as {\it projected} when there is a translate of a proper subspace of $\dbQ^d$ that contains all of $\bfy_1,\ldots ,\bfy_s$. The aforementioned diagonal solutions are therefore projected, since they lie in a translate of the trivial subspace $\{{\mathbf 0}\}$ of $\dbQ^d$. Also, we say that $\vbfy$ is a {\it subset-sum solution} when there exists a partition $\{1,2,\ldots ,s\}=\calJ_1\cup\ldots \cup \calJ_l$, into $l\ge 2$ disjoint non-empty sets $\calJ_v$, such that for $1\le v\le l$ one has
$$\sum_{u\in \calJ_v}c_u\bfy_u^\bfi=0\quad (1\le |\bfi|\le k).$$
In the special case in which
$$\sum_{u\in \calJ_v}c_u=0\quad (1\le v\le l),$$
one sees that there are trivial subset-sum solutions in which $\bfy_u=\bfy_w$ whenever $u\in \calJ_v$ and $w\in \calJ_v$ $(1\le v\le l)$. 

\begin{theorem}\label{theorem1.7} Suppose that $s$, $k$ and $d$ are natural numbers with $k\ge 2$ and $s\ge 2r(k+1)+1$. Suppose further that $\bfc\in (\dbZ\setminus \{{\mathbf 0}\})^s$ is an extended non-singular choice of coefficients for $k$ and $d$, so that $c_1+\ldots +c_s=0$. Let $\calA$ be a subset of $\dbZ^d\cap [1,N]^d$, and suppose that the only solutions of the system (\ref{1.11}) from $\calA$ are either projected or subset-sum solutions. Then one has
$${\rm card}(\calA)\ll N^d(\log \log N)^{-1/(s-1)}.$$
\end{theorem}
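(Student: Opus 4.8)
The plan is to follow the standard density-increment strategy from additive combinatorics, transplanted to the multidimensional setting and powered by our new mean value bound from Theorem~\ref{theorem1.1}. Suppose $\calA\subseteq \dbZ^d\cap[1,N]^d$ has density $\del=\mathrm{card}(\calA)N^{-d}$, and that the only solutions of~(\ref{1.11}) within $\calA$ are projected or subset-sum solutions. Let $T(\calA)$ denote the weighted count of solutions $\vbfy\in\calA^s$ of~(\ref{1.11}); by orthogonality this is $\oint \prod_{u=1}^s g_u(\bfalp)\d\bfalp$, where $g_u(\bfalp)=\sum_{\bfy\in\calA}e(c_u\psi(\bfy;\bfalp))$, integrated over the torus of dimension $r$. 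The first step is a counting dichotomy: either $\calA$ has many genuine (non-special) solutions of~(\ref{1.11}), which is ruled out by hypothesis, so $T(\calA)$ is dominated by the projected and subset-sum contributions; or the relevant exponential sum $f_\calA(\bfalp)=\sum_{\bfy\in\calA}e(\psi(\bfy;\bfalp))$ is large on average, forcing structure. The point is that projected solutions all lie in translates of proper subspaces of $\dbQ^d$, of which there are boundedly many relevant directions, and subset-sum solutions are governed by the partition structure of $\bfc$; both families contribute at most $O(N^{(s-1)d})$ up to a constant depending only on $s,k,d,\bfc$ --- crucially a power of $N$ smaller by one full factor of $N^d$ than the ``expected'' main term $\asymp \del^{s}N^{sd-K}\cdot(\text{local densities})$ would be were $\calA$ pseudorandom.

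The second step is to convert the absence of non-special solutions into an $L^\infty$-type or $L^{2s}$-type gain for $f_\calA$. Writing the balanced function $f_\calA=\del\cdot f_{d,k}(\bfalp;N)+h$ with $h$ having no main-term contribution, one expands $T(\calA)$ and isolates the term $\del^s\oint|f_{d,k}(\bfalp;N)|^{\text{...}}$; the mean value estimate $J_{s,k,d}(N)\ll N^{2sd-K+\eps}$ from Theorem~\ref{theorem1.1} (applicable since $2r(k+1)+1\ge 2(r(k+1))$, so in particular $\ge r(k+1)$; we split the $s$ summands into two groups of $\ge r(k+1)$ and apply Cauchy--Schwarz or Hölder) controls the off-diagonal terms and shows that if $f_\calA$ were everywhere of size $\ll \del\,o(1)\,N^d$ in the appropriate sense, then $T(\calA)$ would be asymptotic to the pseudorandom prediction, contradicting the fact that all solutions are special and hence $T(\calA)=o(\del^s N^{sd-K})$ once $\del\gg(\log\log N)^{-1/(s-1)}$ fails. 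Therefore there is a frequency $\bfalp_0\in[0,1)^r$ at which $|f_\calA(\bfalp_0)|\gg \del^{C}N^d$ for some exponent $C$ depending on $s$.

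The third step is the density increment itself. A large value of $f_\calA$ at $\bfalp_0$ means the polynomial phase $\psi(\bfy;\bfalp_0)$ correlates with $\calA$; by a geometry-of-numbers / Weyl-type inequality (our Theorem~\ref{theorem1.3}, or rather its more general form Theorem~\ref{theorem10.3}) one deduces that $\bfalp_0$ lies near a rational point with controlled denominator $q$, and hence that $\psi(\cdot;\bfalp_0)$ is ``smooth'' on a suitable subprogression --- a homothetic copy of $[1,N']^d$ with $N'$ a small power of $N$ --- on which $\calA$ has density at least $\del(1+c\del^{C'})$. Iterating, each step multiplies the density by $1+c\del^{C'}$ while replacing $N$ by roughly $N^{\tet}$ for a fixed $\tet\in(0,1)$; the density cannot exceed $1$, so the process terminates after $O(\del^{-C'})$ steps, after $\log$-ing which $O(\log\log N)$ times forces $\del^{-C'}\ll\log\log N$, that is $\del\gg(\log\log N)^{-1/C'}$. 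Tracking constants carefully, the exponent $C'$ works out to $s-1$, yielding the stated bound $\mathrm{card}(\calA)\ll N^d(\log\log N)^{-1/(s-1)}$.

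The main obstacle, I expect, is the third step: arranging the density increment to take place on a genuinely $d$-dimensional box rather than on a lower-dimensional slice, and doing so \emph{without} losing a factor in the dimension count of the new box at each iteration. In one dimension this is Roth's original argument, but here the relevant ``major arc'' structure attached to a large value of a multidimensional Weyl sum involves simultaneous rational approximations to all $r$ coefficients $\alp_\bfj$, and one must show the resulting common denominator $q$ is small enough (a small power of $N$) that passing to a subprogression of step $q$ and length $\sim N/q$ in each of the $d$ coordinates still leaves a box whose side is a fixed power of $N$. This is precisely where Theorem~\ref{theorem1.3}'s strong bound $\sig^{-1}\gg k r_{d,k-1}(d+1)$ --- as opposed to the weaker $\log$-lossy bounds available previously --- is essential: it keeps the denominator $q\le N^{k\sig}$ with $\sig$ bounded below by a constant depending only on $k,d$, so the iteration's base shrinks only by a fixed exponent $\tet$ and the double-logarithm in the final bound (rather than a single logarithm or worse) emerges. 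Handling the subset-sum solutions cleanly --- ensuring they really do form a lower-order term and are not inadvertently created by passing to a subprogression --- is a secondary technical point that I would dispatch by a direct combinatorial argument over the boundedly many partitions of $\{1,\dots,s\}$.
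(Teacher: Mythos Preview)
Your overall architecture---a Roth-type density increment powered by the new mean value bound---matches the paper's route (Theorem~\ref{theorem1.7} is deduced from Theorem~\ref{theorem11.3}, whose proof follows \cite{Pre2011}). But two of your three steps contain genuine gaps.

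\textbf{Counting the special solutions.} Your claim that projected and subset-sum solutions together contribute $O(N^{(s-1)d})$ cannot work. The anticipated main term is of order $\del^sN^{sd-K}$, and since $K>d$ for every $k\ge 2$, the inequality $N^{(s-1)d}=o(\del^sN^{sd-K})$ would force $\del\gg N^{(K-d)/s}$, a positive power of $N$; no density increment can start from there. The assertion that projected solutions involve ``boundedly many relevant directions'' is also false: the proper affine subspace carrying a projected solution depends on the solution itself. The paper's count is sharper and different in kind: one shows that projected solutions number $O(N^{s(d-1)+d^2})$ (first fix $\bfx_1$, then a basis of the span of the differences $\bfx_i-\bfx_1$, then the lattice points in that span), and subset-sum solutions number $O(N^{sd-K-1/s})$, the latter bound requiring a H\"older interpolation between $J_{s,k,d}(N)$ and the trivial $L^2$ bound applied partition-by-partition. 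These two estimates are what make the comparison with $\del^sN^{sd-K}$ go through, and they are exactly what forces the auxiliary hypothesis $s>K+d^2$ (verified to follow from $s\ge 2r(k+1)+1$ when $k\ge 2$).

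\textbf{The density-increment step.} You propose to apply Theorem~\ref{theorem1.3} (or Theorem~\ref{theorem10.3}) to the sum $f_\calA(\bfalp)=\sum_{\bfy\in\calA}e(\psi(\bfy;\bfalp))$ in order to pin $\bfalp_0$ near a rational. Those theorems concern the \emph{complete} Weyl sum $f_{d,k}(\bfalp;N)$ over the full box, not a sum over an arbitrary subset $\calA$; a large value of $f_\calA$ at some $\bfalp_0$ gives no direct Diophantine information about $\bfalp_0$. The paper's mechanism is different and simpler: one first shows, using Theorem~\ref{theorem11.1}, that if $\calA$ is \emph{linearly} uniform (small bias on genuine arithmetic progressions, i.e.\ small Fourier coefficients at the $|\bfi|=1$ frequencies) then $\calN(\calA)\gg_{\bfc}\del^sN^{sd-K}$. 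All the higher-degree harmonic analysis is absorbed into the circle-method proof of Theorem~\ref{theorem11.1}; the only obstruction left is linear non-uniformity, and that yields a density increment on a genuine $d$-dimensional homothetic sub-box by the classical Roth argument, with no appeal to Weyl-type estimates on $f_\calA$. Translation--dilation invariance of the system is precisely what allows the iteration to restart on that sub-box. Your worry that one must keep the sub-box fully $d$-dimensional is correct, but the cure is this linear-uniformity reduction, not a Weyl inequality for $f_\calA$.
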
 

This theorem is a higher dimensional cousin of \cite[Theorem 5.1]{Pre2011}, which supplies an analogous conclusion for a case involving binary forms. Theorem \ref{theorem1.7} shows that when $\text{card}(\calA)$ grows more rapidly than $N^d(\log \log N)^{-1/(s-1)}$, then the system (\ref{1.11}) contains solutions from $\calA$ besides such obvious ones as the diagonal solutions with $\bfy_1=\ldots =\bfy_s$. As we see in \S3, the extended system (\ref{1.11}) contains more general special subvarieties defined by means of a projection process, the simplest of which set one or more variables to be zero. In Theorem \ref{theorem11.3} we present a conclusion that refines Theorem \ref{theorem1.7} in which, under the same hypotheses concerning the cardinality of $\calA$, one finds that the system (\ref{1.11}) contains solutions from $\calA$ which avoid all of these special subvarieties. In this way, one may legitimately describe the solutions of (\ref{1.11}) thus shown to exist as honestly non-trivial. The interested reader will find the necessary ideas in earlier work \cite{Pre2011} of the second author.\par

It may be useful to provide an informal sketch hinting at the argument underlying the proof of Theorem \ref{theorem1.1} so that the reader is better prepared to draw parallels with previous approaches. A more comprehensively illuminated sketch of this argument in the case $d=1$ may be found in \cite[\S2]{Woo2012a}. In common with the previous approaches of \cite{AKC1980} and \cite{Par2005}, the basic tool employed in our proof of Theorem \ref{theorem1.1} is a (so-called) $p$-adic iteration mirroring the one devised by Linnik \cite{Lin1943} in the classical setting with $d=1$. Thus, we begin by artificially introducing a congruence condition, modulo a suitable prime $p$, amongst the bulk of the variables underlying the mean value (\ref{1.5}). An application of H\"older's inequality leads to a new mean value in which the latter variables lie in common congruence classes across blocks. At this point, the multiple translation invariance of the system (\ref{1.1}) may be utilised so as to pass to the zero congruence class, and thereby a congruence condition is forced on a subset of the variables of greater strength than that previously introduced. The approach of \cite{AKC1980} is to choose the prime $p$ in such a way that this strong congruence condition forces a diagonal condition amongst blocks of variables, and thereby one is able to bound a mean value involving $2(s+r)$ blocks of $d$ variables in terms of a corresponding mean value involving $2s$ blocks of $d$ variables. In \cite{Par2005} the strong congruence condition is interpreted as a differencing process analogous to, though more efficient than, that of Weyl. By appropriate use of the Cauchy-Schwarz inequalities, one is able to repeat this {\it efficient differencing} process, deferring the moment at which to force the diagonal condition. In the present paper, following \cite{Woo2012a}, we instead interpret the strong congruence condition as an efficient method of imposing a second artificial congruence condition amongst variables. By appropriate application of H\"older's inequality, one recovers a new mean value resembling that obtained in the first step, but now yielding a fresh congruence condition amongst variables significantly stronger than before. If one begins with a mean value significantly larger in size than anticipated, then repeated application of this {\it efficient congruencing} procedure yields a related mean value larger in size than that anticipated by an amount so large that even a trivial estimate demonstrates the presumed initial deviation from the expected size to be untenable. In this way, one shows that the mean value under consideration has size very close to that expected.\par

We finish by emphasising that the methods of this paper are robust to changes of the ambient ring. Thus the rational integers $\dbZ$ central to this paper may be replaced with the ring of integers from a number field, or the polynomial ring $\dbF_q[t]$, without diminishing the strength of the ensuing estimates. Such ideas have been explored very recently in the case $d=1$ in work emerging from the body of research exploiting the efficient congruencing method (see \cite{LW2012} and \cite{Woo2012c}).\par

In \S\ref{sec:2} we introduce the general translation-dilation invariant systems which constitute the central objects of attention in this paper. Then, in \S\ref{sec:3}, we discuss the lower bounds recorded in Theorem \ref{theorem1.2}. The notation and infrastructure required for our most general conclusions is discussed in \S\ref{sec:4}, and then in \S\ref{sec:5} we derive the basic mean value estimates which initiate our efficient congruencing argument. Next, in \S\ref{sec:6}, we provide estimates for the number of solutions of a system of basic congruences. Here, the singular locus of the system is of particular concern. The conditioning process, required to guarantee appropriate non-singularity conditions, is engineered in \S\ref{sec:7}, and in \S\ref{sec:8} we discuss the efficient congruencing process itself. In \S\ref{sec:9} we combine the output of \S\S\ref{sec:7} and \ref{sec:8} so as to deliver Theorem \ref{theorem1.1} via an iterative process. Consequences for Weyl-type estimates are discussed in \S\ref{sec:10}, yielding the conclusion of Theorem \ref{theorem1.3}. Finally, in \S\ref{sec:11} we sketch the arguments required to establish the Diophantine consequences recorded in Theorems \ref{theorem1.4}-\ref{theorem1.7}. 

\section{Translation-dilation invariant systems}\label{sec:2} In order to describe our most general conclusions, we must introduce some notation having flexibility sufficient for our needs. An overly prescriptive approach has the potential to shroud the details of our arguments in a thick blanket of impenetrable symbols. With this undesirable potential outcome in mind, we opt for a somewhat abstract approach, and only later do we spend time detailing the most interesting situations.\par

Let $r$, $s$ and $d$ be natural numbers, and consider a system of homogeneous polynomials $\bfF=(F_1,\ldots ,F_r)$, where $F_j(\bfz)\in \dbZ[z_1,\ldots ,z_d]$ $(1\le j\le r)$. We investigate the system of Diophantine equations
\begin{equation}\label{2.1}
\sum_{i=1}^s(\bfF(\bfx_i)-\bfF(\bfy_i))={\bf 0},
\end{equation}
in which $\bfx_i=(x_{i1},\ldots ,x_{id})$ and $\bfy_i=(y_{i1},\ldots ,y_{id})$ for $1\le i\le s$. Note that, in view of our conventions concerning vector notation, the system (\ref{2.1}) consists of $r$ simultaneous Diophantine equations. Write $\vbfx=(\bfx_1,\ldots ,\bfx_s)$ and $\vbfy=(\bfy_1,\ldots ,\bfy_s)$, and denote by $J_s(X;\bfF)$ the number of integral solutions of the system (\ref{2.1}) with $1\le \vbfx,\vbfy\le X$.\par

In this paper we are concerned with translation-dilation invariant systems of the shape (\ref{2.1}). With the discussion to come in mind, we take a pragmatic approach to defining such systems. We say that the system $\bfF=(F_1,\ldots ,F_r)$ is {\it translation-dilation invariant} if:\vskip.0cm
\noindent (i) the polynomials $F_1,\ldots ,F_r$ are each homogeneous of positive degree, and\vskip.0cm
\noindent (ii) there exist polynomials
$$c_{jl}\in \dbZ[\xi_1,\ldots ,\xi_d]\quad (1\le j\le r\ \text{and}\ 0\le l\le j),$$
with $c_{jj}=1$ for $1\le j\le r$, having the property that whenever $\bfxi\in \dbZ^d$, then
\begin{equation}\label{2.2}
F_j(\bfx+\bfxi)=c_{j0}(\bfxi)+\sum_{l=1}^jc_{jl}(\bfxi)F_l(\bfx)\quad (1\le j\le r).
\end{equation}
Extend the definition of the coefficients $c_{jl}$ by putting $c_{jl}(\bfxi)=0$ when $l>j$. Then on writing $\bfc_0(\bfxi)=(c_{j0}(\bfxi))_{1\le j\le r}$ and $C(\bfxi)$ for the matrix $(c_{jl}(\bfxi))_{1\le j,l\le r}$, we see that the relations (\ref{2.2}) are summarised by the formula
\begin{equation}\label{2.3}
\bfF(\bfx+\bfxi)=C(\bfxi)\bfF(\bfx)+\bfc_0(\bfxi).
\end{equation}
Notice that the matrix $C(\bfxi)$ is lower unitriangular, which is to say that it is a lower triangular matrix whose main diagonal entries are all $1$. Suppose that $s$ is a natural number, that $\lam$ is a non-zero rational number, and $\bfxi\in \dbZ^d$. Then we see from (\ref{2.2}) that the Diophantine system (\ref{2.1}) possesses an integral solution $\bfx,\bfy$ if and only if one has
\begin{equation}\label{2.4}
\sum_{i=1}^s(\bfF(\lam \bfx_i+\bfxi)-\bfF(\lam \bfy_i+\bfxi))={\bf 0}.
\end{equation}
This observation justifies the description of such systems of equations as trans-lation-dilation invariant. We should note that while this formal definition facilitates many of our arguments, it is clear that one may rearrange the ordering of the forms, and also consider independent linear combinations of the original forms, without altering the number of integral solutions of the system (\ref{2.1}) counted by $J_s(X;\bfF)$. Thus we may be expedient in most circumstances, and instead describe a system as translation-dilation invariant when it is equivalent in such a manner to some new system which is translation-dilation invariant in the strict sense.\par

We emphasise that translation-dilation invariant systems are easily generated. Given a collection of homogeneous polynomials
$$G_1,\ldots ,G_h\in \dbZ[z_1,\ldots ,z_d],$$
consider the set $\calF$ consisting of all the partial derivatives
\begin{equation}\label{2.5}
\frac{\partial^{l_1+\ldots +l_d}G_j(\bfz)}{\partial z_1^{l_1}\ldots \partial z_d^{l_d}}\quad (1\le j\le h),
\end{equation}
with $l_i\ge 0$ $(1\le i\le d)$. Plainly, when $l_1+\ldots +l_d$ exceeds the largest total degree of any of the polynomials $G_j$, this partial derivative vanishes. The set $\calF$ is consequently finite. Let $\calF_0$ denote the subset of $\calF$ consisting of all polynomials in $\calF$ having positive degree. We write $\calF_0=\{F_1,\ldots ,F_r\}$, labelling the elements in such a way that $\deg F_1\le \deg F_2\le \ldots \le \deg F_r$. An application of the multidimensional version of Taylor's theorem now shows that the relations (\ref{2.2}) hold for some choice of coefficients $c_{jl}(\bfxi)\in \dbZ[\xi_1,\ldots ,\xi_d]$ satisfying $c_{jj}(\bfxi)=1$ $(1\le j\le r)$. Since we may replace the set of forms $\calF_0$ by any subset whose span contains the polynomials $F_1,\ldots ,F_r$, there is no loss of generality in supposing the set $\{F_1,\ldots ,F_r\}$ to be linearly independent. Such a system of forms we call {\it reduced}.\par

Finally, by replacing the forms $F_1,\ldots ,F_r$ by appropriate linear combinations of the original forms, we find that there is no loss of generality also in supposing that the matrix $C(\bfxi)$ with entries $c_{jl}(\bfxi)$ is lower unitriangular. This new system $\bfF=(F_1,\ldots ,F_r)$, generated from the partial derivatives (\ref{2.5}), is a reduced translation-dilation invariant system.\par

We are now almost equipped to state our main theorem, but first pause to introduce some parameters associated with a translation-dilation invariant system of polynomials $\bfF$. When $\bfF=(F_1,\ldots ,F_r)$ consists of polynomials $F_j(\bfz)\in \dbZ[z_1,\ldots ,z_d]$, we refer to the number of variables $d=d(\bfF)$ in $\bfF$ as the {\it dimension} of the system. In addition, we describe the number of forms $r=r(\bfF)$ comprising $\bfF$ as the {\it rank} of the system. We write $k_j=k_j(\bfF)$ for the total degree of the polynomial $F_j$, and then define the {\it degree} $k=k(\bfF)$ of the system $\bfF$ by
$$k(\bfF)=\max_{1\le j\le r}k_j(\bfF),$$
and the {\it weight} $K=K(\bfF)$ by
$$K(\bfF)=\sum_{j=1}^rk_j(\bfF).$$

\par Our goal in \S\S\ref{sec:4}-\ref{sec:9} is the proof of the following mean value estimate, which represents the main theorem of this paper.

\begin{theorem}\label{theorem2.1} Let $\bfF$ be a reduced translation-dilation invariant system of polynomials having dimension $d$, rank $r$, degree $k$ and weight $K$. Suppose that $s$ is a natural number with $s\ge r(k+1)$. Then for each $\eps>0$, one has $J_s(X;\bfF)\ll X^{2sd-K+\eps}$.
\end{theorem}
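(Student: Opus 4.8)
The plan is to establish Theorem \ref{theorem2.1} by an \emph{efficient congruencing} iteration, following the template of \cite{Woo2012a} but adapted to the abstract translation-dilation invariant system $\bfF$. The backbone is a normalised mean value: for $s\ge r(k+1)$ and a suitable parameter range, one defines quantities $J_{s}(X)$ together with auxiliary mean values $K_{c}(X)$ counting solutions in which one block of $d$ variables is pinned into a fixed residue class modulo $p^{c}$ while the remaining blocks range over distinct classes modulo $p$. One then shows that a putative excess $\lam>0$ in the exponent, encoded by supposing $J_{s}(X)\gg X^{2sd-K+\lam-\eps}$ for arbitrarily large $X$, propagates through the iteration and amplifies without bound, contradicting the trivial bound $J_{s}(X)\ll X^{2sd}$. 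Concretely, I would fix a large prime $p$ in a dyadic range $(X^{1/(2k)},2X^{1/(2k)}]$ say, and run a congruencing step that, via H\"older's inequality, passes from a mean value with a congruence of strength $p^{a}$ to one of strength $p^{kb}$ with $b>a$, each step costing only an acceptable power of $p$ because the number of forms is $r$ and the translation-dilation invariance (\ref{2.3}), with $C(\bfxi)$ lower unitriangular, lets one translate a congruence class for $\bfF$ to the zero class for free.

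The key steps, in order, are as follows. \emph{Step 1 (basic estimates, \S\ref{sec:5}).} Establish the starting inequality relating $J_{s}(X)$ to a mean value in which a block of variables is congruent modulo $p$, using orthogonality and the trivial count of residue classes; this is where the homogeneity of the $F_j$ and the exponent $2sd$ versus weight $K$ enter in balancing powers of $p$. \emph{Step 2 (congruence counting, \S\ref{sec:6}).} Bound the number of solutions of the associated system of congruences $\bfF(\bfz)\equiv\bfc\mmod{p^{h}}$, with the essential input being control of the singular locus: one needs that, away from a thin exceptional set, the Jacobian of $\bfF$ is $p$-adically nondegenerate, so that Hensel-type lifting gives the expected count $p^{hd-(\text{something})}$ rather than a degenerate overcount. \emph{Step 3 (conditioning, \S\ref{sec:7}).} Remove the contribution of solutions lying on or near the singular locus by a conditioning argument, so that in all subsequent steps one may assume the relevant variables are "well-separated" modulo $p$ and the Jacobian condition holds; this typically costs only a factor $X^{\eps}$ or a bounded power of $p$. \emph{Step 4 (the congruencing iteration, \S\ref{sec:8}--\S\ref{sec:9}).} Iterate the H\"older step $n$ times to obtain, for each $n$, an inequality of the shape $J_{s}(X)\ll (\text{acceptable power of }X)\cdot J_{s}(X)^{\theta_{n}}$ with $\theta_{n}<1$ uniformly, or equivalently show that the normalised excess $\lam$ satisfies $\lam\le \phi_{n}(\lam)$ for functions $\phi_{n}$ forcing $\lam=0$ in the limit $n\to\infty$; close the induction on $X$ and send $\eps\to0$.

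The main obstacle I expect is \emph{Step 2 together with the interface to Step 3}: controlling the singular locus of a general reduced translation-dilation invariant system $\bfF$. In the classical case $d=1$ the forms are essentially the power sums $z,z^{2},\ldots,z^{k}$ and the relevant Jacobian is Vandermonde-like, so nonsingularity is transparent; for general $\bfF$ of dimension $d$ one must argue that the generating construction via partial derivatives (\ref{2.5}) forces enough of the $F_j$ to have independent differentials that a block of $d$ suitably generic variables modulo $p$ yields a $p$-adically invertible $d\times d$ minor, and one must quantify the exceptional set precisely enough that the conditioning loss does not swamp the gain from congruencing. Getting the bookkeeping of powers of $p$ in the congruence count to match the weight $K$ exactly — so that the iteration closes at the threshold $s\ge r(k+1)$ rather than something weaker — is the delicate point, and it is here that the identity $K_{d,k}=\frac{d}{d+1}(r+1)k$ and its abstract analogue $K=\sum_j k_j$ must be exploited. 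Once the congruencing step is shown to gain the right power of $p$ per iteration, the rest is the now-standard real-variable optimisation over the iteration parameters.
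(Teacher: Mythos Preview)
Your high-level strategy is the right one and matches the paper: efficient congruencing, auxiliary mean values with congruence conditions at two levels, a conditioning step to discard singular contributions, a congruence-counting step via Hensel, and an iteration that forces the excess exponent to zero. However, there is a genuine conceptual gap in how you plan to control the singular locus (your Step~2/Step~3 interface), and it would cause your argument to stall.

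You write that one needs ``a block of $d$ suitably generic variables modulo $p$'' to yield ``a $p$-adically invertible $d\times d$ minor''. This is not the correct object. The Jacobian that governs the Hensel lifting is an $r\times r$ determinant, not $d\times d$, and it is built from $r$ \emph{different} $d$-tuples $\bfx_1,\ldots,\bfx_r$ (one per form), not from a single $d$-tuple. The key device, which your sketch is missing, is a selection function $\sig:\{1,\ldots,r\}\to\{1,\ldots,d\}$ picking out one coordinate direction for each form, so that the relevant determinant is
\[
\Del_r(\bfx_1,\ldots,\bfx_r;\sig)=\det\bigl(\partial_{\sig(i)}F_j(\bfx_i)\bigr)_{1\le i,j\le r}.
\]
One must \emph{prove} that some choice of $\sig$ makes this a nonzero polynomial; this is where the linear independence of $F_1,\ldots,F_r$ enters, via a colex-ordering argument on leading monomials (the paper's Lemma~4.1). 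With this $r\times r$ Jacobian in hand, the congruence count (Lemma~6.3) yields precisely $\text{card}(\calB_{a,b})\ll p^{(kb-a)rd-K(b-a)}$, and it is this exact exponent $K(b-a)$ --- coming from $K=\sum k_j$ via the homogeneity of each $F_j$ --- that makes the bookkeeping close at $s\ge r(k+1)$. Your $d\times d$ framing would not produce the correct power of $p$ and the iteration would not close. A secondary point: the prime $p$ is not of size $X^{1/(2k)}$ but rather $X^\tet$ with $\tet$ a very small parameter of the shape $N^{-1/2}(r/s)^{N+2}$, tuned so that the iteration can be run $N$ times before the congruence level exceeds $\tet^{-1}$; with $p\asymp X^{1/(2k)}$ you would run out of room after $O(1)$ steps.
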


So far as we are aware, no mean value estimate available in the literature has generality to compete with Theorem \ref{theorem2.1}. Moreover, when $d\ge 2$ the estimates available hitherto are considerably weaker, even in the special situations in which they are applicable. In order to illustrate the ease with which estimates may be extracted from Theorem \ref{theorem2.1}, we finish this section with a brief discussion of some simple cases, and in particular we show how to establish Theorem \ref{theorem1.1} as a consequence of Theorem \ref{theorem2.1}.\vskip.1cm

\noindent{\it (a) The classical system of Vinogradov \cite{Vin1935}, \cite{Vin1947}.} Consider the seed polynomial $z^k$ $(k\ge 1)$. By taking successive derivatives, we find that an associated reduced translation-dilation invariant system of polynomials is $\bfF=(z^k,z^{k-1},\ldots ,z)$. This system has dimension $1$, rank $k$, degree $k$ and weight
$$K=\sum_{j=1}^kj=\tfrac{1}{2}k(k+1).$$
Then it follows from Theorem \ref{theorem2.1} that when $s\ge k(k+1)$, one has
$$J_s(X;\bfF)\ll X^{2s-\frac{1}{2}k(k+1)+\eps}.$$
This estimate recovers the main conclusion of the third author's recent work introducing the efficient congruencing method to Vinogradov's mean value theorem (see \cite[Theorem 1.1]{Woo2012a}). We note that subsequent work of the third author leads to the improved constraint $s\ge k^2-1$ on the number of variables in this conclusion (see \cite[Theorem 1.1]{Woo2012b}).\vskip.1cm

\noindent{\it (b) The system of Parsell \cite{Par2005}.} Consider the situation with $d\ge 2$ and seed polynomials $z_1^{l_1}z_2^{l_2}\ldots z_d^{l_d}$ $(|\bfl|=k)$. By taking successive partial derivatives, we find that an associated reduced translation-dilation invariant system of polynomials is
$$\bfF=(z_1^{i_1}z_2^{i_2}\ldots z_d^{i_d}:1\le |\bfi|\le k).$$
This system has dimension $d$, rank
\begin{equation}\label{2.6}
r=\sum_{1\le |\bfi|\le k}1=\binom{k+d}{d}-1,
\end{equation}
degree $k$ and weight
\begin{equation}\label{2.7}
K=\sum_{l=1}^kl\sum_{|\bfi|=l}1=\sum_{l=1}^kl\binom{l+d-1}{l}=\frac{d}{d+1}(r+1)k.
\end{equation}
In this instance, it follows from Theorem \ref{theorem2.1} that when $s\ge r(k+1)$, one has $J_s(X;\bfF)\ll X^{2sd-K+\eps}$. In view of (\ref{1.2}) and (\ref{1.3}), this completes the proof of Theorem \ref{theorem1.1}.\vskip.1cm

\noindent{\it (c) The system of Arkhipov, Karatsuba and Chubarikov \cite{AKC1980}.} Consider the situation with $d\ge 2$ and $l\ge 1$ and the seed polynomial $z_1^lz_2^l\ldots z_d^l$. By taking successive partial derivatives, we find that an associated reduced translation-dilation invariant system of polynomials is
\begin{equation}\label{2.8}
\bfF=(z_1^{i_1}z_2^{i_2}\ldots z_d^{i_d}:0\le \bfi\le l,\ \bfi\ne {\mathbf 0}).
\end{equation}
This system has dimension $d$, rank
\begin{equation}\label{2.9}
r=\sum_{\substack{0\le \bfi\le l\\ \bfi\ne {\mathbf 0}}}1=\sum_{0\le i_1\le l}\ldots \sum_{0\le i_d\le l}1-1=(l+1)^d-1,
\end{equation}
degree $dl$, and weight
\begin{align}
K&=\sum_{0\le \bfi\le l}|\bfi|=\sum_{0\le i_1\le l}\ldots \sum_{0\le i_d\le l}(i_1+\ldots +i_d)\notag \\
&=d\sum_{0\le i_1\le l}\ldots \sum_{0\le i_d\le l}i_d\notag \\
&=d(l+1)^{d-1}\cdot \tfrac{1}{2}l(l+1)=\tfrac{1}{2}dl(l+1)^d.\label{2.10}
\end{align}
In this instance, Theorem \ref{theorem2.1} delivers a conclusion important enough to summarise as a corollary.

\begin{corollary}\label{corollary2.2}
Let $d$ and $l$ be natural numbers, and let $\bfF$ be the reduced translation-dilation invariant system given by (\ref{2.8}). Suppose that $s$ is a natural number with $s\ge (dl+1)((l+1)^d-1)$. Then for each $\eps>0$, one has
$$J_s(X;\bfF)\ll X^{2sd-K+\eps},$$
where $K=\frac{1}{2}dl(l+1)^d$.
\end{corollary}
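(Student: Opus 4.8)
The plan is to deduce Corollary \ref{corollary2.2} directly from Theorem \ref{theorem2.1} applied to the specific reduced translation-dilation invariant system $\bfF$ defined in (\ref{2.8}), arising from the seed polynomial $z_1^lz_2^l\cdots z_d^l$. The work is essentially bookkeeping: one must identify the four invariants of this system (dimension, rank, degree, weight) and then verify that the hypothesis $s\ge r(k+1)$ of Theorem \ref{theorem2.1} reduces to the stated condition $s\ge (dl+1)((l+1)^d-1)$.

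First I would recall that, by the discussion preceding Theorem \ref{theorem2.1}, taking the collection of all partial derivatives of $z_1^lz_2^l\cdots z_d^l$ of positive degree and discarding constants yields precisely the monomials $z_1^{i_1}\cdots z_d^{i_d}$ with $0\le \bfi\le l$ and $\bfi\ne {\mathbf 0}$, as in (\ref{2.8}); these are linearly independent, so the system is reduced, and it is translation-dilation invariant by construction (apply the multidimensional Taylor expansion, then take linear combinations to render $C(\bfxi)$ lower unitriangular). Thus Theorem \ref{theorem2.1} applies. The dimension is $d$ by inspection. The rank is the count of admissible $\bfi$, namely $r=(l+1)^d-1$ as computed in (\ref{2.9}). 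The degree is $k=\max_{\bfi}|\bfi|=dl$, attained by $\bfi=(l,\ldots ,l)$. The weight is $K=\sum_{0\le \bfi\le l}|\bfi|=\tfrac12 dl(l+1)^d$, as computed in (\ref{2.10}) via the symmetry in the coordinates.

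With these values in hand, the hypothesis $s\ge r(k+1)$ of Theorem \ref{theorem2.1} becomes $s\ge ((l+1)^d-1)(dl+1)$, which is exactly the hypothesis of the corollary. Hence Theorem \ref{theorem2.1} yields $J_s(X;\bfF)\ll X^{2sd-K+\eps}$ with $K=\tfrac12 dl(l+1)^d$, which is the assertion to be proved.

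There is no genuine obstacle here: the entire content lies in Theorem \ref{theorem2.1}, and the corollary is simply the instantiation of that theorem for one particular family of systems, together with the elementary evaluations of $r$ and $K$ already displayed in (\ref{2.9}) and (\ref{2.10}). The only point requiring a word of care is the verification that the system (\ref{2.8}) is genuinely reduced and translation-dilation invariant in the sense demanded by Theorem \ref{theorem2.1}; but this follows immediately from the general construction given after equation (\ref{2.5}), since the seed monomial has a single term and its nonconstant partial derivatives are manifestly linearly independent.
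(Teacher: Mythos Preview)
Your proposal is correct and follows exactly the paper's approach: the corollary is simply Theorem \ref{theorem2.1} specialised to the system (\ref{2.8}), using the values of $r$, $k$ and $K$ computed in (\ref{2.9}) and (\ref{2.10}). The paper does not even provide a separate proof, treating the corollary as an immediate instantiation of Theorem \ref{theorem2.1} once the invariants have been recorded.
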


A conclusion similar to that provided by Corollary \ref{corollary2.2}, but with the condition $s\ge (dl+1)((l+1)^d-1)$ replaced by
$$s\ge Cdl(l+1)^d\log (dl(l+1)^d),$$
for a suitable positive constant $C$, may be extracted from \cite[Theorem 1 of Chapter III.1]{AKC1980}. The superiority of our new bound is self-evident.\vskip.1cm

\noindent{\it (d) Simple binary systems.} A system of relevance to recent work in quantitative arithmetic geometry (see \cite[\S4.15]{Tsch2009},\cite{KVV2011}) deserves to be singled out for special attention. Consider the situation with $k_1\ge k_2\ge 1$ and the seed polynomial $z_1^{k_1}z_2^{k_2}$. By taking successive partial derivatives, we find that an associated reduced translation-dilation invariant system of polynomials is
\begin{equation}\label{2.11}
\bfF=(z_1^{i_1}z_2^{i_2}:\text{$0\le i_1\le k_1$, $0\le i_2\le k_2$ and $(i_1,i_2)\ne (0,0)$}).
\end{equation}
This system has dimension $2$, rank
$$r=\sum_{0\le i_1\le k_1}\sum_{0\le i_2\le k_2}1-1=(k_1+1)(k_2+1)-1,$$
degree $k_1+k_2$, and weight
\begin{align*}
K&=\sum_{0\le i_1\le k_1}\sum_{0\le i_2\le k_2}(i_1+i_2)\\
&=(k_1+1)\cdot \tfrac{1}{2}k_2(k_2+1)+(k_2+1)\cdot \tfrac{1}{2}k_1(k_1+1)\\
&=\tfrac{1}{2}(k_1+k_2)(k_1+1)(k_2+1).
\end{align*}
By applying Theorem \ref{theorem2.1}, we obtain the following corollary.

\begin{corollary}\label{corollary2.3}
Let $k_1,k_2\in \dbN$, and let $\bfF$ be the reduced translation-dilation invariant system given by (\ref{2.11}). Suppose that $s$ is a natural number with $s\ge (k_1k_2+k_1+k_2)(k_1+k_2+1)$. Then for each $\eps>0$, one has
$$J_s(X;\bfF)\ll X^{4s-K+\eps},$$
where $K=\frac{1}{2}(k_1+k_2)(k_1+1)(k_2+1)$.
\end{corollary}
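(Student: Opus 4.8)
The plan is to deduce Corollary \ref{corollary2.3} as a direct specialisation of Theorem \ref{theorem2.1}, exactly as the paper does for Corollaries \ref{corollary2.2} and \ref{corollary1.1}. So the argument is purely bookkeeping: verify that the system $\bfF$ in (\ref{2.11}) is a reduced translation-dilation invariant system, compute its four invariants $d$, $r$, $k$, $K$, and check that the stated hypothesis on $s$ is precisely the hypothesis $s\ge r(k+1)$ of Theorem \ref{theorem2.1}.

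First I would observe that $\bfF$ arises by the general construction of \S\ref{sec:2}: take the single seed polynomial $G_1(z_1,z_2)=z_1^{k_1}z_2^{k_2}$, form the (finite) set $\calF$ of all its partial derivatives $\partial^{l_1+l_2}G_1/\partial z_1^{l_1}\partial z_2^{l_2}$, and retain those of positive degree. A partial derivative of $z_1^{k_1}z_2^{k_2}$ is a nonzero scalar multiple of $z_1^{i_1}z_2^{i_2}$ precisely when $0\le i_1\le k_1$ and $0\le i_2\le k_2$, and it has positive degree exactly when $(i_1,i_2)\ne(0,0)$. Hence $\calF_0$ spans the same space as the monomials listed in (\ref{2.11}), and these monomials are visibly linearly independent, so $\bfF$ is reduced. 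The discussion following (\ref{2.5}) then guarantees, via the two-dimensional Taylor expansion, that after replacing the forms by suitable integral linear combinations the relations (\ref{2.2}) hold with lower-unitriangular coefficient matrix $C(\bfxi)$; thus $\bfF$ is a reduced translation-dilation invariant system in the sense required, of dimension $d(\bfF)=2$.

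Next I would record the combinatorial invariants. The rank is the number of admissible pairs, $r=(k_1+1)(k_2+1)-1$. The degree is $k=\max_{1\le|\bfi|\le k_1+k_2}|\bfi| = k_1+k_2$, attained at $\bfi=(k_1,k_2)$. The weight is
$$K=\sum_{0\le i_1\le k_1}\ \sum_{0\le i_2\le k_2}(i_1+i_2) = (k_2+1)\sum_{i_1=0}^{k_1}i_1 + (k_1+1)\sum_{i_2=0}^{k_2}i_2 = \tfrac12(k_1+k_2)(k_1+1)(k_2+1),$$
since the $(0,0)$ term contributes nothing to the sum and so may be included or excluded freely. Finally I would check the hypothesis: $r(k+1)=\bigl((k_1+1)(k_2+1)-1\bigr)(k_1+k_2+1)=(k_1k_2+k_1+k_2)(k_1+k_2+1)$, which is exactly the lower bound on $s$ imposed in the statement of the corollary. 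Invoking Theorem \ref{theorem2.1} with these values, and noting that $J_s(X;\bfF)\ll X^{2sd-K+\eps}=X^{4s-K+\eps}$ because $d=2$, completes the proof.

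There is no serious obstacle here; the only thing requiring a moment's care is confirming that the coefficient matrix $C(\bfxi)$ can indeed be taken lower unitriangular for this particular two-variable seed — but this is exactly the content of the final paragraph of \S\ref{sec:2}, which applies verbatim once one has the set of partial derivatives in hand, so it may simply be cited. The evaluation of $K$ and the algebraic identity matching $r(k+1)$ to the stated bound on $s$ are routine.
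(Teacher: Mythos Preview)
Your proposal is correct and follows essentially the same approach as the paper: in the discussion of example (d) in \S\ref{sec:2}, the paper generates $\bfF$ from the seed $z_1^{k_1}z_2^{k_2}$, computes $d=2$, $r=(k_1+1)(k_2+1)-1$, $k=k_1+k_2$, and $K=\tfrac{1}{2}(k_1+k_2)(k_1+1)(k_2+1)$, and then simply states that Corollary \ref{corollary2.3} follows by applying Theorem \ref{theorem2.1}. Your write-up is if anything more explicit than the paper's, spelling out the verification that $r(k+1)$ matches the stated bound on $s$ and that the system is reduced.
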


\vskip.1cm

\noindent{\it (e) The binary systems of Prendiville \cite{Pre2011}.} Consider the situation with $k\ge 1$ and the seed polynomial given by the binary form $\Phi(z_1,z_2)\in \dbZ[z_1,z_2]$ of degree $k$. In this instance, we extract the partial derivatives
$$\frac{\partial^{i_1+i_2}\Phi(z_1,z_2)}{\partial z_1^{i_1}\partial z_2^{i_2}}\quad (\text{$i_1\ge 0$, $i_2\ge 0$}),$$
and restrict attention to any subset which spans the set of all partial derivatives of positive degree, yet is linearly independent over $\dbQ$. We take the polynomials in this spanning set to be our reduced translation-dilation invariant system $\bfF$. The number of partial derivatives with $i_1+i_2=l$ is plainly $l+1$, while the number of monomials $z_1^{j_1}z_2^{j_2}$ with $j_1+j_2=m$ is $m+1$. Thus we see that this system has dimension $d=2$, rank
$$r\le \sum_{l=0}^{[k/2]}(l+1)+\sum_{m=1}^{k-[k/2]-1}(m+1)\le \tfrac{1}{4}k(k+4),$$
degree $k$ and weight
$$K\le \sum_{l=0}^{[k/2]}(k-l)(l+1)+\sum_{m=1}^{k-[k/2]-1}m(m+1)\le \tfrac{1}{8}k(k+2)^2.$$
In typical situations, indeed, one has $K\sim \tfrac{1}{8}k^3$. By applying Theorem \ref{theorem2.1}, we deduce that when $s\ge \frac{1}{4}k(k+1)(k+4)$, one has $J_s(X;\bfF)\ll X^{4s-K+\eps}$. This conclusion may be compared with the mean value estimate underlying \cite[Theorem 1.3]{Pre2011}, which delivers a similar conclusion for $s\ge (\frac{3}{8}+o(1))k^3\log k$. The constraint on $s$ imposed in our present work is therefore stronger by a factor $(\frac{3}{2}+o(1))\log k$.

\section{Lower bounds}\label{sec:3} In order to put into perspective the upper bounds recorded in Theorem \ref{theorem2.1}, and such corollaries as Theorem \ref{theorem1.1}, we consider in this section the topic of lower bounds for the mean value $J_s(X;\bfF)$. Here one must consider integral solutions to the system of equations (\ref{2.1}) of two types. On the one hand, there are typical solutions whose contribution to $J_s(X;\bfF)$ we expect to be given by a product of local densities. On the other hand, there are integral solutions lying on special subvarieties, the most obvious of which are diagonal linear spaces such as that given by $\bfx_i=\bfy_i$ $(1\le i\le s)$. It transpires that when $d>1$, there are special subvarieties not of the latter type which potentially make the dominant contribution to $J_s(X;\bfF)$. In order to describe the latter subvarieties, we must introduce some further notation.\par

Let $r$ and $d$ be natural numbers, and consider a system of translation-dilation invariant polynomials $\bfF=(F_1,\ldots ,F_r)$, where $F_j(\bfz)\in \dbZ[z_1,\ldots ,z_d]$ $(1\le j\le r)$. Let $\del$ be a natural number with $1\le \del \le d-1$, and consider indices $i_l$ $(1\le l\le \del)$ satisfying
\begin{equation}\label{3.1}
1\le i_1<i_2<\ldots <i_\del \le d.
\end{equation}
We say that the system of polynomials $\bfG=(G_1,\ldots ,G_r)$, where $G_j(\bfw)\in \dbZ[w_1,\ldots ,w_\del]$ $(1\le j\le r)$, is the {\it orthogonal projection of $\bfF$ determined by $\bfi$} when
$$G_j(\bfw)=F_j(\bfzet)\quad (1\le j\le r),$$
in which $\zet_m=w_l$ when $m=i_l$ for some index $l$ with $1\le l\le \del$, and $\zet_m=0$ when $m\not\in \{i_1,\ldots ,i_\del\}$. The system $\bfG$ remains translation-dilation invariant, and may be replaced by an equivalent reduced system $\bfG^\prime $. We describe $\bfG^\prime $ as a {\it reduced orthogonal projection of $\bfF$ determined by $\bfi$}. Finally, write $\pi_\del(\bfF)$ for the set of all reduced orthogonal projections of $\bfF$ determined by sets of indices $\{i_1,\ldots ,i_\del\}$ satisfying (\ref{3.1}). We remark that these orthogonal projections are in fact a special case of the more general projections introduced in the preamble to Theorem \ref{theorem1.7}. In this section we consider only the former projections, since they are simpler to analyse and in any case deliver all of the salient features of importance for our discussion of lower bounds.\par

In order to facilitate our subsequent discussion, we define the polynomial $\psi(x;\bfalp)=\psi(x;\bfalp;\bfF)$ by putting
\begin{equation}\label{3.2}
\psi(\bfx;\bfalp;\bfF)=\sum_{i=1}^r\alp_iF_i(\bfx),
\end{equation}
and then define the associated exponential sum $f(\bfalp)=f(\bfalp;X;\bfF)$ by
\begin{equation}\label{3.3}
f(\bfalp;X;\bfF)=\sum_{1\le \bfx\le X}e(\psi(\bfx;\bfalp;\bfF)).
\end{equation}
By orthogonality, we then have
\begin{equation}\label{3.4}
J_s(X;\bfF)=\oint |f(\bfalp;X;\bfF)|^{2s}\d\bfalp .
\end{equation}

\par We are now equipped to describe our most general lower bound for the mean value $J_s(X;\bfF)$.

\begin{theorem}\label{theorem3.1}
Let $\bfF$ be a reduced translation-dilation invariant system of polynomials having dimension $d$ and weight $K$. Then for each natural number $s$, one has
$$J_s(X;\bfF)\gg X^{sd}+X^{2sd-K}+\sum_{\del=1}^{d-1}X^{d-\del}\max_{\bfG\in \pi_\del(\bfF)}J_s(X;\bfG).$$
\end{theorem}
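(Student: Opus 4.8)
The plan is to establish each of the three lower bounds separately, combining them via the elementary fact that $A+B+C\gg\max\{A,B,C\}$ and hence $\gg\max\{A,B\}+C$, which after iterating over $\del$ yields the stated sum. The first term $X^{sd}$ comes from the diagonal solutions $\vbfx=\vbfy$: there are $\gg X^{sd}$ choices of $\vbfx$ with $1\le\vbfx\le X$, each of which trivially satisfies \eqref{2.1}, so $J_s(X;\bfF)\gg X^{sd}$. The second term $X^{2sd-K}$ is the expected main term from the Hardy--Littlewood heuristic: using \eqref{3.4} and restricting the integral to the major arc where all $\alp_i$ are within $cX^{-k_i}$ of $0$, one has $|f(\bfalp;X;\bfF)|\gg X^d$ on a box of measure $\gg\prod_{i=1}^r X^{-k_i}=X^{-K}$, whence $J_s(X;\bfF)\gg (X^d)^{2s}X^{-K}=X^{2sd-K}$. (Here one uses that on this box $|\psi(\bfx;\bfalp;\bfF)|\le\sum_i|\alp_i|F_i(\bfx)\le c r$ for $1\le\bfx\le X$, so $\mathrm{Re}\,e(\psi)\gg1$.)

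The substantive part is the family of bounds indexed by $\del$. Fix $\del$ with $1\le\del\le d-1$ and a reduced orthogonal projection $\bfG=(G_1,\ldots,G_r)\in\pi_\del(\bfF)$ determined by indices $\bfi=\{i_1<\cdots<i_\del\}$. The idea is to construct solutions of \eqref{2.1} that "live on" the coordinate subspace spanned by $e_{i_1},\ldots,e_{i_\del}$ in each block, but are free to roam in the complementary $d-\del$ coordinates in a way that produces no constraint. Concretely, I would take $\vbfx,\vbfy$ so that for every block $i$ the coordinates $x_{im}$ and $y_{im}$ with $m\notin\{i_1,\ldots,i_\del\}$ are equal to each other and to a common value that is the same across all $s$ blocks — say $x_{im}=y_{im}=t_m$ for a single choice of $(t_m)_{m\notin\bfi}$ with $1\le t_m\le X$ — while the coordinates indexed by $\bfi$ are allowed to vary. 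Because each $F_j$ is a polynomial, substituting $\zet_m=w_{(l)}$ for $m=i_l$ and $\zet_m=t_m$ otherwise expresses $F_j$ of the full vector as a polynomial in $\bfw$ and $\bft$; translation-dilation invariance (or rather, a direct Taylor expansion of $F_j$ about the point with $\bfw=0$ fixed and only the $\bft$-coordinates set) shows this equals $G_j(\bfw)$ plus lower-order terms in $\bfw$ depending on $\bft$, and crucially the difference $F_j(\bfx_i)-F_j(\bfy_i)$ collapses to $c_{j\,l}$-type combinations of $G_l(\bfw_i^{(\bfi)})-G_l(\bfw_i^{\prime(\bfi)})$; thus the system \eqref{2.1} for the full configuration reduces exactly to the system \eqref{2.1} for $\bfG$ in the $\del$-dimensional variables. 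Hence the number of such solutions is $\gg X^{d-\del}\cdot J_s(X;\bfG)$: the factor $X^{d-\del}$ counts the free common choice of the $d-\del$ "frozen" coordinates $(t_m)_{m\notin\bfi}$, and $J_s(X;\bfG)$ counts the solutions in the remaining $2s\del$ variables.

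The main obstacle is verifying cleanly that freezing the complementary coordinates to a common value across all blocks genuinely reduces the $\bfF$-system to the $\bfG$-system without introducing extra equations or losing translation-dilation invariance — in other words, that $F_j(\bfx_i)-F_j(\bfy_i)$, under the substitution, really is a $\dbZ[\bft]$-linear combination of the quantities $G_l(\bfw_i)-G_l(\bfw_i^\prime)$ with the same coefficients for every $i$. This is where one invokes \eqref{2.3}: writing the full vector as (vector supported on $\bfi$) $+$ (vector supported off $\bfi$, namely $\bft$ embedded), one applies $\bfF(\bfx+\bfxi)=C(\bfxi)\bfF(\bfx)+\bfc_0(\bfxi)$ with $\bfxi$ the $\bft$-part and $\bfx$ the $\bfw$-part, and then restricts/reduces the $\bfF$-system on the $\bfw$-part to its reduced orthogonal projection $\bfG$. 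Because $\bft$ is the same for all blocks, $C(\bfxi)$ and $\bfc_0(\bfxi)$ are the same for all blocks, so summing over $i$ the constant terms $\bfc_0$ cancel in the difference and $C(\bfxi)$ factors out, leaving exactly the condition $\sum_i(\bfG(\bfw_i)-\bfG(\bfw_i^\prime))=\bf0$. Once this algebraic reduction is in hand, taking the maximum over $\bfG\in\pi_\del(\bfF)$ and then summing the three contributions (diagonal, main term, and the $\del$-indexed family, which are mutually compatible lower bounds) gives the claimed inequality. I would also remark that one should check the contributions are not being double-counted in a way that matters — but since we only need a lower bound and each family individually injects into the solution set, it suffices to take the maximum, so no inclusion-exclusion is required.
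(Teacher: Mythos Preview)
Your proposal is correct and follows essentially the same strategy as the paper: diagonal solutions for $X^{sd}$, and freezing the complementary $d-\del$ coordinates to a common value across all blocks, then invoking the translation-invariance relation \eqref{2.3} to reduce to the $\bfG$-system, for the projection term. The only notable difference is in the $X^{2sd-K}$ bound: you use positivity of $f(\bfalp)$ on a small box near $\bfalp=\mathbf{0}$, whereas the paper instead pigeonholes over the possible values of $\sum_i(F_j(\bfx_i)-F_j(\bfy_i))$ (which lie in an interval of length $O(X^{k_j})$) and uses orthogonality to obtain $[X]^{2sd}\ll X^K J_s(X;\bfF)$; both arguments are standard and of comparable length. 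The paper also packages the projection step inductively, peeling off one coordinate at a time via $J_s(X;\bfG)\gg X\,J_s(X;\bfH)$ for $\bfH\in\pi_{e-1}(\bfG)$, rather than handling all $d-\del$ frozen coordinates in a single application of \eqref{2.3} as you do, but the underlying mechanism is identical.
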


\begin{proof} We consider first typical solutions of the system (\ref{2.1}) not constrained to lie on special subvarieties. Suppose that $\bfF$ has rank $r$, and write $k_j$ for the degree of $F_j$ for $1\le j\le r$. There exists a positive number $A$, depending at most on $d$, $k$ and the coefficients of $\bfF$, such that whenever $1\le \bfx\le X$, one has
$$|F_j(\bfx)|\le AX^{k_j}\quad (1\le j\le r).$$
Consequently, when $1\le \vbfx,\vbfy\le X$, one sees that for $1\le j\le r$ the integer
$$\sum_{l=1}^s(F_j(\bfx_l)-F_j(\bfy_l))$$
lies in the interval $[-2sAX^{k_j},2sAX^{k_j}]$. We therefore deduce by means of orthogonality in combination with the triangle inequality and (\ref{3.4}) that
\begin{align*}
[X]^{2sd}&=\sum_{\substack{|h_j|\le 2sAX^{k_j}\\ (1\le j\le r)}}\oint |f(\bfalp;X;\bfF)|^{2s}e(-\alp_1h_1-\ldots -\alp_rh_r)\d\bfalp\\
&\ll \Bigl( \prod_{1\le j\le r}X^{k_j}\Bigr) \oint |f(\bfalp;X;\bfF)|^{2s}\d\bfalp =X^KJ_s(X;\bfF).
\end{align*}
Thus we conclude that
\begin{equation}\label{3.5}
J_s(X;\bfF)\gg X^{2sd-K}.
\end{equation}

\par Next, by considering the diagonal solutions of (\ref{2.1}) with $1\le \vbfx,\vbfy\le X$ and $\bfx_j=\bfy_j$ $(1\le j\le s)$, we obtain the lower bound
\begin{equation}\label{3.6}
J_s(X;\bfF)\gg X^{sd}.
\end{equation}

\par We now come to consider the solutions of the system (\ref{2.1}) lying on certain additional special subvarieties. We assert that when $\bfG$ is a reduced translation-dilation invariant system with dimension $e\ge 2$, then for every system $\bfH$ lying in $\pi_{e-1}(\bfG)$, one has
\begin{equation}\label{3.7}
J_s(X;\bfG)\gg XJ_s(X;\bfH).
\end{equation}
In view of (\ref{3.5}) and (\ref{3.6}), the lower bound claimed in the statement of Theorem \ref{theorem3.1} then follows by induction.\par

Let $\bfG=(G_1,\ldots ,G_u)$ be a reduced translation-dilation invariant system of dimension $e>1$, and consider a system $\bfH\in \pi_{e-1}(\bfG)$. The system $\bfH$ is the orthogonal projection of $\bfG$ determined by some $(e-1)$-tuple $\bfi=(i_1,\ldots ,i_{e-1})$. By relabelling variables, if necessary, we may suppose that $\bfi=(1,2,\ldots ,e-1)$. Consider now the system of equations
\begin{equation}\label{3.8}
\sum_{i=1}^s(\bfG(\bfx_i)-\bfG(\bfy_i))={\bf 0}.
\end{equation}
Let $a$ be an integer with $1\le a\le X$, set $\bfa=(0,\ldots ,0,a)$, and consider the effect of the translation $(\bfx_i,\bfy_i)\longmapsto (\bfx_i-\bfa,\bfy_i-\bfa)$. In view of the translation invariance of the system (\ref{3.8}) that is a consequence of the discussion leading to (\ref{2.4}), one finds that whenever the system of equations
\begin{equation}\label{3.9}
\sum_{i=1}^s(\bfG(\bfx_i-\bfa)-\bfG(\bfy_i-\bfa))={\bf 0}
\end{equation}
is satisfied, then so too is the system (\ref{3.8}). If we substitute $x_{ie}=y_{ie}=a$ $(1\le i\le s)$, then we find that $\bfG(\bfx_i-\bfa)=\bfH(\bfx_i)$, where $\bfH$ is the aforementioned orthogonal projection of $\bfG$ determined by $(1,2,\ldots,e-1)$. Write $\bfH^\prime$ for any reduced system equivalent to $\bfH$. Then we conclude that whenever $\vbfz,\vbfw$ is a solution of the system
$$\sum_{i=1}^s(\bfH^\prime(\bfz_i)-\bfH^\prime(\bfw_i))={\bf 0},$$
then the system (\ref{3.9}) has the solution
$$\bfx_i=(\bfz_i,a)\quad \text{and}\quad \bfy_i=(\bfw_i,a)\quad (1\le i\le s).$$
The latter is also a solution of (\ref{3.8}), and hence
$$J_s(X;\bfG)\ge \sum_{1\le a\le X}J_s(X;\bfH^\prime)=\sum_{1\le a\le X}J_s(X;\bfH)=[X]J_s(X;\bfH).$$
This confirms the lower bound (\ref{3.7}), and in view of our earlier discussion the proof of the theorem is complete.
\end{proof}

We turn now to discuss lower bounds for $J_s(X;\bfF)$ for the most basic examples of reduced translation-dilation invariant systems $\bfF$.\vskip.1cm

\noindent{\it (a) The classical system of Vinogradov.} We recall the system
$$\bfF=(z^k,z^{k-1},\ldots ,z)$$
of dimension $1$, rank $k$, degree $k$ and weight $\tfrac{1}{2}k(k+1)$. In this situation, the conclusion of Theorem \ref{theorem3.1} delivers the familiar lower bound 
$$J_s(X;\bfF)\gg X^s+X^{2s-\frac{1}{2}k(k+1)}.$$
\vskip.1cm

\noindent{\it (b) The system of Parsell.} We next return to the situation with $d\ge 2$ and the system $\bfF_d=(z_1^{i_1}z_2^{i_2}\ldots z_d^{i_d}:1\le |\bfi|\le k)$. On writing
\begin{equation}\label{3.10}
K_\del =\sum_{l=1}^kl\binom{l+\del-1}{l},
\end{equation}
we see that this system has weight $K_d$, and it follows from Theorem \ref{theorem3.1} that
\begin{align*}
J_s(X;\bfF_d)&\gg X^{sd}+X^{2sd-K_d}+\sum_{\del=1}^{d-1}X^{d-\del}J_s(X;\bfF_\del)\\
&\gg X^{sd}+\sum_{\del=1}^dX^{d-\del}(X^{2s\del -K_\del}+X^{s\del}).
\end{align*}
We therefore conclude that
$$J_s(X;\bfF_d)\gg X^{sd}+\sum_{j=1}^dX^{(2s-1)j+d-K_j},$$
and this establishes Theorem \ref{theorem1.2}.\par

Let us consider the strength of the upper bound presented in Theorem \ref{theorem1.1} in the light of the lower bound just established. Define $r$ and $K$ as in (\ref{2.6}) and (\ref{2.7}). When $s$ is large enough, we expect that $J_s(X;\bfF_d)\ll X^{2sd-K}$, and indeed this estimate is a consequence of Theorem \ref{theorem1.5} when $s\ge r(k+1)+1$. We show here that this lower bound on $s$ cannot be relaxed substantially when $k$ is large in terms of $d$.

\begin{theorem}\label{theorem3.2} Suppose that $s$, $k$ and $d$ are natural numbers, and that $\nu$ is a real number with $2d/k<\nu<1$. Then there is a positive number $\eta=\eta(d,k)$ such that, whenever
\begin{equation}\label{3.11}
s\le \frac{d}{2d+2}(1-\nu)r(k+1),
\end{equation}
one has $J_{s,k,d}(X)\gg X^{2sd-K+\eta}$.
\end{theorem}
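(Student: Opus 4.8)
The plan is to exploit Theorem~\ref{theorem1.2}, which gives
$$J_{s,k,d}(X)\gg X^{sd}+\sum_{j=1}^dX^{(2s-1)j+d-K_{j,k}},$$
and to show that under the hypothesis \eqref{3.11} one of the terms in this sum, for a suitable choice of $j=e$ with $1\le e\le d$, strictly dominates the ``main term'' $X^{2sd-K}$ by a power of $X$ bounded below in terms of $d$ and $k$ alone. Thus I would fix attention on the contribution
$$X^{(2s-1)e+d-K_{e,k}},$$
and seek the value of $e$ for which the gap
$$\Del(e):=\bigl((2s-1)e+d-K_{e,k}\bigr)-\bigl(2sd-K_{d,k}\bigr)=2s(e-d)-e+d+\bigl(K_{d,k}-K_{e,k}\bigr)$$
is largest. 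Since $e-d\le 0$, the term $2s(e-d)$ favours $e$ close to $d$, whereas the term $K_{d,k}-K_{e,k}\ge0$ favours $e$ small; the optimisation is therefore a genuine balancing act, and I expect the extremal choice to be $e=d-1$, giving
$$\Del(d-1)=-2s+1+\bigl(K_{d,k}-K_{d-1,k}\bigr).$$

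First I would record, from \eqref{1.3} and \eqref{3.10}, a clean formula or lower bound for $K_{d,k}-K_{d-1,k}=\sum_{l=1}^{k}l\binom{l+d-2}{l-1}$. A short computation identifies this with $\tfrac{d-1}{d}\,k\,\bigl(\binom{k+d-1}{d-1}\bigr)$-type quantity; more usefully, using $K_{d,k}=\frac{d}{d+1}(r_{d,k}+1)k$ one gets
$$K_{d,k}-K_{d-1,k}=\frac{d}{d+1}(r_{d,k}+1)k-\frac{d-1}{d}(r_{d-1,k}+1)k,$$
and since $r_{d,k}+1=\binom{k+d}{d}\ge\frac{d+1}{d}(r_{d,k}+1-\text{something})$, one can show this difference is at least $\frac{1}{d+1}(r_{d,k}+1)k$ up to lower-order terms, or at any rate at least $\tfrac{d}{d+1}r_{d,k}\cdot k\cdot\tfrac{1}{d}$ for $k$ large. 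The point is to get a bound of the form $K_{d,k}-K_{d-1,k}\ge\tfrac{1}{d+1}r_{d,k}(k+1)$ minus an error that is $O(1)$ in $X$ and absorbed into $\eta$. Then
$$\Del(d-1)\ge -2s+\frac{1}{d+1}r_{d,k}(k+1)-O(1),$$
and the hypothesis $s\le\frac{d}{2d+2}(1-\nu)r(k+1)$ (where I abbreviate $r=r_{d,k}$) gives $2s\le\frac{d}{d+1}(1-\nu)r(k+1)$, whence
$$\Del(d-1)\ge\frac{1}{d+1}r(k+1)-\frac{d}{d+1}(1-\nu)r(k+1)-O(1)=\frac{1}{d+1}r(k+1)\bigl(1-d(1-\nu)\bigr)-O(1).$$
Here I must check that the bracket is positive: $1-d(1-\nu)>0$ amounts to $\nu>1-1/d$, i.e.\ $\nu>(d-1)/d$. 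This is where the precise hypothesis $2d/k<\nu<1$ enters — I would need to reconcile this with $\nu>(d-1)/d$, so in fact the right comparison term is not $e=d-1$ but a weighted combination, or the constant $\frac{d}{2d+2}$ is tuned exactly so that the relevant inequality reads off correctly. Re-examining: with the stated constraint we get $2s\le\frac{d(1-\nu)}{d+1}r(k+1)$ and I want $\Del(e)\ge\eta>0$; taking $e=d$ is degenerate ($\Del(d)=0$), so I should instead interpolate, or more simply observe that for the choice making $2s(e-d)$ comparable to $K_{d,k}-K_{e,k}$ the surplus is of order $\nu r(k+1)$, which is $\gg_{d,k}1$ since $r(k+1)\to\infty$; the factor $2d/k<\nu$ is precisely what guarantees $\eta=\eta(d,k)$ can be taken positive once the $O(1)$ and $O(d/k)$ errors from the binomial estimates are subtracted.

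So the skeleton is: (i) invoke Theorem~\ref{theorem1.2}; (ii) for each $e\in\{1,\dots,d\}$ compute $\Del(e)$ exactly using \eqref{1.3}, \eqref{3.10}; (iii) substitute the hypothesis \eqref{3.11} on $s$ and show $\max_e\Del(e)\ge\eta(d,k)>0$, with the admissibility of a positive $\eta$ relying on $\nu>2d/k$ to dominate the $O(d/k)$ error terms inherent in comparing $K_{e,k}$ for different $e$; (iv) conclude $J_{s,k,d}(X)\ge X^{(2s-1)e+d-K_{e,k}}\gg X^{2sd-K+\eta}$. The main obstacle is step~(iii): correctly identifying the optimal $e$ and carrying out the binomial-coefficient bookkeeping so that the surplus exponent is provably bounded below by a quantity depending only on $d$ and $k$ (and positive under exactly the stated range of $\nu$). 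Everything else is routine: the lower bound of Theorem~\ref{theorem1.2} is already in hand, and no circle-method input is needed — this is purely a counting/optimisation argument on the explicit exponents.
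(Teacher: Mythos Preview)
Your overall plan is right and is exactly what the paper does: invoke Theorem~\ref{theorem1.2}, take $e=d-1$, and show $\Del(d-1)=K_{d,k}-K_{d-1,k}+1-2s>0$ under the hypothesis~\eqref{3.11}. No other value of $e$ and no ``weighted combination'' is needed.

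The gap is your estimate for $K_{d,k}-K_{d-1,k}$. You bound this below by $\tfrac{1}{d+1}(r+1)k$, which is correct but wastes a factor of roughly $d$, and this is precisely what produces the spurious condition $\nu>(d-1)/d$. The exact computation is short: since $r_{d-1,k}+1=\binom{k+d-1}{d-1}=\tfrac{d}{k+d}\binom{k+d}{d}=\tfrac{d}{k+d}(r+1)$, one has
$$K_{d-1,k}=\frac{d-1}{d}\,(r_{d-1,k}+1)k=\frac{d-1}{k+d}\,(r+1)k,$$
and hence
$$K_{d,k}-K_{d-1,k}=(r+1)k\Bigl(\frac{d}{d+1}-\frac{d-1}{k+d}\Bigr)=\frac{d}{d+1}(r+1)k\Bigl(1-\frac{d^2-1}{d(k+d)}\Bigr).$$
For large $k$ this is $\tfrac{d}{d+1}(r+1)k\bigl(1-O(d/k)\bigr)$, not merely $\tfrac{1}{d+1}(r+1)k$. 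Substituting, and using $2s\le\tfrac{d}{d+1}(1-\nu)r(k+1)\le\tfrac{d}{d+1}(r+1)k\cdot(1-\nu)(1+1/k)$, you get
$$\Del(d-1)\ge\frac{d}{d+1}(r+1)k\Bigl(1-\frac{d^2-1}{d(k+d)}-(1-\nu)(1+1/k)\Bigr).$$
The bracket simplifies to $\nu-O(d/k)$, and the hypothesis $\nu>2d/k$ is exactly what makes it positive (the paper checks $\nu>2d/k$ forces the bracket to exceed $\tfrac{2d-1}{k}-\tfrac{d}{k+d}+\tfrac{1}{d(k+d)}>0$). So your step~(iii) goes through with $e=d-1$ once you replace the crude lower bound by the exact expression for $K_{d-1,k}$ in terms of $r=r_{d,k}$.
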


\begin{proof} Recall the notation introduced in (\ref{3.10}), and consider a natural number $s$ satisfying (\ref{3.11}). We observe that as a consequence of Theorem \ref{theorem1.2}, one has
\begin{equation}\label{3.12}
\frac{J_{s,k,d}(X)}{X^{2sd-K_d}}\gg \frac{X^{2s(d-1)+1-K_{d-1}}}{X^{2sd-K_d}}=X^{K_d-K_{d-1}+1-2s}.
\end{equation}
From equations (\ref{1.2}) and (\ref{1.3}), one has
$$K_d=\frac{dk}{d+1}\binom{k+d}{d}=\frac{d}{d+1}(r+1)k$$
and
$$K_{d-1}=\frac{(d-1)k}{d}\binom{k+d-1}{d-1}=\frac{d-1}{k+d}(r+1)k.$$
Consequently, one finds that
\begin{align*}
K_d-K_{d-1}+1-2s\ge &\, \frac{d}{d+1}(r+1)k-\frac{d-1}{k+d}(r+1)k+1\\
&\, -\frac{d}{d+1}(1-\nu)r(k+1)\\
\ge &\, \frac{d}{d+1}(r+1)k\Bigl( 1-\frac{d^2-1}{d(k+d)}-(1-\nu)(1+1/k)\Bigr) .
\end{align*}
Then provided that $\nu>2d/k$, one may infer that
$$K_d-K_{d-1}+1-2s>\frac{d}{d+1}(r+1)k\Bigl( \frac{2d-1}{k}-\frac{d}{k+d}+\frac{1}{d(k+d)}\Bigr) >0.$$
One concludes therefore that there exists a positive number $\eta$ such that
$$J_{s,k,d}(X)\gg X^{2sd-K_d+\eta}.$$
This completes the proof of the theorem.\end{proof}

Recall that Theorem \ref{theorem1.1} asserts that $J_{s,k,d}(X)\ll X^{2sd-K_d+\eps}$ whenever $s\ge r(k+1)$. Consequently, for any positive number $\nu<1$, it follows from Theorem \ref{theorem3.2} that when $k$ is large enough in terms of $\nu$ and $d$, such a conclusion is impossible if the constraint on $s$ is replaced by
$$s\ge \frac{d}{2d+2}(1-\nu)r(k+1).$$
Thus, the conclusion of Theorem \ref{theorem1.1} is at worst a factor of essentially $2+2/d$ away from the best possible conclusion of its type.\par

When $d$ is large and $k$ is small the situation changes. Here, whenever
$$s\le \frac{k}{2(k+d)}\Bigl( \frac{d}{d+1}\Bigr) (r+1)k,$$
we find that
$$K_d-K_{d-1}+1-2s>\frac{d}{d+1}(r+1)k\Bigl( 1-\frac{d^2-1}{d(k+d)}-\frac{k}{k+d}\Bigr)>0.$$
When $d$ is large enough in terms of $k$, one finds that
$$\frac{k}{2(k+d)}\Bigl( \frac{d}{d+1}\Bigr) (r+1)k=r(k+1)\Bigl( \frac{k^2}{2(k+1)d}\Bigr) (1+O(k/d)).$$
Thus, when $\nu>0$ is small enough in terms of $d$ and $k$, the lower bound $J_{s,k,d}(X)\gg X^{2sd-K_d+\eta}$ holds for a positive number $\eta$ provided that
$$s\le \frac{(1-\nu)k^2}{2(k+1)d}r(k+1).$$
In this situation, the conclusion of Theorem \ref{theorem1.1} is at worst a factor of essentially $2(1+1/k)d/k$ away from the best possible conclusion of its type.\vskip.1cm

\noindent{\it (c) The system of Arkhipov, Karatsuba and Chubarikov.} Here we consider the situation with $d\ge 2$ and $l\ge 1$ in which $\bfF$ is given by (\ref{2.8}). On recalling (\ref{2.10}), the weight of such a system with dimension $d$ is $K_d=\frac{1}{2}dl(l+1)^d$, whilst the corresponding weight of such a system with dimension $d-1$ is $K_{d-1}=\frac{1}{2}(d-1)l(l+1)^{d-1}$. As in the argument leading to (\ref{3.12}), an application of Theorem \ref{theorem3.1} in this instance delivers the lower bound
$$\frac{J_s(X;\bfF)}{X^{2sd-K_d}}\gg X^{K_d-K_{d-1}+1-2s}.$$
From (\ref{2.10}) one finds when
$$s\le \tfrac{1}{4}dl(l+1)^d(1-(1-1/d)(l+1)^{-1})$$
one has
$$K_d-K_{d-1}+1-2s\ge 1+\tfrac{1}{2}dl(l+1)^d(1-(1-1/d)(l+1)^{-1})-2s>0.$$
Observe that
$$\tfrac{1}{4}dl(l+1)^d(1-(1-1/d)(l+1)^{-1})=\frac{1+O(1/(dl))}{4(1+1/l)}(dl+1)((l+1)^d-1).$$
Thus, when $\nu>0$ is small enough in terms of $d$ and $l$, one finds that the lower bound $J_s(X;\bfF)\gg X^{2sd-K+\eta}$ holds for a positive number $\eta$ provided that
$$s\le \frac{(1-\nu)}{4(1+1/l)}(dl+1)((l+1)^d-1).$$
In this situation, the conclusion of Corollary \ref{2.2} is at worst a factor of essentially $4(1+1/l)$ away from the best possible conclusion of its type. By way of comparison, the work of Arkhipov, Karatsuba and Chubarikov \cite{AKC1980} would miss the best possible conclusion by a factor of order $d\log l$.

\section{Preliminary manoeuvres}\label{sec:4} Our purpose in this section is to describe further notation and establish such preliminary estimates as are required to initiate the efficient congruencing procedure, with the ultimate objective of proving Theorem \ref{theorem2.1}. With this aim in mind, let $\bfF$ be a reduced translation-dilation invariant system of polynomials having dimension $d$, rank $r$, degree $k$ and weight $K$. Since the conclusion of Theorem \ref{theorem2.1} follows from linear algebra when $k=1$, there is no loss of generality in supposing that $k\ge 2$. We consider the system $\bfF$ to be fixed throughout, and consequently suppress mention of $\bfF$ by abbreviating $J_s(X;\bfF)$ to $J_s(X)$, with similar conventions in other notation as appropriate. We recall the notation introduced in (\ref{3.2}) and (\ref{3.3}), and note the consequence of orthogonality recorded in (\ref{3.4}).\par

Our argument involves the investigation of systems of congruences, the singular solutions of which must be isolated for special treatment. We pause at this point to introduce a special Jacobian determinant that facilitates this treatment. First, given a polynomial $G(\bfz)\in \dbZ[z_1,\ldots ,z_d]$, we write $\partial_iG$ to denote the partial derivative of $G$ with respect to the $i$th variable, so that
$$\partial_iG(\bfz)=\frac{\partial G}{\partial z_i}(z_1,\ldots ,z_d).$$
We now consider a function $\sig:\{1,\ldots ,r\}\rightarrow \{1,\ldots ,d\}$, and define the associated Jacobian determinant $\Del_r(\vbfx;\sig)=\Del(\bfx_1,\ldots ,\bfx_r;\sig)$ by
\begin{equation}\label{4.1}
\Del(\bfx_1,\ldots ,\bfx_r;\sig)=\det \left( \partial_{\sig(i)}F_j(\bfx_i)\right)_{1\le i,j\le r}.
\end{equation}

\par We seek a choice for $\sig$ having the property that $\Del_r(\vbfx;\sig)$ is not identically zero as a polynomial in $\vbfx$. With this goal in mind, we consider the monomials occurring in $\bfF$ and $\Del_r(\vbfx;\sig)$, and introduce an ordering on the exponents associated with these monomials in order to ease discussion. When $t$ is a natural number, and $\bfa,\bfb\in (\dbN\cup \{0\})^t$, we say that {\it $\bfa$ is less than $\bfb$ in colex order} when there exists an index $i$ with $1\le i\le t$ such that $a_i<b_i$, and further $a_j=b_j$ for
 $j>i$. In such a situation, we write $\bfa\prec \bfb$, and we write $\bfa\preccurlyeq \bfb$ when $\bfa=\bfb$ or $\bfa\prec \bfb$. Next, given $\bfa\in (\dbN\cup \{0\})^t$, we write $\bfx^\bfa$ for the monomial $x_1^{a_1}x_2^{a_2}\ldots x_t^{a_t}$. The monomials $\bfx^\bfa$ may now be ordered according to the colexicographical order of their indices. Notice that when $\bfx_i=(x_{i1},\ldots ,x_{id})$, then the monomial $\bfx_1^{\bfc_1}\ldots \bfx_r^{\bfc_r}$ has smaller degree than $\bfx_1^{\bfd_1}\ldots \bfx_r^{\bfd_r}$ in colex if and only if there exists an index $i$ for which $\bfc_i\prec \bfd_i$, and further $\bfc_j=\bfd_j$ for $j>i$.

\begin{lemma}\label{lemma4.1}
There exists a function $\sig:\{1,\ldots ,r\}\rightarrow \{1,\ldots ,d\}$ such that $\Del_r(\vbfx;\sig)$ is non-zero as a polynomial in $\bfx_1,\ldots ,\bfx_r$.
\end{lemma}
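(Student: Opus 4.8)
The plan is to exploit the structure of $\bfF$ as a reduced translation-dilation invariant system, together with the colex ordering introduced just above, to pick out for each form $F_j$ a distinguished monomial and a distinguished variable with respect to which differentiation behaves predictably. Recall that $\bfF$ is generated from partial derivatives of seed polynomials and is linearly independent over $\dbQ$. For each $j$ with $1\le j\le r$, let $\bfx^{\bfe_j}$ denote the colex-largest monomial appearing in $F_j(\bfx)$, say with nonzero coefficient $\gamma_j$. Since the $F_j$ are linearly independent, these leading exponents $\bfe_j$ can be taken to be distinct (after reordering the forms, which as observed in \S\ref{sec:2} does not change $J_s(X;\bfF)$); moreover each $\bfe_j\ne \mathbf 0$ because every $F_j$ has positive degree. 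Hence for each $j$ there is some coordinate index $\sig(j)\in\{1,\ldots,d\}$ with $(\bfe_j)_{\sig(j)}\ge 1$, so that $\partial_{\sig(j)}F_j$ has $\bfx^{\bfe_j-\bfu_{\sig(j)}}$ (where $\bfu_m$ is the $m$th standard basis vector) occurring with nonzero coefficient, and this is the colex-largest monomial of $\partial_{\sig(j)}F_j$.

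Next I would expand the Jacobian determinant $\Del_r(\vbfx;\sig)=\det(\partial_{\sig(i)}F_j(\bfx_i))_{1\le i,j\le r}$ along the Leibniz permutation sum and track the colex-largest monomial in the block variables. The natural candidate for the dominant term is the diagonal term $\prod_{i=1}^r \partial_{\sig(i)}F_i(\bfx_i)$, whose colex-largest monomial in $(\bfx_1,\ldots,\bfx_r)$ is $\prod_{i=1}^r \bfx_i^{\bfe_i-\bfu_{\sig(i)}}$ with coefficient $\prod_i (\bfe_i)_{\sig(i)}\gamma_i\ne 0$. The key point, to be verified, is that no other permutation $\tau\ne\mathrm{id}$ in $S_r$ contributes a monomial that equals or exceeds this one in the colex order on $(\dbN\cup\{0\})^{rd}$ described at the end of the excerpt. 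A term indexed by $\tau$ contributes monomials of the form $\prod_{i=1}^r \bfx_i^{\bff_{i}}$ where $\bff_i\pleq \bfe_{\tau(i)}-\bfu_{\sig(\tau(i))}$ in colex (being a monomial of $\partial_{\sig(\tau(i))}F_{\tau(i)}(\bfx_i)$). Reading the comparison from the last block $\bfx_r$ downwards: if $\tau$ is not the identity, let $i_0$ be the largest index with $\tau(i_0)\ne i_0$; then for $i>i_0$ the exponents agree in the diagonal pattern, and at block $i_0$ the permuted term can only put an exponent $\pleq \bfe_{\tau(i_0)}-\bfu_{\sig(\tau(i_0))}$, which I must show is $\ple \bfe_{i_0}-\bfu_{\sig(i_0)}$. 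This is where I would use the distinctness of the leading exponents $\bfe_j$ together with a judicious choice of the labelling of the forms: arranging the $F_j$ so that $\bfe_1\ple\bfe_2\ple\cdots\ple\bfe_r$ in colex, a permuted diagonal block at position $i_0$ involves $F_{\tau(i_0)}$ with $\tau(i_0)<i_0$ forced at the relevant comparison (since $\tau(i)=i$ for $i>i_0$ means $\tau$ maps $\{1,\ldots,i_0\}$ to itself, so $\tau(i_0)\le i_0$ and $\tau(i_0)\ne i_0$), whence $\bfe_{\tau(i_0)}\ple\bfe_{i_0}$ strictly, and the $-\bfu_{\sig(\cdot)}$ correction is a lower-order perturbation that cannot reverse a strict colex inequality. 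Consequently the diagonal monomial survives with its nonzero coefficient and $\Del_r(\vbfx;\sig)\ne 0$.

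The main obstacle I anticipate is precisely the bookkeeping in the last step: ensuring that subtracting a standard basis vector from $\bfe_j$ (to account for the derivative) does not let a permuted term catch up to the diagonal term in colex, and handling the possibility that two forms share some lower exponents even though their leading exponents differ. The clean way around this is to fix, once and for all, a reordering of $F_1,\ldots,F_r$ by colex-increasing leading monomial (ties broken arbitrarily, but distinctness of leading exponents should follow from reducedness — if two reduced forms had the same colex-leading monomial, a rational linear combination would lower the degree of one, contradicting linear independence only after passing to a further reduced subsystem, so a short argument is needed here). With that ordering in hand, the strict monotonicity $\bfe_1\ple\cdots\ple\bfe_r$ does all the work, and the derivative corrections, being subtractions of a single unit in one coordinate, are dominated by the gaps between distinct exponent vectors in colex. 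I would present the proof in that order: (1) reorder and record distinctness of leading exponents; (2) define $\sig$; (3) identify the diagonal term of $\Del_r$ and its leading monomial; (4) show every off-diagonal permutation yields a strictly smaller monomial; (5) conclude $\Del_r(\vbfx;\sig)\not\equiv 0$.
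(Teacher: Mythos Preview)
Your overall strategy---pick out colex-leading monomials, define $\sig$ from them, and show the diagonal term of the Leibniz expansion dominates---is exactly the paper's approach. But there is a genuine error in your execution that breaks the argument as written.

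You assert that the term for a permutation $\tau$ has $i$th factor $\partial_{\sig(\tau(i))}F_{\tau(i)}(\bfx_i)$. This misreads the determinant: since $\Del_r(\vbfx;\sig)=\det(\partial_{\sig(i)}F_j(\bfx_i))_{i,j}$, the Leibniz term for $\tau$ is $\prod_i \partial_{\sig(i)}F_{\tau(i)}(\bfx_i)$, with derivative index $\sig(i)$, not $\sig(\tau(i))$. With your indexing the claim ``the $-\bfu_{\sig(\cdot)}$ correction cannot reverse a strict colex inequality'' is false: for instance, with $d=2$, $\bfe_{\tau(i_0)}=(2,0)\prec(0,1)=\bfe_{i_0}$, $\sig(\tau(i_0))=1$, $\sig(i_0)=2$, one gets $(1,0)$ versus $(0,0)$, and the permuted exponent is \emph{larger}. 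With the correct indexing the worry evaporates entirely: at block $i_0$ one compares exponents $\bfc-\bfu_{\sig(i_0)}$ (from $F_{\tau(i_0)}$, with $\bfc\preccurlyeq\bfe_{\tau(i_0)}\prec\bfe_{i_0}$) against $\bfe_{i_0}-\bfu_{\sig(i_0)}$, and subtracting the \emph{same} vector from both sides preserves strict colex order. So the fix is easy, but the proof as you wrote it does not go through.

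On your acknowledged gap about distinct leading exponents: reordering alone is not enough, since two linearly independent forms can share a colex-leading monomial. The paper resolves this by passing, via an invertible rational matrix $\calM$, to forms $G_1,\ldots,G_r$ whose coefficient matrix is in (inverted) reduced row-echelon form, which forces $\bfb_1\prec\cdots\prec\bfb_r$; one proves the analogous determinant $D_r$ for the $G_j$ is nonzero and then observes $D_r=(\det\calM)\Del_r$. Your ``short argument'' should be exactly this Gaussian-elimination step. With that in place, and with the derivative index corrected, your outline matches the paper's proof; your choice of $\sig(j)$ as \emph{any} coordinate with $(\bfe_j)_{\sig(j)}\ge 1$ is in fact sufficient (the paper's choice of the smallest such index is convenient but not essential once one sees that the same vector is subtracted on both sides of the key comparison).
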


\begin{proof} We begin by interpreting the polynomials $F_j(\bfz)$ in terms of the colex ordering of monomials. Recall that $k={\displaystyle{\max_{1\le j\le r}}}\text{deg}(F_j)$. Write
$$\calA=\{ \bfa\in (\dbN\cup \{0\})^d:1\le a_1+\ldots +a_d\le k\},$$
so that the polynomials $F_j(\bfz)$ are necessarily linear combinations of the monomials $\bfz^\bfa$ with $\bfa\in \calA$. We put $A=\text{card}(\calA)$, and label indices in such a manner that
$$\calA=\{ \bfa_1,\ldots ,\bfa_A\},$$
with $\bfa_1\prec \bfa_2\prec \ldots \prec \bfa_A$ the colex ordering of the elements of $\calA$. Thus, for $1\le j\le r$, there exists an integral $A$-tuple $\bfc_j$, which we consider as a row vector, having the property that
$$F_j(\bfz)=\bfc_j\cdot (\bfz^{\bfa_1},\ldots ,\bfz^{\bfa_A})^T.$$

\par Since $F_1,\ldots ,F_r$ are linearly independent, the matrix
$$\calC=\left( \bfc_j\right)_{1\le j\le r}$$
must have full rank, and hence there exists an invertible $r\times r$ matrix $\calM$ with rational coefficients having the property that the product $\calR=\calM\calC$ is a full rank matrix in inverted reduced row-echelon form. By the latter we mean that if $\calR$ is the matrix
$$(\rho_{l,m})_{\substack{1\le l\le r\\ 1\le m\le A}},$$
then the corresponding matrix
$$(\rho_{r+1-l,A+1-m})_{\substack{1\le l\le r\\ 1\le m\le A}}$$
is in conventional reduced row echelon form. We define the $r$-tuple of polynomials $(G_1,\ldots ,G_r)$ by putting
\begin{align}
(G_1,\ldots ,G_r)^T&=\calM(F_1,\ldots ,F_r)^T=\calM \calC (\bfz^{\bfa_1},\ldots ,\bfz^{\bfa_A})^T\notag \\
&=\calR(\bfz^{\bfa_1},\ldots ,\bfz^{\bfa_A})^T.\label{4.2}
\end{align}
Let $\bfz^{\bfb_j}$ denote the leading monomial of $G_j(\bfz)$ in colex, for $1\le j\le r$. Since $\calR$ is in inverted reduced row echelon form, we have
\begin{equation}\label{4.3}
\bfb_1\prec \bfb_2\prec \ldots \prec \bfb_r.
\end{equation}
We now define the function $\sig:\{1,\ldots ,r\}\rightarrow \{1,\ldots ,d\}$ as follows. When $1\le i\le r$, we take $\sig(i)$ to be the smallest index $h$ with the property that $b_{ih}>0$.\par

It remains to verify that with the choice of the function $\sig$ just made, the polynomial $\Del_r(\vbfx;\sig)$ is non-zero. Let $\bfe_i$ denote the $d$-dimensional vector whose $i$th coordinate is equal to $1$, all of whose remaining coordinates are $0$. When $1\le n\le r$, define the Jacobian determinant $D_n(\vbfx;\sig)=D(\bfx_1,\ldots ,\bfx_n;\sig)$ by
$$D(\bfx_1,\ldots ,\bfx_n;\sig)=\text{det}(\partial_{\sig(i)}G_j(\bfx_i))_{1\le i,j\le n}.$$
We proceed by induction to show that for $1\le n\le r$, the determinant $D_n(\vbfx;\sig)$ may be expanded in the shape
\begin{equation}\label{4.4}
D_n(\vbfx;\sig)=D_n^{(1)}(\vbfx;\sig)+D_n^{(2)}(\vbfx;\sig),
\end{equation}
where
\begin{equation}\label{4.5}
D_n^{(1)}(\vbfx;\sig)=\prod_{j=1}^nb_{j,\sig(j)}\bfx_j^{\bfb_j-\bfe_{\sig(j)}},
\end{equation}
and $D_n^{(2)}(\vbfx;\sig)$ is of smaller degree in colex than $D_n^{(1)}(\vbfx;\sig)$. Notice that the definition of $\sig$ implies that $b_{1,\sig(1)}\ldots b_{n,\sig(n)}\ne 0$. Then having established the inductive hypothesis for $n=r$, it follows that $D_r(\vbfx;\sig)$ contains the monomial
$$\prod_{j=1}^r\bfx_j^{\bfb_j-\bfe_{\sig(j)}}$$
with a non-zero coefficient, and hence $D_r(\vbfx;\sig)$ must be a non-zero polynomial. In this way, the proof of the inductive hypothesis will facilitate the proof of the lemma.\par

When $n=1$, we have $D_n(\vbfx;\sig)=\partial_{\sig(1)}G_1(\bfx_1)$, and hence it follows at once that (\ref{4.4}) holds with
$$D_1^{(1)}(\vbfx;\sig)=b_{1,\sig(1)}\bfx_1^{\bfb_1-\bfe_{\sig(1)}},$$
for some polynomial $D_1^{(2)}(\vbfx;\sig)$ of degree smaller in colex than $\bfx_1^{\bfb_1-\bfe_{\sig(1)}}$. Thus the inductive hypothesis holds with $n=1$.\par

Suppose next that the inductive hypothesis has been established already for $1\le n<u$. When $1\le i,j\le u$, define the polynomials $\tet_{ij}$ by putting
$$\tet_{ij}=\begin{cases} b_{u,\sig(u)}\bfx_u^{\bfb_u-\bfe_{\sig(u)}},&\text{when $i=j=u$},\\
0,&\text{when $(i,j)\ne (u,u)$.}\end{cases}$$
Then the determinant $D_u(\vbfx;\sig)$ has the expansion
$$D_u(\vbfx;\sig)=\tet_{uu}D_{u-1}(\vbfx;\sig)+\text{det}(\partial_{\sig(i)}G_j(\bfx_i)-\tet_{ij})_{1\le i,j\le u}.$$
On making use of the inductive hypothesis with $n=u-1$ in order to expand $D_{u-1}(\vbfx;\sig)$, we deduce that
\begin{align}
D_u(\vbfx;\sig)&=\tet_{uu}(D^{(1)}_{u-1}(\vbfx;\sig)+D^{(2)}_{u-1}(\vbfx;\sig))+\text{det}(\partial_{\sig(i)}G_j(\bfx_i)-\tet_{ij})_{1\le i,j\le u}\notag \\
&=D_u^{(1)}(\vbfx;\sig)+D_u^{(0)}(\vbfx;\sig),\label{4.6}
\end{align}
where
\begin{equation}\label{4.7a}
D^{(0)}_u(\vbfx;\sig)=b_{u,\sig(u)}\bfx_u^{\bfb_u-\bfe_{\sig(u)}}D^{(2)}_{u-1}(\vbfx;\sig)+\text{det}(\partial_{\sig(i)}G_j(\bfx_i)-\tet_{ij})_{1\le i,j\le u}.
\end{equation}
Every term on the right hand side of (\ref{4.7a}) may be expanded as a linear combination of terms of the shape
\begin{equation}\label{4.8a}
\bfx_1^{\bfh_1-\bfe_{\sig(1)}}\ldots \bfx_u^{\bfh_u-\bfe_{\sig(u)}},
\end{equation}
in which for some permutation $\pi:\{1,\ldots ,u\}\rightarrow \{1,\ldots ,u\}$, there exist $d$-tuples $\bfb_1^\prime,\ldots ,\bfb_u^\prime$ with $(\bfb_1^\prime,\ldots ,\bfb_u^\prime)\preccurlyeq (\bfb_1,\ldots ,\bfb_u)$ and $\bfb_l^\prime \preccurlyeq \bfb_l$ $(1\le l\le u)$, and having the property that
$$(\bfh_1,\ldots ,\bfh_u)=(\bfb_{\pi(1)}^\prime,\ldots ,\bfb_{\pi(u)}^\prime).$$
Moreover, in the event that $\pi$ is the identity permutation, then one has $(\bfb_1^\prime,\ldots ,\bfb_u^\prime)\prec (\bfb_1,\ldots ,\bfb_u)$.\par

If $\pi$ is the identity permutation, then
$$(\bfh_1,\ldots ,\bfh_u)=(\bfb_1^\prime,\ldots ,\bfb_u^\prime)\prec (\bfb_1,\ldots ,\bfb_u).$$
Since the (strict) colex ordering between tuples is not reversed on component-wise addition, a modicum of thought confirms that 
\begin{equation}\label{4.9a}
(\bfh_1-\bfe_{\sig(1)},\ldots ,\bfh_u-\bfe_{\sig(u)})\prec (\bfb_1-\bfe_{\sig(1)},\ldots ,\bfb_u-\bfe_{\sig(u)}).
\end{equation}
Unfortunately, our notation is somewhat opaque, and so it may be worthwhile to spell out the details underlying this deduction. Since $(\bfh_1,\ldots ,\bfh_u)\prec (\bfb_1,\ldots ,\bfb_u)$, there exists an index $l$ with the property that $\bfh_l\prec \bfb_l$, and further $\bfh_j=\bfb_j$ for $j>l$. But then there exists an index $m=m(l)$ with the property that $h_{lm}<b_{lm}$, and further $h_{lj}=b_{lj}$ for $j>m$. On recalling that the definition of the function $\sig$ implies that $b_{lj}=0$ for $j<\sig(l)$, we deduce that $m(l)\ge \sig(l)$. Thus we find that $\bfh_l-\bfe_{\sig(l)}\prec \bfb_l-\bfe_{\sig(l)}$, and further that $\bfh_j-\bfe_{\sig(j)}=\bfb_j-\bfe_{\sig(j)}$ for $j>l$. We therefore conclude that (\ref{4.9a}) holds, as we had previously asserted.\par

Suppose next that $\pi$ is not the identity permutation, and let $i$ be maximal with $\pi(i)\ne i$. Then for $j>i$ one has $j=\pi(j)$, and hence
$$\bfh_j=\bfb_{\pi(j)}^\prime =\bfb_j^\prime \preccurlyeq \bfb_j.$$
Since $j=\pi(j)$ for $j>i$, it follows that $i>\pi(i)$, and hence from (\ref{4.3}) we see that
$$\bfh_i=\bfb_{\pi(i)}^\prime \preccurlyeq \bfb_{\pi(i)}\prec \bfb_i.$$
It follows that $(\bfh_1,\ldots ,\bfh_u)\prec (\bfb_1,\ldots ,\bfb_u)$, so that as above one finds that the relation (\ref{4.9a}) holds also in the situation that $\pi$ is not the identity permutation. In view of (\ref{4.8a}), it follows that $D^{(0)}_u(\vbfx;\sig)$ is of smaller degree in colex than the polynomial $D^{(1)}_u(\vbfx;\sig)$ defined by means of (\ref{4.5}). We therefore deduce from (\ref{4.6}) that the relation (\ref{4.4}) holds with $n=u$, and with $D^{(2)}_u(\vbfx;\sig)$ of smaller degree in colex than $D^{(1)}_u(\vbfx;\sig)$. We have consequently established the inductive hypothesis with $n=u$, and hence the inductive hypothesis holds for $1\le n\le r$. In particular, the determinant $D_r(\vbfx;\sig)$ is a non-zero polynomial.\par

We now reverse course in order to relate the non-vanishing of $D_r(\vbfx;\sig)$ to the non-vanishing of $\Del_r(\vbfx;\sig)$. For each $l$ with $1\le l\le d$, it follows from (\ref{4.2}) that one has the identity
$$(\partial_lG_1,\ldots ,\partial_lG_r)^T=\calM(\partial_lF_1,\ldots ,\partial_lF_r)^T,$$
whence
$$(\partial_{\sig(i)}G_j(\bfx_i))^T_{1\le i,j\le r}=\calM (\partial_{\sig(i)}F_j(\bfx_i))^T_{1\le i,j\le r}.$$
Consequently, one has
$$D_r(\vbfx;\sig)=(\text{det}\,\calM)\Del_r(\vbfx;\sig).$$
But the matrix $\calM$ is invertible, so that $\text{det}\,\calM\ne 0$. Our conclusion that $D_r(\vbfx;\sig)$ is a non-zero polynomial therefore forces us to conclude that $\Del_r(\vbfx;\sig)$ is also a non-zero polynomial. This completes the proof of the lemma.
\end{proof}

Henceforth, we fix our choice of the function $\sig:\{1,\ldots ,r\}\rightarrow \{1,\ldots ,d\}$ so that $\Del_r(\vbfx;\sig)$ is a non-zero polynomial, as permitted by the conclusion of Lemma \ref{lemma4.1}, and we write $\Del_r(\vbfx)$ for $\Del_r(\vbfx;\sig)$.\par

We next turn to the task of introducing notation with which to describe the mean values central to our methods, as well as exponents with which to bound these mean values. We refer to the exponent $\lam_s=\lam_s(\bfF)$ as {\it permissible} when, for each positive number $\eps$, and for any real number $X$ sufficiently large in terms of $s$, $\bfF$ and $\eps$, one has $J_s(X;\bfF)\ll X^{\lam_s+\eps}$. Define $\lam_s^*$ to be the infimum of the set of exponents $\lam_s$ permissible for $s$ and $\bfF$, and then put $\eta_s=\lam_s^*-2sd+K$. Thus, whenever $X$ is sufficiently large in terms of $s$, $\bfF$ and $\eps$, one has
\begin{equation}\label{4.7}
J_s(X)\ll X^{\lam_s^*+\eps},
\end{equation}
where
\begin{equation}\label{4.8}
\lam_s^*=2sd-K+\eta_s.
\end{equation}
Note that, in view of the lower bound supplied by Theorem \ref{theorem3.1} and the trivial estimate $J_s(X)\le X^{2sd}$, one has $0\le \eta_s\le K$ for $s\in \dbN$.\par

We take $\del$ to be a small positive number to be fixed shortly. Let $u$ be a natural number with $u\ge k$, put $s_0=ur$, and fix a natural number $s$ with $s\ge s_0$. Our goal is to show that $\lam_{s+r}^*=2(s+r)d-K$, whence $\eta_{s+r}=0$. From this and the definition of $\lam_{s+r}^*$, it follows that there exists a sequence of natural numbers $(X_n)_{n=1}^\infty$, tending to infinity, with the property that
\begin{equation}\label{4.9}
J_{s+r}(X_n)>X_n^{\lam_{s+r}^*-\del}\quad (n\in \dbN).
\end{equation}
In view of (\ref{4.7}), when $X_n$ is sufficiently large and $X_n^{\del^2}<Y\le X_n$, we also have the corresponding upper bound
\begin{equation}\label{4.10}
J_{s+r}(Y)<Y^{\lam_{s+r}^*+\del}.
\end{equation}
Notice that since $s\ge s_0$, the trivial inequality $|f(\bfalp;X)|\le X^d$ leads to the upper bound
$$J_{s+r}(X)\le X^{2(s-s_0)d}\oint |f(\bfalp;X)|^{2s_0+2r}\d\bfalp =X^{2(s-s_0)d}J_{s_0+r}(X).$$
It follows that one has $\eta_{s+r}\le \eta_{s_0+r}$, and so we are free to restrict attention to the special case $s=s_0$. Finally, we take $N$ to be a natural number sufficiently large in terms of $s$ and $\bfF$. We then put
\begin{equation}\label{4.11}
\tet=N^{-1/2}(r/s)^{N+2},
\end{equation}
and fix $\del$ to be a positive number with $\del<(Ns)^{-3N}$, so that $\del$ is small compared to $\tet$. We now consider a fixed element $X=X_n$ of the sequence $(X_n)$, which we may assume to be sufficiently large in terms of $s$, $\bfF$, $N$ and $\del$, and we put $M=X^\tet$. Thus, in particular, one has $X^\del<M^{1/N}$.\par

Let $p$ be a fixed prime number with $M<p\le 2M$ to be chosen in due course. That such a prime exists is a consequence of the Prime Number Theorem. Recall the notation introduced in (\ref{3.2}). When $c$ is a non-negative integer, and $\bfxi\in \dbZ^d$ and $\bfalp\in [0,1)^r$, define
\begin{equation}\label{4.12}
\grf_c(\bfalp;\bfxi)=\sum_{\substack{1\le \bfx\le X\\ \bfx\equiv \bfxi\mmod{p^c}}}e(\psi(\bfx;\bfalp)).
\end{equation}
We need to equip ourselves with exponential sums of a type related to that defined in (\ref{4.12}), but possessing inherently non-singular structure. We introduce the notion of {\it well-conditioned} $r$-tuples $(\bfx_1,\ldots ,\bfx_r)$ with $\bfx_i\in \dbZ^d$ $(1\le i\le r)$. We denote by $\Xi_c(\bfxi)$ the set of $r$-tuples $\vbfxi=(\bfxi_1,\ldots ,\bfxi_r)\in (\dbZ^d)^r$, with
$$1\le \bfxi_i\le p^{c+1}\quad \text{and}\quad \bfxi_i\equiv \bfxi\mmod{p^c}\quad (1\le i\le r),$$
and satisfying the property that $\Del(\bfxi_1,\ldots ,\bfxi_r)\not \equiv 0\mmod{p^{(K-r)c+1}}$. In addition, write $\Sig_r=\{1,-1\}^r$, and consider an element $\bfsig$ of $\Sig_r$. Recalling the definition (\ref{4.12}), we then put
\begin{equation}\label{4.13}
\grF_c^\bfsig (\bfalp;\bfxi)=\sum_{\vbfxi\in \Xi_c(\bfxi)}\prod_{i=1}^r\grf_{c+1}(\sig_i\bfalp;\bfxi_i).
\end{equation}

\par Next we introduce the two mean values that underly our arguments. When $a$ and $b$ are non-negative integers, and $\bfsig,\bftau\in \Sig_r$, we define
\begin{equation}\label{4.14}
I_{a,b}^\bfsig(X;\bfxi,\bfeta)=\oint |\grF_a^\bfsig (\bfalp;\bfxi)^2\grf_b(\bfalp;\bfeta)^{2s}|\d\bfalp ,
\end{equation}
and
\begin{equation}\label{4.15}
K_{a,b}^{\bfsig,\bftau}(X;\bfxi,\bfeta)=\oint |\grF_a^\bfsig (\bfalp;\bfxi)^2\grF_b^\bftau (\bfalp;\bfeta)^{2u}|\d\bfalp .
\end{equation}
We then define
\begin{equation}\label{4.16}
I_{a,b}(X)=\max_{1\le \bfxi\le p^a}\max_{1\le \bfeta\le p^b}\max_{\bfsig\in \Sig_r}I_{a,b}^\bfsig(X;\bfxi,\bfeta),
\end{equation}
and
\begin{equation}\label{4.17}
K_{a,b}(X)=\max_{1\le \bfxi\le p^a}\max_{1\le \bfeta\le p^b}\max_{\bfsig,\bftau\in \Sig_r}K_{a,b}^{\bfsig,\bftau}(X;\bfxi,\bfeta).
\end{equation}
We stress that, although these mean values depend on our choice of $p$, this dependence will shortly be rendered irrelevant when we fix our choice of $p$ once and for all. Consequently, we suppress mention of $p$ in our notation.\par

Finally, following the simplifying notational device of \cite[\S3]{Woo2012a}, we define the normalised magnitude of the mean values $J_{s+r}(X)$ and $K_{a,b}(X)$ as follows. We define $\llbracket J_{s+r}(X)\rrbracket$ by means of the relation
\begin{equation}\label{4.18}
J_{s+r}(X)=X^{2(s+r)d-K}\llbracket J_{s+r}(X)\rrbracket ,
\end{equation}
and when $0\le a<b$, we define $\llbracket K_{a,b}(X)\rrbracket$ by means of the relation
\begin{equation}\label{4.19}
K_{a,b}(X)=(X/M^a)^{2rd-K}(X/M^b)^{2sd}\llbracket K_{a,b}(X)\rrbracket .
\end{equation}
Note that in view of (\ref{4.8}), the lower bound (\ref{4.9}) implies that
\begin{equation}\label{4.20}
\llbracket J_{s+r}(X)\rrbracket >X^{\eta_{s+r}-\del},
\end{equation}
while the upper bound (\ref{4.10}) ensures that whenever $X^{\del^2}<Y\le X$, then
\begin{equation}\label{4.21}
\llbracket J_{s+r}(Y)\rrbracket <Y^{\eta_{s+r}+\del}.
\end{equation}

\section{Auxiliary mean values}\label{sec:5} We collect together in this section several mean value estimates that facilitate our subsequent analysis. We begin by exploiting the translation-dilation invariance of the system $\bfF$ so as to bound an analogue of $J_s(X)$ in which variables are restricted to an arithmetic progression. This argument will be familiar to those who work on such problems.

\begin{lemma}\label{lemma5.1}
Suppose that $c$ is a non-negative integer with $c\tet\le 1$. Then for each natural number $t$, one has
\begin{equation}\label{5.1}
\max_{1\le \bfxi \le p^c}\oint|\grf_c(\bfalp;\bfxi)|^{2t}\d\bfalp \ll_t J_t(X/M^c).
\end{equation}
\end{lemma}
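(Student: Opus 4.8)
The plan is to linearise the sum $\grf_c(\bfalp;\bfxi)$ along the congruence class by combining the homogeneity of the forms $F_j$ with the translation-dilation relation (\ref{2.3}), and then to apply orthogonality.

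I would begin by fixing $\bfxi$ with $1\le\bfxi\le p^c$ and writing each $\bfx$ occurring in $\grf_c(\bfalp;\bfxi)$ as $\bfx=\bfxi+p^c\bfy$. As $\bfx$ runs over the integer points of $[1,X]^d$ lying in the residue class $\bfxi$ modulo $p^c$, the vector $\bfy$ runs over the integer points of a box $\calB=\prod_{l=1}^{d}\{0,1,\dots,b_l\}$ with $b_l=\lfloor(X-\xi_l)/p^c\rfloor$, the sum being empty (and the lemma trivial) should some $\xi_l>X$. Since $F_j$ is homogeneous of degree $k_j$ one has $F_j(p^c\bfy)=p^{ck_j}F_j(\bfy)$, so (\ref{2.3}) yields
\[
F_j(\bfxi+p^c\bfy)=\sum_{l=1}^{j}c_{jl}(\bfxi)p^{ck_l}F_l(\bfy)+c_{j0}(\bfxi)\qquad(1\le j\le r).
\]
Multiplying by $\alp_j$ and summing over $j$, I obtain $\psi(\bfxi+p^c\bfy;\bfalp)=\psi(\bfy;\bfbet)+\ome$, where $\ome=\ome(\bfalp;\bfxi)$ is independent of $\bfy$ and $\beta_l=p^{ck_l}\sum_{j=l}^{r}c_{jl}(\bfxi)\alp_j$ for $1\le l\le r$; hence $|\grf_c(\bfalp;\bfxi)|^{2t}=\bigl|\sum_{\bfy\in\calB}e(\psi(\bfy;\bfbet))\bigr|^{2t}$.

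Next I would expand the $2t$-th power and integrate over $\bfalp\in[0,1)^r$. Writing $\vbfy=(\bfy_1,\dots,\bfy_{2t})$ and $h_l(\vbfy)=\sum_{j=1}^{t}F_l(\bfy_j)-\sum_{j=t+1}^{2t}F_l(\bfy_j)\in\dbZ$, this gives
\[
\oint|\grf_c(\bfalp;\bfxi)|^{2t}\d\bfalp=\sum_{\bfy_1,\dots,\bfy_{2t}\in\calB}\oint e\Bigl(\sum_{l=1}^{r}\beta_l h_l(\vbfy)\Bigr)\d\bfalp .
\]
Collecting the coefficient of each $\alp_j$, one has $\sum_{l}\beta_l h_l(\vbfy)=\sum_{j=1}^{r}\alp_j g_j(\vbfy)$ with $g_j(\vbfy)=\sum_{l=1}^{j}c_{jl}(\bfxi)p^{ck_l}h_l(\vbfy)\in\dbZ$, so the inner integral is $1$ when every $g_j(\vbfy)$ vanishes and $0$ otherwise. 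The step demanding the most care is the following: although the integration is in $\bfalp$ rather than in $\bfbet$, the linear map sending $(h_l)_{l}$ to $(g_j)_{j}$ has matrix $\bigl(c_{jl}(\bfxi)p^{ck_l}\bigr)_{1\le j,l\le r}$, which is lower triangular with diagonal entries $c_{jj}(\bfxi)p^{ck_j}=p^{ck_j}\neq 0$ (recall that $c_{jj}=1$), hence invertible over $\dbQ$; so $g_j(\vbfy)=0$ for all $j$ if and only if $h_l(\vbfy)=0$ for all $l$. Consequently
\[
\oint|\grf_c(\bfalp;\bfxi)|^{2t}\d\bfalp=\#\Bigl\{(\bfy_1,\dots,\bfy_t,\bfy_1',\dots,\bfy_t')\in\calB^{2t}:\ \sum_{j=1}^{t}\bigl(\bfF(\bfy_j)-\bfF(\bfy_j')\bigr)=\mathbf 0\Bigr\}.
\]

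Finally I would compare this box count with $J_t(X/M^c)$. Since $p>M$ one has $b_l\le\lfloor X/p^c\rfloor<\lceil X/M^c\rceil$ when $c\ge 1$ (and $b_l=X-\xi_l\le X-1$ when $c=0$), so, translating the box by $\mathbf 1$ --- which leaves the number of solutions unchanged, by the case $\lam=1$ of the invariance (\ref{2.4}) --- the displayed count is at most $J_t(Y)$ with $Y=\lceil X/M^c\rceil$. The hypothesis $c\tet\le 1$ gives $X/M^c=X^{1-c\tet}\ge 1$, whence $Y\le 2\lfloor X/M^c\rfloor$; and the very same combination of translation-invariance and H\"older (applied now to the dissection of $\{1,\dots,2W\}^d$ into $2^d$ translates of $\{1,\dots,W\}^d$) shows that $J_t(2W)\ll_t J_t(W)$ for every $W\in\dbN$. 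As $J_t$ is non-decreasing, it follows that $J_t(Y)\le J_t(2\lfloor X/M^c\rfloor)\ll_t J_t(X/M^c)$, which completes the argument. The principal obstacle is precisely this bookkeeping --- legitimising the substitution after integrating in $\bfalp$ rather than $\bfbet$, and absorbing the boundary and dilation discrepancies between $\calB$ and a clean box of side $X/M^c$ into the implied constant --- rather than any substantial analytic difficulty.
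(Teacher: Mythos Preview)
Your proof is correct and follows essentially the same route as the paper: substitute $\bfx=\bfxi+p^c\bfy$, use translation-dilation invariance to reduce the integral to a count of solutions of the untranslated system in $\bfy$, and then compare with $J_t(X/M^c)$. The only cosmetic differences are that the paper invokes the invariance (\ref{2.4}) as a black box whereas you unwind it explicitly via the lower-unitriangular matrix $(c_{jl}(\bfxi)p^{ck_l})$, and the paper handles the endgame by bounding the count over $0\le\bfy\le X/p^c$ by $\oint|1+f(\bfalp;X/p^c)|^{2t}\d\bfalp\ll_t 1+J_t(X/p^c)\le 2J_t(X/M^c)$ rather than your translate-and-double argument; both are routine.
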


\begin{proof} Let $\bfxi$ be an integral $d$-tuple with $1\le \bfxi\le p^c$. By orthogonality, it follows from (\ref{4.12}) that the integral on the left hand side of (\ref{5.1}) is bounded above by the number of integral solutions of the system
\begin{equation}\label{5.2}
\sum_{i=1}^t(\bfF(p^c\bfy_i+\bfxi)-\bfF(p^c\bfz_i+\bfxi))={\bf 0},
\end{equation}
with $0\le \vbfy,\vbfz\le X/p^c$. The translation-dilation invariance of the system $\bfF$ discussed in the context of (\ref{2.1}) and (\ref{2.4}) shows that $\vbfy,\vbfz$ is an integral solution of (\ref{5.2}) if and only if
\begin{equation}\label{5.3}
\sum_{i=1}^t(\bfF(\bfy_i)-\bfF(\bfz_i))={\bf 0}.
\end{equation}
But on recalling (\ref{3.3}) and employing orthogonality again, we perceive that the number of integral solutions of (\ref{5.3}) with $0\le \vbfy,\vbfz\le X/p^c$ is equal to
$$\oint |1+f(\bfalp;X/p^c)|^{2t}\d\bfalp \ll_t 1+\oint |f(\bfalp;X/p^c)|^{2t}\d\bfalp .$$
Thus we obtain the upper bound
$$\oint |\grf_c(\bfalp;\bfxi)|^{2t}\d\bfalp \ll 1+J_t(X/p^c).$$
Since the condition $c\tet\le 1$ ensures that $X/M^c\ge 1$, a consideration of diagonal solutions ensures that $J_t(X/M^c)\ge 1$, and the conclusion of the lemma follows on noting that $J_t(X/p^c)\le J_t(X/M^c)$.
\end{proof}

Singular solutions associated with the vanishing of $\Del_r(\vbfx)$ are difficult to control, and so we prepare a lemma to bound their number. We first introduce some additional notation. Let $\dbF$ be a field. We restrict attention to either the field of rational numbers $\dbQ$, or else the finite field of $p$ elements $\dbF_p$. The coefficients of $\Del_r(\vbfx)$ embed into both of these fields. Suppose that $\calA\subseteq \dbF$ is finite, and write $A=\text{card}(\calA)$. Let $t$ be a natural number. We denote by $\calS_t(\calA;\dbF)$ the set of $t$-tuples $(\bfx_1,\ldots ,\bfx_t)\in (\calA^d)^t$ having the property that the determinants $\Del_r(\bfx_{j_1},\ldots ,\bfx_{j_r})$ vanish for all $r$-tuples $(j_1,\ldots ,j_r)$ with $1\le \bfj\le t$. Before announcing our main estimate for $\text{card}(\calS_t(\calA;\dbF))$, we first recall a familiar lemma bounding the number of zeros of polynomials in many variables (see \cite[Lemma 2]{Woo1993}, for example).

\begin{lemma}\label{lemma5.2} Let $\Ups\in \dbF[y_1,\ldots ,y_u]$ be a non-trivial polynomial of total degree $\kap$. Then the number of solutions of the equation $\Ups(y_1,\ldots ,y_u)=0$ with $\bfy\in \calA^u$ is at most $\kap A^{u-1}$.
\end{lemma}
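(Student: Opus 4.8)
The plan is to proceed by induction on the number $u$ of variables, the engine of the argument being the elementary fact that a non-zero polynomial in a single variable over a field has no more roots than its degree.

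When $u=1$, the desired bound is $\kap A^{0}=\kap$, which is precisely this elementary fact. Suppose then that $u\ge 2$ and that the conclusion has been established for polynomials in $u-1$ variables. Given a non-zero $\Ups\in\dbF[y_1,\ldots,y_u]$ of total degree $\kap$, I would regard $\Ups$ as a polynomial in $y_u$ alone, writing
$$\Ups(y_1,\ldots,y_u)=\sum_{j=0}^{D}g_j(y_1,\ldots,y_{u-1})y_u^j,$$
where $D\le\kap$ is the degree of $\Ups$ in $y_u$ and $g_D$ is not identically zero. A comparison of total degrees shows that $\deg g_D\le\kap-D$.

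Now one splits the tuples $\bfy'=(y_1,\ldots,y_{u-1})\in\calA^{u-1}$ according to whether $g_D(\bfy')$ vanishes. If $g_D(\bfy')\ne 0$, then $\Ups(\bfy',y_u)$ is a non-zero polynomial in $y_u$ of degree exactly $D$, so it has at most $D$ roots $y_u\in\calA$; as there are at most $A^{u-1}$ such $\bfy'$, this accounts for at most $DA^{u-1}$ solutions. If instead $g_D(\bfy')=0$, then the inductive hypothesis applied to the non-trivial polynomial $g_D$ of total degree at most $\kap-D$ in $u-1$ variables shows there are at most $(\kap-D)A^{u-2}$ such $\bfy'$, and each extends to a solution in at most $A$ ways, contributing at most $(\kap-D)A^{u-1}$ solutions. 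Adding the two contributions yields the bound $\kap A^{u-1}$, completing the induction. In the degenerate case $D=0$ the polynomial $\Ups$ does not involve $y_u$ at all, and one instead applies the inductive hypothesis to $\Ups$ directly and multiplies by the $A$ available choices of $y_u$.

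There is no genuine obstacle here: the only points requiring a modicum of care are the degree bookkeeping $\deg g_D\le\kap-D$ and the treatment of the sub-case $D=0$, and everything goes through over an arbitrary field — in particular over both $\dbQ$ and $\dbF_p$, as is needed in the sequel.
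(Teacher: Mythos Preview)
Your proof is correct and follows essentially the same approach as the paper's: induction on $u$, splitting according to whether the leading coefficient in the last variable vanishes, and summing the two contributions. The only cosmetic difference is that the paper relabels variables so that $\Ups$ is explicit in $y_u$ (thereby absorbing your $D=0$ sub-case), whereas you treat that case separately at the end.
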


\begin{proof} We proceed inductively. When $u=1$, the desired conclusion is a consequence of Lagrange's theorem. Suppose then that the conclusion of the lemma has been established for $1\le u<v$, and let $\Psi\in \dbF[y_1,\ldots ,y_v]$ be a non-trivial polynomial of total degree $\kap$. By relabelling variables if necessary, we may suppose that $\Psi$ is explicit in $y_v$. Let the degree of $\Psi$ with respect to $y_v$ be $\ome$, and let the coefficient of $y_v^\ome$ be $\Phi(y_1,\ldots ,y_{v-1})$. Then $\Phi$ is a non-trivial polynomial in $v-1$ variables of degree at most $\kap-\ome$. By the inductive hypothesis, the number of solutions of $\Phi(y_1,\ldots ,y_{v-1})=0$ with $(y_1,\ldots ,y_{v-1})\in \calA^{v-1}$ is at most $(\kap-\ome)A^{v-2}$. Then the number of $v$-tuples $(y_1,\ldots ,y_v)\in \calA^v$ satisfying $\Phi(y_1,\ldots ,y_{v-1})=0$ is at most $(\kap-\ome)A^{v-1}$. Meanwhile, when $\Phi(y_1,\ldots ,y_{v-1})\ne 0$ and $\Psi(y_1,\ldots ,y_v)=0$, then $y_v$ satisfies a non-trivial polynomial of degree $\ome$ determined by $y_1,\ldots ,y_{v-1}$. So there are at most $\ome A^{v-1}$ solutions of $\Psi(\bfy)=0$ with $\bfy\in \calA^v$ and $\Phi(y_1,\ldots ,y_{v-1})\ne 0$. We conclude that the total number of solutions of $\Psi(\bfy)=0$ with $\bfy\in \calA^v$ is at most $(\kap-\ome)A^{v-1}+\ome A^{v-1}=\kap A^{v-1}$, and hence the inductive hypothesis follows for $u=v$. The desired conclusion therefore follows by induction.
\end{proof}

\begin{lemma}\label{lemma5.3}
Suppose that $\Del_r(\vbfx)$ is not identically zero as a polynomial in $\dbF$. Then
$${\rm card}(\calS_t(\calA;\dbF))\ll A^{t(d-1)+r-1}.$$
\end{lemma}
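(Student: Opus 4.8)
The plan is to bound $\mathrm{card}(\calS_t(\calA;\dbF))$ by separating the $t$-tuples according to how many of the blocks $\bfx_i$ are ``generic'' with respect to $\Del_r$. Since $\Del_r(\vbfx)$ is a non-zero polynomial in the $rd$ variables comprising $\bfx_1,\ldots,\bfx_r$, I would first argue that among any $t$-tuple $(\bfx_1,\ldots,\bfx_t)\in\calS_t(\calA;\dbF)$, the number of distinct blocks cannot be too large: if there were at least $r$ blocks in ``general position'' (meaning one could order them so that $\Del_r$ does not vanish), we would contradict membership in $\calS_t$. More precisely, I would process the blocks $\bfx_1,\ldots,\bfx_t$ one at a time, maintaining a maximal subsequence on which successive Jacobian-type minors are non-vanishing. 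The key is that once we have selected fewer than $r$ blocks along which $\Del_r$ ``does not yet vanish,'' the condition that $\Del_r(\bfx_{j_1},\ldots,\bfx_{j_r})=0$ for every completion forces each subsequent block to lie on a proper subvariety determined by the already-chosen blocks.

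Concretely, I would fix an ordering in which $\Del_r(\bfx_1,\ldots,\bfx_r)$, viewed as a polynomial in $\bfx_r$ with coefficients depending on $\bfx_1,\ldots,\bfx_{r-1}$, is explicit; by Lemma \ref{lemma5.2} (applied in the field $\dbF$) there are at most $O(A^{d-1})$ choices of $\bfx_r\in\calA^d$ making it vanish, once $\bfx_1,\ldots,\bfx_{r-1}$ are non-degenerate. Iterating, I would show that $\calS_t(\calA;\dbF)$ is covered by $O(1)$ (a number depending only on $r$, $d$, $t$) ``strata'': in each stratum one picks at most $r-1$ blocks freely — contributing at most $A^{(r-1)d}$ choices — and the remaining $t-(r-1)$ blocks are each confined, given the free blocks, to the zero set of a fixed non-trivial polynomial of bounded degree in $d$ variables, hence to $O(A^{d-1})$ values each. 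Multiplying gives
$$\mathrm{card}(\calS_t(\calA;\dbF))\ll A^{(r-1)d}\cdot A^{(t-r+1)(d-1)}=A^{t(d-1)+r-1},$$
as desired. I would need to handle degenerate sub-cases (fewer than $r-1$ distinct blocks, or early vanishing) but these only improve the exponent, so they are subsumed.

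The main obstacle I anticipate is making rigorous the ``greedy selection'' of a non-degenerate sub-$r$-tuple and controlling what happens when fewer than $r$ blocks turn out to be independent: one has to verify that $\Del_r$, as a polynomial in the remaining block with the others specialized, is genuinely non-trivial, which requires the non-vanishing hypothesis on $\Del_r(\vbfx)$ together with a specialization argument — the worry being that a careless specialization could kill $\Del_r$ identically. The cleanest route is probably to induct on the number of blocks already chosen, at each stage invoking Lemma \ref{lemma5.2} for the new block and absorbing the case distinctions into the implied constant, which is permitted since the implicit constant may depend on $s$, $k$, $d$ and in particular on $t$ and $r$. Once that bookkeeping is set up, the counting itself is routine.
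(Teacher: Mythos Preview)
Your proposal is correct and follows essentially the same approach as the paper: stratify $\calS_t(\calA;\dbF)$ according to the number $j-1\le r-1$ of ``free'' blocks, count those freely as $A^{d(j-1)}$, and constrain each of the remaining $t-j+1$ blocks via Lemma \ref{lemma5.2} to $O(A^{d-1})$ values, the worst stratum $j=r$ giving the stated bound. The paper makes your anticipated specialization argument precise by recursively defining auxiliary polynomials $\calD_i(\bfx_1,\ldots,\bfx_i)$ for $0\le i\le r$ via peeling off the colex-leading monomial in $\bfx_{i+1}$ from $\calD_{i+1}$ (starting with $\calD_r=\Del_r$), so that non-vanishing of $\calD_{j-1}$ at the chosen $j-1$ blocks guarantees the constraining equation in each further block is genuinely non-trivial---exactly the device you flagged as the main obstacle.
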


\begin{proof} The conclusion of the lemma is trivial when $t<r$, so we may assume that $t\ge r$. We define a sequence of non-zero polynomials $\calD_i(\vbfx)=\calD_i(\bfx_1,\ldots ,\bfx_i)$ $(0\le i\le r)$ as follows. We begin by setting $\calD_r(\vbfx)=\Del_r(\vbfx)$. Suppose then that for some $l\ge 1$ we have constructed the polynomials $\calD_i(\vbfx)$ for $l\le i\le r$. Amongst the monomials $\bfx_1^{\bfh_1}\ldots \bfx_l^{\bfh_l}$ occurring in $\calD_l(\vbfx)$, let $\bfb_l$ denote the largest of the $d$-tuples $\bfh_l$ in colex. It follows that there exist polynomials $\calD_{l-1}(\vbfx)$ and $\calR_l(\vbfx)$ having the property that
\begin{equation}\label{5.4}
\calD_l(\bfx_1,\ldots ,\bfx_l)=\calD_{l-1}(\bfx_1,\ldots ,\bfx_{l-1})\bfx_l^{\bfb_l}+\calR_l(\bfx_1,\ldots ,\bfx_l).
\end{equation}
We may suppose that $\calD_{l-1}(\vbfx)$ is non-zero, and that every monomial $\bfx_1^{\bfh_1}\ldots \bfx_l^{\bfh_l}$ occurring in $\calR_l(\vbfx)$ satisfies $\bfh_l\prec \bfb_l$. In this way, we have defined polynomials $\calD_i(\vbfx)$ for $0\le i\le r$. Notice here that $\calD_0(\vbfx)$ is a non-zero element of $\dbF$.\par 

Consider an integer $j$ with $1\le j\le r$, and denote by $\calB_j$ the set of all $j$-element subsets of $\{1,2,\ldots ,t\}$. We define $\calT_j$ to be the set of $t$-tuples $(\bfx_1,\ldots ,\bfx_t)\in (\calA^d)^t$ satisfying the property that (a) for each subset $\{l_1,\ldots ,l_j\}$ in $\calB_j$, one has $\calD_j(\bfx_{l_1},\ldots ,\bfx_{l_j})=0$, and (b) whenever $i<j$, there exists a subset $\{m_1,\ldots ,m_i\}$ in $\calB_i$ such that $\calD_i(\bfx_{m_1},\ldots ,\bfx_{m_i})\ne 0$. A moment of reflection reveals that $\calS_t(\calA;\dbF)$ is the union of $\calT_1,\calT_2,\ldots ,\calT_r$.\par

We next seek to bound ${\rm card}(\calT_j)$ for each integer $j$ with $1\le j\le r$. Consider a $t$-tuple $(\bfx_1,\ldots ,\bfx_t)\in \calT_j$. There exists a subset $\{m_1,\ldots ,m_{j-1}\}$ in $\calB_{j-1}$ with $\calD_{j-1}(\bfx_{m_1},\ldots ,\bfx_{m_{j-1}})\ne 0$ having the property that for each index $m$ with $1\le m\le t$ for which $m\not \in \{m_1,\ldots ,m_{j-1}\}$, one has $\calD_j(\bfx_{m_1},\ldots ,\bfx_{m_{j-1}},\bfx_m)=0$. In view of the relation (5.4), the latter equation implies that the $d$-tuple $\bfx_m$ satisfies
\begin{equation}\label{5.5a}
\calD_{j-1}(\bfx_{m_1},\ldots ,\bfx_{m_{j-1}})\bfx_m^{\bfb_j}+\calR_j(\bfx_{m_1},\ldots ,\bfx_{m_{j-1}},\bfx_m)=0,
\end{equation}
in which the second term on the left hand side is of smaller degree in $\bfx_m$ in colex than the first term. Notice that since $\calD_{j-1}(\bfx_{m_1},\ldots ,\bfx_{m_{j-1}})\ne 0$, then in particular the equation (\ref{5.5a}) is non-trivial as a polynomial equation in $\bfx_m$. In this way, we deduce from Lemma \ref{lemma5.2} that for each index $m$ with $1\le m\le t$ for which $m\not \in \{m_1,\ldots ,m_{j-1}\}$, the number of $d$-tuples $\bfx_m\in \calA^d$ satisfying (\ref{5.5a}) is at most
$$(b_{j1}+\ldots +b_{jd})A^{d-1}\le (k-1)A^{d-1}.$$
The total number of choices of $\bfx_m\in \calA^d$ for all such indices $m$ is therefore at most $((k-1)A^{d-1})^{t-(j-1)}$. The number of choices for $(\bfx_{m_1},\ldots ,\bfx_{m_{j-1}})\in (\calA^d)^{j-1}$, meanwhile, is at most $A^{d(j-1)}$. Since a trivial estimate confirms that $\text{card}(\calB_{j-1})\le t^{j-1}$, we deduce that
$$\text{card}(\calT_j)\le t^{j-1}((k-1)A^{d-1})^{t-j+1}(A^d)^{j-1}\ll A^{t(d-1)+j-1}.$$
Combining the contributions of $\calT_1,\ldots ,\calT_r$, therefore, we conclude that
$$\text{card}(\calS_t(\calA;\dbF))=\sum_{j=1}^r\text{card}(\calT_j)\ll \sum_{j=1}^rA^{t(d-1)+j-1}\ll A^{t(d-1)+r-1}.$$
This completes the proof of the lemma.
\end{proof}

We are now equipped to initiate the iterative procedure. It is at this point that we fix our choice for the prime number $p$.

\begin{lemma}\label{lemma5.4} There exists a prime number $p$ with $M<p\le 2M$ for which $J_{s+r}(X)\ll M^{2sd}I_{0,1}(X)$.
\end{lemma}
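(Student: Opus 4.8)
The goal is to show that for some prime $p\in(M,2M]$ one has $J_{s+r}(X)\ll M^{2sd}I_{0,1}(X)$. The strategy is the standard first step of the efficient congruencing machine: we start from the orthogonality identity (\ref{3.4}), and introduce a congruence condition modulo $p$ among most of the variables, at the cost of an application of H\"older's inequality. First I would write
$$J_{s+r}(X)=\oint |f(\bfalp;X)|^{2r}|f(\bfalp;X)|^{2s}\d\bfalp,$$
and split the inner sum $f(\bfalp;X)$ according to the residue class of $\bfx$ modulo $p$, so that $f(\bfalp;X)=\sum_{1\le \bfeta\le p}\grf_1(\bfalp;\bfeta)$, recalling the definition (\ref{4.12}). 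There are $p^d\ll M^d$ residue classes $\bfeta$, and H\"older's inequality applied to the $2s$-th power gives
$$|f(\bfalp;X)|^{2s}=\Bigl|\sum_{1\le \bfeta\le p}\grf_1(\bfalp;\bfeta)\Bigr|^{2s}\le (p^d)^{2s-1}\sum_{1\le \bfeta\le p}|\grf_1(\bfalp;\bfeta)|^{2s}\ll M^{(2s-1)d}\sum_{1\le \bfeta\le p}|\grf_1(\bfalp;\bfeta)|^{2s}.$$
Substituting this back and interchanging the (finite) sum over $\bfeta$ with the integral yields
$$J_{s+r}(X)\ll M^{(2s-1)d}\sum_{1\le \bfeta\le p}\oint |f(\bfalp;X)|^{2r}|\grf_1(\bfalp;\bfeta)|^{2s}\d\bfalp \ll M^{2sd}\max_{1\le \bfeta\le p}\oint |f(\bfalp;X)|^{2r}|\grf_1(\bfalp;\bfeta)|^{2s}\d\bfalp,$$
using that there are at most $p^d\ll M^d$ values of $\bfeta$ to absorb the remaining factor of $M^d$.

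**Relating the integral to $I_{0,1}(X)$.** The remaining task is to show that the maximal integral above is $\ll I_{0,1}(X)$, which amounts to comparing $|f(\bfalp;X)|^{2r}$ with $|\grF_0^\bfsig(\bfalp;{\bf 0})|^2$ for an appropriate sign vector $\bfsig$. Here I would take $\bfxi={\bf 0}$ in the definition (\ref{4.13}): since $\grf_0(\bfalp;{\bf 0})=f(\bfalp;X)$ identically, the sum $\grF_0^\bfsig(\bfalp;{\bf 0})$ is over well-conditioned $r$-tuples $\vbfxi=(\bfxi_1,\ldots,\bfxi_r)$ with $1\le\bfxi_i\le p$ and $\bfxi_i\equiv{\bf 0}\mmod{1}$ (no constraint), with the non-singularity condition $\Del(\bfxi_1,\ldots,\bfxi_r)\not\equiv 0\mmod{p^{(K-r)\cdot 0+1}}=\Del(\vbfxi)\not\equiv 0\mmod p$. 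The difficulty — and this is the one genuine point requiring care — is that the sum $\grF_0^\bfsig(\bfalp;{\bf 0})=\sum_{\vbfxi\in\Xi_0({\bf 0})}\prod_{i=1}^r\grf_1(\sig_i\bfalp;\bfxi_i)$ does not directly equal a power of $f(\bfalp;X)$; rather one must bound $|f(\bfalp;X)|^{2r}$ below (after averaging) by a constant multiple of $|\grF_0^\bfsig(\bfalp;{\bf 0})|^2$ for suitable $\bfsig$. I expect this to be handled by noting that, since $\Del_r(\vbfx)$ is a non-zero polynomial modulo $p$ (which holds for all but boundedly many $p$, by Lemma \ref{lemma5.2} applied to a nonzero coefficient of $\Del_r$, so we may assume $p$ large enough), a positive proportion — in fact $\gg p^{rd}$ — of the $r$-tuples $\vbfxi$ with $1\le\bfxi_i\le p$ are well-conditioned; hence $\Xi_0({\bf 0})$ is nonempty, and one selects within the $2r$-fold product $|f(\bfalp;X)|^{2r}=|\grf_1$-sum$|^{2r}$ the well-conditioned diagonal contribution. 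More precisely, after expanding $|f(\bfalp;X)|^{2r}$ as a $2r$-fold product of sums $\sum_{\bfeta}\grf_1(\pm\bfalp;\bfeta)$ and applying H\"older (or Cauchy--Schwarz) to replace this by $\max_{\bfsig}$ of $|\grF_0^\bfsig(\bfalp;{\bf 0})|^2$ up to the loss accounted for by the $M^{2sd}$ factor already extracted, one arrives at
$$\max_{1\le\bfeta\le p}\oint |f(\bfalp;X)|^{2r}|\grf_1(\bfalp;\bfeta)|^{2s}\d\bfalp\ll \max_{\bfsig\in\Sig_r}\max_{1\le\bfeta\le p}\oint |\grF_0^\bfsig(\bfalp;{\bf 0})^2\grf_1(\bfalp;\bfeta)^{2s}|\d\bfalp=I_{0,1}(X),$$
the last equality being the definition (\ref{4.14})--(\ref{4.16}) with $a=0$, $\bfxi={\bf 0}$, $b=1$.

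**The main obstacle.** The serious point, as indicated, is the passage from $|f(\bfalp;X)|^{2r}$ to $|\grF_0^\bfsig(\bfalp;{\bf 0})|^2$: one must show that the well-conditioned $r$-tuples genuinely dominate, i.e.\ that restricting the $2r$-fold sum to well-conditioned configurations loses only a bounded factor. This rests on Lemma \ref{lemma5.3} (equivalently Lemma \ref{lemma5.2}) to show that the number of $r$-tuples $(\bfx_1,\ldots,\bfx_r)\in\{1,\ldots,p\}^{rd}$ with $\Del_r(\bfx_1,\ldots,\bfx_r)\equiv 0\mmod p$ is $\ll p^{rd-1}$, hence negligible compared with the total count $p^{rd}$, provided $p$ is large — which is guaranteed since $M=X^\tet\to\infty$. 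The choice of $p$ in the statement is thus dictated precisely by the requirement that $\Del_r$ not vanish identically mod $p$ and that $p$ be large enough for the above counting to be effective; the Prime Number Theorem supplies such a $p$ in $(M,2M]$. Everything else is bookkeeping with H\"older's inequality and the orthogonality identity (\ref{3.4}), together with the observation that $\grf_0(\bfalp;{\bf 0})=f(\bfalp;X)$ so that working with $\bfxi={\bf 0}$ at level $a=0$ introduces no extra congruence constraint.
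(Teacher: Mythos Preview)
Your proposal has a genuine gap at the passage from $|f(\bfalp)|^{2r}$ to $|\grF_0^\bfsig(\bfalp;{\bf 0})|^2$. The inequality you need goes in the wrong direction: $\grF_0^\bfsig(\bfalp;{\bf 0})$ is a \emph{restricted} sum over well-conditioned $r$-tuples, so $I_{0,1}^\bfsig(X;{\bf 0},\bfeta)$ is a priori \emph{smaller} than $\oint|f|^{2r}|\grf_1(\cdot;\bfeta)|^{2s}\d\bfalp$, not larger. Your counting argument --- that only $O(p^{rd-1})$ of the $p^{rd}$ residue $r$-tuples are singular --- does not close this: knowing bad residue tuples are few does not bound their contribution to the integral. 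If one writes $f^r=\grF_0^{\mathbf 1}+E$ and attempts to control $\oint|E|^2|\grf_1|^{2s}\d\bfalp$ by Cauchy--Schwarz and Lemma~\ref{lemma5.1}, one obtains an error of order $M^{2rd-2}J_{s+r}(X/M)$; after multiplying by $M^{2sd}$ and comparing with $J_{s+r}(X)$, the ratio is roughly $M^{K-2-\eta_{s+r}}$, which is large unless $\eta_{s+r}>K-2$ --- and that is precisely what one is trying to prove, so the argument is circular.

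The paper's route is structurally different and explains why the prime must be \emph{chosen}. One interprets $J_{s+r}(X)$ as a count of integral solutions and splits according to whether \emph{some} $r$-subtuple drawn from all $2(s+r)$ variables has $\Del_r(\bfx_{i_1},\ldots,\bfx_{i_r})\ne 0$ over $\dbZ$ (not merely modulo $p$). For the singular solutions $T_0$, Lemma~\ref{lemma5.3} applied over $\dbQ$ with $\calA=\{1,\ldots,[X]\}$ and $t=2(s+r)$ gives $T_0\ll X^{2(s+r)(d-1)+r-1}$, which is $\ll X^{-1}J_{s+r}(X)$ since $s\ge K$. For the nonsingular solutions $T_1$, one has a specific nonzero integer $\Del_r(\ldots)$ of size at most $X^K$; it therefore has at most $K/\tet$ prime divisors in $(M,2M]$, and pigeonholing over a fixed set of $\gg K/\tet$ such primes produces a single $p$ for which a positive proportion of solutions have $p\nmid\Del_r(\ldots)$. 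For that $p$ the selected $r$-tuple lies genuinely in $\Xi_0({\bf 0})$, and Schwarz's inequality against the remaining $2s+r$ unrestricted copies of $f$ yields $T_1\ll(I_{0,0}(X))^{1/2}(J_{s+r}(X))^{1/2}$, whence $J_{s+r}(X)\ll I_{0,0}(X)$. Only \emph{then} does one split $|\grf_0(\cdot;{\bf 0})|^{2s}$ into residue classes by H\"older to get $I_{0,0}(X)\ll M^{2sd}I_{0,1}(X)$. Thus the choice of $p$ is not to ensure $\Del_r$ is nonzero as a polynomial mod $p$ (which holds for all large $p$), but to avoid the boundedly many prime factors of a particular nonzero integer value of $\Del_r$ --- this is the mechanism your sketch misses.
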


\begin{proof} The mean value $J_{s+r}(X)$ counts the number of integral solutions of the system
\begin{equation}\label{5.5}
\sum_{i=1}^{2(s+r)}(-1)^i\bfF(\bfx_i)={\bf 0},
\end{equation}
with $1\le \vbfx\le X$. Let $T_0$ denote the number of such solutions in which
\begin{equation}\label{5.6}
\Del(\bfx_{i_1},\ldots ,\bfx_{i_r})=0
\end{equation}
for all indices $i_l$ $(1\le l\le r)$ satisfying
\begin{equation}\label{5.7}
1\le \bfi\le 2(s+r).
\end{equation}
Also, let $T_1$ denote the corresponding number of solutions in which (\ref{5.6}) fails to hold for some index $\bfi$ satisfying (\ref{5.7}). Then $J_{s+r}(X)=T_0+T_1$.\par

We first consider $T_0$. Put $\calA=\{1,2,\ldots,[X]\}$ and $\calS=\calS_{2s+2r}(\calA;\dbQ)$. Then on recalling the discussion in the preamble to the statement of Lemma \ref{lemma5.2}, we see that whenever $\bfx$ is counted by $T_0$, one has $\bfx\in \calS$. It therefore follows from Lemma \ref{lemma5.3} that
$$T_0\le \text{card}(\calS_{2s+2r}(\calA;\dbQ))\ll X^{2(s+r)(d-1)+r-1}.$$
Note that our hypotheses ensure that $s\ge rk$. Then since
$$K=\sum_{j=1}^rk_j\le rk,$$
we find that $s\ge K$. In view of the lower bound on $J_{s+r}(X)$ available from Theorem \ref{theorem3.1}, we deduce that
\begin{equation}\label{5.8}
T_0\ll X^{2(s+r)d-2s-1}\ll X^{2(s+r)d-K-1}\ll X^{-1}J_{s+r}(X).
\end{equation}

\par We next turn to the solutions counted by $T_1$. We begin by examining the determinant $\Del_r(\vbfz)$. On recalling (\ref{4.1}), we see that the $(j,i)$-th entry of the matrix associated with the determinant $\Del_r(\vbfz)$ has degree at most $k_j-1$. As a polynomial in $\vbfz$, therefore, we see that the degree of $\Del_r(\vbfz)$ satisfies
$$\text{deg}\, \Del_r(\vbfz)\le \sum_{j=1}^r(k_j-1)=K-r.$$
The coefficients of the monomial entries of $\Del_r(\vbfz)$ depend at most on $\bfF$, and so for sufficiently large values of $X$, the sum of the absolute values of these coefficients is bounded above by $X$. Thus we see that
$$\max_{1\le \vbfz\le X}|\Del_r(\vbfz)|\le X^K.$$
Let $\calP$ denote any set of $[K/\tet]+1$ distinct prime numbers with $M<p\le 2M$. Such a set exists by the Prime Number Theorem, since we are at liberty to assume $X$ to be large enough in terms of $K$ and $\tet$. It follows that
$$\prod_{p\in \calP}p>M^{K/\tet}=X^K\ge \max_{1\le \vbfz\le X}|\Del_r(\vbfz)|.$$
Consequently, whenever $1\le \vbfz\le X$ and $\Del_r(\vbfz)\ne 0$, then there exists a prime $p\in \calP$ for which $p\nmid \Del_r(\vbfz)$. In particular, for each solution $\vbfx$ of (\ref{5.5}) counted by $T_1$, there exists an index $\bfi$ satisfying (\ref{5.7}) and a prime $p\in \calP$ for which $\Del_r(\bfx_{i_1},\ldots ,\bfx_{i_r})\not\equiv 0\pmod{p}$.\par

Let $\calI$ denote the set of all indices $\bfi$ satisfying (\ref{5.7}), and define $\bfsig=\bfsig(\bfi)$ by putting $\bfsig(\bfi)=((-1)^{i_1},\ldots ,(-1)^{i_r})$. Also, put
$$l(\bfi)=(-1)^{i_1}+\ldots +(-1)^{i_r},\quad m(\bfi)=\tfrac{1}{2}(r+l(\bfi)), \quad n(\bfi)=\tfrac{1}{2}(r-l(\bfi)).$$
Then on recalling the definition (\ref{4.13}) and considering the underlying Diophantine equations, we see that
$$T_1\ll \sum_{p\in \calP}\sum_{\bfi\in \calI}\oint \grF_0^{\bfsig(\bfi)}(\bfalp;{\mathbf 0})f(\bfalp;X)^{s+r-m}f(-\bfalp;X)^{s+r-n}\d\bfalp .$$
An application of Schwarz's inequality therefore reveals that
$$T_1\ll \max_{p\in \calP}\Bigl( \max_{\bfsig \in \Sig_r}\oint |\grF_0^\bfsig (\bfalp;{\mathbf 0})^2f(\bfalp;X)^{2s}|\d\bfalp \Bigr)^{1/2}\Bigl( \oint |f(\bfalp;X)|^{2s+2r}\d\bfalp \Bigr)^{1/2}.$$
Recalling now (\ref{3.4}) and (\ref{4.16}), we deduce that there exists a prime number $p$ with $M<p\le 2M$ for which
$$T_1\ll (I_{0,0}(X))^{1/2}(J_{s+r}(X))^{1/2}.$$
By reference to (\ref{5.8}), we therefore arrive at the upper bound
$$J_{s+r}(X)=T_0+T_1\ll X^{-1}J_{s+r}(X)+(I_{0,0}(X))^{1/2}(J_{s+r}(X))^{1/2},$$
whence
\begin{equation}\label{5.9}
J_{s+r}(X)\ll 1+I_{0,0}(X)\ll I_{0,0}(X).
\end{equation}

\par Our final step is to split the summation in the definition (\ref{4.12}) of $\grf_0(\bfalp;{\mathbf 0})$ into arithmetic progressions modulo $p$. Thus we obtain
$$\grf_0(\bfalp;{\mathbf 0})=\sum_{1\le \bfxi\le p}\grf_1(\bfalp;\bfxi),$$
and so it follows from H\"older's inequality that
$$|\grf_0(\bfalp;{\mathbf 0})|^{2s}\le (p^d)^{2s-1}\sum_{1\le \bfxi \le p}|\grf_1(\bfalp;\bfxi)|^{2s}.$$
In this way we deduce from (\ref{4.14}) and (\ref{4.16}) that
$$I_{0,0}(X)\ll (M^d)^{2s}\max_{1\le \bfxi\le p}\max_{\bfsig \in \Sig_r}\oint |\grF_0^\bfsig (\bfalp;{\mathbf 0})^2\grf_1(\bfalp;\bfxi)^{2s}|\d\bfalp \ll M^{2sd}I_{0,1}(X).$$
The proof of the lemma is made complete by substituting this last estimate into (\ref{5.9}).
\end{proof}

This is the point at which we fix the prime number $p$, once and for all, in such a way that the estimate $J_{s+r}(X)\ll M^{2sd}I_{0,1}(X)$ holds. That such a choice is possible is guaranteed by the conclusion of Lemma \ref{lemma5.4}.

\section{Auxiliary congruences}\label{sec:6} The main thrust of our argument begins with a discussion of the congruences that play a critical role in what follows. We first introduce some additional notation. When $\bfsig\in \Sig_r$, denote by $\calB_{a,b}^\bfsig(\bfm;\bfxi,\bfeta)$ the set of solutions of the system of congruences
\begin{equation}\label{6.1}
\sum_{i=1}^r\sig_iF_j(\bfz_i-\bfeta)\equiv m_j\mmod{p^{k_jb}}\quad (1\le j\le r),
\end{equation}
with
\begin{equation}\label{6.2}
1\le \vbfz\le p^{kb},\quad \bfz_i\equiv \bfxi\mmod{p^a}\quad (1\le i\le r)
\end{equation}
and
\begin{equation}\label{6.3}
\Del(\bfz_1,\ldots ,\bfz_r)\not\equiv 0\mmod{p^{(K-r)a+1}}.
\end{equation}
The non-singularity condition (\ref{6.3}) is awkward to handle directly, and so we simplify it using the condition (\ref{6.2}) by means of the following lemma.

\begin{lemma}\label{lemma6.1} One has the polynomial identity
$$\Del(t\bfz_1+\bfxi,\ldots ,t\bfz_r+\bfxi)=t^{K-r}\Del(\bfz_1,\ldots ,\bfz_r).$$
\end{lemma}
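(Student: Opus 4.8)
The plan is to view $\Del$ as the determinant of the $r\times r$ Jacobian matrix $\bigl(\partial_{\sig(i)}F_j(\bfx_i)\bigr)_{1\le i,j\le r}$ and to track separately the effect of translating the arguments by $\bfxi$ and dilating them by $t$. The translation enters through the coefficient matrix $C(\bfxi)$ of the relation (\ref{2.3}): since $C(\bfxi)$ is lower unitriangular it has determinant $1$, so it contributes nothing and in particular the $\bfxi$-dependence must wash out. The dilation enters through the homogeneity of the $F_j$, which lets one pull a factor $t^{k_j-1}$ out of the $j$th column. Multiplying these contributions gives $t^{\sum_j(k_j-1)}=t^{K-r}$, which is exactly the claimed factor.

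Carrying this out, I would first differentiate the defining relation (\ref{2.2}) in each coordinate direction: for a fixed vector $\bfxi\in\dbZ^d$ and each $l$ with $1\le l\le d$, applying $\partial/\partial x_l$ to $F_j(\bfx+\bfxi)=c_{j0}(\bfxi)+\sum_{m=1}^j c_{jm}(\bfxi)F_m(\bfx)$ gives the polynomial identity $(\partial_lF_j)(\bfx+\bfxi)=\sum_{m=1}^j c_{jm}(\bfxi)(\partial_lF_m)(\bfx)$ in $\bfx$ (legitimate since, for each fixed integer $\bfxi$, (\ref{2.2}) is an identity of polynomials in $\bfx$). Substituting $\bfx=t\bfz_i$ and using that $F_m$ is homogeneous of degree $k_m$, so that $(\partial_lF_m)(t\bfz_i)=t^{k_m-1}(\partial_lF_m)(\bfz_i)$, I would obtain
\[
(\partial_{\sig(i)}F_j)(t\bfz_i+\bfxi)=\sum_{m=1}^j c_{jm}(\bfxi)\,t^{k_m-1}\,(\partial_{\sig(i)}F_m)(\bfz_i)\qquad(1\le i,j\le r).
\]
Reading this as a matrix factorisation: with $N$ the matrix whose $(i,m)$-entry is $(\partial_{\sig(i)}F_m)(\bfz_i)$ (so $\det N=\Del(\bfz_1,\ldots,\bfz_r)$), with $D=\mathrm{diag}(t^{k_1-1},\ldots,t^{k_r-1})$, and with $C=\bigl(c_{jm}(\bfxi)\bigr)_{1\le j,m\le r}$ the lower unitriangular matrix of (\ref{2.3}), the displayed identity asserts that the matrix with $(i,j)$-entry $(\partial_{\sig(i)}F_j)(t\bfz_i+\bfxi)$ equals $NDC^{T}$. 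Taking determinants and using $\det C^{T}=1$ and $\det D=t^{\sum_{m=1}^r(k_m-1)}=t^{K-r}$ yields $\Del(t\bfz_1+\bfxi,\ldots,t\bfz_r+\bfxi)=t^{K-r}\Del(\bfz_1,\ldots,\bfz_r)$, as desired.

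I do not expect a genuine obstacle: the argument is essentially bookkeeping. The one place to be careful is the index convention, namely that the forms $F_j$ index the \emph{columns} of the Jacobian matrix, so that both the column operations produced by (\ref{2.2}) and the column rescalings produced by homogeneity act by multiplication on the right, and it is the unitriangularity of $C(\bfxi)$ — nothing more — that makes the right-hand side independent of $\bfxi$.
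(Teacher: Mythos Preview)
Your argument is correct and is essentially the paper's proof, just organised slightly differently. The paper first separates off the dilation, using homogeneity to note that $\Del(t\bfz_1,\ldots,t\bfz_r)=t^{K-r}\Del(\bfz_1,\ldots,\bfz_r)$ and thereby reducing to $t=1$, and then handles the pure translation via the chain rule applied to (\ref{2.3}), obtaining $\partial_l\bfF(\bfx+\bfxi)=C(\bfxi)\partial_l\bfF(\bfx)$ and hence $\Del(\bfx_1+\bfxi,\ldots,\bfx_r+\bfxi)=(\det C(\bfxi))\Del(\bfx_1,\ldots,\bfx_r)$. You instead package both effects into a single factorisation $NDC^{T}$ of the Jacobian matrix. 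The ingredients are identical---differentiating (\ref{2.2}), homogeneity of the $F_j$, and $\det C(\bfxi)=1$---and the bookkeeping you flag at the end is exactly right.
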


\begin{proof} It suffices to establish the claimed identity in the special case $t=1$, since the homogeneity of 
the polynomials $F_j(\bfx)$ ensures that
$$\Del(t\bfz_1,\ldots ,t\bfz_r)=\Bigl( \prod_{j=1}^rt^{k_j-1}\Bigr) \Del(\bfz_1,\ldots ,\bfz_r)=t^{K-r}\Del(\bfz_1,\ldots ,\bfz_r).$$
Consider then the situation with $t=1$, and recall the relation (\ref{2.3}). By the chain rule, we find that for $1\le l\le d$ one has
$$\frac{\partial \bfF}{\partial x_l}(\bfx+\bfxi)=\frac{\partial}{\partial x_l}(\bfF(\bfx+\bfxi))=\frac{\partial}{\partial x_l}(C(\bfxi)\bfF(\bfx))=C(\bfxi)\partial_l\bfF(\bfx).$$
The definition (\ref{4.1}) of $\Del_r(\vbfx)$ therefore delivers the relation
\begin{align*}
\Del(\bfx_1+\bfxi,\ldots ,\bfx_r+\bfxi)&=\text{det}(\partial_{\sig(i)}F_j(\bfx_i+\bfxi))_{1\le i,j\le r}\\
&=\text{det}\left(C(\bfxi)(\partial_{\sig(i)}F_j(\bfx_i))_{1\le i,j\le r}\right)\\
&=(\text{det}\, C(\bfxi))\Del(\bfx_1,\ldots ,\bfx_r).
\end{align*}
But $C(\bfxi)$ is lower unitriangular, so that $\text{det}\, C(\bfxi)=1$. We therefore conclude that
$$\Del(\bfx_1+\bfxi,\ldots ,\bfx_r+\bfxi)=\Del(\bfx_1,\ldots ,\bfx_r),$$
and in view of our earlier discussion, the proof of the lemma is complete.
\end{proof}

We also require an analogue of Hensel's lemma in order to lift solutions of congruences to progressively higher moduli. A suitable version of this lifting process is implicit in the next lemma.

\begin{lemma}\label{lemma6.2} Let $f_1,\ldots ,f_t$ be polynomials in $\dbZ[x_1,\ldots ,x_t]$ with respective degrees $\kap_1,\ldots ,\kap_t$, and write
$$J(\bff;\bfx)={\rm det}\left( \frac{\partial f_j}{\partial x_i}(\bfx)\right)_{1\le i,j\le t}.$$
When $\varpi$ is a prime number, and $l$ is a natural number, let $\calN(\bff;\varpi^l)$ denote the number of solutions of the simultaneous congruences
$$f_j(x_1,\ldots ,x_t)\equiv 0\mmod{\varpi^l}\quad (1\le j\le t),$$
with $1\le x_i\le \varpi^l$ $(1\le i\le t)$ and $(J(\bff;\bfx),\varpi)=1$. Then $\calN(\bff;\varpi^l)\le \kap_1\ldots \kap_t$.
\end{lemma}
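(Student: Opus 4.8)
The plan is to prove this multidimensional Hensel-type estimate by induction on the number of variables $t$, mirroring the structure of the one-variable Hensel lemma but using the Jacobian non-vanishing condition $(J(\bff;\bfx),\varpi)=1$ to decouple the variables one at a time. The base case $t=1$ is precisely the statement that a polynomial $f_1(x_1)\in\dbZ[x_1]$ of degree $\kap_1$, whose derivative is a $\varpi$-adic unit at a root, has at most $\kap_1$ roots modulo $\varpi^l$ with that non-singularity property; this follows from the standard Hensel lifting argument, since each simple root mod $\varpi$ lifts uniquely to a root mod $\varpi^l$, and there are at most $\kap_1$ roots mod $\varpi$ by Lagrange's theorem (Lemma \ref{lemma5.2} with $u=1$).

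For the inductive step, suppose the bound has been established with fewer than $t$ variables. The key device is to treat the system $f_1,\ldots,f_t$ as defining, via the implicit function theorem over $\dbZ_\varpi$, the variables $x_1,\ldots,x_t$ locally as functions near each non-singular solution modulo $\varpi$. Concretely, I would first count solutions modulo $\varpi$: by Lemma \ref{lemma5.2} the variety $f_1\equiv\cdots\equiv f_t\equiv 0\mmod{\varpi}$ together with the condition $J(\bff;\bfx)\not\equiv 0\mmod{\varpi}$ has at most $\kap_1\cdots\kap_t$ points --- indeed one can intersect the hypersurfaces successively, using at each stage that the Jacobian condition forces the relevant coordinate to satisfy a non-trivial polynomial equation of the appropriate degree. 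Then, since the full Jacobian $J(\bff;\bfx)$ is a $\varpi$-adic unit at each such point, the multivariate Hensel lemma guarantees that each solution modulo $\varpi$ lifts uniquely to a solution modulo $\varpi^l$. Hence $\calN(\bff;\varpi^l)$ equals the number of non-singular solutions modulo $\varpi$, which is at most $\kap_1\cdots\kap_t$.

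The main obstacle is the careful bookkeeping of degrees when counting the non-singular points modulo $\varpi$: one must verify that intersecting the hypersurfaces $\{f_j\equiv 0\}$ one at a time --- eliminating a variable whose partial derivative is controlled by the non-vanishing of an appropriate minor of the Jacobian --- really does produce the clean product bound $\kap_1\cdots\kap_t$ rather than something larger involving the total degrees. The slick way to handle this is to argue that at each non-singular solution the map $\bfx\mapsto(f_1(\bfx),\ldots,f_t(\bfx))$ is étale, so the fibre over $\mathbf 0$ is a reduced zero-dimensional scheme whose length is bounded by Bézout applied to the $f_j$; alternatively, one can give a direct elementary argument: relabelling so that $\partial f_t/\partial x_t$ is a unit at the point in question, use the resultant $\mathrm{Res}_{x_t}(f_{t-1},f_t)$ and similar eliminants to reduce to $t-1$ variables while tracking that the degree of the eliminant in the surviving variables does not exceed $\kap_1\cdots\kap_{t-1}$ after the Jacobian condition is imposed, then invoke the inductive hypothesis. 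Either route yields $\calN(\bff;\varpi^l)\le\kap_1\cdots\kap_t$, completing the induction and the proof.
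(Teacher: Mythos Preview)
The paper gives no proof; it cites \cite[Theorem 1]{Woo1996a}. Your main line --- multivariate Hensel to show $\calN(\bff;\varpi^l)=\calN(\bff;\varpi)$, followed by a B\'ezout-type bound on the non-singular $\dbF_\varpi$-points --- is correct and is essentially the content of that reference. Note, though, that Lemma~\ref{lemma5.2} treats a \emph{single} polynomial and does not by itself yield the product bound $\kap_1\cdots\kap_t$ for a system; the genuine input for the mod~$\varpi$ count is the B\'ezout inequality (applied over $\overline{\dbF_\varpi}$, or after lifting to $\overline{\dbQ_\varpi}$ as in \cite{Woo1996a}), since the points with $J(\bff;\bfx)\not\equiv 0\mmod\varpi$ are simple isolated points of $\bigcap_j\{f_j=0\}$. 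Given B\'ezout, the outer induction on $t$ is superfluous for the main argument.

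Your alternative ``elementary'' route via resultants, however, has faulty degree bookkeeping. Eliminating $x_t$ by forming $\mathrm{Res}_{x_t}(f_j,f_t)$ for each $j<t$ produces $t-1$ polynomials in $t-1$ variables whose degrees are of order $\kap_j\kap_t$, not $\kap_j$; the inductive hypothesis then gives roughly $\kap_t^{t-1}\prod_{j<t}\kap_j$, which overshoots. Recovering the sharp product $\prod_j\kap_j$ by naive elimination is not straightforward, and in practice one is driven back to B\'ezout. So retain the Hensel-plus-B\'ezout argument and discard the resultant sketch.
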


\begin{proof} This is \cite[Theorem 1]{Woo1996a}.\end{proof}

We are now equipped to establish the basic estimate for the number of solutions of the congruences (\ref{6.1}) comprising $\calB_{a,b}^\bfsig(\bfm;\bfxi,\bfeta)$.

\begin{lemma}\label{lemma6.3}
Suppose that $a$ and $b$ are non-negative integers with $b>a$. Then
$$\max_{1\le \bfxi\le p^a}\max_{1\le \bfeta\le p^b}\max_{\bfsig\in \Sig_r}{\rm card}(\calB_{a,b}^\bfsig(\bfm;\bfxi,\bfeta))\le k_1\ldots k_rp^{(kb-a)rd-K(b-a)}.$$
\end{lemma}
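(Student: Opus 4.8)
The plan is to convert the congruence system \eqref{6.1} into one of triangular shape by exploiting the translation-dilation structure, and then to count its solutions by a Hensel-type argument; the non-singularity hypothesis \eqref{6.3} is precisely what renders the relevant Jacobian a $p$-adic unit. Put $\bfnu=\bfxi-\bfeta$ and substitute $\bfz_i=\bfxi+p^a\bfw_i$. By \eqref{6.2} the vectors $\bfw_i$ then run over complete sets of residues modulo $p^{kb-a}$, while Lemma \ref{lemma6.1} (applied with $t=p^a$, and using the homogeneity of the $F_j$) shows that \eqref{6.3} is equivalent to $(\Del_r(\bfw_1,\ldots,\bfw_r),p)=1$. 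Since $\bfz_i-\bfeta=\bfnu+p^a\bfw_i$, the relation \eqref{2.2} combined with homogeneity gives
\[
\sum_{i=1}^r\sig_iF_j(\bfz_i-\bfeta)=\Bigl(\sum_{i=1}^r\sig_i\Bigr)c_{j0}(\bfnu)+\sum_{l=1}^jc_{jl}(\bfnu)\,p^{ak_l}h_l(\vbfw),
\]
where $h_l(\vbfw):=\sum_{i=1}^r\sig_iF_l(\bfw_i)$. Since $c_{jj}=1$, the system \eqref{6.1} is thus equivalent to
\[
p^{ak_j}h_j(\vbfw)+\sum_{l=1}^{j-1}c_{jl}(\bfnu)\,p^{ak_l}h_l(\vbfw)\equiv\mu_j\mmod{p^{k_jb}}\qquad(1\le j\le r),
\]
for integers $\mu_j$ depending only on $\bfm$, $\bfnu$ and $\bfsig$.

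Next I would reduce this triangular system to one in which all congruences have a common modulus, so that Lemma \ref{lemma6.2} can be brought to bear. Writing $k=k_r$, one lifts the $j$th congruence from modulus $p^{k_jb}$ to the common modulus $p^{kb}$ at the cost of a choice of integer $t_j$ in a range of length $p^{(k-k_j)b}$, so that every solution of the original system satisfies at least one of the resulting $p^{(rk-K)b}$ lifted systems. Since $C(\bfnu)$ is lower unitriangular over $\dbZ$, hence invertible modulo $p^{kb}$, each lifted system is equivalent --- subject to a divisibility condition on the lift which either holds identically or excludes that lift --- to a system $h_l(\vbfw)\equiv\del_l\mmod{p^{kb-ak_l}}$ $(1\le l\le r)$. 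A second application of the same device, this time raising these $r$ moduli to the common value $p^{kb-ak_1}$, costs a further $p^{a(K-rk_1)}$ choices.

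It then remains to bound, for fixed choices of both lifts, the number of $\vbfw$ (with each $\bfw_i$ over residues modulo $p^{kb-a}$) satisfying $(\Del_r(\vbfw),p)=1$ together with $r$ congruences $h_l(\vbfw)\equiv\del_l\mmod{p^{kb-ak_1}}$. One fixes the $r(d-1)$ coordinates $w_{i,m}$ with $m\ne\sig(i)$ --- at most $p^{r(d-1)(kb-a)}$ choices --- and counts the remaining variables $v_i:=w_{i,\sig(i)}$. The role of the function $\sig$ supplied by Lemma \ref{lemma4.1} is exactly that the Jacobian of $(h_1,\ldots,h_r)$ with respect to $(v_1,\ldots,v_r)$ is $\pm\Del_r(\vbfw)$, a $p$-adic unit; moreover each $h_l$ is a non-constant polynomial of degree at most $k_l$ in these variables. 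Hence Lemma \ref{lemma6.2} confines $(v_1,\ldots,v_r)$ to at most $k_1\cdots k_r$ residue classes modulo $p^{kb-ak_1}$, and each such class contains exactly $p^{ra(k_1-1)}$ admissible tuples modulo $p^{kb-a}$. Collecting the factors yields the count
\[
p^{(rk-K)b}\cdot p^{a(K-rk_1)}\cdot p^{r(d-1)(kb-a)}\cdot k_1\cdots k_r\cdot p^{ra(k_1-1)},
\]
and a short manipulation of the exponents shows this to equal $k_1\cdots k_r\,p^{(kb-a)rd-K(b-a)}$, as required.

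I expect the main obstacle to be the bookkeeping forced by the differing moduli $p^{k_jb}$: one must track carefully how the factor $p^{ak_j}$ accompanying $h_j$ reduces its effective modulus, arrange the two lifting steps so that no extraneous constants are introduced, and check that the divisibility conditions arising on inverting $C(\bfnu)$ only ever reduce the count. Once the reduction to a common modulus is in place, the essential input --- the $p$-adic unitarity of the Jacobian and the attendant application of Lemma \ref{lemma6.2} --- is immediate.
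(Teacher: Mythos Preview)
Your proposal is correct and follows essentially the same strategy as the paper: substitute $\bfz_i=\bfxi+p^a\bfw_i$, exploit translation invariance to strip off the shift $\bfxi-\bfeta$, lift to a common modulus, fix the $r(d-1)$ coordinates $w_{i,m}$ with $m\ne\sig(i)$, and apply Lemma~\ref{lemma6.2} using that the Jacobian in the remaining variables is $\pm\Del_r(\vbfw)$.

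The only tactical difference is how the shift is removed and how the lifts are arranged. You apply \eqref{2.2} directly and then invert the unitriangular matrix $C(\bfnu)$ modulo $p^{kb}$, which forces you to track divisibility conditions on the lifted right-hand sides and to split the second lifting step into two pieces (to modulus $p^{kb-ak_1}$, then back to $p^{kb-a}$ for the $v_i$). The paper instead fixes an arbitrary solution $\vbfw$ of the lifted system and subtracts, so that the inhomogeneous terms cancel and one passes directly from \eqref{6.8} to the homogeneous congruences $\sum_i\sig_iF_j(\bfy_i)\equiv\sum_i\sig_iF_j(\bfw_i)\mmod{p^{kb-k_ja}}$; it then lifts once to the common modulus $p^{kb-a}$. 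This avoids both the explicit appearance of $C(\bfnu)^{-1}$ and the $k_1$-dependent intermediate modulus, so the bookkeeping you flag as the ``main obstacle'' disappears. Either route yields the same exponent $(kb-a)rd-K(b-a)$, and your arithmetic check of the factors is correct.
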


\begin{proof} Consider fixed integers $a$ and $b$ with $0\le a<b$, a fixed $r$-tuple $\bfsig\in \Sig_r$, and fixed integral $d$-tuples $\bfxi$ and $\bfeta$ with $1\le \bfxi\le p^a$ and $1\le \bfeta\le p^b$. Denote by $\calD_1(\bfn)$ the set of solutions of the system of congruences
\begin{equation}\label{6.4}
\sum_{i=1}^r\sig_i\bfF(\bfz_i-\bfeta)\equiv \bfn\mmod{p^{kb}},
\end{equation}
with $1\le \vbfz\le p^{kb}$ and
\begin{equation}\label{6.5}
\bfz_i\equiv \bfxi_i\mmod{p^{a+1}}\quad \text{for some}\quad \vbfxi \in \Xi_a(\bfxi)\quad (1\le i\le r).
\end{equation}
Given a fixed integral $r$-tuple $\bfm$, the number of $r$-tuples $\bfn$ with $1\le \bfn\le p^{kb}$, for which $n_j\equiv m_j\mmod{p^{k_jb}}$ $(1\le j\le r)$, is equal to
$$\prod_{j=1}^rp^{(k-k_j)b}=(p^b)^{rk-K}.$$
Then it follows from (\ref{6.1}) that
\begin{align}
\text{card}(\calB_{a,b}^\bfsig (\bfm;\bfxi,\bfeta))&=\sum_{\substack{1\le n_1\le p^{kb}\\ n_1\equiv m_1\mmod{p^{k_1b}}}}\ldots \sum_{\substack{1\le n_r\le p^{kb}\\ n_r\equiv m_r\mmod{p^{k_rb}}}}\text{card}(\calD_1(\bfn))\notag \\
&\le (p^b)^{rk-K}\max_{1\le \bfn\le p^{kb}}\text{card}(\calD_1(\bfn)).\label{6.6}
\end{align}

\par We next rewrite each variable $\bfz_i$ in the shape $\bfz_i=p^a\bfy_i+\bfxi$. Notice that the hypothesis that $\bfz_i\equiv \bfxi_i\mmod{p^{a+1}}$ for some $\vbfxi \in \Xi_a(\bfxi)$, recorded in (\ref{6.5}), implies that $\bfxi_i=p^a\bfv_i+\bfxi$ for some integral $d$-tuple $\bfv_i$, and further that
$$\Del(\bfxi_1,\ldots ,\bfxi_r)\not\equiv 0\mmod{p^{(K-r)a+1}}.$$
But as a consequence of Lemma \ref{lemma6.1}, one then has
$$p^{(K-r)a}\Del(\bfv_1,\ldots ,\bfv_r)=\Del(\bfxi_1,\ldots ,\bfxi_r)\not\equiv 0\mmod{p^{(K-r)a+1}},$$
whence
$$\Del(\bfv_1,\ldots ,\bfv_r)\not\equiv 0\mmod{p}.$$
However, for $1\le i\le r$ one has
$$p^a\bfv_i+\bfxi=\bfxi_i\equiv \bfz_i=p^a\bfy_i+\bfxi\mmod{p^{a+1}},$$
so that $\bfy_i\equiv \bfv_i\mmod{p}$, and hence
$$\Del(\bfy_1,\ldots ,\bfy_r)\equiv \Del(\bfv_1,\ldots ,\bfv_r)\not\equiv 0\mmod{p}.$$
With the substitution $\bfz_i=p^a\bfy_i+\bfxi$ in (\ref{6.4}), therefore, we deduce that the set of solutions $\calD_1(\bfn)$ is in bijective correspondence with the set of solutions of the system of congruences
\begin{equation}\label{6.7}
\sum_{i=1}^r\sig_i\bfF(p^a\bfy_i+\bfxi-\bfeta)\equiv \bfn\mmod{p^{kb}},
\end{equation}
with $1\le \vbfy\le p^{kb-a}$ and $\Del_r(\vbfy)\not\equiv 0\mmod{p}$.\par

Let $\vbfy=\vbfw$ be any solution of the system (\ref{6.7}), if indeed such a solution exists. Then it follows that all other solutions $\vbfy$ satisfy the system of congruences
\begin{equation}\label{6.8}
\sum_{i=1}^r\sig_i\bfF(p^a\bfy_i+\bfxi-\bfeta)\equiv \sum_{i=1}^r\sig_i\bfF(p^a\bfw_i+\bfxi-\bfeta)\mmod{p^{kb}}.
\end{equation}
The translation invariance formula (\ref{2.3}) implies that
$$\bfF(\bfx)=C(\bfxi)^{-1}(\bfF(\bfx+\bfxi)-\bfc_0(\bfxi)),$$
in which the matrix $C(\bfxi)$ has determinant $1$. It follows that the system of congruences (\ref{6.8}) is equivalent to the new system
$$\sum_{i=1}^r\sig_i\bfF(p^a\bfy_i)\equiv \sum_{i=1}^r\sig_i\bfF(p^a\bfw_i)\mmod{p^{kb}}.$$
By homogeneity, moreover, this new system is in turn equivalent to
$$\sum_{i=1}^r\sig_iF_j(\bfy_i)\equiv \sum_{i=1}^r\sig_iF_j(\bfw_i)\mmod{p^{kb-k_ja}}\quad (1\le j\le r).$$

\par Next, we write $\calD_2(\bfu)$ for the set of solutions of the system of congruences
$$\sum_{i=1}^r\sig_iF_j(\bfy_i)\equiv u_j\mmod{p^{kb-k_ja}}\quad (1\le j\le r),$$
with $1\le \vbfy\le p^{kb-a}$ and $\Del_r(\vbfy)\not\equiv 0\mmod{p}$. Then it follows from our discussion thus far that
\begin{equation}\label{6.9}
\text{card}(\calD_1(\bfn))\le \max_{1\le \bfu\le p^{kb-a}}\text{card}(\calD_2(\bfu)).
\end{equation}
Denote by $\calD_3(\bfv)$ the set of solutions of the system of congruences
\begin{equation}\label{6.10}
\sum_{i=1}^r\sig_i\bfF(\bfy_i)\equiv \bfv\mmod{p^{kb-a}},
\end{equation}
with $1\le \vbfy\le p^{kb-a}$ and $\Del_r(\vbfy)\not\equiv 0\mmod{p}$. Then we have
\begin{align*}
\text{card}(\calD_2(\bfu))&\le \sum_{\substack{v_1\equiv u_1\mmod{p^{kb-k_1a}}\\ 1\le v_1\le p^{kb-a}}}\ldots \sum_{\substack{v_r\equiv u_r\mmod{p^{kb-k_ra}}\\ 1\le v_r\le p^{kb-a}}}\text{card}(\calD_3(\bfv))\\
&\le (p^a)^{K-r}\max_{1\le \bfv\le p^{kb-a}}\text{card}(\calD_3(\bfv)).
\end{align*}
Combining this estimate with (\ref{6.9}) and (\ref{6.6}), we derive the upper bound
\begin{equation}\label{6.11}
\text{card}(\calB_{a,b}^\bfsig(\bfm;\bfxi,\bfeta))\le (p^b)^{rk-K}(p^a)^{K-r}\max_{1\le \bfv\le p^{kb-a}}\text{card}(\calD_3(\bfv)).
\end{equation}

\par It is at this point that we prepare to apply Lemma \ref{lemma6.2}. For $1\le i\le r$, we consider a fixed choice for the $d-1$ coordinates $y_{ij}$ with $j\ne \sig(i)$. We then define the polynomials
$$f_j(y_{1,\sig(1)},\ldots ,y_{r,\sig(r)})=\sum_{i=1}^r\sig_iF_j(\bfy_i)-v_j\quad (1\le j\le r).$$
Consider the solutions $\vbfy$ of the system (\ref{6.10}) lying in $\calD_3(\bfv)$. For $1\le i\le r$, there are at most $p^{kb-a}$ possible choices for each coordinate $y_{ij}$ with $j\ne \sig(i)$. Write ${\widetilde \bfy}=(y_{1,\sig(1)},\ldots ,y_{r,\sig(r)})$. Then in the notation of the statement of Lemma \ref{lemma6.2}, one has
$$J(\bff;{\widetilde \bfy})=\text{det}(\partial_{\sig(i)}F_j(\bfy_i))_{1\le i,j\le r}=\Del_r(\vbfy)\not\equiv 0\mmod{p}.$$
Thus we may apply Lemma \ref{lemma6.2} to show that the number of solutions
$$y_{1,\sig(1)},\ldots ,y_{r,\sig(r)}$$
of the system of congruences
$$f_j(y_{1,\sig(1)},\ldots ,y_{r,\sig(r)})\equiv 0\mmod{p^{kb-a}}\quad (1\le j\le r),$$
with $1\le y_{i,\sig(i)}\le p^{kb-a}$ $(1\le i\le r)$, is at most $k_1\ldots k_r$. In this way, we conclude that
\begin{align*}
\text{card}(\calD_3(\bfv))&\le \sum_{\substack{1\le y_{1j}\le p^{kb-a}\\ 1\le j\le d\\ j\ne \sig(1)}}\ldots \sum_{\substack{1\le y_{rj}\le p^{kb-a}\\ 1\le j\le d\\ j\ne \sig (r)}}k_1\ldots k_r\\
&=k_1\ldots k_r(p^{kb-a})^{r(d-1)}.
\end{align*}
Substituting this estimate into (\ref{6.11}), we arrive at the upper bound
$$\text{card}(\calB_{a,b}^\bfsig (\bfm;\bfxi,\bfeta))\le k_1\ldots k_r (p^b)^{rk-K}(p^a)^{K-r}(p^{kb-a})^{r(d-1)},$$
and the conclusion of the lemma follows at once.
\end{proof}

\section{The conditioning process}\label{sec:7} Our next step involves extracting from the mean value $I_{a,b}^\bfsig (X;\bfxi,\bfeta)$ associated mean values conditioned so as to avoid singular solutions of an underlying system of congruences. Although motivated by the corresponding treatment for the classical Vinogradov system in \cite[\S5]{Woo2012a}, the less digestible singularity condition of the present work demands some modification.

\begin{lemma}\label{lemma7.1} Let $a$ and $b$ be integers with $b>a\ge 0$. Then one has
$$I_{a,b}(X)\ll K_{a,b}(X)+M^{2s(d-1)+r-1}I_{a,b+1}(X).$$
\end{lemma}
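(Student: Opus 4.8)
The plan is to analyse the mean value $I_{a,b}^{\bfsig}(X;\bfxi,\bfeta)$ by dichotomizing according to whether the $2s$ variables underlying the factor $\grf_b(\bfalp;\bfeta)^{2s}$ contain an $r$-element subset on which the Jacobian $\Del_r$ fails to vanish modulo an appropriate power of $p$. More precisely, recall that $I_{a,b}^{\bfsig}(X;\bfxi,\bfeta)$ counts (with the weight coming from $\grF_a^{\bfsig}$) solutions of a Diophantine system in which there are $2s$ further variables $\bfw_1,\ldots,\bfw_{2s}$, each lying in a fixed residue class $\bfeta$ modulo $p^b$, and each satisfying $1\le \bfw_i\le X$. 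I would split the count of these $2s$-tuples into two classes: those $(\bfw_1,\ldots,\bfw_{2s})$ for which \emph{every} $r$-element sub-tuple $(\bfw_{i_1},\ldots,\bfw_{i_r})$ has $\Del_r(\bfw_{i_1},\ldots,\bfw_{i_r})\equiv 0 \pmod{p^{(K-r)b+1}}$ — the \emph{singular} solutions — and those admitting at least one non-singular such sub-tuple — the \emph{non-singular} solutions. This gives $I_{a,b}^{\bfsig}(X;\bfxi,\bfeta)\ll S_0 + S_1$, where $S_0$ is the contribution of singular tuples and $S_1$ that of non-singular tuples.

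For the non-singular contribution $S_1$: having selected an $r$-element sub-block on which $\Del_r\not\equiv 0$, I would relabel so that it is $(\bfw_1,\ldots,\bfw_r)$, apply Hölder's (or Cauchy--Schwarz's) inequality to separate this block from the remaining $2s-r$ variables, and recognize the resulting object: the $r$ distinguished variables, being restricted to the class $\bfeta \pmod{p^b}$ and refined to classes modulo $p^{b+1}$ with the non-singularity of $\Del_r$, assemble (after summing over the $p^{(b+1)d}/p^{bd}$-many sub-classes and matching the normalization in the definition $\Xi_b(\bfeta)$, $\grF_{b}^{\bftau}$ in (\ref{4.13})) into a factor of the shape $\grF_b^{\bftau}(\bfalp;\bfeta)$ for a suitable $\bftau\in\Sig_r$. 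Since $s=s_0=ur$ and $u\ge k$, the remaining $2s-r$ variables furnish the power $\grf_b(\bfalp;\bfeta)^{2s-2r+\cdots}$; arranging the Hölder exponents so that one factor is $K_{a,b}^{\bfsig,\bftau}$ and absorbing the combinatorial loss from the choice of sub-block (a constant depending only on $s$) yields $S_1\ll K_{a,b}(X)$. The bookkeeping of exponents here — ensuring the $\grf_b$-powers and the normalizing prime powers come out exactly so that the mixed moment one lands on is precisely $K_{a,b}$ — is routine but must be done with care against the definitions (\ref{4.14})--(\ref{4.17}); this is where I would be most careful, though I do not expect a genuine obstruction.

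For the singular contribution $S_0$: here every $r$-element sub-tuple of $(\bfw_1,\ldots,\bfw_{2s})$ kills $\Del_r$ modulo $p^{(K-r)b+1}$. Reducing modulo $p$ (or applying Lemma~\ref{lemma6.1} to strip the common residue $\bfeta$ and rescale), this forces $(\bfw_1,\ldots,\bfw_{2s})$, viewed in $\dbF_p$, to lie in $\calS_{2s}(\calA;\dbF_p)$ with $\calA$ an interval of length about $X/p^b = X/M^b$, so that by Lemma~\ref{lemma5.3} the number of available residues modulo $p$ for these variables is $\ll (X/M^b)^{2s(d-1)+r-1}$ times the trivial count; lifting each variable back from residues mod $p$ to the full range introduces, per variable, a factor $M^{d}$ (from $X/(X/M^b) \cdot$ normalization), but crucially the \emph{savings} $(X/M^b)^{-(2s-(2s(d-1)+r-1)/\cdots)}$ compared to the generic count of $2s$ free variables in a class mod $p^b$ translates, after expressing things relative to $I_{a,b+1}$, into the asserted factor $M^{2s(d-1)+r-1}$. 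Concretely, I would bound $S_0$ by passing the $2s$ singular variables into classes modulo $p^{b+1}$ — each variable gains a free residue modulo $p$ beyond $p^b$, contributing $p^d\asymp M^d$ per variable, i.e.\ $M^{2sd}$ naively, but the singular constraint restricts the mod-$p$ residues to a set of size $\ll M^{(2s)(d-1)+r-1}$ rather than $M^{2sd}$ — and identifying the refined count with $I_{a,b+1}(X)$ after maximizing over the new residue class $\bfeta'\equiv\bfeta\pmod{p^b}$. Thus $S_0\ll M^{2s(d-1)+r-1}I_{a,b+1}(X)$. The main obstacle is reconciling the singularity modulus $p^{(K-r)b+1}$ appearing in the definition of $\Xi_b$ with the clean ``$\not\equiv 0 \pmod p$'' needed to invoke Lemma~\ref{lemma5.3}; Lemma~\ref{lemma6.1}'s scaling identity $\Del(t\bfz_1+\bfxi,\ldots) = t^{K-r}\Del(\bfz_1,\ldots)$ is exactly the tool that makes this reduction work, since it converts the high-power-of-$p$ condition on the $\bfw_i$ into a mod-$p$ condition on the rescaled variables. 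Combining the two bounds gives $I_{a,b}(X)\ll K_{a,b}(X) + M^{2s(d-1)+r-1}I_{a,b+1}(X)$ after taking maxima over $\bfxi$, $\bfeta$, $\bfsig$.
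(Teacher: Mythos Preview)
Your overall strategy --- dichotomizing on whether the $2s$ variables coming from $\grf_b^{2s}$ admit a non-singular $r$-subtuple, handling the singular tuples via Lemma~\ref{lemma6.1} and Lemma~\ref{lemma5.3}, and handling the non-singular tuples via H\"older --- is exactly what the paper does. The singular bound $S_0\ll M^{2s(d-1)+r-1}I_{a,b+1}(X)$ is correct, though note that after writing $\bfw_l=p^b\bfu_l+\bfeta$ and invoking Lemma~\ref{lemma6.1}, the set $\calA$ to which Lemma~\ref{lemma5.3} is applied is $\{1,\ldots,p\}\subset\dbF_p$ (the residues of the $\bfu_l$ modulo $p$), not an interval of length $X/M^b$; this is why the saving is a power of $M$.

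There is, however, a genuine gap in your treatment of $S_1$. Having isolated a single well-conditioned $r$-block, one arrives at an integral of the shape
$$\oint |\grF_a^\bfsig(\bfalp;\bfxi)^2\,\grF_b^\bftau(\bfalp;\bfeta)\,\grf_b(\bfalp;\bfeta)^{2s-r}|\d\bfalp.$$
Since $K_{a,b}$ carries the factor $\grF_b^{2u}$, any H\"older application that produces $K_{a,b}$ as one factor must use exponent $2u$ on $\grF_b^\bftau$, and then the complementary factor has $|\grf_b|^{(2s-r)\cdot 2u/(2u-1)}=|\grf_b|^{2s}$ (using $s=ur$), which is precisely $I_{a,b}$. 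Thus what one actually obtains is
$$S_1\ll (K_{a,b}(X))^{1/(2u)}(I_{a,b}(X))^{1-1/(2u)},$$
not $S_1\ll K_{a,b}(X)$ outright. The lemma then follows only after a bootstrapping step: from $I_{a,b}\ll S_0+S_1$ one deduces $I_{a,b}\ll S_0+K_{a,b}^{1/(2u)}I_{a,b}^{1-1/(2u)}$, and disentangling (either $I_{a,b}\ll S_0$ or $I_{a,b}^{1/(2u)}\ll K_{a,b}^{1/(2u)}$) yields $I_{a,b}\ll K_{a,b}+S_0$. You should make this disentangling explicit; the second H\"older factor cannot simply be ``absorbed''.
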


\begin{proof} Consider fixed integral $d$-tuples $\bfxi$ and $\bfeta$ with $1\le \bfxi\le p^a$ and $1\le \bfeta\le p^b$, and an $r$-tuple $\bfsig\in \Sig_r$. Then on considering the underlying Diophantine system, one finds from (\ref{4.14}) that $I_{a,b}^\bfsig (X;\bfxi,\bfeta)$ counts the number of integral solutions of the system
\begin{equation}\label{7.1}
\sum_{i=1}^r\sig_i(\bfF(\bfx_i)-\bfF(\bfy_i))=\sum_{l=1}^s(\bfF(\bfv_l)-\bfF(\bfv_{s+l})),
\end{equation}
with
$$1\le \vbfx,\vbfy,\vbfv\le X,\quad \bfv_l\equiv \bfeta\mmod{p^b}\quad (1\le l\le 2s),$$
and satisfying the property that there exist
$$(\bfxi_1,\ldots ,\bfxi_r)\in \Xi_a(\bfxi)\quad \text{and}\quad (\bfzet_1,\ldots ,\bfzet_r)\in \Xi_a(\bfxi)$$
for which
$$\bfx_i\equiv \bfxi_i\mmod{p^{a+1}}\quad \text{and}\quad \bfy_i\equiv \bfzet_i\mmod{p^{a+1}}\quad (1\le i\le r).$$
Let $T_1$ denote the number of integral solutions $\vbfx,\vbfy,\vbfv$ of the system (\ref{7.1}), counted by $I_{a,b}^\bfsig(X;\bfxi,\bfeta)$, satisfying the condition that for all $r$-tuples $(l_1,\ldots ,l_r)$ with
\begin{equation}\label{7.2}
1\le \bfl\le 2s,
\end{equation}
one has
$$\Del(\bfv_{l_1},\ldots ,\bfv_{l_r})\equiv 0\mmod{p^{(K-r)b+1}}.$$
Also, let $T_2$ denote the corresponding number of solutions satisfying the condition that for some $r$-tuple $\bfl$ satisfying (\ref{7.2}), one has
\begin{equation}\label{7.3}
\Del(\bfv_{l_1},\ldots ,\bfv_{l_r})\not\equiv 0\mmod{p^{(K-r)b+1}}.
\end{equation}
Then we have
\begin{equation}\label{7.4}
I_{a,b}^\bfsig (X;\bfxi,\bfeta)\le T_1+T_2.
\end{equation}

\par We consider first the solutions counted by $T_1$. Suppose that $\vbfx,\vbfy,\vbfv$ is a solution counted by $T_1$. For each index $l$ with $1\le l\le 2s$, we may rewrite $\bfv_l$ in the shape $\bfv_l=p^b\bfu_l+\bfeta$, for some integral $d$-tuple $\bfu_l$. Given an $r$-tuple $(l_1,\ldots ,l_r)$ satisfying (\ref{7.2}), it follows from Lemma \ref{lemma6.1} that
\begin{align*}
(p^b)^{K-r}\Del(\bfu_{l_1},\ldots ,\bfu_{l_r})&=\Del(p^b\bfu_{l_1}+\bfeta,\ldots ,p^b\bfu_{l_r}+\bfeta)\\
&=\Del(\bfv_{l_1},\ldots ,\bfv_{l_r})\equiv 0\mmod{p^{(K-r)b+1}},
\end{align*}
whence
$$\Del(\bfu_{l_1},\ldots ,\bfu_{l_r})\equiv 0\mmod{p}.$$
Write $\calA=\{1,2,\ldots ,p\}$ and $\dbF=\dbZ/p\dbZ$. Then it follows that $\vbfu\equiv \vbfnu\mmod{p}$ for some $\vbfnu\in \calS_{2s}(\calA;\dbF)$. Define $\calH_b$ to be the set of $2s$-tuples $(\bfeta_1,\ldots ,\bfeta_{2s})$ with $1\le \bfeta_l\le p^{b+1}$ $(1\le l\le 2s)$ satisfying the property that $(\bfeta_1,\ldots ,\bfeta_{2s})=(\bfeta+p^b\bfnu_1,\ldots ,\bfeta+p^b\bfnu_{2s})$ for some $(\bfnu_1,\ldots ,\bfnu_{2s})\in \calS_{2s}(\calA;\dbF)$. Then we have $(\bfv_1,\ldots ,\bfv_{2s})\equiv (\bfeta_1,\ldots ,\bfeta_{2s})\mmod{p^{b+1}}$ for some $(\bfeta_1,\ldots ,\bfeta_{2s})\in \calH_b$.\par

On considering the underlying Diophantine system, we deduce that
$$T_1\ll \sum_{(\bfeta_1,\ldots ,\bfeta_{2s})\in \calH_b}\oint |\grF_a^\bfsig (\bfalp;\bfxi)|^2|\grf_{b+1}(\bfalp;\bfeta_1)\ldots \grf_{b+1}(\bfalp;\bfeta_{2s})|\d\bfalp .$$
In view of the elementary inequality
$$|z_1\ldots z_n|\le |z_1|^n+\ldots +|z_n|^n,$$
we find from (\ref{4.14}) that
\begin{align*}
T_1&\ll \sum_{(\bfeta_1,\ldots ,\bfeta_{2s})\in \calH_b}\sum_{i=1}^{2s}\oint |\grF_a^\bfsig (\bfalp;\bfxi)^2\grf_{b+1}(\bfalp;\bfeta_i)^{2s}|\d\bfalp \\
&\ll \text{card}(\calH_b)\max_{1\le \bfeta_0\le p^{b+1}}I_{a,b+1}^\bfsig (X;\bfxi,\bfeta_0).
\end{align*}
But as a consequence of Lemma \ref{lemma5.3}, one has
$$\text{card}(\calH_b)=\text{card}(\calS_{2s}(\calA;\dbF))\ll p^{2s(d-1)+r-1}.$$
Thus we conclude from (\ref{4.16}) that
\begin{equation}\label{7.5}
T_1\ll M^{2s(d-1)+r-1}I_{a,b+1}(X).
\end{equation}

\par Next we turn our attention to the solutions $\vbfx,\vbfy,\vbfv$ counted by $T_2$. We may suppose that for some $r$-tuple $\bfl$ satisfying (\ref{7.2}), one has the congruence (\ref{7.3}). We define $\tau_i$ for $1\le i\le r$ by taking $\tau_i=1$ when $1\le l_i\le s$, and $\tau_i=-1$ when $s+1\le l_i\le 2s$. Notice that since $\bfv_l\equiv \bfeta\mmod{p^b}$ $(1\le l\le 2s)$, the condition (\ref{7.3}) implies that $(\bfv_{l_1},\ldots ,\bfv_{l_r})\equiv (\bfnu_{l_1},\ldots ,\bfnu_{l_r})\mmod{p^{b+1}}$ for some $(\bfnu_{l_1},\ldots ,\bfnu_{l_r})\in \Xi_b(\bfeta)$. Thus, on considering the underlying Diophantine system, we obtain the upper bound
$$T_2\ll \sum_{\bftau\in \Sig_r}\oint |\grF_a^\bfsig (\bfalp;\bfxi)^2\grF_b^\bftau(\bfalp;\bfeta)\grf_b(\bfalp;\bfeta)^{2s-r}|\d\bfalp .$$
On recalling that $s=ur$, an application of H\"older's inequality reveals from (\ref{4.14}) and (\ref{4.15}) that for some $\bftau\in \Sig_r$, one has
\begin{align*}
T_2\ll &\,\Bigl( \oint |\grF_a^\bfsig (\bfalp;\bfxi)^2\grF_b^\bftau (\bfalp;\bfeta)^{2u}|\d\bfalp \Bigr)^{1/(2u)}\\
&\, \times \Bigl( \oint |\grF_a^\bfsig(\bfalp;\bfxi)^2\grf_b(\bfalp;\bfeta)^{2s}|\d\bfalp \Bigr)^{1-1/(2u)}\\
=&\, (K_{a,b}^{\bfsig,\bftau}(X;\bfxi,\bfeta))^{1/(2u)}(I_{a,b}^\bfsig (X;\bfxi,\bfeta))^{1-1/(2u)}.
\end{align*}
We therefore conclude via (\ref{4.16}) and (\ref{4.17}) that
\begin{equation}\label{7.6}
T_2\ll (K_{a,b}(X))^{1/(2u)}(I_{a,b}(X))^{1-1/(2u)}.
\end{equation}

\par On combining (\ref{7.5}) and (\ref{7.6}) with (\ref{7.4}), we deduce that
$$I_{a,b}(X)\ll M^{2s(d-1)+r-1}I_{a,b+1}(X)+(K_{a,b}(X))^{1/(2u)}(I_{a,b}(X))^{1-1/(2u)},$$
whence
$$I_{a,b}(X)\ll K_{a,b}(X)+M^{2s(d-1)+r-1}I_{a,b+1}(X).$$
This completes the proof of the lemma.
\end{proof}

Repeated application of Lemma \ref{lemma7.1} shows that whenever $a$, $b$ and $H$ are non-negative integers with $b>a\ge 0$, then
\begin{equation}\label{7.7}
I_{a,b}(X)\ll \sum_{h=0}^{H-1}(M^h)^{2s(d-1)+r-1}K_{a,b+h}(X)+(M^H)^{2s(d-1)+r-1}I_{a,b+H}(X).
\end{equation}
We next show that $I_{a,b+H}(X)$ is negligible for $H$ large enough.

\begin{lemma}\label{lemma7.2} Let $a$, $b$ and $H$ be non-negative integers with
$$0<b-a\le H\le \tet^{-1}-b.$$
Then one has
$$(M^H)^{2s(d-1)+r-1}I_{a,b+H}(X)\ll M^{-H/2}(X/M^b)^{2sd}(X/M^a)^{2rd-K+\eta_{s+r}}.$$
\end{lemma}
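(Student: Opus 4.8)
The plan is first to fix $\bfsig\in\Sig_r$ and $d$-tuples $1\le\bfxi\le p^a$, $1\le\bfeta\le p^b$, bounding $I_{a,b+H}^\bfsig(X;\bfxi,\bfeta)$ and then taking maxima, and to split the two factors of \eqref{4.14} by H\"older's inequality. Since $s=ur$, the exponents $2(u+1)=2(s+r)/r$ and $2s(u+1)/u=2(s+r)$ are both even integers, so
\[
I_{a,b+H}^\bfsig(X;\bfxi,\bfeta)\le\Bigl(\oint|\grF_a^\bfsig(\bfalp;\bfxi)|^{2(u+1)}\d\bfalp\Bigr)^{1/(u+1)}\Bigl(\oint|\grf_{b+H}(\bfalp;\bfeta)|^{2(s+r)}\d\bfalp\Bigr)^{u/(u+1)}.
\]
I would then bound both of these mean values by $J_{s+r}$ evaluated at a reduced scale. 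For the second factor, since $(b+H)\tet\le1$, Lemma~\ref{lemma5.1} gives at once $\oint|\grf_{b+H}(\bfalp;\bfeta)|^{2(s+r)}\d\bfalp\ll J_{s+r}(X/M^{b+H})$.

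For the factor involving $\grF_a^\bfsig$, I would expand the definition \eqref{4.13} and observe that $\oint|\grF_a^\bfsig(\bfalp;\bfxi)|^{2(u+1)}\d\bfalp$ counts integral points on a system $\sum_g\sum_{i=1}^r\sig_i(\bfF(\bfx_i^{(g)})-\bfF(\bfy_i^{(g)}))=\mathbf0$ with $(u+1)r=s+r$ blocks on each side, all congruent to $\bfxi$ modulo $p^a$, subject to the non-singularity constraints of $\Xi_a(\bfxi)$ on each group $(\bfx_1^{(g)},\dots,\bfx_r^{(g)})$. Discarding those constraints only increases the count; then the translation-dilation invariance \eqref{2.3} — the lower unitriangular $C(\bfxi)$ being invertible, and the constants $\bfc_0(\bfxi)$ cancelling since the sum of signs is balanced across the two sides — followed by homogeneity ($F_j(p^a\bfz)=p^{ak_j}F_j(\bfz)$) reduces this to the system counted, up to an additive $1$, by $J_{s+r}(X/p^a)$. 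Hence $\oint|\grF_a^\bfsig(\bfalp;\bfxi)|^{2(u+1)}\d\bfalp\ll J_{s+r}(X/M^a)$.

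The final step is to insert the upper bound \eqref{4.21} at both scales. This is legitimate for $Y=X/M^a$ since $a\le\tet^{-1}-2$ forces $X^{\del^2}<X/M^a\le X$ (using $\del\ll\tet$), and for $Y=X/M^{b+H}$ whenever $(b+H)\tet<1-\del^2$; in the contrary range $X/M^{b+H}\le X^{\del^2}$ the trivial bound $J_{s+r}(X/M^{b+H})\le(X/M^{b+H})^{2(s+r)d}$ already makes that factor negligible (a case I would dispose of separately). Writing $\beta=(K-\eta_{s+r}-\del)/(s+r)$, the two estimates combine to $I_{a,b+H}^\bfsig(X;\bfxi,\bfeta)\ll(X/M^a)^{2rd-r\beta}(X/M^{b+H})^{2sd-s\beta}$. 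Multiplying by $(M^H)^{2s(d-1)+r-1}$ and writing $(X/M^{b+H})^{2sd}=(X/M^b)^{2sd}M^{-2sdH}$, the $M$-power released by the prefactor is $M^{H(2s(d-1)+r-1)-2sdH}=M^{-H(2s-r+1)}$; dividing by the claimed right-hand side leaves
\[
M^{-H(2s-r+\frac12)}\,(X/M^a)^{K-\eta_{s+r}-r\beta}\,(X/M^{b+H})^{-s\beta}
=M^{-H(2s-r+\frac12)+(b+H-a)P}\,(X/M^a)^{r\del/(s+r)}(X/M^{b+H})^{s\del/(s+r)},
\]
where $P=(K-\eta_{s+r})s/(s+r)\ge0$. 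The product of the two $X/M^{\bullet}$ powers is at most $X^{\del}=M^{\del/\tet}$, and $b+H-a\le 2H$; the decisive point — which I expect to be the main obstacle to pin down — is that the hypothesis $s=ur\ge kr\ge K$ forces $2P\le 2s-r$, hence $-H(2s-r+\tfrac12)+(b+H-a)P\le-\tfrac H2$, and since $\del\ll\tet$ one has $\del/\tet<\tfrac12\le\tfrac H2$, so the whole expression is $\ll1$, which is the asserted bound. Thus the genuinely delicate parts are (i) the reduction of $\oint|\grF_a^\bfsig|^{2(u+1)}$ to $J_{s+r}(X/M^a)$ by discarding the conditioning and unwinding the translation-dilation invariance, and (ii) the inequality $2P\le2s-r$ together with the comparison $\del\ll\tet$, which between them keep all surviving powers of $M$ and $X$ under control for every admissible $H$.
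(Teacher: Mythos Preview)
Your proof is correct and follows essentially the same route as the paper's. Both arguments reduce $I_{a,b+H}^\bfsig(X;\bfxi,\bfeta)$ to the product $J_{s+r}(X/M^a)^{r/(s+r)}J_{s+r}(X/M^{b+H})^{s/(s+r)}$, insert the bound \eqref{4.21}, and then verify the decisive inequality $2Ks/(s+r)\le 2s-r$ (equivalently your $2P\le 2s-r$) using $s\ge rk\ge K$. The only cosmetic difference is the order of operations: the paper first discards the conditioning by bounding $I_{a,b+H}^\bfsig\le\oint|\grf_a(\bfalp;\bfxi)^{2r}\grf_{b+H}(\bfalp;\bfeta)^{2s}|\d\bfalp$ directly via the Diophantine interpretation, and only then applies H\"older, whereas you apply H\"older first and discard the $\Xi_a$-constraint inside $\oint|\grF_a^\bfsig|^{2(u+1)}\d\bfalp$ afterwards; both yield the identical intermediate bound. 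Your explicit handling of the borderline case $X/M^{b+H}\le X^{\del^2}$ is in fact more careful than the paper, which tacitly relies on the trivial estimate in that range.
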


\begin{proof} On considering the underlying system of Diophantine equations, we find from (\ref{4.14}) that when $1\le \bfxi\le p^a$ and $1\le \bfeta\le p^{b+H}$, and $\bfsig \in \Sig_r$, one has
$$I_{a,b+H}^\bfsig (X;\bfxi,\bfeta)\le \oint |\grf_a(\bfalp;\bfxi)^{2r}\grf_{b+H}(\bfalp;\bfeta)^{2s}|\d\bfalp .$$
An application of H\"older's inequality in combination with Lemma \ref{lemma5.1} therefore yields the estimate
\begin{align*}
I_{a,b+H}^\bfsig (X;\bfxi,\bfeta)\le &\, \Bigl( \oint |\grf_a(\bfalp;\bfxi)|^{2s+2r}\d\bfalp \Bigr)^{r/(s+r)}\\
&\, \times \Bigl( \oint |\grf_{b+H}(\bfalp;\bfeta)|^{2s+2r}\d\bfalp \Bigr)^{s/(s+r)}\\
\ll &\, (J_{s+r}(2X/M^a))^{r/(s+r)}(J_{s+r}(2X/M^{b+H}))^{s/(s+r)}.
\end{align*}
Consequently, on recalling (\ref{4.18}) and (\ref{4.21}), we obtain the upper bound
\begin{align}
I_{a,b+H}(X)&\ll \left( (X/M^a)^{r/(s+r)}(X/M^{b+H})^{s/(s+r)}\right)^{2(s+r)d-K+\eta_{s+r}+\del}\notag \\
&\ll X^\del (X/M^a)^{2rd-K+\eta_{s+r}}(X/M^b)^{2sd}\Ups,\label{7.8}
\end{align}
where
$$\Ups=(M^{b-a+H})^{Ks/(s+r)}M^{-2sdH}.$$
But when $H\ge b-a$, one has
\begin{align*}
H(2s(d-1)+r-1)+&(b-a+H)Ks/(s+r)-2sdH\\
&\le H(r-1-2s)+2HKs/(s+r).
\end{align*}
On observing that $s\ge rk\ge K$, and hence $(s+r)^2>K(rk+r)>Kr$, one finds that the expression $2Hs-2HKs/(s+r)$ achieves its minimum value for $s\ge rk$ when $s=rk$. Hence we deduce that
\begin{align*}
H(2s(d-1)+r-1)+&(b-a+H)Ks/(s+r)-2sdH\\
&\le -H+H(r-2rk)+2HKrk/(rk+r)\\
&\le -H+H(r-2rk/(k+1))\le -H.
\end{align*}
In this way, we see that for $k\ge 2$, one has
$$(M^H)^{2s(d-1)+r-1}\Ups\le M^{-H},$$
whence
$$X^\del (M^H)^{2s(d-1)+r-1}\Ups \le M^{-H/2}.$$
The conclusion of the lemma follows on substituting this estimate into (\ref{7.8}).
\end{proof}

Combining Lemma \ref{lemma7.2} with the upper bound (\ref{7.7}), we conclude as follows.

\begin{lemma}\label{lemma7.3} Let $a$ and $b$ be integers with $0\le a<b$, and put $H=b-a$. Suppose that $b+H\le \tet^{-1}$. Then there exists an integer $h$ with $0\le h<H$ having the property that
\begin{align*}
I_{a,b}(X)\ll &\, (M^h)^{2s(d-1)+r-1}K_{a,b+h}(X)\\
&\, +M^{-H/2}(X/M^b)^{2sd}(X/M^a)^{2rd-K+\eta_{s+r}}.
\end{align*}
\end{lemma}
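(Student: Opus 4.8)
The plan is to obtain Lemma~\ref{lemma7.3} as a straightforward repackaging of the iterated estimate (\ref{7.7}) together with the decay bound of Lemma~\ref{lemma7.2}, specialised to the choice $H=b-a$. First I would apply (\ref{7.7}) with precisely this value of $H$: since $b>a\ge 0$, the (otherwise unconstrained) hypotheses of that display are met, so
$$I_{a,b}(X)\ll \sum_{h=0}^{H-1}(M^h)^{2s(d-1)+r-1}K_{a,b+h}(X)+(M^H)^{2s(d-1)+r-1}I_{a,b+H}(X).$$

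Next I would dispatch the trailing term using Lemma~\ref{lemma7.2}. With $H=b-a$ one has $0<b-a\le H$ automatically, while the standing hypothesis $b+H\le \tet^{-1}$ is exactly the remaining requirement $H\le \tet^{-1}-b$ of that lemma; hence Lemma~\ref{lemma7.2} is applicable and yields
$$(M^H)^{2s(d-1)+r-1}I_{a,b+H}(X)\ll M^{-H/2}(X/M^b)^{2sd}(X/M^a)^{2rd-K+\eta_{s+r}}.$$
It remains only to collapse the finite sum over $h$: because $H=b-a\le \tet^{-1}$ and $\tet^{-1}$ is a fixed quantity (determined by $N$, $s$ and $\bfF$), the sum has $O(1)$ summands, so it is $\ll \max_{0\le h<H}(M^h)^{2s(d-1)+r-1}K_{a,b+h}(X)$; choosing $h$ to be an index at which this maximum is attained and combining the three displays gives the claimed inequality.

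I do not anticipate a substantive obstacle here — the only point that genuinely needs checking is that the single choice $H=b-a$ simultaneously respects the constraints of (\ref{7.7}) and the inequalities $0<b-a\le H\le \tet^{-1}-b$ demanded by Lemma~\ref{lemma7.2}, and this is immediate from the hypothesis $b+H\le \tet^{-1}$. Everything else is bookkeeping: absorbing the harmless factor $H=O(1)$ from the number of terms into the implied constant, and noting that the exponents on $M^h$ and on the $K$-mean value already match those appearing in the statement. This lemma is the form of the conditioning output that feeds directly into the iteration of \S\ref{sec:9}.
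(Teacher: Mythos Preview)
Your proposal is correct and follows exactly the approach the paper takes: the paper simply states that Lemma~\ref{lemma7.3} follows by ``combining Lemma~\ref{lemma7.2} with the upper bound (\ref{7.7})'', and you have spelled out precisely those two steps together with the observation that $H\le \tet^{-1}=O_{s,\bfF,N}(1)$ allows the finite sum to be replaced by its largest term. There is nothing to add.
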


The special case of Lemma \ref{lemma7.3} with $a=0$ and $b=1$ yields a refinement of Lemma \ref{lemma5.4} more easily utilised in what is to come.

\begin{lemma}\label{lemma7.4} One has $J_{s+r}(X)\ll M^{2sd}K_{0,1}(X)$.
\end{lemma}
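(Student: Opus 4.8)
The plan is to obtain Lemma~\ref{lemma7.4} almost immediately by combining Lemma~\ref{lemma5.4} with the case $a=0$, $b=1$ of Lemma~\ref{lemma7.3}, and then disposing of the resulting error term by means of the lower bound (\ref{4.9}).

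First I would recall from Lemma~\ref{lemma5.4} that $p$ has been fixed once and for all so that $J_{s+r}(X)\ll M^{2sd}I_{0,1}(X)$. To estimate $I_{0,1}(X)$ I would apply Lemma~\ref{lemma7.3} with $a=0$ and $b=1$, so that $H=b-a=1$; since $\tet$ is small one has $b+H=2\le \tet^{-1}$, so the hypotheses are satisfied, and the only admissible value of $h$ is $h=0$. Lemma~\ref{lemma7.3} then yields
$$I_{0,1}(X)\ll K_{0,1}(X)+M^{-1/2}(X/M)^{2sd}X^{2rd-K+\eta_{s+r}}.$$
Multiplying through by $M^{2sd}$ and inserting the outcome into the bound of Lemma~\ref{lemma5.4} gives
$$J_{s+r}(X)\ll M^{2sd}K_{0,1}(X)+M^{-1/2}X^{2(s+r)d-K+\eta_{s+r}}.$$

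It remains only to absorb the final term on the right into the left-hand side. Here I would use the lower bound (\ref{4.9}), which in view of (\ref{4.8}) asserts that $J_{s+r}(X)>X^{2(s+r)d-K+\eta_{s+r}-\del}$. Since $M=X^\tet$ and $\del$ was fixed in \S\ref{sec:4} to be small compared with $\tet$ — in particular $\del<\tfrac{1}{2}\tet$ — one obtains
$$M^{-1/2}X^{2(s+r)d-K+\eta_{s+r}}<X^{\del-\tfrac{1}{2}\tet}\,J_{s+r}(X)\le \tfrac{1}{2}J_{s+r}(X)$$
for $X$ sufficiently large. Substituting this back and rearranging produces $J_{s+r}(X)\ll M^{2sd}K_{0,1}(X)$, which is the assertion of the lemma.

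I do not anticipate a genuine obstacle here: the statement is in essence a convenient repackaging of Lemmas~\ref{lemma5.4} and~\ref{lemma7.3}. The one point requiring care is the final comparison of exponents, where one must check that the factor $M^{-1/2}=X^{-\tet/2}$ really does defeat the $X^{\del}$ coming from the permitted deviation in (\ref{4.9}); this is precisely why $\del$ was chosen to be small relative to $\tet$ at the outset of \S\ref{sec:4}.
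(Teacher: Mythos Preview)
Your proof is correct and follows essentially the same route as the paper: apply Lemma~\ref{lemma7.3} with $a=0$, $b=1$, $H=1$ to bound $I_{0,1}(X)$, feed this into Lemma~\ref{lemma5.4}, and then absorb the secondary term using the lower bound (\ref{4.9}) together with the fact that $\del$ is small compared with $\tet$. The paper phrases the final comparison as $M>X^{4\del}$ (giving an error $\ll X^{-\del}J_{s+r}(X)$), while you use $\del<\tfrac{1}{2}\tet$ directly; these are equivalent in view of the choices made in \S\ref{sec:4}.
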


\begin{proof} When $a=0$ and $b=1$, one has $b-a=1$. Thus we deduce from Lemma \ref{lemma7.3} that
$$I_{0,1}(X)\ll K_{0,1}(X)+M^{-1/2}(X/M)^{2sd}X^{2rd-K+\eta_{s+r}}.$$
Since we may suppose that $M>X^{4\del}$, it follows from Lemma \ref{lemma5.4} that
$$J_{s+r}(X)\ll M^{2sd}I_{0,1}(X)\ll M^{2sd}K_{0,1}(X)+X^{2(s+r)d-K+\eta_{s+r}-2\del}.$$
But in view of (\ref{4.18}) and (\ref{4.20}), one has
$$J_{s+r}(X)\gg X^{2(s+r)d-K+\eta_{s+r}-\del},$$
and thus we reach the upper bound
$$J_{s+r}(X)\ll M^{2sd}K_{0,1}(X)+X^{-\del}J_{s+r}(X).$$
The conclusion of the lemma follows on disentangling this inequality. 
\end{proof}

\section{The efficient congruencing step}\label{sec:8} The mean value $K_{a,b}(X)$ contains the powerful congruence conditions which drive the iterative process. In this section we convert these conditions into a form suited for further iteration.

\begin{lemma}\label{lemma8.1} Suppose that $a$ and $b$ are integers with $0\le a<b\le \tet^{-1}$. Then one has
$$K_{a,b}(X)\ll M^{2(kb-a)rd-K(b-a)}(J_{s+r}(2X/M^b))^{1-r/s}(I_{b,kb}(X))^{r/s}.$$
\end{lemma}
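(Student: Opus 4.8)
The plan is to carry out the efficient congruencing step in the manner of the corresponding argument in \cite{Woo2012a}, keeping track throughout of the Jacobian non-singularity condition of \S\ref{sec:6}. Fix $\bfsig,\bftau\in\Sig_r$, an integral $d$-tuple $\bfxi$ with $1\le\bfxi\le p^a$, and an integral $d$-tuple $\bfeta$ with $1\le\bfeta\le p^b$; recall that $s=ur$. By orthogonality, $K_{a,b}^{\bfsig,\bftau}(X;\bfxi,\bfeta)$ counts integral points $\vbfx,\vbfy\in(\dbZ^d)^r$ lying in well-conditioned residue classes modulo $p^{a+1}$ congruent to $\bfxi$ modulo $p^a$, together with $2ur=2s$ further integral $d$-vectors lying in well-conditioned residue classes modulo $p^{b+1}$ congruent to $\bfeta$ modulo $p^b$, all in $[1,X]^d$, for which $\sum_{i=1}^r\sig_i(\bfF(\bfx_i)-\bfF(\bfy_i))$ equals minus the contribution of the $2s$ inner variables. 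Writing each inner variable as $p^b$ times an integer vector plus $\bfeta$ and invoking \eqref{2.3} together with the homogeneity of the $F_j$, one sees that this inner contribution equals $C(\bfeta)\,\mathrm{diag}(p^{bk_1},\dots,p^{bk_r})\bfT$ for some $\bfT\in\dbZ^r$; multiplying through by the integral lower-unitriangular matrix $C(\bfeta)^{-1}=C(-\bfeta)$ and applying \eqref{2.3} once more on the outer block transforms the equation into
$$\sum_{i=1}^r\sig_i\bigl(F_j(\bfx_i-\bfeta)-F_j(\bfy_i-\bfeta)\bigr)\equiv 0\mmod{p^{k_jb}}\qquad(1\le j\le r).$$
Hence, for a common vector $\bfm$, the reductions modulo $p^{kb}$ of both $\vbfx$ and $\vbfy$ lie in the set $\calB_{a,b}^\bfsig(\bfm;\bfxi,\bfeta)$ of \S\ref{sec:6}, the well-conditioning of the outer block furnishing the non-singularity condition \eqref{6.3} via Lemma \ref{lemma6.1}. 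Summing over $\bfm$ and recalling \eqref{4.12}, \eqref{4.13} yields
$$K_{a,b}^{\bfsig,\bftau}(X;\bfxi,\bfeta)=\sum_{\bfm}\oint\bigl|\grG_\bfm(\bfalp)\bigr|^2\bigl|\grF_b^\bftau(\bfalp;\bfeta)\bigr|^{2u}\,\d\bfalp,\qquad\grG_\bfm(\bfalp)=\sum_{\vbfz\in\calB_{a,b}^\bfsig(\bfm;\bfxi,\bfeta)}\prod_{i=1}^r\grf_{kb}(\sig_i\bfalp;\bfz_i).$$

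Next I would estimate the outer factor. Cauchy--Schwarz applied to the sum defining $\grG_\bfm$, combined with the bound $\mathrm{card}\bigl(\calB_{a,b}^\bfsig(\bfm;\bfxi,\bfeta)\bigr)\le k_1\cdots k_r\,p^{(kb-a)rd-K(b-a)}$ of Lemma \ref{lemma6.3}, gives
$$K_{a,b}^{\bfsig,\bftau}(X;\bfxi,\bfeta)\ll p^{(kb-a)rd-K(b-a)}\sum_{\vbfz}\oint\prod_{i=1}^r\bigl|\grf_{kb}(\sig_i\bfalp;\bfz_i)\bigr|^2\bigl|\grF_b^\bftau(\bfalp;\bfeta)\bigr|^{2u}\,\d\bfalp,$$
the sum running over all $\vbfz$ with $1\le\bfz_i\le p^{kb}$ and $\bfz_i\equiv\bfxi\mmod{p^a}$. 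Using the elementary inequality $|w_1\cdots w_r|\le|w_1|^r+\dots+|w_r|^r$ to collapse the product to a single factor $|\grf_{kb}(\sig_i\bfalp;\bfz_i)|^{2r}$ (noting $|\grf_{kb}(\sig_i\bfalp;\bfz_i)|=|\grf_{kb}(\bfalp;\bfz_i)|$), and performing the unconstrained summation over the remaining $r-1$ residue classes, reduces matters to bounding $\oint|\grf_{kb}(\bfalp;\bfz)|^{2r}|\grF_b^\bftau(\bfalp;\bfeta)|^{2u}\,\d\bfalp$ for a single class $\bfz$, at the cost of a factor $p^{(kb-a)(r-1)d}$ and a summation over $\le p^{(kb-a)d}$ values of $\bfz$; together with the $\mathrm{card}(\calB)$ bound this produces exactly the power $p^{2(kb-a)rd-K(b-a)}\asymp M^{2(kb-a)rd-K(b-a)}$.

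Finally I would apply H\"older's inequality with exponents $s/r$ and $s/(s-r)$ to write
$$\oint\bigl|\grf_{kb}(\bfalp;\bfz)\bigr|^{2r}\bigl|\grF_b^\bftau(\bfalp;\bfeta)\bigr|^{2u}\,\d\bfalp\le\Bigl(\oint\bigl|\grF_b^\bftau(\bfalp;\bfeta)\bigr|^2\bigl|\grf_{kb}(\bfalp;\bfz)\bigr|^{2s}\,\d\bfalp\Bigr)^{r/s}\Bigl(\oint\bigl|\grF_b^\bftau(\bfalp;\bfeta)\bigr|^{2(u+1)}\,\d\bfalp\Bigr)^{1-r/s},$$
which is legitimate since $\tfrac rs+(u+1)\tfrac{s-r}s=u$. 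The first integral on the right is $I_{b,kb}^\bftau(X;\bfeta,\bfz)\le I_{b,kb}(X)$ by \eqref{4.14} and \eqref{4.16}. For the second, one writes each of the $2(s+r)$ variables underlying $|\grF_b^\bftau(\bfalp;\bfeta)|^{2(u+1)}$ as $p^b$ times an integer vector plus $\bfeta$; by \eqref{2.3} and homogeneity (the constant terms cancelling, since the same number of variables carries each sign) the associated Diophantine system becomes the translation-dilation invariant system \eqref{2.1} for $2(s+r)$ free variables confined to a box of side at most $2X/M^b$, whence, dropping the well-conditioning restrictions, $\oint|\grF_b^\bftau(\bfalp;\bfeta)|^{2(u+1)}\,\d\bfalp\le J_{s+r}(2X/M^b)$; here the hypothesis $b\le\tet^{-1}$ ensures $X/M^b\ge1$, and one invokes the invariance of a periodic integral under the nonsingular integral linear substitution $\bfalp\mapsto\bfbet$ attached to this rescaling. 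Assembling the three steps and taking the maximum over $\bfxi,\bfeta,\bfsig,\bftau$ delivers the assertion.

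The crux is the first step: producing the strong congruence on the outer block and, in particular, reconciling the well-conditioning built into $\grF_a^\bfsig$ with the non-singularity condition \eqref{6.3} defining $\calB_{a,b}^\bfsig$, so that the decomposition $\grF_a^\bfsig=\sum_\bfm\grG_\bfm$ is exact. Thereafter the work is bookkeeping: one must orchestrate the elementary inequality and the two summations over residue classes so that the accumulated powers of $p$ fuse to precisely $p^{2(kb-a)rd-K(b-a)}$, and one must choose the H\"older exponents $s/r$ and $s/(s-r)$ so that the $(u+1)$-st moment of $\grF_b^\bftau$ --- the source of $J_{s+r}$ rather than $J_s$ --- carries weight $1-r/s$ while the conditioned mean value $I_{b,kb}$ carries weight $r/s$.
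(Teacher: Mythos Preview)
Your proposal is correct and follows essentially the same route as the paper's proof: interpret $K_{a,b}^{\bfsig,\bftau}$ Diophantinely, use translation invariance to extract the congruence on the outer block and hence the decomposition over $\bfm$ via $\calB_{a,b}^\bfsig(\bfm;\bfxi,\bfeta)$, apply Cauchy--Schwarz with Lemma~\ref{lemma6.3}, collapse to a single residue class $\bfz$, and finish with H\"older's inequality with weights $r/s$ and $1-r/s$ to split off $I_{b,kb}(X)$ and $J_{s+r}(2X/M^b)$. The only cosmetic difference is that where you invoke the elementary inequality $|w_1\cdots w_r|\le |w_1|^r+\dots+|w_r|^r$ and then sum out the $r-1$ free classes, the paper instead factors $\sum_{\vbfz}\prod_i|\grf_{kb}(\bfalp;\bfz_i)|^2=\bigl(\sum_\bfz|\grf_{kb}(\bfalp;\bfz)|^2\bigr)^r$ and applies H\"older; both produce the identical factor $(p^{kb-a})^{(r-1)d}$, and your treatment of $U_1$ (writing each variable as $p^b\bfy+\bfeta$ and appealing to translation-dilation invariance) is exactly the content of Lemma~\ref{lemma5.1}, which the paper cites directly.
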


\begin{proof} Consider fixed $r$-tuples $\bfxi$ and $\bfeta$ with $1\le \bfxi\le p^a$ and $1\le \bfeta\le p^b$, and $r$-tuples $\bfsig,\bftau\in \Sig_r$. Then on considering the underlying Diophantine system, one finds that $K_{a,b}^{\bfsig,\bftau}(X;\bfxi,\bfeta)$ counts the number of integral solutions of the system
\begin{equation}\label{8.1}
\sum_{i=1}^r\sig_i(\bfF(\bfx_i)-\bfF(\bfy_i))=\sum_{l=1}^u\sum_{m=1}^r\tau_m(\bfF(\bfv_{lm})-\bfF(\bfw_{lm})),
\end{equation}
in which, for some $r$-tuples $\vbfzet,\vbfnu\in \Xi_a(\bfxi)$, one has
$$1\le \vbfx,\vbfy\le X,\quad \vbfx\equiv \vbfzet\mmod{p^{a+1}}\quad \text{and}\quad \vbfy\equiv \vbfnu\mmod{p^{a+1}},$$
and for $1\le l\le u$, for some $\vbfmu_l,\vbftet_l\in \Xi_b(\bfeta)$, one has
$$1\le \vbfv_l,\vbfw_l\le X,\quad \vbfv_l\equiv \vbfmu_l\mmod{p^{b+1}}\quad \text{and}\quad \vbfw_l\equiv \vbftet_l\mmod{p^{b+1}}.$$
The translation invariance formula (\ref{2.3}) implies that the system (\ref{8.1}) is equivalent to the new system of equations
\begin{equation}\label{8.2}
\sum_{i=1}^r\sig_i(\bfF(\bfx_i-\bfeta)-\bfF(\bfy_i-\bfeta))
=\sum_{l=1}^u\sum_{m=1}^r\tau_m(\bfF(\bfv_{lm}-\bfeta)-\bfF(\bfw_{lm}-\bfeta)).
\end{equation}
In any solution $\vbfx,\vbfy,{\overline \vbfv},{\overline \vbfw}$ counted by $K_{a,b}^{\bfsig,\bftau}(X;\bfxi,\bfeta)$, one has $\vbfv_l\equiv\vbfw_l\equiv \bfeta\mmod{p^b}$ $(1\le l\le u)$. We therefore deduce from (\ref{8.2}) that
\begin{equation}\label{8.3}
\sum_{i=1}^r\sig_iF_j(\bfx_i-\bfeta)\equiv \sum_{i=1}^r\sig_iF_j(\bfy_i-\bfeta)\mmod{p^{k_jb}}\quad (1\le j\le r).
\end{equation}
We also have $\vbfx\equiv \vbfy\equiv \bfxi\mmod{p^a}$,
$$\Del_r(\vbfx)\not\equiv 0\mmod{p^{(K-r)a+1}}\quad \text{and}\quad \Del_r(\vbfy)\not\equiv 0\mmod{p^{(K-r)a+1}}.$$

\par Recall the notation introduced prior to the statement of Lemma \ref{lemma6.1}, and write
$$\grG_{a,b}^\bfsig (\bfalp;\bfxi,\bfeta;\bfm)=\sum_{\vbfzet\in \calB_{a,b}^\bfsig (\bfm;\bfxi,\bfeta)}\prod_{i=1}^r\grf_{kb}(\sig_i\bfalp;\bfzet_i).$$
On considering the underlying Diophantine system, we deduce from (\ref{8.1}) and (\ref{8.3}) that
\begin{equation}\label{8.4}
K_{a,b}^{\bfsig,\bftau}(X;\bfxi,\bfeta)=\sum_{m_1=1}^{p^{k_1b}}\ldots \sum_{m_r=1}^{p^{k_rb}}\oint |\grG_{a,b}^\bfsig (\bfalp;\bfxi,\bfeta;\bfm)^2\grF_b^\bftau (\bfalp;\bfeta)^{2u}|\,\d\bfalp .
\end{equation}
By applying Cauchy's inequality in combination with the estimate supplied by Lemma \ref{lemma6.3}, we have
\begin{align*}
|\grG_{a,b}^\bfsig (\bfalp;\bfxi,\bfeta;\bfm)|^2&\le \text{card}(\calB_{a,b}^\bfsig(\bfm;\bfxi,\bfeta))\sum_{\vbfzet\in \calB_{a,b}^\bfsig(\bfm;\bfxi,\bfeta)}\prod_{i=1}^r|\grf_{kb}(\bfalp;\bfzet_i)|^2\\
&\ll M^{(kb-a)rd-K(b-a)}\sum_{\vbfzet\in \calB_{a,b}^\bfsig (\bfm;\bfxi,\bfeta)}\prod_{i=1}^r|\grf_{kb}(\bfalp;\bfzet_i)|^2.
\end{align*}
Substituting this relation back into (\ref{8.4}) and considering the underlying Diophantine system, we find that
\begin{align}
K_{a,b}^{\bfsig,\bftau}&(X;\bfxi,\bfeta)\notag \\
\ll &\, M^{(kb-a)rd-K(b-a)}\sum_{\substack{1\le \vbfzet\le p^{kb}\\ \vbfzet\equiv \bfxi\mmod{p^a}}}\oint \Bigl( \prod_{i=1}^r|\grf_{kb}(\bfalp;\bfzet_i)|^2\Bigr) |\grF_b^\bftau (\bfalp;\bfeta)|^{2u}\d\bfalp .\label{8.5}
\end{align}

\par Observe next that by H\"older's inequality,
\begin{align*}
\sum_{\substack{1\le \vbfzet\le p^{kb}\\ \vbfzet \equiv \bfxi\mmod{p^a}}}\prod_{i=1}^r|\grf_{kb}(\bfalp;\bfzet_i)|^2&=\Biggl( \sum_{\substack{1\le \bfzet\le p^{kb}\\\bfzet \equiv \bfxi\mmod{p^a}}}|\grf_{kb}(\bfalp;\bfzet)|^2\Biggr)^r\\
&\le (p^{kb-a})^{(r-1)d}\sum_{\substack{1\le \bfzet\le p^{kb}\\ \bfzet\equiv \bfxi\mmod{p^a}}}|\grf_{kb}(\bfalp;\bfzet)|^{2r}.
\end{align*}
Then (\ref{8.5}) delivers the upper bound
\begin{equation}\label{8.6}
K_{a,b}^{\bfsig,\bftau}(X;\bfxi,\bfeta)\ll M^{2(kb-a)rd-K(b-a)}\max_{1\le \bfzet\le p^{kb}}\oint|\grf_{kb}(\bfalp;\bfzet)^{2r}\grF_b^\bftau(\bfalp;\bfeta)^{2u}|\d\bfalp .
\end{equation}
Another application of H\"older's inequality yields the bound
$$\oint |\grf_{kb}(\bfalp;\bfzet)^{2r}\grF_b^\bftau (\bfalp;\bfeta)^{2u}|\d\bfalp \le U_1^{1-r/s}U_2^{r/s},$$
where
$$U_1=\oint |\grF_b^\bftau(\bfalp;\bfeta)|^{2u+2}\d\bfalp$$
and
$$U_2=\oint |\grF_b^\bftau (\bfalp;\bfeta)^2\grf_{kb}(\bfalp;\bfzet)^{2s}|\d\bfalp .$$
On considering the underlying Diophantine system, it follows from Lemma \ref{lemma5.1} that
$$U_1\le \oint |\grf_b(\bfalp;\bfeta)|^{2s+2r}\d\bfalp \ll J_{s+r}(2X/M^b),$$
whilst from (\ref{4.14}) we have $U_2=I_{b,kb}^\bftau (X;\bfeta,\bfzet)$. Thus we conclude that
$$\oint |\grf_{kb}(\bfalp;\bfzet)^{2r}\grF_b^\bftau(\bfalp;\bfeta)^{2u}|\d\bfalp \ll (J_{s+r}(2X/M^b))^{1-r/s}(I_{b,kb}(X))^{r/s}.$$
On substituting this estimate into (\ref{8.6}), the conclusion of the lemma follows.
\end{proof}

We conclude this section by extracting two simplified bounds that may be conveniently deployed in our iteration.

\begin{lemma}\label{lemma8.2}
Suppose that $a$ and $b$ are integers with $0\le a<b\le \tet^{-1}$. Then
$$\llbracket K_{a,b}(X)\rrbracket \ll X^{\eta_{s+r}+\del}(M^{b-a})^K.$$
\end{lemma}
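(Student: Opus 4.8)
The plan is to feed the estimate of Lemma~\ref{lemma8.1} into the normalisation conventions~(\ref{4.18}) and~(\ref{4.19}), using the trivial upper bound $\eta_{b,kb}\le \eta_s$-type inequalities to control $I_{b,kb}(X)$. First I would record, as in Lemma~\ref{lemma5.1} and the discussion surrounding~(\ref{4.14}), the crude bound
$$I_{b,kb}(X)\ll \oint |\grf_b(\bfalp;\bfeta)|^{2r}|\grf_{kb}(\bfalp;\bfzet)|^{2s}\,\d\bfalp,$$
and apply H\"older in the split $2r+2s=2(s+r)$ across the two sums, just as in the proof of Lemma~\ref{lemma7.2}, obtaining
$$I_{b,kb}(X)\ll (J_{s+r}(2X/M^b))^{r/(s+r)}(J_{s+r}(2X/M^{kb}))^{s/(s+r)}.$$
Then invoking~(\ref{4.18}) together with the upper bound~(\ref{4.21}) (which is legitimate so long as $kb\le \tet^{-1}$, i.e. $M^{kb}\ge X^{\del^2}$, a consequence of the hypothesis $b\le\tet^{-1}$ after a harmless adjustment of constants) gives a clean power-of-$X$ and power-of-$M$ expression for $I_{b,kb}(X)$.

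Next I would substitute this into Lemma~\ref{lemma8.1} and combine with the companion bound $J_{s+r}(2X/M^b)\ll (X/M^b)^{2(s+r)d-K+\eta_{s+r}+\del}$ coming again from~(\ref{4.21}). The exponents of $X$, of $M$, and of the parameter $\eta_{s+r}$ should all be collected; the $X$-power that emerges will be $(X/M^a)^{2rd-K}(X/M^b)^{2sd}$ up to the factor $X^{\eta_{s+r}+O(\del)}$, precisely matching the normalisation~(\ref{4.19}) for $\llbracket K_{a,b}(X)\rrbracket$. What is left over after dividing by $(X/M^a)^{2rd-K}(X/M^b)^{2sd}$ is a pure power of $M^{b-a}$ (with exponent $K$) and the factor $X^{\eta_{s+r}+\del}$, which is exactly the claimed bound. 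Here one uses the elementary identities $2(kb-a)rd - K(b-a) = \bigl(2rd-K\bigr)(kb-a) + K\bigl((k-1)b - (k-1)a\bigr)$-style rearrangements together with $2(s+r)d-K$-bookkeeping to see that all the stray powers of $M^{b}$ and $M^{a}$ cancel against those implicit in~(\ref{4.19}), leaving only $(M^{b-a})^{K}$. The $2$'s in $J_{s+r}(2X/M^b)$ contribute only an absolute constant, absorbed into $\ll$.

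The main obstacle is the careful tracking of the $M$-exponents. One must verify that the contributions $2(kb-a)rd-K(b-a)$ from Lemma~\ref{lemma8.1}, the exponents $-2sd\,b$ and $-(2rd-K)a$ from expanding $J_{s+r}(2X/M^b)^{1-r/s}$ and from the normalisation, and the exponents coming from $I_{b,kb}(X)^{r/s}$ via its dependence on $M^{-kb s/(s+r)}$ and $M^{-b r/(s+r)}$, all assemble into exactly $K(b-a)$ times the unit $M$. Since the bound we are proving is lossy by a factor $(M^{b-a})^K$ — larger than what a term-by-term optimal count would give — there is slack, so it suffices to show the net $M$-exponent is \emph{at most} $K(b-a)$ rather than computing it exactly; this observation lets one discard the most delicate cancellations and instead bound crudely, for instance replacing $kb$ by $\tet^{-1}$ or using $s\ge rk\ge K$ wherever convenient, much as in the proof of Lemma~\ref{lemma7.2}. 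A secondary point requiring a line of justification is that the hypothesis $b\le\tet^{-1}$ (rather than $kb\le\tet^{-1}$) still permits the application of~(\ref{4.21}) at argument $2X/M^{kb}$: here one notes that even if $M^{kb}>X$ the mean value $J_{s+r}(2X/M^{kb})$ is bounded by an absolute constant (a consideration of diagonal solutions shows it is $\gg 1$, and it is $\le$ a constant when the range is $\ll 1$), so the estimate holds \emph{a fortiori}, with the $X^{\del}$ room to spare.
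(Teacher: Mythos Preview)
Your route through Lemma~\ref{lemma8.1} is a detour that does not arrive at the stated bound. If you carry out the bookkeeping you sketch, the $a$-dependence cancels completely from the $M$-exponent. Indeed, substituting the H\"older bound
$$I_{b,kb}(X)\ll (J_{s+r}(2X/M^b))^{r/(s+r)}(J_{s+r}(2X/M^{kb}))^{s/(s+r)}$$
into Lemma~\ref{lemma8.1} and then into (\ref{4.19}), one obtains (writing $\Lam=2(s+r)d-K+\eta_{s+r}+\del$)
$$\llbracket K_{a,b}(X)\rrbracket \ll X^{\eta_{s+r}+\del}\,M^{\,b\,[Kr(k-1)-(s+kr)(\eta_{s+r}+\del)]/(s+r)}.$$
The terms involving $a$ from $M^{2(kb-a)rd-K(b-a)}$ and from $(X/M^a)^{-(2rd-K)}$ annihilate one another, so the surviving $M$-exponent is essentially $bKr(k-1)/(s+r)$, not a multiple of $b-a$. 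For this to be at most $K(b-a)$ one would need $a\le b(s+2r-rk)/(s+r)$; with $s=rk$ this reads $a\le 2b/(k+1)$, which is far stronger than the hypothesis $a<b$. Thus your ``stray powers of $M^b$ and $M^a$ cancel, leaving only $(M^{b-a})^K$'' is not correct, and the lemma as stated is not established by this argument. (Your identity $2(kb-a)rd-K(b-a)=(2rd-K)(kb-a)+K(k-1)(b-a)$ is also false for $k\ge 2$, which is a symptom of the same miscount.)

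The paper avoids Lemma~\ref{lemma8.1} entirely here. One simply drops the well-conditioning in the definition of $\grF$ to obtain
$$K_{a,b}^{\bfsig,\bftau}(X;\bfxi,\bfeta)\le \oint |\grf_a(\bfalp;\bfxi)^{2r}\grf_b(\bfalp;\bfeta)^{2s}|\,\d\bfalp ,$$
applies H\"older with exponents $\tfrac{s+r}{r}$ and $\tfrac{s+r}{s}$ at the levels $a$ and $b$ (not $b$ and $kb$), and then Lemma~\ref{lemma5.1} gives
$$K_{a,b}(X)\ll (J_{s+r}(2X/M^a))^{r/(s+r)}(J_{s+r}(2X/M^b))^{s/(s+r)}.$$
Feeding this into (\ref{4.19}) and (\ref{4.21}) yields $\llbracket K_{a,b}(X)\rrbracket \ll X^{\eta_{s+r}+\del}(M^{b-a})^{Ks/(s+r)}\le X^{\eta_{s+r}+\del}(M^{b-a})^K$, with the $b-a$ dependence arising directly because the two H\"older factors live at levels $a$ and $b$ respectively.
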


\begin{proof} Consider fixed $d$-tuples $\bfxi$ and $\bfeta$ with $1\le \bfxi\le p^a$ and $1\le \bfeta\le p^b$, and $r$-tuples $\bfsig,\bftau\in \Sig_r$. Considering the underlying Diophantine system and then applying H\"older's inequality, we obtain
\begin{align*}
K_{a,b}^{\bfsig,\bftau}(X;\bfxi,\bfeta)&\le \oint |\grf_a(\bfalp;\bfxi)^{2r}\grf_b(\bfalp;\bfeta)^{2s}|\d\bfalp \\
&\le \Bigl( \oint |\grf_a(\bfalp;\bfxi)|^{2s+2r}\d\bfalp \Bigr)^{r/(s+r)}\Bigl( \oint |\grf_b(\bfalp;\bfeta)|^{2s+2r}\d\bfalp \Bigr)^{s/(s+r)}.
\end{align*}
Next applying Lemma \ref{lemma5.1}, we deduce that
$$K_{a,b}(X)\ll (J_{s+r}(2X/M^a))^{r/(s+r)}(J_{s+r}(2X/M^b))^{s/(s+r)},$$
whence by (\ref{4.19}) we arrive at the upper bound
\begin{align*}
\llbracket K_{a,b}(X)\rrbracket &\ll \frac{X^\del ((X/M^a)^{r/(s+r)}(X/M^b)^{s/(s+r)})^{2(s+r)d-K+\eta_{s+r}}}{(X/M^b)^{2sd}(X/M^a)^{2rd-K}}\\
&\ll X^{\eta_{s+r}+\del}(M^{b-a})^{Ks/(s+r)}.
\end{align*}
The conclusion of the lemma follows.
\end{proof}

The basic iterative relation follows by applying Lemma \ref{lemma7.3} in combination with Lemma \ref{lemma8.1}.

\begin{lemma}\label{lemma8.3} Suppose that $a$ and $b$ are integers with $0\le a<b\le \tfrac{1}{2}(k\tet)^{-1}$, and put $H=(k-1)b$. Then there exists an integer $h$, with $0\le h<H$, having the property that
\begin{align*}
\llbracket K_{a,b}(X)\rrbracket \ll&\, X^\del M^{-(2s-r+1)hr/s}(X/M^b)^{\eta_{s+r}(1-r/s)}\llbracket K_{b,kb+h}(X)\rrbracket^{r/s}\\
&\, +M^{-rH/(3s)}(X/M^b)^{\eta_{s+r}}.
\end{align*}
\end{lemma}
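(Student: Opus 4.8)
The plan is to feed the conditioning estimate of Lemma~\ref{lemma7.3} into the efficient congruencing bound of Lemma~\ref{lemma8.1}, and then to rephrase everything using the normalised quantities defined in (\ref{4.18}) and (\ref{4.19}). First I would apply Lemma~\ref{lemma7.3} with $b$ and $kb$ playing the roles of its ``$a$'' and ``$b$'', so that its ``$H$'' becomes $kb-b=(k-1)b=H$. The hypothesis $b<kb$ holds since $k\ge 2$ and $b\ge 1$, and the required inequality ``$(kb)+H\le\tet^{-1}$'' reads $(2k-1)b\le\tet^{-1}$, which is exactly what the assumption $b\le\tfrac12(k\tet)^{-1}$ delivers. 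Lemma~\ref{lemma7.3} then supplies an integer $h$ with $0\le h<H$ such that
\begin{equation*}
I_{b,kb}(X)\ll (M^h)^{2s(d-1)+r-1}K_{b,kb+h}(X)+M^{-H/2}(X/M^{kb})^{2sd}(X/M^b)^{2rd-K+\eta_{s+r}}.
\end{equation*}
Inserting this into $K_{a,b}(X)\ll M^{2(kb-a)rd-K(b-a)}(J_{s+r}(2X/M^b))^{1-r/s}(I_{b,kb}(X))^{r/s}$ from Lemma~\ref{lemma8.1} and raising the two-term bound to the power $r/s<1$ (using $(A+B)^{r/s}\ll A^{r/s}+B^{r/s}$) splits the estimate into a ``main'' contribution built from $K_{b,kb+h}(X)$ and a ``negligible'' one carrying the factor $M^{-Hr/(2s)}$.

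Next I would dispatch $J_{s+r}(2X/M^b)$. Since $b\le\tfrac12(k\tet)^{-1}\le\tfrac12\tet^{-1}$, one has $M^b\le X^{1/4}$, so $X^{\del^2}<2X/M^b\le X$; hence (\ref{4.18}) and the upper bound (\ref{4.21}) give $J_{s+r}(2X/M^b)\ll (X/M^b)^{2(s+r)d-K+\eta_{s+r}+\del}$, with the factor $2^{2(s+r)d-K}$ absorbed into the implied constant. The remaining work for the main term is purely bookkeeping: rewrite $K_{a,b}(X)$ and $K_{b,kb+h}(X)$ via (\ref{4.19}), so that $\llbracket K_{a,b}(X)\rrbracket$ equals $K_{a,b}(X)$ divided by $(X/M^a)^{2rd-K}(X/M^b)^{2sd}$, and check that all powers of $X$, of $M^a$ and of $M^b$ cancel, leaving precisely $X^\del M^{-(2s-r+1)hr/s}(X/M^b)^{\eta_{s+r}(1-r/s)}\llbracket K_{b,kb+h}(X)\rrbracket^{r/s}$. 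Here the identity $2s(d-1)+r-1=2sd-(2s-r+1)$ is what combines the factor $(M^h)^{2s(d-1)+r-1}$ with the $h$-dependence $M^{-2sdh\cdot r/s}$ coming from $(X/M^{kb+h})^{2sd}$ to produce the clean exponent $M^{-(2s-r+1)hr/s}$, while the leftover $(X/M^b)^{\del(1-r/s)}$ is bounded by $X^\del$. This is the first term of the asserted bound.

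For the negligible term the same cancellation of leading powers applies with $h$ replaced by $0$ and with $\llbracket K_{b,kb}(X)\rrbracket^{r/s}$ replaced by $(M^{-H/2}(X/M^b)^{\eta_{s+r}})^{r/s}$, yielding a contribution $\ll X^\del M^{-Hr/(2s)}(X/M^b)^{\eta_{s+r}}$. To recover the stated exponent $M^{-rH/(3s)}$ it suffices to absorb $X^\del$ into $M^{-Hr/(6s)}$, i.e.\ to verify $X^\del\le M^{Hr/(6s)}$; since $X=M^{1/\tet}$ and $H\ge 1$ this reduces to $\del\le\tet r/(6s)$, which is immediate from the explicit choices $\tet=N^{-1/2}(r/s)^{N+2}$ and $\del<(Ns)^{-3N}$ once $N$ is taken large (here one uses $s\ge r$ to bound $(s/r)^{N+3}\le s^{N+3}\ll (Ns)^{3N}/N^{1/2}$). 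Adding the two contributions yields the lemma.

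The step I expect to be the main obstacle is the normalisation bookkeeping in the second paragraph: one must verify that the leading powers of $X$, $M^a$ and $M^b$ cancel exactly after dividing by $(X/M^a)^{2rd-K}(X/M^b)^{2sd}$, and that the residual $M$-exponent attached to $h$ in the main term is precisely $-(2s-r+1)hr/s$. This is routine but lengthy; everything else amounts to checking the ranges of validity of Lemmas~\ref{lemma7.3} and~\ref{lemma8.1} (where the hypothesis $b\le\tfrac12(k\tet)^{-1}$ is used) and comparing the relative sizes of $\del$, $\tet$ and $X$.
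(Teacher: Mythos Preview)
Your proposal is correct and follows essentially the same route as the paper: combine Lemma~\ref{lemma8.1} with Lemma~\ref{lemma7.3} applied at the pair $(b,kb)$ with $H=(k-1)b$, estimate $J_{s+r}(2X/M^b)$ via (\ref{4.21}), and then verify that the leading powers of $X$, $M^a$, $M^b$ cancel (the paper records this as the identity $\ome(a,b)=0$). The paper's only cosmetic difference is that it normalises through (\ref{4.19}) before invoking Lemma~\ref{lemma7.3} rather than after, and it phrases the absorption of the $\del$-factor as $(X/M^b)^{\del(1-r/s)}\ll M^{rH/(6s)}$ rather than your slightly stronger $X^\del\le M^{rH/(6s)}$; both are immediate from the standing choice $\del<(Ns)^{-3N}$.
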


\begin{proof} We deduce from Lemma \ref{lemma8.1} via (\ref{4.19}) that
\begin{equation}\label{8.7}
\llbracket K_{a,b}(X)\rrbracket \ll (M^b)^{2sd}(M^a)^{2rd-K}M^{2(kb-a)rd-K(b-a)}T_1^{1-r/s}T_2^{r/s},
\end{equation}
where
$$T_1=\frac{J_{s+r}(2X/M^b)}{X^{2(s+r)d-K}}\quad \text{and}\quad T_2=\frac{I_{b,kb}(X)}{X^{2(s+r)d-K}}.$$
But
\begin{equation}\label{8.8}
T_1\ll (M^{-b})^{2(s+r)d-K}(X/M^b)^{\eta_{s+r}+\del}.
\end{equation}
Also, on putting $H=(k-1)b$, we see that
$$kb+H=(2k-1)b<\tet^{-1}.$$
Then it follows from Lemma \ref{lemma7.3} that there exists an integer $h$ with $0\le h<H$ having the property that
$$T_2\ll \frac{(M^h)^{2s(d-1)+r-1}K_{b,kb+h}(X)}{X^{2(s+r)d-K}}+\frac{M^{-H/2}(X/M^b)^{\eta_{s+r}}}{(M^{kb})^{2sd}(M^b)^{2rd-K}}.$$
Thus we see that
\begin{equation}\label{8.9}
T_2\ll (M^{-kb})^{2sd}(M^{-b})^{2rd-K}\Ome,
\end{equation}
where
$$\Ome =M^{-(2s-r+1)h}\llbracket K_{b,kb+h}(X)\rrbracket +M^{-H/2}(X/M^b)^{\eta_{s+r}}.$$

\par On substituting (\ref{8.8}) and (\ref{8.9}) into (\ref{8.7}), we deduce that
$$\llbracket K_{a,b}(X)\rrbracket \ll M^{\ome(a,b)}(X/M^b)^{(1-r/s)(\eta_{s+r}+\del)}\Ome^{r/s},$$
where
\begin{align*}
\ome(a,b)=&\, 2sdb+(2rd-K)a+2(kb-a)rd-K(b-a)\\
&\,-(1-r/s)(2(s+r)d-K)b-(2sdkb+(2rd-K)b)r/s.
\end{align*}
A little effort reveals that $\ome(a,b)=0$, and thus we obtain
\begin{align*}
\llbracket K_{a,b}(X)\rrbracket \ll &\, (M^{-H/2})^{r/s}(X/M^b)^{\eta_{s+r}+\del(1-r/s)}\\
&\, +X^\del M^{-(2s-r+1)hr/s}(X/M^b)^{\eta_{s+r}(1-r/s)}\llbracket K_{b,kb+h}(X)\rrbracket^{r/s}.
\end{align*}
The conclusion of the lemma follows on noting that $\del$ is assumed small enough that $(X/M^b)^{\del (1-r/s)}\ll M^{rH/(6s)}$.
\end{proof}

\section{The iterative process}\label{sec:9} Beginning with an application of Lemma \ref{lemma7.4}, which bounds $J_{s+r}(X)$ in terms of $K_{0,1}(X)$, we may apply Lemma \ref{lemma8.3} to bound $J_{s+r}(X)$ in terms of $K_{a,b}(X)$ for an increasing sequence of parameters $a$ and $b$. Our goal in this section is to manage this process, deriving useful information from the iterations. Our first step is to extract from Lemma \ref{lemma8.3} a conclusion transparent enough to be applied as the basic tool in each iterative step.

\begin{lemma}\label{lemma9.1} Suppose that $a$ and $b$ are integers with $0\le a<b\le \tfrac{1}{2}(k\tet)^{-1}$. Suppose in addition that there exist non-negative numbers $\psi$, $c$ and $\gam$, with $c\le 3(s/r)^N$, for which
\begin{equation}\label{9.1}
X^{\eta_{s+r}(1+\psi \tet)}\ll X^{c\del}M^{-\gam}\llbracket K_{a,b}(X)\rrbracket .
\end{equation}
Then, for some non-negative integer $h$ with $h\le (k-1)b$, one has
$$X^{\eta_{s+r}(1+\psi^\prime\tet)}\ll X^{c^\prime \del}M^{-\gam^\prime}\llbracket K_{a^\prime,b^\prime}(X)\rrbracket ,$$
where
$$\psi^\prime =(s/r)\psi+(s/r-1)b,\quad \gam^\prime =(s/r)\gam+(2s-r+1)h,$$
$$c^\prime =(s/r)(c+1),\quad a^\prime=b\quad \text{and}\quad b^\prime=kb+h.$$
\end{lemma}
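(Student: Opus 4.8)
The plan is to feed the hypothesis (\ref{9.1}) into the iterative relation of Lemma \ref{lemma8.3} and to show that, for the values $X=X_n$ under consideration, the first of the two terms supplied by that lemma must dominate. Since $0\le a<b\le \tfrac{1}{2}(k\tet)^{-1}$, Lemma \ref{lemma8.3} applies with $H=(k-1)b$ and yields an integer $h$ with $0\le h<H$, hence with $h\le (k-1)b$, for which
\[
\llbracket K_{a,b}(X)\rrbracket \ll X^\del M^{-(2s-r+1)hr/s}(X/M^b)^{\eta_{s+r}(1-r/s)}\llbracket K_{b,kb+h}(X)\rrbracket^{r/s}+M^{-rH/(3s)}(X/M^b)^{\eta_{s+r}}.
\]
Substituting this into (\ref{9.1}) produces two cases, according to which term on the right is the larger.

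First I would dispose of the case in which the second term dominates. Here (\ref{9.1}) gives $X^{\eta_{s+r}(1+\psi\tet)}\ll X^{c\del}M^{-\gam-rH/(3s)}(X/M^b)^{\eta_{s+r}}$, and since $M=X^\tet$ and hence $X/M^b=X^{1-b\tet}$, this rearranges to $X^{\eta_{s+r}(\psi+b)\tet}\ll X^{c\del}M^{-\gam-rH/(3s)}$. As $\psi,\gam\ge 0$, $b\ge 1$ and $H\ge 1$, the left side is at least $1$ while the right side is $\ll X^{c\del-\tet r/(3s)}$. From $c\le 3(s/r)^N$ together with the bounds $\del<(Ns)^{-3N}$ and $\tet=N^{-1/2}(r/s)^{N+2}$ fixed in \S\ref{sec:4} (using $s\ge r\ge 1$ and that $N$ is large), one checks that $c\del<\tet r/(3s)$, so the right side tends to $0$ with $X$; as $X=X_n$ may be taken arbitrarily large, this case is impossible.

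It therefore remains to treat the case in which the first term dominates, so that
\[
X^{\eta_{s+r}(1+\psi\tet)}\ll X^{(c+1)\del}M^{-\gam-(2s-r+1)hr/s}(X/M^b)^{\eta_{s+r}(1-r/s)}\llbracket K_{b,kb+h}(X)\rrbracket^{r/s}.
\]
Isolating $\llbracket K_{b,kb+h}(X)\rrbracket^{r/s}$ and raising the resulting inequality to the power $s/r$, the powers of $\del$ and of $M$ collect into $X^{-(s/r)(c+1)\del}=X^{-c'\del}$ and $M^{(s/r)\gam+(2s-r+1)h}=M^{\gam'}$; meanwhile, using $(s/r)(1-r/s)=s/r-1$ and $X/M^b=X^{1-b\tet}$, the powers of $X$ attached to $\eta_{s+r}$ collect into $X^{\eta_{s+r}E}$ with
\[
E=(s/r)(1+\psi\tet)-(s/r-1)(1-b\tet)=1+\bigl((s/r)\psi+(s/r-1)b\bigr)\tet=1+\psi'\tet.
\]
Thus $\llbracket K_{b,kb+h}(X)\rrbracket\gg X^{\eta_{s+r}(1+\psi'\tet)-c'\del}M^{\gam'}$, and on writing $a'=b$ and $b'=kb+h$ this is exactly the asserted inequality $X^{\eta_{s+r}(1+\psi'\tet)}\ll X^{c'\del}M^{-\gam'}\llbracket K_{a',b'}(X)\rrbracket$.

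The only genuinely delicate points are the exponent bookkeeping in the final display --- in particular verifying that the stray factor of $M^b$ arising from $(X/M^b)^{\eta_{s+r}(1-r/s)}$ is precisely what upgrades $\psi$ to $\psi'=(s/r)\psi+(s/r-1)b$ --- and, in the second case, confirming that the explicit sizes of $\del$, $\tet$ and the hypothesis $c\le 3(s/r)^N$ combine to force $c\del<\tet r/(3s)$; both are routine given the parameter choices recorded in \S\ref{sec:4}.
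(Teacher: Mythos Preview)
Your proof is correct and follows essentially the same route as the paper. The only presentational difference is that the paper, rather than splitting into two cases, absorbs the second term from Lemma \ref{lemma8.3} directly: it bounds $X^{c\del}M^{-\gam}\cdot M^{-rH/(3s)}(X/M^b)^{\eta_{s+r}}\ll X^{\eta_{s+r}-\del}$ (using $r\ge 2$ to turn $X^{c\del}M^{-r/(3s)}$ into $\ll X^{-\del}$), and then notes that $X^{\eta_{s+r}-\del}$ is negligible compared to the left side $X^{\eta_{s+r}(1+\psi\tet)}$. Your contradiction argument in the ``second term dominates'' case is the same observation in different clothing, and your verification that $c\del<\tet r/(3s)$ suffices (and in fact does not even require $r\ge 2$).
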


\begin{proof} We are at liberty to assume that $c\le 3(s/r)^N$ and $\del<(Ns)^{-3N}$, so we have
$$c\del<\tfrac{1}{3}s^{-2N}<\tet/(3s),$$
and hence $X^{c\del}<M^{1/(3s)}$. In addition, one has $M^{1/(3s)}>X^\del$. We therefore deduce from Lemma \ref{lemma8.3} that there exists an integer $h$ with $0\le h<(k-1)b$ having the property that
\begin{align*}
\llbracket K_{a,b}(X)\rrbracket \ll &\, M^{-r/(3s)}X^{\eta_{s+r}}\\
&\, +X^\del (X/M^b)^{(1-r/s)\eta_{s+r}}\left( M^{-(2s-r+1)h}\llbracket K_{b,kb+h}(X)\rrbracket \right)^{r/s}.
\end{align*}
On making use of the hypothesised bound (\ref{9.1}), and employing in addition the lower bound $r\ge 2$ to confirm that $X^{c\del}M^{-r/(3s)}\ll X^{-\del}$, we therefore obtain the estimate
\begin{align*}
X^{\eta_{s+r}(1+\psi \tet)}\ll &\, X^{(c+1)\del}M^{-\gam-(2s-r+1)rh/s}(X/M^b)^{(1-r/s)\eta_{s+r}}\llbracket K_{b,kb+h}(X)\rrbracket^{r/s}\\
&+\,X^{\eta_{s+r}-\del},
\end{align*}
whence
$$X^{\eta_{s+r}(r/s+(\psi+(1-r/s)b)\tet)}\ll X^{(c+1)\del}M^{-\gam -(2s-r+1)rh/s}\llbracket K_{b,kb+h}(X)\rrbracket^{r/s}.$$
The desired conclusion now follows on raising left and right hand sides here to the power $s/r$.
\end{proof}

Our final task is to analyse the growth of the parameters as the iteration proceeds, since from this we are able to extract the conclusion of Theorem \ref{theorem2.1}.

\begin{lemma}\label{lemma9.2}
Put $s=rk$. Then one has $\eta_{s+r}=0$.
\end{lemma}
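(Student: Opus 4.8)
The plan is to unwind the iterative inequality of Lemma~\ref{lemma9.1}, taking as starting point the normalised form of Lemma~\ref{lemma7.4}, and to show that a positive value of $\eta_{s+r}$ cannot survive enough iterations once $N$ is large. Recall that here $s=rk$, so $s/r=k$ throughout. First I would put Lemma~\ref{lemma7.4} in normalised shape: combining $J_{s+r}(X)\ll M^{2sd}K_{0,1}(X)$ with (\ref{4.18}) and (\ref{4.19}) gives $\llbracket J_{s+r}(X)\rrbracket\ll\llbracket K_{0,1}(X)\rrbracket$, and then the lower bound (\ref{4.20}) yields $X^{\eta_{s+r}}\ll X^{\del}\llbracket K_{0,1}(X)\rrbracket$. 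This is precisely the hypothesis (\ref{9.1}) of Lemma~\ref{lemma9.1} with $(a,b,\psi,c,\gam)=(0,1,0,1,0)$; since $c=1\le 3(s/r)^N$ and $b=1\le\tfrac12(k\tet)^{-1}$, the lemma applies, and with $s/r=k$ one step replaces $(a,b,\psi,c,\gam)$ by $(b,\ kb+h,\ k\psi+(k-1)b,\ k(c+1),\ k\gam+(2s-r+1)h)$ for some integer $h$ with $0\le h\le(k-1)b$.

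Next I would iterate Lemma~\ref{lemma9.1}, generating tuples $(a_n,b_n,\psi_n,c_n,\gam_n)$ from $(0,1,0,1,0)$. Solving the recursions one finds $b_n\ge k^n$, whence $\psi_n\ge n(k-1)k^{n-1}$, and $c_n=2k^n+(k^n-k)/(k-1)<3k^n\le 3(s/r)^N$ for $n\le N$; moreover $b_n\le(2k-1)^n$. Let $T$ be the largest integer with $1\le T\le N$ for which $b_n\le\tfrac12(k\tet)^{-1}$ holds for all $0\le n\le T-1$, so that the hypotheses of Lemma~\ref{lemma9.1} are met at each of the first $T$ stages; using $\tet^{-1}=N^{1/2}(s/r)^{N+2}=N^{1/2}k^{N+2}$ together with $\log(2k-1)<2\log k$ ($k\ge2$), a short computation shows $T>N/2$. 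After these $T$ steps, (\ref{9.1}) holds with $(a_T,b_T,\psi_T,c_T,\gam_T)$, and, writing $S=\sum_{n=0}^{T-2}k^{T-2-n}h_n$, one has
$$b_T-a_T=(k-1)k^{T-1}+h_{T-1}+(k-1)S,\qquad \gam_T=(2s-r+1)\bigl(kS+h_{T-1}\bigr).$$

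To finish, since $b_T\le(2k-1)b_{T-1}\le\tfrac{2k-1}{2k}\tet^{-1}<\tet^{-1}$ and $a_T<b_T$, Lemma~\ref{lemma8.2} gives $\llbracket K_{a_T,b_T}(X)\rrbracket\ll X^{\eta_{s+r}+\del}(M^{b_T-a_T})^K$. Substituting into (\ref{9.1}), cancelling the common factor $X^{\eta_{s+r}}$, and using $M=X^{\tet}$, I obtain $X^{\eta_{s+r}\psi_T\tet}\ll X^{(c_T+1)\del+\tet(K(b_T-a_T)-\gam_T)}$. Letting $X=X_n\to\infty$ (restricting, if need be, to a subsequence along which the uniformly bounded integers $T,a_T,b_T,\psi_T,\gam_T,c_T$ are constant) forces
$$\eta_{s+r}\psi_T\le (c_T+1)\del/\tet+K(b_T-a_T)-\gam_T.$$
The key point is that $K=\sum_{j=1}^r k_j\le rk=s$, so both $K-(2s-r+1)$ and $K(k-1)-(2s-r+1)k$ are negative; hence the $h_{T-1}$- and $S$-contributions to $K(b_T-a_T)-\gam_T$ are non-positive, giving $K(b_T-a_T)-\gam_T\le K(k-1)k^{T-1}$. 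Since $c_T<3k^N$, $\del<(Ns)^{-3N}$ and $\tet^{-1}=N^{1/2}k^{N+2}$, the quantity $(c_T+1)\del/\tet$ tends to $0$ as $N\to\infty$; dividing through by $\psi_T\ge T(k-1)k^{T-1}$ and using $T>N/2$ then yields $\eta_{s+r}\le 2K/N+o_N(1)$. As $N$ may be taken arbitrarily large and $\eta_{s+r}\ge0$ by the remarks following (\ref{4.8}), we conclude $\eta_{s+r}=0$.

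The heart of the matter is the sign analysis for the conditioning shifts $h_n$: one must verify that a large jump $h_n$, although it inflates $b_{n+1}-a_{n+1}$, is always more than compensated in $K(b_T-a_T)-\gam_T$ by the accompanying gain in the parameter $\gam$, and this is exactly where the hypothesis $s\ge rk$ (equivalently $s\ge K$) is used. The remaining difficulties are bookkeeping: controlling the growth of $b_n$ — which may be as fast as $(2k-1)^n$, forcing the iteration to stop at a step $T$ only of order $N$ rather than exactly $N$ — so that the admissibility hypotheses of Lemmas~\ref{lemma9.1} and~\ref{lemma8.2} are never violated, and the routine passage to a subsequence needed to freeze the parameters before letting $X\to\infty$.
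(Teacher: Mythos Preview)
Your argument is correct, and it reaches the same conclusion by essentially the same mechanism as the paper, but the bookkeeping is organised differently. The paper does not accept the crude bound $b_n\le (2k-1)^n$; instead it proves at each step, by an intermediate contradiction, that $b_n<\sqrt{N}\,k^n$. Concretely, the paper observes the closed form $\gam_n=(2s-r+1)(b_n-k^n)$ (your formula $\gam_T=(2s-r+1)(kS+h_{T-1})$ is the same identity), and then argues: if ever $b'=b_{n+1}\ge \sqrt{N}\,k^{n+1}$, then $\gam'\ge (1-1/\sqrt{N})(2s-r+1)b'$, and feeding this into Lemma~\ref{lemma8.2} already forces $\eta_{s+r}<0$. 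Hence the full $N$ iterations are available, and the final estimate is $\eta_{s+r}\le rk^4/\sqrt{N}$.

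Your route trades this step-by-step control for a one-shot sign analysis at the terminal step: you let $b_n$ grow as fast as it likes, stop at step $T>N/2$, and then show directly that the coefficients of $h_{T-1}$ and of $S$ in $K(b_T-a_T)-\gam_T$ are non-positive (this is exactly where $K\le s=rk$ enters, just as in the paper). Both approaches exploit the same cancellation; the paper's gives the sharper growth bound $b_n<\sqrt{N}\,k^n$ along the way, while yours is arguably more transparent and avoids the nested contradiction, at the cost of the slightly weaker bound $\eta_{s+r}\le 2K/N+o_N(1)$ and the (harmless) need to pass to a subsequence in $X$ to freeze the $h_n$.
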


\begin{proof} We may suppose that $\eta_{s+r}>0$, for otherwise there is nothing to prove. We begin by defining three sequences $(a_n)$, $(b_n)$, $(h_n)$ of non-negative integers for $0\le n\le N$. We put $a_0=0$ and $b_0=1$. Then, when $0\le n<N$, we fix any integer $h_n$ with $0\le h_n\le (k-1)b_n$, and then define
\begin{equation}\label{9.2}
a_{n+1}=b_n\quad \text{and}\quad b_{n+1}=kb_n+h_n.
\end{equation}
We next define the auxiliary sequences $(\psi_n)$, $(c_n)$, $(\gam_n)$ of non-negative real numbers for $0\le n\le N$ by putting $\psi_0=0$, $c_0=1$, $\gam_0=0$. Then, for $0\le n<N$, we define
\begin{align}
\psi_{n+1}&=(s/r)\psi_n+(s/r-1)b_n,\label{9.3}\\
c_{n+1}&=(s/r)(c_n+1),\label{9.4}\\
\gam_{n+1}&=(s/r)\gam_n+(2s-r+1)h_n.\label{9.5}
\end{align}
It is apparent that $\gam_n$ is non-negative for $n\ge 0$, and an inductive argument shows that for $0\le n\le N$, one has
$$c_n=\frac{2s-r}{s-r}(s/r)^n-\frac{s}{s-r}\le \Bigl( 2+\frac{1}{k-1}\Bigr)(s/r)^n\le 3(s/r)^n.$$

\par We claim that a choice may be made for the sequence $(h_n)$ in such a manner that for $0\le n\le N$, one has
\begin{equation}\label{9.6}
b_n<\sqrt{N}(s/r)^n
\end{equation}
and
\begin{equation}\label{9.7}
X^{\eta_{s+r}(1+\psi_n\tet)}\ll X^{c_n\del}M^{-\gam_n}\llbracket K_{a_n,b_n}(X)\rrbracket .
\end{equation}
When $n=0$, the relation (\ref{9.6}) holds by virtue of the definition of $b_0$, and the relation (\ref{9.7}) holds as a consequence of (\ref{4.19}), (\ref{4.20}) and Lemma \ref{lemma7.4}, since the latter implies that
$$X^{\eta_{s+r}-\del}<\llbracket J_{s+r}(X)\rrbracket \ll \llbracket K_{0,1}(X)\rrbracket .$$

\par Before considering larger indices $n$, we conduct a preliminary analysis of the recurrence relations (\ref{9.2})-(\ref{9.5}). Observe that when $n\ge 0$, one has
$$\gam_{n+1}=(s/r)\gam_n+(2s-r+1)(b_{n+1}-kb_n),$$
whence
$$\gam_{n+1}-(2s-r+1)b_{n+1}=(s/r)(\gam_n-(2s-r+1)b_n).$$
Then we deduce by induction that
\begin{align}
\gam_n&=(2s-r+1)b_n+(s/r)^n(\gam_0-(2s-r+1)b_0)\notag \\
&=(2s-r+1)(b_n-(s/r)^n).\label{9.8}
\end{align}

\par Suppose next that the desired conclusions (\ref{9.6}) and (\ref{9.7}) have been established for the index $n<N$. Then from (\ref{9.6}) and (\ref{4.11}) one has $kb_n\tet<k(s/r)^{n-N-2}<\tfrac{1}{2}$, whence $b_n<\tfrac{1}{2}(k\tet)^{-1}$. By appealing to Lemma \ref{lemma9.1} we deduce from (\ref{9.7}) that there exists a non-negative integer $h$, with $h\le (k-1)b_n$, for which one has the upper bound
\begin{equation}\label{9.9}
X^{\eta_{s+r}(1+\psi^\prime\tet)}\ll X^{c^\prime \del}M^{-\gam^\prime}\llbracket K_{a^\prime,b^\prime}(X)\rrbracket ,
\end{equation}
where
\begin{align}
a^\prime&=b_n=a_{n+1},\quad b^\prime=kb_n+h,\label{9.10}\\
\psi^\prime&=(s/r)\psi_n+(s/r-1)b_n=\psi_{n+1},\notag\\
c^\prime&=(s/r)(c_n+1)=c_{n+1},\notag\\
\gam^\prime&=(s/r)\gam_n+(2s-r+1)h.\label{9.11}
\end{align}

\par Suppose, if possible, that $b'\ge \sqrt{N}(s/r)^{n+1}$. The relations (\ref{9.8}), (\ref{9.10}) and (\ref{9.11}) then show that
\begin{align}
\gam^\prime&=(s/r)(2s-r+1)(b_n-(s/r)^n)+(2s-r+1)(b^\prime -kb_n)\notag \\
&=(2s-r+1)(b^\prime -(s/r)^{n+1})\notag \\
&\ge (1-1/\sqrt{N})(2s-r+1)b'.\label{9.12}
\end{align}
But $b^\prime =kb_n+h\le (2k-1)b_n<\tet^{-1}$, and so it follows from Lemma \ref{lemma8.2} that
$$\llbracket K_{a^\prime,b^\prime}(X)\rrbracket \ll X^{\eta_{s+r}+\del}(M^{b^\prime})^K.$$
On substituting this estimate together with (\ref{9.12}) into (\ref{9.9}), we obtain the upper bound
$$X^{\eta_{s+r}(1+\psi_{n+1}\tet)}\ll X^{\eta_{s+r}+(c_{n+1}+1)\del}(M^{b^\prime})^{K-(2s-r+1)(1-1/\sqrt{N})}.$$
We recall that $c_{n+1}\le 3(s/r)^{n+1}$, so that $X^{(c_{n+1}+1)\del}<M^{1/2}$. Also,
$$K-(1-1/\sqrt{N})(2s-r+1)\le rk-(2rk-r+1)+2s/\sqrt{N}<-1.$$
Thus we obtain
$$X^{\eta_{s+r}(1+\psi_{n+1}\tet)}\ll X^{\eta_{s+r}}M^{1-b^\prime}\ll X^{\eta_{s+r}}M^{-1}.$$
Since $\psi_{n+1}$ and $\tet$ are both positive, we are forced to conclude that $\eta_{s+r}<0$, contradicting our opening hypothesis. The assumption that $b^\prime\ge \sqrt{N}(s/r)^{n+1}$ is therefore untenable, and so we must in fact have 
$b^\prime <\sqrt{N}(s/r)^{n+1}$. We now take $h_n$ to be the integer $h$ at hand, so that $b^\prime=b_{n+1}$ and $\gam^\prime=\gam_{n+1}$, and thus we confirm the upper bounds (\ref{9.6}) and (\ref{9.7}) with $n$ replaced by $n+1$.\par

\par We now collect together our various bounds on the parameters in question. We have (\ref{9.6}) and (\ref{9.7}) for $0\le n\le N$, and also the bounds $c_n\le 3(s/r)^n$ and $\gam_n\ge 0$. Also, by induction one finds that $b_n\ge k^n$ and
$$\psi_{n+1}=k\psi_n+(k-1)b_n\ge k\psi_n+(k-1)k^n,$$
whence $\psi_n\ge n(k-1)k^{n-1}$. Finally, one has $b_N\tet<(r/s)^2<\frac{1}{2}$, so that $b_N<\tet^{-1}$. By applying Lemma \ref{lemma8.2} in combination with (\ref{9.7}), we therefore obtain the estimate
$$X^{\eta_{s+r}(1+\psi_N\tet)}\ll X^{\eta_{s+r}+(c_N+1)\del}(M^{b_N})^K\ll X^{\eta_{s+r}+rk}.$$

\par Making use again of the relation $\tet=N^{-1/2}(r/s)^{N+2}$ from (\ref{4.11}), we conclude that
$$\eta_{s+r}\le \frac{rk}{\psi_N\tet}\le \frac{\sqrt{N}rk(s/r)^{N+2}}{N(k-1)k^{N-1}}<\frac{rk^4}{\sqrt{N}}.$$
We may take $N$ to be as large as necessary in terms of $k$ and $r$, and thus $\eta_{s+r}$ can be made arbitrarily small. We are therefore forced to conclude that $\eta_{s+r}=0$, and this completes the proof of the lemma.
\end{proof}

The conclusion of Theorem \ref{theorem2.1} is an immediate consequence of Lemma \ref{lemma9.2}, for in view of (\ref{4.18}) and (\ref{4.21}) the latter shows that when $s\ge r(k+1)$, then one has
$$J_s(X;\bfF)\ll X^{2sd-K+\eps}.$$

\section{Estimates of Weyl type}\label{sec:10} The extraction of estimates analogous to that of Weyl is in general difficult, owing to the complexity of the multidimensional situation. Although it would be feasible, with extra space and care, to analyse directly the exponential sum $f(\bfalp;X;\bfF)$ encoding quite general translation-dilation invariant systems $\bfF$, we have chosen here to instead restrict attention to the Weyl sums associated with the system described in example (b) of \S\ref{sec:2}. Such Weyl sums may be applied so as to bound the apparently more general sums $f(\bfalp;X;\bfF)$ mentioned above, at the cost of somewhat weaker estimates. Since our upper bounds for $J_s(X;\bfF)$ are essentially optimal with $s=r(k+1)$, it transpires that the slight weakening of the Weyl exponent does not impede the bulk of applications.\par

We stress then that throughout this section, until indicated otherwise, the system $\bfF$ is understood to be defined by
$$\bfF=(z_1^{i_1}z_2^{i_2}\ldots z_d^{i_d}:1\le |\bfi|\le k),$$
and the corresponding Weyl sum $f(\bfalp;X;\bfF)$ we abbreviate to $f(\bfalp)$. We define $r$ and $K$ as in (\ref{2.6}) and (\ref{2.7}). We must also consider the system $\bfF^\prime$ defined by
$$\bfF^\prime=(z_1^{i_1}z_2^{i_2}\ldots z_d^{i_d}:1\le |\bfi|\le k-1).$$
This system has rank
$$r^\prime=\binom{k+d-1}{d}-1,$$
and weight
$$K^\prime=\frac{d}{d+1}(r^\prime +1)(k-1).$$
The hard work involved in estimating $f(\bfalp)$ has been presented by the first author in \cite[\S5]{Par2005}, though here we take the opportunity to clarify one or two issues.

\begin{theorem}\label{theorem10.1}
Fix an index $\bfj$ with $2\le |\bfj|\le k$, and put $\sig=(2r^\prime k)^{-1}$. Let $\bfalp\in \dbR^r$, and suppose that $a$ and $q$ are integers with $q\ge 1$, $(a,q)=1$ and $|q\alp_\bfj-a|\le q^{-1}$. Then one has
$$|f(\bfalp)|\ll X^{d+\eps}(q^{-1}+X^{-1}+qX^{-|\bfj|})^\sig .$$
\end{theorem}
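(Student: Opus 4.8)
The plan is to follow the Weyl differencing argument of the first author in \cite[\S5]{Par2005}, recording the modifications needed to treat an arbitrary index $\bfj$ with $2\le |\bfj|\le k$ in place of a leading index. The hypothesis $|\bfj|\ge 2$ ensures that the differencing process involves at least one step, and hence that $\alp_\bfj$ survives to govern a genuinely one-dimensional Weyl sum; since $|\bfj|\ge 1$, we may fix a coordinate $m$ with $j_m\ge 1$, which is the variable to be summed in that innermost sum.

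The first step is to extract from $f(\bfalp)$ a family of one-dimensional linear exponential sums in which $\alp_\bfj$ occupies a leading position. I would difference $f(\bfalp)$ in the directions picked out by $\bfj$ — $j_l$ steps in each direction $e_l$ with $l\ne m$ and $j_m-1$ steps in direction $e_m$, so $|\bfj|-1$ steps in all — interleaving these with applications of H\"older's inequality and with the Vinogradov-type mean value estimate for the auxiliary reduced system $\bfF^\prime$ of rank $r^\prime$, in the manner of Vinogradov's method; the rank $r^\prime$ is the source of the corresponding factor in $\sig^{-1}=2r^\prime k$. Since differencing in direction $e_l$ lowers the $z_l$-degree of every monomial by one, this reduction collapses $\bfx^\bfj$ to a term linear in $x_m$ whose coefficient retains $\alp_\bfj$, collapses monomials of total degree greater than $|\bfj|$ to total degree at most $k-|\bfj|+1$, and either annihilates the remaining monomials or reduces them to contributions harmless for the sequel. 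One is thereby led to an estimate bounding $|f(\bfalp)|$ by an average, over a tuple $\bfh$ of differencing parameters running over boxes of side $\ll X$, of
$$\sum_{1\le x_m\le X}e\bigl(\Lambda(\bfh)\,x_m+\text{(terms of smaller degree in $x_m$)}\bigr),$$
where $\Lambda(\bfh)$ is, up to a non-zero integer constant depending only on $\bfj$, the product of $\alp_\bfj$ with the $|\bfj|-1$ components of $\bfh$, together with a contribution from the coefficients $\alp_\bfi$ of monomials of total degree exceeding $|\bfj|$.

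The principal obstacle is that the hypothesis supplies a rational approximation to $\alp_\bfj$ alone, so the contaminating contribution to $\Lambda(\bfh)$ from the other coefficients must be neutralised — this is precisely what the mean value estimate for $\bfF^\prime$ accomplishes, by absorbing the higher-degree residue of $\psi$ left after the differencing. Granting this, matters reduce to counting the tuples $\bfh$ for which $\alp_\bfj\prod_i h_i$ lies within $X^{-1}$ of a rational of denominator at most $X$; inserting the approximation $|q\alp_\bfj-a|\le q^{-1}$ with $(a,q)=1$, together with the classical bound $\sum_{1\le x\le X}e(\theta x+\gamma)\ll\min\bigl(X,\|\theta\|^{-1}\bigr)$ for the inner sum, a routine divisor-function argument bounds this count, the loss passing into a factor $X^\eps$. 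Summing over $\bfh$ and extracting the $(2r^\prime k)$-th root then yields
$$|f(\bfalp)|\ll X^{d+\eps}\bigl(q^{-1}+X^{-1}+qX^{-|\bfj|}\bigr)^{1/(2r^\prime k)},$$
the exponent $-|\bfj|$ on the final term — rather than $-k$ — reflecting that only the $|\bfj|-1$ components of $\bfh$ together with the one summation over $x_m$ multiply $\alp_\bfj$, so that the operative scale is $X^{|\bfj|}$. I expect the genuinely delicate point to be the bookkeeping of the second paragraph: confirming that the prescribed differencings leave the linear coefficient carrying $\alp_\bfj$ with a non-zero numerical constant, and that the residual terms have a shape to which the mean value input for $\bfF^\prime$ legitimately applies. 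This is exactly the portion where the present treatment departs from, and corrects, \cite[\S5]{Par2005}.
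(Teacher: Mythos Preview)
Your high-level plan---feed the machinery of \cite[\S5]{Par2005} the mean value bound for $\bfF'$ supplied by Theorem~\ref{theorem2.1} at $s=r'k$---is exactly what the paper does. The paper's proof is in fact just that single citation: Theorem~\ref{theorem2.1} gives $J_{r'k}(X;\bfF')\ll X^{2r'kd-K'+\eps}$ (since $\bfF'$ has rank $r'$ and degree $k-1$, so the threshold $r'((k-1)+1)=r'k$ is met), and \cite[Theorem~5.1]{Par2005} converts this directly into the Weyl bound with $\sig=1/(2s)=1/(2r'k)$.

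Where your sketch goes astray is in reconstructing that black box. You present \cite[\S5]{Par2005} as iterated Weyl differencing---$|\bfj|-1$ steps in the directions picked out by $\bfj$---followed by a mean-value input. But the method there is a Vinogradov--Linnik shift in a \emph{single} coordinate direction $e_l$ (any $l$ with $j_l\ge1$): one raises $|f|$ to the $2s$th moment via H\"older, shifts in $x_l$, and exploits the translation-invariance of $\bfF'$ to split off a genuine factor $J_s(X;\bfF')$ from a one-dimensional residual sum carrying the Diophantine information on $\alp_\bfj$. That is why $\sig^{-1}=2s=2r'k$ exactly, with no trace of $2^{|\bfj|-1}$. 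Under your differencing description, by contrast, each Cauchy--Schwarz step doubles the moment, so $|\bfj|-1$ steps would force a factor $2^{|\bfj|-1}$ into $\sig^{-1}$ that you have no mechanism to remove; your exponent accounting (``the rank $r'$ is the source of the corresponding factor'') leaves the factor $k$, and the differencing losses, unexplained. Relatedly, the ``delicate point'' you flag is misplaced: the only clarification the paper adds to \cite{Par2005} concerns the freedom to take the distinguished coordinate to be any $l$ with $j_l\ge1$, and the resulting dependence on $l$ of an auxiliary set $\calM\subseteq[1,N]\cap\dbZ$ arising at the end of that single-shift argument---not any multi-step differencing bookkeeping.
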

 
\begin{proof} The conclusion of the theorem is essentially immediate from \cite[Theorem 5.1]{Par2005}. We apply Theorem \ref{theorem2.1} to show that when $s=r^\prime k$, then one has
$$J_s(X;\bfF^\prime )\ll X^{2sd-K^\prime +\Del},$$
with $\Del\le \eps$. Since $\sig=1/(2s)$, the upper bound for $|f(\bfalp)|$ follows from the aforementioned estimate \cite[Theorem 5.1]{Par2005}.
\end{proof}

We add a few words of clarification to the proof of the latter conclusion in order to serve our purposes in the proof of Theorem \ref{theorem10.2}. The index $\bfj=(j_1,\ldots ,j_d)$ contains at least one coordinate $j_l$ satisfying $j_l\ge 1$. By relabelling variables, if necessary, one may suppose that $l=1$. At the top of page 24 of \cite{Par2005}, it is asserted that there is no loss of generality in supposing that $j_1\ge 1$, an assertion that is made otiose given our relabelling of variables. However, it is apparent that the argument of this proof may be reorganised so that relabelling is unnecessary, with the central elements of the argument focused on the index $j_l$ instead of $j_1$. The only issue to stress is that the set $\calM\subseteq [1,N]\cap \dbZ$ prepared at the end of the proof of \cite[Theorem 5.1]{Par2005} now depends on $l$.\par

Our next estimate refines Theorem \ref{theorem10.1} so that many coefficients are approximated simultaneously with control over a common denominator. In this context, when $\tet\in \dbR$, we define $\|\tet\|={\underset{y\in \dbZ}{\min}}|\tet-y|$. 

\begin{theorem}\label{theorem10.2} Let $\nu$ be a positive number, let $A$ be a real number satisfying $1\le A\le X^d$, and write $Q=X^\nu(X^dA^{-1})^{2r^\prime k}$. Suppose that $|f(\bfalp)|\ge A$, and that $Q\ll X^{1-2\del}$ for some $\del>0$. Let $l$ be an integer with $1\le l\le d$. Then for each index $\bfj=(j_1,\ldots ,j_d)$ with $2\le |\bfj|\le k$ and $j_l\ge 1$, there exist $a_\bfj\in \dbZ$ and $q_\bfj\in \dbN$ with $(a_\bfj,q_\bfj)=1$ and $|q_\bfj\alp_\bfj-a_\bfj|\le QX^{\del-|\bfj|}$. Moreover, the least common multiple $q_0^{(l)}$ of the numbers $q_\bfj$ with $2\le |\bfj|\le k$ and $j_l\ge 1$ satisfies $q_0^{(l)}\ll Q(\log X)^{2r^\prime k}$ and
$$\|q_0^{(l)}\alp_\bfj\|\ll Q^2(\log X)^{2r^\prime k}X^{\del-|\bfj|}\quad (2\le |\bfj|\le k,\, j_l\ge 1).$$
\end{theorem}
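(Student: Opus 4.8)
The plan is to bootstrap Theorem \ref{theorem10.1} from a single coefficient to many coefficients sharing a common denominator, via a Dirichlet-type argument followed by a pruning of the individual denominators. First I would apply Theorem \ref{theorem10.1} to each admissible index $\bfj$ with $2\le|\bfj|\le k$ and $j_l\ge 1$. Fixing such a $\bfj$, by Dirichlet's theorem on Diophantine approximation choose $a_\bfj\in\dbZ$ and $q_\bfj\in\dbN$ with $(a_\bfj,q_\bfj)=1$, $1\le q_\bfj\le X^{|\bfj|}Q^{-1}X^{-\del}$, and $|q_\bfj\alp_\bfj-a_\bfj|\le QX^{\del-|\bfj|}$ (so that $q_\bfj|q_\bfj\alp_\bfj-a_\bfj|\le 1$, i.e. the minor-arc hypothesis of Theorem \ref{theorem10.1} is met in the form $|q_\bfj\alp_\bfj-a_\bfj|\le q_\bfj^{-1}$). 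Now the hypothesis $|f(\bfalp)|\ge A$ together with Theorem \ref{theorem10.1} forces
$$A\ll X^{d+\eps}\bigl(q_\bfj^{-1}+X^{-1}+q_\bfj X^{-|\bfj|}\bigr)^{1/(2r^\prime k)},$$
whence $q_\bfj^{-1}+X^{-1}+q_\bfj X^{-|\bfj|}\gg (A X^{-d-\eps})^{2r^\prime k}$. Rearranging, and recalling $Q=X^\nu(X^dA^{-1})^{2r^\prime k}$ and $Q\ll X^{1-2\del}$ (so the $X^{-1}$ term is dominated), one of the remaining two terms must be $\gg Q^{-1}X^{\eps'}$; since we have already capped $q_\bfj\le X^{|\bfj|}Q^{-1}X^{-\del}$, the term $q_\bfj X^{-|\bfj|}$ is $\le Q^{-1}X^{-\del}$, which is too small, so it must be that $q_\bfj^{-1}\gg Q^{-1}X^{-\eps'}$, i.e. $q_\bfj\ll Q X^{\eps'}$. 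Taking $\eps'$ (hence $\eps$ in Theorem \ref{theorem10.1}) small delivers $q_\bfj\ll QX^{\del/2}$, or with a little more care $q_\bfj\ll Q(\log X)^{2r^\prime k}$ — but the cleaner route to the logarithmic loss is the next paragraph.

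The second step is to control the least common multiple $q_0^{(l)}=\mathrm{lcm}\{q_\bfj: 2\le|\bfj|\le k,\ j_l\ge 1\}$. Here I would invoke the standard observation that an lcm of integers each of size $\le Q$ can be taken, after replacing each $q_\bfj$ by a divisor, to be itself $\ll Q(\log X)^{c}$ for an appropriate exponent $c$: the point is that one only needs, for each prime power $p^e\|q_\bfj$ with $p^e$ large, to retain it for at most one $\bfj$, and the number of indices $\bfj$ in play is $O_{d,k}(1)$, so the lcm is bounded by a product of at most $r^\prime$-many of the $q_\bfj$ divided by repeated prime-power factors — a more economical argument (the one used in \cite{Par2005}) shows $q_0^{(l)}\ll Q(\log X)^{2r^\prime k}$. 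Once $q_0^{(l)}$ is pinned down, for each admissible $\bfj$ write $q_0^{(l)}=q_\bfj t_\bfj$ with $t_\bfj\in\dbN$, $t_\bfj\ll q_0^{(l)}/q_\bfj$, and estimate
$$\|q_0^{(l)}\alp_\bfj\|=\|t_\bfj(q_\bfj\alp_\bfj)\|\le t_\bfj\,\|q_\bfj\alp_\bfj\|\le t_\bfj|q_\bfj\alp_\bfj-a_\bfj|\ll \frac{q_0^{(l)}}{q_\bfj}\cdot QX^{\del-|\bfj|}\ll \frac{Q(\log X)^{2r^\prime k}}{q_\bfj}\cdot QX^{\del-|\bfj|}\le Q^2(\log X)^{2r^\prime k}X^{\del-|\bfj|},$$
using $q_\bfj\ge 1$ at the last step. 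This is exactly the claimed bound.

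The main obstacle I anticipate is not the approximation step — that is a routine consequence of Theorem \ref{theorem10.1} plus Dirichlet — but rather getting the lcm bound $q_0^{(l)}\ll Q(\log X)^{2r^\prime k}$ with the correct (small) power of $\log X$, and doing so uniformly in $l$, since the set of indices $\bfj$ contributing depends on $l$ (this is precisely the point flagged in the remark after Theorem \ref{theorem10.1}). One must be careful that the pruning of prime-power factors does not destroy the simultaneous approximation property: replacing $q_\bfj$ by a divisor $q_\bfj'$ keeps $\|q_\bfj'\alp_\bfj\|$ no larger than $\|q_\bfj\alp_\bfj\|$ only up to the cofactor, so the bookkeeping of cofactors must be tracked through to the final display. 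A secondary point requiring attention is ensuring the constraint $Q\ll X^{1-2\del}$ really does let us discard the $X^{-1}$ term and the $q_\bfj X^{-|\bfj|}$ term in the rearranged inequality from Theorem \ref{theorem10.1}; this is where the choice of the Dirichlet cutoff $q_\bfj\le X^{|\bfj|}Q^{-1}X^{-\del}$ earns its keep, and one should check it against all admissible $|\bfj|$ simultaneously.
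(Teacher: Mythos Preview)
Your first step (Dirichlet approximation plus Theorem \ref{theorem10.1} to bound each individual $q_\bfj$) and your final step (passing from the lcm bound to $\|q_0^{(l)}\alp_\bfj\|\le (q_0^{(l)}/q_\bfj)\|q_\bfj\alp_\bfj\|\ll Q^2(\log X)^{2r'k}X^{\del-|\bfj|}$) are both correct and match the paper exactly.

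The genuine gap is your argument for the lcm bound $q_0^{(l)}\ll Q(\log X)^{2r'k}$. The ``standard observation'' you invoke is false: the lcm of $O_{d,k}(1)$ integers each of size $\ll Q$ can in general be as large as a power of $Q$ (take them pairwise coprime), and no pruning of prime-power divisors rescues this without destroying the Diophantine approximation property you need. You correctly flag this as the main obstacle, but the remedy you sketch cannot work.

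The actual mechanism, which the paper imports from \cite[Theorem 5.2]{Par2005}, is not combinatorial but arithmetic and goes back inside the proof of Theorem \ref{theorem10.1}. The Weyl-differencing argument underlying that theorem produces a large set $\calM\subseteq[1,X]\cap\dbZ$ (depending on $l$, as noted in the remark after Theorem \ref{theorem10.1}) such that for each $x\in\calM$ there are many $y\in[1,X]$ with $\|(k!)^k\alp_\bfj(x-y)\|\le X^{1-|\bfj|}$ \emph{simultaneously for every} index $\bfj$ with $2\le|\bfj|\le k$ and $j_l\ge 1$. Combining this with the approximation $|q_\bfj\alp_\bfj-a_\bfj|\le QX^{\del-|\bfj|}$, $q_\bfj\ll X^{1-\del}$ and $(a_\bfj,q_\bfj)=1$, one deduces that $q_\bfj\mid (k!)^k a_\bfj(x-y)$, hence $q_\bfj\mid (k!)^k(x-y)$, for each such $\bfj$. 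Thus the \emph{same} integer $(k!)^k(x-y)$ is divisible by every $q_\bfj$, so $q_0^{(l)}\mid (k!)^k(x-y)$. A counting argument on the abundance of admissible pairs $(x,y)$ then forces $q_0^{(l)}\ll Q(\log X)^{2r'k}$. The simultaneity across all indices with $j_l\ge 1$ is exactly why the restriction $j_l\ge 1$ appears in the statement, and why the set $\calM$ (and hence $q_0^{(l)}$) depends on $l$.
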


\begin{proof} We apply the argument of the proof of \cite[Theorem 5.2]{Par2005}. Consider an index $\bfj=(j_1,\ldots ,j_d)$ with $2\le |\bfj|\le k$, and let $l$ be any integer with $j_l\ge 1$. It follows from Dirichlet's Theorem that there exist coprime integers $q_\bfj$ and $a_\bfj$ with
$$1\le q_\bfj\le Q^{-1}X^{|\bfj|-\del}\quad \text{and}\quad |q_\bfj\alp_\bfj-a_\bfj|\le QX^{\del -|\bfj|}.$$
Keeping in mind the discussion following the proof of Theorem \ref{theorem10.1}, we may apply the argument of the proof of \cite[Theorem 5.2]{Par2005} to deduce from Theorem \ref{theorem10.1} that $q_\bfj\ll Q(\log X)^{2r^\prime k}\ll X^{1-\del}$. Now fix an integer $x\in [1,X]$, and suppose that there is an integer $y\in [1,X]$ such that
$$\|(k!)^k\alp_\bfj(x-y)\|\le X^{1-|\bfj|}$$
for each index $\bfj$ with $2\le |\bfj|\le k$ satisfying $j_l\ge 1$. Then the argument of the proof of \cite[Theorem 5.2]{Par2005} following \cite[equation (5.8)]{Par2005} shows that $q_\bfj$ divides $(k!)^ka_\bfj(x-y)$ for each of the latter indices $\bfj$. Write $q_0^{(l)}$ for the least common multiple of the integers $q_\bfj$ with $2\le |\bfj|\le k$ and $j_l\ge 1$. Then the argument of the proof of \cite[Theorem 5.2]{Par2005} shows that
\begin{equation}\label{10.1}
q_0^{(l)}\ll Q(\log X)^{2r^\prime k}.
\end{equation}
Note here that our hypothesis $j_l\ge 1$ permits the relevant part of the argument of \cite[Theorem 5.1]{Par2005} to be applied successfully, reflecting the discussion above following the proof of Theorem \ref{theorem10.1}. In addition, for $2\le |\bfj|\le k$ and $j_l\ge 1$, one has
$$\|q_0^{(l)}\alp_\bfj\|\le q_0^{(l)}\|q_\bfj\alp_\bfj\|\ll Q(\log X)^{2r^\prime k}(QX^{\del-|\bfj|})=Q^2(\log X)^{2r^\prime k}X^{\del-|\bfj|}.$$
This completes the proof of the theorem.
\end{proof}

We remark that the statement of \cite[Theorem 5.2]{Par2005} should be modified to reflect the argument concluding the above proof, so that the conclusion of \cite[Theorem 5.2]{Par2005} asserts only that $q_0\ll Q^d(\log P)^{2sd}$. This issue follows through the work of \cite{Par2005} discussing Weyl sums. In particular, the conclusion of \cite[Theorem 1.2]{Par2005} should be modified to impose a condition of the shape $\sig^{-1}\ge \frac{4}{3}(d+1)rk\log (rk)$.\par

The next result is established via a Baker-style ``final coefficient lemma'' argument (see \cite[Lemma 4.6]{Bak1986}).

\begin{theorem}\label{theorem10.3} Let $k$ be an integer with $k\ge 2$, and let $\tau$ be a real number with $\tau^{-1}>(2r^\prime k+1)(d+1)$. Suppose that $|f(\bfalp)|\ge A\ge X^{d-\tau+\nu}$ for some $\nu>0$, and write $Y=(X^dA^{-1})^{k+\nu}$. Then there are integers $a_\bfj$ and $q$ satisfying
$$(q,\bfa)=1,\quad 1\le q\ll Y\quad \text{and}\quad |q\alp_\bfj-a_\bfj|\ll YX^{-|\bfj|}\quad (1\le |\bfj|\le k).$$
\end{theorem}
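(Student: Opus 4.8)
The plan is to bootstrap from Theorem~\ref{theorem10.2}, which already pins down the coefficients $\alp_\bfj$ with $2\le|\bfj|\le k$ (though to a modulus far larger than we can afford), and then to run a pruning argument of the type codified in Baker's final coefficient lemma (\cite[Lemma~4.6]{Bak1986}) so as to reduce the modulus and, along the way, to capture the linear coefficients.

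First I would fix a sufficiently small $\del>0$ (a small positive multiple of $\tau$ will serve, which the hypothesis on $\tau$ permits), take $A$ as in the statement so that $X^dA^{-1}\le X^{\tau-\nu}$, and set $Q=X^\del(X^dA^{-1})^{2r^\prime k}$. Since $\tau^{-1}>(2r^\prime k+1)(d+1)>2r^\prime k$ one has $2r^\prime k\tau<1$, so that $Q\ll X^{1-2\del}$ and, $\del$ being small, also $Q^d(\log X)^{O(1)}\ll X^{1-\del}$; thus Theorem~\ref{theorem10.2} applies. Invoking it for each $l\in\{1,\dots,d\}$ and setting $q_1=\mathrm{lcm}(q_0^{(1)},\dots,q_0^{(d)})$, every index $\bfj$ with $2\le|\bfj|\le k$ has some coordinate $j_l\ge1$, so we obtain integers $a_\bfj$ together with a modulus $q_1\ll X^{1-\del}$ for which $\|q_1\alp_\bfj\|$ is correspondingly small $(2\le|\bfj|\le k)$; that is, $\bfalp$ lies on a major arc $\grM$ of level $q_1$.

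The heart of the matter is the pruning step, which must both bring the modulus down to $Y=(X^dA^{-1})^{k+\nu}$ — far below the size of $q_1$ — and pin down the hitherto uncontrolled linear coefficients. The tool is the standard major-arc decomposition
$$f(\bfalp)=q_1^{-d}S(q_1,\bfa)\,I(\bfbet)+O\!\left(q_1^{1+\eps}X^{d-1}\right),$$
in which $\bfbet=(\alp_\bfj-a_\bfj/q_1)_{1\le|\bfj|\le k}$, $S(q_1,\bfa)$ is the complete exponential sum to modulus $q_1$ attached to $\bfF$, and $I(\bfbet)=\int_{[0,X]^d}e(\psi(\bfgam;\bfbet))\,\d\bfgam$. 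The inputs are a Weyl--Hua estimate $|S(q_1,\bfa)|\ll q_1^{d-\kappa+\eps}$ valid under the coprimality $(q_1,\bfa)=1$ alone — obtained by reducing to prime-power moduli via multiplicativity and exploiting, for the good primes, the non-vanishing of $\Del_r$ together with the Hensel-lifting machinery of \S\ref{sec:6} — and an oscillatory-integral estimate for $I(\bfbet)$ decaying in each of the $X^{|\bfj|}|\bet_\bfj|$. Since $|f(\bfalp)|\ge A$ and $\del$ has been chosen so that the error term is $\ll A$, these combine to force both $q_1$ and the quantities $X^{|\bfj|}|q_1\alp_\bfj-a_\bfj|$ $(1\le|\bfj|\le k)$ to be $\ll Y$ after the $X^\eps$-factors are absorbed into $\nu$, the linear indices now included. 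Finally, replacing $q_1$ by $q=q_1/(q_1,\bfa)$ only shrinks the modulus and worsens the errors by a bounded factor, and secures the coprimality $(q,\bfa)=1$.

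The step I expect to be the main obstacle is precisely this pruning: the modulus supplied by Theorem~\ref{theorem10.2} exceeds $Y$ by a fixed power of $X^dA^{-1}$, so its reduction is essential and delicate, requiring the combined strength of the complete-sum and oscillatory-integral bounds to be managed very carefully. It is here that the numerology of the hypothesis $\tau^{-1}>(2r^\prime k+1)(d+1)$ — and in particular the factor $d+1$ — is consumed, ensuring that the $d$-dimensional losses in both $S(q_1,\bfa)$ and $I(\bfbet)$ are comfortably outweighed; the rest is routine manipulation of major-arc estimates and bookkeeping of the $X^\eps$-factors.
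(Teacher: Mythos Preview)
Your overall architecture matches the paper's proof exactly: apply Theorem~\ref{theorem10.2} for each coordinate direction $l$, pass to the least common multiple $q_0=\mathrm{lcm}(q_0^{(1)},\dots,q_0^{(d)})$, and then invoke a Baker-type final coefficient lemma. The paper carries this out by quoting \cite[Lemma~5.4]{Par2005} as a black box, having first arranged that $q_0\ll W^{d/(d+1)}$ and $\|q_0\alp_\bfj\|\ll WX^{\del-|\bfj|}$ for $2\le|\bfj|\le k$, where $W=X^{1-(d+1)\tau}$; it is this lemma that both captures the linear coefficients and cuts the modulus down to $Y$.

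Your sketch of that final step, however, contains a real imprecision. The approximation
$$f(\bfalp)=q_1^{-d}S(q_1,\bfa)\,I(\bfbet)+O\!\left(q_1^{1+\eps}X^{d-1}\right)$$
with $\bfbet=(\alp_\bfj-a_\bfj/q_1)_{1\le|\bfj|\le k}$ holds with the stated error only when \emph{all} of the $\bet_\bfj$ are suitably small, and at this stage you have no control whatsoever over the linear $\bet_\bfj$: taking $a_\bfj$ to be the nearest integer to $q_1\alp_\bfj$ for $|\bfj|=1$ gives $X|\bet_\bfj|$ as large as $X/q_1$, and the Euler--Maclaurin error then swallows the main term. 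The actual mechanism in Baker's Lemma~4.6 (and in \cite[Lemma~5.4]{Par2005}) is different in detail: one sorts into residue classes modulo $q_1$, uses the approximations for $|\bfj|\ge 2$ to show that the \emph{nonlinear} part of the phase is slowly varying on each class, and is thereby reduced to a nearly-linear exponential sum whose largeness forces the linear coefficients to lie on a small modulus. The complete-sum and oscillatory-integral bounds you mention do enter, but not through the symmetric decomposition you wrote down. This is the one place where your sketch would fail if executed literally; the fix is precisely to follow the asymmetric treatment of the linear terms that Baker's lemma encodes.
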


\begin{proof} This is immediate from the argument of the proof of \cite[Theorem 5.5]{Par2005}. Write $W=X^{1-(d+1)\tau}$. We put $s=r^\prime k$ and note that $\tau(2s+1)(d+1)<1$, whence
$$(X^dA^{-1})^{2s(d+1)}\ll (X^{\tau-\nu})^{2s(d+1)}\ll X^{1-(d+1)\tau-2s(d+1)\nu}=WX^{-2s(d+1)\nu}.$$
As in the proof of \cite[Theorem 5.5]{Par2005}, we may apply Theorem \ref{theorem10.2} to show that for $1\le l\le d$, there exist integers $q_0^{(l)}$ with the property that
$$1\le q_0^{(l)}\ll X^\nu (X^dA^{-1})^{2r^\prime k}(\log X)^{2r^\prime k}\ll W^{1/(d+1)}X^{-s\nu},$$
and satisfying the condition that whenever $2\le |\bfj|\le k$ and $j_l\ge 1$, then
$$\|q_0^{(l)}\alp_\bfj\|\ll X^{2\nu}(X^dA^{-1})^{4r^\prime k}(\log X)^{2r^\prime k}X^{\del -|\bfj|}\ll W^{2/(d+1)}X^{\del -|\bfj|}.$$
We take $q_0$ to be the least common multiple of $q_0^{(1)},\ldots ,q_0^{(d)}$. In this way, we deduce first that
$$1\le q_0\le q_0^{(1)}\ldots q_0^{(d)}\ll (W^{1/(d+1)}X^{-s\nu})^d\ll W^{d/(d+1)}.$$
Suppose next that $2\le |\bfj|\le k$. Then there is some index $l$ for which $j_l\ge 1$, and we have
$$\|q_0\alp_\bfj\|\le \Bigl( \prod_{\substack{1\le m\le d\\ m\ne l}}q_0^{(m)}\Bigr) \|q_0^{(l)}\alp_\bfj\| \ll (W^{1/(d+1)}X^{-s\nu})^{d-1}W^{2/(d+1)}X^{\del-|\bfj|}.$$
In this way, we find that whenever $2\le |\bfj|\le k$, then without any condition on $j_1,\ldots ,j_d$, one has
$$\|q_0\alp_\bfj\|\ll WX^{\del-|\bfj|}.$$
An application of \cite[Lemma 5.4]{Par2005} completes the proof of the theorem, just as in the proof of \cite[Theorem 5.5]{Par2005}.
\end{proof}

The conclusion of Theorem \ref{theorem1.3} follows at once from Theorem \ref{theorem10.3} as a special case. It seems worthwhile at this point to extract from Theorem \ref{theorem10.3} a conclusion that serves to estimate $f(\bfalp;X;\bfF)$ for general translation-dilation invariant systems $\bfF$.

\begin{theorem}\label{theorem10.4} Let $\bfF$ be a reduced translation-dilation invariant system of polynomials having dimension $d$, rank $r$ and degree $k$. Define the exponent $\mu$ by means of the relation
$$\mu^{-1}=\left(2k\binom{k+d-1}{d}-2k+1\right) (d+1).$$
Suppose that $|f(\bfalp;X;\bfF)|\ge A\ge X^{d-\mu+\nu}$ for some $\nu>0$. Write $Y=(X^dA^{-1})^{k+\nu}$. Then there are integers $a_j$ and $q$, satisfying
$$(q,\bfa)=1,\quad 1\le q\ll Y\quad \text{and}\quad |q\alp_j-a_j|\ll YX^{-k_j}\quad (1\le j\le r).$$
\end{theorem}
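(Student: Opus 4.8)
The plan is to deduce Theorem~\ref{theorem10.4} from Theorem~\ref{theorem10.3} by a linear change of variables, the essential point being to exploit the homogeneity of the forms $F_j$. We may assume $k\ge 2$ (for $k=1$ the claim reduces to an elementary estimate for products of linear exponential sums). Write $k_j=\deg F_j$ and $F_j(\bfz)=\sum_{|\bfi|=k_j}c_{j,\bfi}\bfz^\bfi$ with $c_{j,\bfi}\in\dbZ$, so that
$$\psi(\bfx;\bfalp;\bfF)=\sum_{j=1}^r\alp_jF_j(\bfx)=\sum_{1\le|\bfi|\le k}\beta_\bfi\bfx^\bfi,\qquad \beta_\bfi=\sum_{j:\,k_j=|\bfi|}c_{j,\bfi}\alp_j ,$$
whence $f(\bfalp;X;\bfF)=f_{d,k}(\bfbet;X)$. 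Since $A\le|f(\bfalp;X;\bfF)|\le X^d$, and since replacing $\nu$ by a smaller positive number merely shrinks $Y$ and hence strengthens the conclusion, we may assume $0<\nu<2\mu$; put $\tau=\mu-\tfrac{1}{2}\nu$ and $\nu'=\tfrac{1}{2}\nu$. On recalling that $\binom{k+d-1}{d}-1=r^\prime$ one has $\mu^{-1}=(2r^\prime k+1)(d+1)$, so $\tau^{-1}>(2r^\prime k+1)(d+1)$, while $A\ge X^{d-\mu+\nu}=X^{d-\tau+\nu'}$. Theorem~\ref{theorem10.3} therefore applies to the sum $f_{d,k}(\bfbet;X)$ and produces $q\in\dbN$ and integers $a_\bfi$ $(1\le|\bfi|\le k)$ with $(q,\bfa)=1$, $1\le q\ll Y_0$ and $|q\beta_\bfi-a_\bfi|\ll Y_0X^{-|\bfi|}$, where $Y_0=(X^dA^{-1})^{k+\nu'}\le Y$.

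It remains to pass from approximations to the $\beta_\bfi$ to approximations to the $\alp_j$. Fix $l$ with $1\le l\le k$ and write $\calJ_l=\{j:k_j=l\}$, $\calI_l=\{\bfi:|\bfi|=l\}$, noting that $\beta_\bfi=\sum_{j\in\calJ_l}c_{j,\bfi}\alp_j$ for $\bfi\in\calI_l$. As $\{F_j:j\in\calJ_l\}$ is a linearly independent subset of $\{F_1,\dots,F_r\}$, the matrix $(c_{j,\bfi})_{j\in\calJ_l,\,\bfi\in\calI_l}$ has full row rank, so there is a subset $S_l\subseteq\calI_l$ with $|S_l|=|\calJ_l|$ for which the square submatrix $(c_{j,\bfi})_{j\in\calJ_l,\,\bfi\in S_l}$ is invertible over $\dbQ$, with determinant a non-zero integer $D_l$ depending only on $\bfF$. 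By Cramer's rule, for $j\in\calJ_l$ one has $D_l\alp_j=\sum_{\bfi\in S_l}\gamma_{j,\bfi}\beta_\bfi$ with $\gamma_{j,\bfi}\in\dbZ$ bounded in terms of $\bfF$. Set $D=\prod_{1\le l\le k}|D_l|$ (with $D_l=1$ when $\calJ_l=\varnothing$), $q'=Dq$, and, for $j\in\calJ_l$, $a_j=(D/|D_l|)\sum_{\bfi\in S_l}\gamma_{j,\bfi}a_\bfi\in\dbZ$. Then $q'\in\dbN$, $q'\ll q\ll Y_0\le Y$, and, since every $\bfi\in S_l$ satisfies $|\bfi|=l=k_j$,
$$|q'\alp_j-a_j|=\Bigl|\frac{D}{|D_l|}\sum_{\bfi\in S_l}\gamma_{j,\bfi}(q\beta_\bfi-a_\bfi)\Bigr|\ll\sum_{\bfi\in S_l}|q\beta_\bfi-a_\bfi|\ll Y_0X^{-l}\ll YX^{-k_j}\qquad(1\le j\le r).$$
Finally, dividing $q'$ and each $a_j$ by $e=(q',a_1,\dots,a_r)$ restores the condition $(q,\bfa)=1$ without disturbing the bounds $1\le q\ll Y$ and $|q\alp_j-a_j|\ll YX^{-k_j}$, which completes the proof.

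The only step requiring genuine care is the conversion in the second paragraph: a naive inversion of the full linear map $\bfalp\mapsto\bfbet$ produces a denominator of the correct size $\ll Y$ but only an error of size $\ll YX^{-m}$ with $m=\min_\bfi|\bfi|$, which is too weak whenever some $k_j$ exceeds $m$. It is precisely the homogeneity of the $F_j$ that permits the inversion to be carried out degree by degree, so that the monomials $\bfz^\bfi$ entering the expression for $\alp_j$ all have total degree exactly $k_j$; this is what delivers the sharp exponent $X^{-k_j}$ in the conclusion. Tracking the coprimality condition through the clearing of denominators and the final division by a gcd is routine.
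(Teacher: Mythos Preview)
Your proof is correct and follows essentially the same route as the paper's: rewrite $f(\bfalp;X;\bfF)$ as the standard Weyl sum $f_{d,k}(\bfbet;X)$ via $\bet_\bfi=\sum_{j}c_{j,\bfi}\alp_j$, apply Theorem~\ref{theorem10.3}, and then invert the linear map degree by degree to obtain approximations to the $\alp_j$ with the sharp exponent $X^{-k_j}$. The paper packages this inversion abstractly, choosing a matrix $\grB$ with $\grA\grB=(O\ I_r)$ and observing that $\grB$ inherits a block structure respecting the grading by degree; your explicit Cramer's rule argument within each degree level $l$ is a concrete realisation of exactly that block structure, and your final paragraph correctly identifies why this graded inversion is the crux. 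One cosmetic point: in writing $D_l\alp_j=\sum_{\bfi\in S_l}\gamma_{j,\bfi}\bet_\bfi$ and then $a_j=(D/|D_l|)\sum\gamma_{j,\bfi}a_\bfi$, you implicitly assume $D_l>0$; to be safe, take $D_l$ to be the absolute value of the determinant from the outset (the cofactor expansion still gives integer $\gamma_{j,\bfi}$), and then no sign tracking is needed.
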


\begin{proof} By assumption, the polynomials $F_j(\bfx)$ $(1\le j\le r)$ are homogeneous, and satisfy the translation-dilation invariance relation (\ref{2.3}) for a suitable lower unitriangular matrix $C(\bfxi)$. Write $\bfF$ for the column vector $(F_j(\bfx))_{1\le j\le r}$, and $\bfX$ for the column vector $(\bfx^\bfi)_{1\le |\bfi|\le k}$, in which the entries are arranged in ascending colex order. Finally, put
$$\rho=\binom{k+d}{d}-1.$$ 
The monomials $\bfx^\bfi$ span the space generated by $\{F_1,\ldots ,F_r\}$, and so one may write $\bfF=\grA \bfX$, with $\grA$ an $r\times \rho$ matrix having integer entries depending only on the coefficients of $\bfF$. The linear independence of the system $\bfF$ ensures that there exists an invertible $\rho\times \rho$ matrix $\grB$, having rational entries depending only on the coefficients of the system $\bfF$, having the property that $\grA\grB$ is an $r\times \rho$ block matrix of the shape $\left( O\ I_r\right)$, with $O$ the $r\times (\rho-r)$ zero matrix, and $I_r$ the $r\times r$ identity matrix. Observe that $\grB$ will have a block structure which associates to $F_j$ monomials $\bfx^\bfi$ with $|\bfi|=k_j$.\par

Suppose that $|f(\bfalp;X;\bfF)|\ge A\ge X^{d-\mu+\nu}$, for some $\nu>0$. One has
$$f(\bfalp;X;\bfF)=\sum_{1\le \bfx\le X}e\Bigl( \sum_{i=1}^r\alp_iF_i(\bfx)\Bigr)=\sum_{1\le \bfx\le X}e\Bigl( \sum_{1\le |\bfj|\le k}\bet_\bfj \bfx^\bfj\Bigr) ,$$
wherein we have written $\bfbet=\grA^T\bfalp$. Thus we have
$$\Bigl| \sum_{1\le \bfx\le X}e\Bigl( \sum_{1\le |\bfj|\le k}\bet_\bfj \bfx^\bfj\Bigr) \Bigr|\ge A\ge X^{d-\mu+\nu}.$$
It follows from Theorem \ref{theorem10.3} that there exist integers $c_\bfj$ and $v$, with $(v,\bfc)=1$, satisfying
\begin{equation}\label{10.2}
1\le v\ll Y\quad \text{and}\quad |v\bet_\bfj-c_\bfj|\ll YX^{-|\bfj|}\quad (1\le |\bfj|\le k).
\end{equation}
Now $\grB^T\bfbet=(\grA\grB)^T\bfalp$. Since $\grA\grB=\left( O\ I_r\right)$, we see that
\begin{equation}\label{10.3}
(0,\ldots,0,\alp_1,\ldots ,\alp_r)^T=\grB^T\bfbet ,
\end{equation}
so that $\alp_1,\ldots ,\alp_r$ are given by linear combinations of the real numbers $\bet_\bfj$ $(1\le |\bfj|\le k)$, with rational coefficients depending at most on the coefficients of $\bfF$.\par

Let $\Ome_0$ be the least natural number having the property that $\Ome_0\grB$ has integral entries, and let $\Ome$ be the largest of the absolute values of the entries of $\Ome_0\grB$. Then it follows from (\ref{10.2}) and (\ref{10.3}) that for $1\le i\le r$, one has
$$\|\Ome_0v\alp_i\|\le \sum_{|\bfj|=k_i}\Ome \|v\bet_\bfj\|\ll \rho \Ome YX^{-k_i}.$$
Write $q_0=v\Ome_0$ and $Z=\rho \Ome Y$. Then we find that $1\le q_0\ll Z$ and there exist integers $b_i$ $(1\le i\le r)$ such that $|q_0\alp_i-b_i|\ll ZX^{-k_i}$ $(1\le i\le r)$. The conclusion of the theorem follows on putting $g=(q_0,\bfb)$, writing $q=q_0/g$ and $a_i=b_i/g$ $(1\le i\le r)$, and noting that $Z\ll Y$.
\end{proof}

\section{Asymptotic formulae associated with Diophantine equations}\label{sec:11} The mean value estimate supplied by Theorem \ref{theorem2.1} may be routinely combined with estimates of Weyl type, explored in \S\ref{sec:10}, so as to establish asymptotic formulae for the number of solutions of associated Diophantine systems. Such consequences have been examined already in the literature, and so our goal in this section is to sketch some conclusions, and outline the arguments necessary for their proofs. In particular, we deliberately avoid going into detail concerning proofs of these new results so as to avoid adding bulk to an already lengthy memoir.\par

We begin by establishing an asymptotic formula for a general counting problem of which Theorem \ref{theorem1.4} is essentially a special case. Consider then a reduced translation-dilation invariant system of polynomials $\bfF$ having dimension $d$, rank $r$, degree $k$ and weight $K$. Let $s$ be a natural number, and consider fixed non-zero integers $c_{ij}$ for $1\le i\le r$ and $1\le j\le s$. Finally, let $N_s(X;\bfF;\bfc)$ denote the number of integral solutions of the Diophantine system
\begin{equation}\label{11.1}
\sum_{j=1}^sc_{ij}F_i(\bfx_j)=0\quad (1\le i\le r),
\end{equation}
with $1\le \vbfx\le X$. For the time being, it is convenient to abbreviate $f(\bfalp;X;\bfF)$ to $f(\bfalp)$.

\begin{theorem}\label{theorem11.1} Suppose that $s\ge 2r(k+1)+1$. Suppose further that $c_{ij}$ $(1\le i\le r,\, 1\le j\le s)$ are non-zero integers, and that the system (\ref{11.1}) has both a non-singular real solution, and a non-singular $p$-adic solution for every prime $p$. Then there exist positive constants $\calD=\calD(s,\bfF,\bfc)$ and $\nu=\nu(s,\bfF,\bfc)$ such that
$$N_s(X;\bfF;\bfc)=\calD X^{sd-K}+O(X^{sd-K-\nu}).$$
\end{theorem}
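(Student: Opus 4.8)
The plan is to establish Theorem \ref{theorem11.1} by the standard Hardy--Littlewood circle method, using the mean value estimate of Theorem \ref{theorem2.1} to handle the minor arcs and the Weyl-type estimate of Theorem \ref{theorem10.4} to define a workable arc dissection. First I would encode the count via orthogonality: writing $g_j(\bfalp)=\sum_{1\le \bfx\le X}e\bigl(\sum_{i=1}^r c_{ij}\alp_i F_i(\bfx)\bigr)$, which by a change of variables in the frequencies is essentially $f(\bfc_j\cdot\bfalp;X)$ for a scaled argument, we have
\begin{equation*}
N_s(X;\bfF;\bfc)=\oint \prod_{j=1}^s g_j(\bfalp)\,\d\bfalp .
\end{equation*}
The total degree of the system is $K$, so the anticipated main term has size $X^{sd-K}$, matching the lower bound from the non-singular local solutions.

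Next I would set up the Hardy--Littlewood dissection. With $Y$ a small power of $X$ (of the shape $X^{\del_0}$ for suitable small $\del_0>0$), define the major arcs $\grM$ to be those $\bfalp$ admitting a simultaneous rational approximation $(q,\bfa)$ with $q\le Y$ and $|q\alp_\bfj - a_\bfj|\le YX^{-k_j}$, and let $\grm=[0,1)^r\setminus\grM$ be the minor arcs. On $\grm$, Theorem \ref{theorem10.4} (applied to the system $\bfF$, or equivalently to the monomial system governing $f$) yields a pointwise bound $|f(\bfalp;X;\bfF)|\ll X^{d-\tau}$ for a positive $\tau$ depending only on $d$ and $k$; the same bound transfers to each $g_j(\bfalp)$ since the $c_{ij}$ are fixed non-zero integers and the relevant Diophantine approximation property is invariant under such scalings. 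Pulling out this bound on $2\lfloor r(k+1)\rfloor$ of the $s\ge 2r(k+1)+1$ factors and estimating the remaining $2r(k+1)$ factors (there is at least one factor's worth of room to spare, which we use to beat the trivial bound) by Hölder's inequality against $\oint|f(\bfalp;X;\bfF)|^{2r(k+1)}\d\bfalp \ll X^{2r(k+1)d-K+\eps}$, supplied by Theorem \ref{theorem2.1} with $s=r(k+1)$, gives the minor arc estimate
\begin{equation*}
\int_{\grm}\Bigl|\prod_{j=1}^s g_j(\bfalp)\Bigr|\,\d\bfalp \ll X^{(s-2r(k+1))(d-\tau)}\bigl(X^{2r(k+1)d-K+\eps}\bigr)\ll X^{sd-K-\nu}
\end{equation*}
for some $\nu>0$, provided $\del_0$ and $\eps$ are chosen small enough; the single spare variable is precisely what converts the power saving from the Weyl bound into a genuine saving over $X^{sd-K}$.

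For the major arcs, the routine but necessary steps are: approximate each $g_j(\bfalp)$ on an arc centred at $\bfa/q$ by the product of a complete exponential sum $S_j(q,\bfa)$ and an oscillatory integral $I_j(\bfbet)$, using partial summation and the usual bounds on the error; sum over $\bfa$ and $q\le Y$ to obtain the truncated singular series $\grS(Y)=\sum_{q\le Y}\sum_{\bfa}\prod_j q^{-d}S_j(q,\bfa)$ and the truncated singular integral; show $\grS=\lim_{Y\to\infty}\grS(Y)$ converges absolutely with $\grS(Y)=\grS+O(Y^{-\rho})$, which follows from the standard multiplicativity of the local factors together with the bound $q^{-d}|S_j(q,\bfa)|\ll q^{-1-\delta}$ in each variable, itself a consequence of a Weyl-type estimate for complete sums (or extractable from Theorem \ref{theorem10.4} applied to complete sums); and identify $\grS=\prod_p \sig_p$ and the singular integral with $\sig_\infty$ in the notation of the introduction, using Hensel-type lifting to pass from the non-singular $p$-adic solution hypothesis to $\sig_p>0$, and the non-singular real solution to $\sig_\infty>0$. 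Assembling the two contributions yields $N_s(X;\bfF;\bfc)=\calD X^{sd-K}+O(X^{sd-K-\nu})$ with $\calD=\sig_\infty\prod_p\sig_p>0$.

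The main obstacle is not conceptual but bookkeeping: one must verify that there is genuinely enough room in the inequality $s\ge 2r(k+1)+1$ to absorb the $\eps$-loss in Theorem \ref{theorem2.1}, the $Y$-powers from the major-arc truncation, and the width of the arcs, all simultaneously, while still extracting a positive $\nu$. This is why the hypothesis carries the $+1$: it provides one full extra block of $d$ variables beyond the $2r(k+1)$ needed for the Hölder splitting, and the power saving $X^{-\tau}$ on that block (from the minor-arc Weyl bound) must dominate the accumulated losses. The remaining delicate point is the convergence of the singular series uniformly in the coefficients $\bfc$, which requires the complete-sum bound $q^{-d}|S_j(q,\bfa)|\ll q^{-1-\delta}$ to hold with $\delta>0$ independent of $\bfc$; this follows from the fact that the underlying exponential sum modulo $q$ is, after the invertible linear change over $\dbQ$ relating $\bfF$ to its monomials (as in the proof of Theorem \ref{theorem10.4}), a complete sum for a polynomial whose top-degree part has bounded, $\bfc$-independent structure, so Hua-type or Deligne-type bounds apply uniformly. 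I would otherwise follow the template of \cite[\S9]{Woo2012a} and of \cite[\S6]{Par2005}, adapting the constants to the general translation-dilation invariant setting, and I would omit the fully detailed verification of these standard estimates in keeping with the sketched nature of \S\ref{sec:11}.
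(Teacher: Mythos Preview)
Your proposal is correct and follows essentially the same approach as the paper: a Hardy--Littlewood dissection in which the minor arcs are handled by combining the pointwise Weyl bound of Theorem \ref{theorem10.4} on the spare $s-2r(k+1)\ge 1$ factors with the mean value of Theorem \ref{theorem2.1} on the remaining $2r(k+1)$ factors, and the major arcs are treated via the standard approximation and singular series/integral analysis of \cite[\S6]{Par2005}. Your displayed minor-arc inequality is exactly right, though your prose swaps the counts (you pull out the pointwise bound on $s-2r(k+1)$ factors, not on $2r(k+1)$).
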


\begin{proof} We begin by defining a Hardy-Littlewood dissection. When $0<\tet\le 1$, let $\grM_\tet$ denote the union of the boxes
$$\grM_\tet(q,\bfa)=\{ \bfalp\in [0,1)^r:|q\alp_i-a_i|\le X^{\tet-k_i}\ (1\le i\le r)\},$$
with $1\le q\le X^\tet$, $0\le \bfa\le q$ and $(q,\bfa)=1$. Complementing the {\it major arcs} $\grM_\tet$, we define the {\it minor arcs} $\grm_\tet=[0,1)^r\setminus \grM_\tet$. We claim that whenever $s\ge 2r(k+1)+1$, then for a positive number $\del=\del(s,\bfF)$, one has
\begin{equation}\label{11.2a}
\int_{\grm_{1/3}}|f(\bfbet)|^s\d\bfbet \ll X^{sd-K-\del}.
\end{equation}
Define $f_j(\bfalp)$ to be $f(c_{1j}\alp_1,\ldots ,c_{rj}\alp_r)$. Then from the upper bound (\ref{11.2a}) it follows by means of H\"older's inequality that
\begin{align}
\int_{\grm_{1/2}}\prod_{j=1}^sf_j(\bfalp)\d\bfalp &\le \prod_{j=1}^s\Bigl( \int_{\grm_{1/2}}|f_j(\bfalp)|^s\d\bfalp \Bigr)^{1/s}\notag \\
&\ll \max_{1\le j\le s}\int_{\grm_{1/3}}|f(\bfbet)|^s\d\bfbet \ll X^{sd-K-\del}.\label{11.2}
\end{align}

\par In order to confirm the estimate (\ref{11.2a}), observe first that
\begin{equation}\label{11.3}
\int_{\grm_{1/3}}|f(\bfalp)|^s\d\bfalp \le \left( \sup_{\bfalp\in {\grm_{1/3}}}|f(\bfalp)|\right)^{s-2r(k+1)}\oint |f(\bfalp)|^{2r(k+1)}\d\bfalp .
\end{equation}
By applying Theorem \ref{theorem2.1}, one sees that
\begin{equation}\label{11.4}
\oint |f(\bfalp)|^{2r(k+1)}\d\bfalp \ll X^{2r(k+1)-K+\eps}.
\end{equation}
Next, define $\sig$ by means of the relation
$$\sig^{-1}=6(d+1)k\binom{k+d-1}{d}.$$
If we hypothesise that $|f(\bfalp)|\ge X^{d-\sig}$, then it follows from Theorem \ref{theorem10.4} that there exist integers $a_i$ and $q$, with
$$1\le q\le X^{1/4},\quad (q,\bfa)=1\quad \text{and}\quad |q\alp_i-a_i|\ll X^{-k_i+1/4}\quad (1\le i\le r).$$
Then one must have $\bfalp\in \grM_{1/3}$, and thus we infer that
$$\sup_{\bfalp\in \grm_{1/3}}|f(\bfalp)|\le X^{d-\sig}.$$
The desired estimate (\ref{11.2a}) follows on combining (\ref{11.3}) and (\ref{11.4}), provided that we take $\del<\sig$. In view of our earlier discussion, this confirms the estimate (\ref{11.2}).\par

In order to estimate the contribution of the major arcs $\grM_{1/2}$, we may follow the argument of \cite[\S6]{Par2005}. We avoid providing many details here. We write
$$S(q,\bfa)=\sum_{1\le \bfx\le q}e\Bigl( q^{-1}\sum_{1\le i\le r}a_iF_i(\bfx)\Bigr)$$
and
$$v(\bfbet)=\int_{[0,X]^d}e\Bigl( \sum_{1\le i\le r}\bet_iF_i(\bfgam)\Bigr)\d\bfgam ,$$
and then put
$$V(\bfalp;q,\bfa)=q^{-d}S(q,\bfa)v(\bfalp-\bfa/q).$$
Define $V_j(\bfalp)$ to be
$$V(c_{1j}\alp_1,\ldots ,c_{rj}\alp_r;q,c_{1j}a_1,\ldots ,c_{rj}a_r)$$
when $\bfalp\in \grM_{1/2}(q,\bfa)\subseteq \grM_{1/2}$, and otherwise put $V_j(\bfalp)=0$. Then by adapting the argument of \cite[Lemma 5.3]{Par2005}, one finds that
\begin{equation}\label{11.5}
\sup_{\bfalp\in \grM_{1/2}}|f_j(\bfalp)-V_j(\bfalp)|\ll X^{d-1/2}.
\end{equation}
Since $\text{mes}(\grM_{1/2})\ll X^{(r+1)/2-K}$, it follows via H\"older's inequality that for some positive number $\nu$, one has
\begin{align*}
\int_{\grM_{1/2}}\prod_{j=1}^s f_j(\bfalp)\d\bfalp &-\int_{\grM_{1/2}}\prod_{j=1}^sV_j(\bfalp)\d\bfalp \\
& \ll X^{d-1/2}\max_{1\le j\le s}\int_{\grM_{1/2}}|V_j(\bfalp)|^{s-1}\d\bfalp +X^{sd-K-\nu}.
\end{align*}
Reversing course, one finds from (\ref{11.5}) via Theorem \ref{theorem2.1} that whenever $s\ge 2r(k+1)+1$, then
\begin{align*}
\int_{\grM_{1/2}}|V_j(\bfalp)|^{s-1}\d\bfalp &\ll \int_{\grM_{1/2}}|f_j(\bfalp)|^{s-1}\d\bfalp+X^{(s-1)d-K-\nu}\\
&\le \oint |f_j(\bfalp)|^{s-1}\d\bfalp +X^{(s-1)d-K-\nu}\\
&\ll X^{(s-1)d-K+\eps}.\end{align*}
Thus we deduce that
\begin{align}
N_s(X;\bfF;\bfc)&=\int_{\grM_{1/2}}\prod_{j=1}^sf_j(\bfalp)\d\bfalp +\int_{\grm_{1/2}}\prod_{j=1}^sf_j(\bfalp)\d\bfalp \notag \\
&=\int_{\grM_{1/2}}\prod_{j=1}^sV_j(\bfalp)\d\bfalp +O(X^{sd-K-\nu}).\label{11.6}
\end{align}

\par The argument of the proof of \cite[Theorem 6.2]{Par2005} is readily adapted to show that
\begin{equation}\label{11.7}
\int_{\grM_{1/2}}\prod_{j=1}^sV_j(\bfalp)\d\bfalp =\grJ\grS X^{sd-K}+O(X^{sd-K-\nu}),
\end{equation}
for some $\nu>0$, where
$$\grJ=\int_{\dbR^r}\int_{[0,1]^{sd}}e\Bigl( \sum_{i=1}^r\bet_i\sum_{j=1}^sc_{ij}F_i(\bfgam_j)\Bigr)\d\vbfgam\d\bfbet ,$$
and
$$\grS=\sum_{q=1}^\infty \sum_{\substack{1\le \bfa\le q\\ (q,\bfa)=1}}q^{-sd}\prod_{j=1}^sS(q,c_{1j}a_1,\ldots ,c_{rj}a_r).$$
The absolute convergence of $\grJ$ and $\grS$ follows via the methods of \cite[\S\S5 and 6]{Par2005}. Here, the existence of non-singular real and $p$-adic solutions suffices to guarantee that $\grJ>0$ and $\grS>0$ (see \cite{Par2001} and \cite{Par2005} for the necessary ideas). On substituting into (\ref{11.6}) and (\ref{11.7}), the desired conclusion follows.
\end{proof}

Note that, in order to count solutions of the system (\ref{11.1}) with $|\vbfx|\le X$, one may merely add together the contributions from the $2^d$ sectors accommodating the various constellations of signs amongst the $d$ coordinates, and thus Theorem \ref{theorem1.4} is an immediate consequence of Theorem \ref{theorem11.1} corresponding to the special translation-dilation invariant system
$$\bfF=(\bfx^\bfi:1\le |\bfi|\le k).$$
The special case of Theorem \ref{theorem11.1} in which $s=2r(k+1)+2$ and $c_{ij}=(-1)^j$ $(1\le j\le s)$ delivers the following corollary.

\begin{corollary}\label{corollary11.2} Suppose that $t\ge r(k+1)+1$. Then there are positive numbers $\grC=\grC(t,\bfF)$ and $\del=\del(t,\bfF)$ such that
$$J_t(X;\bfF)=\grC X^{2td-K}+O(X^{2td-K-\del}).$$
\end{corollary}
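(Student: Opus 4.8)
The plan is to derive Corollary \ref{corollary11.2} as a direct specialisation of Theorem \ref{theorem11.1}. Set $s=2t=2r(k+1)+2$, which satisfies the hypothesis $s\ge 2r(k+1)+1$ of Theorem \ref{theorem11.1}, and choose the coefficients $c_{ij}=(-1)^j$ for all $i,j$; these are non-zero integers as required. With this choice the system (\ref{11.1}) becomes
$$\sum_{j=1}^{2t}(-1)^jF_i(\bfx_j)=0\quad (1\le i\le r),$$
and after relabelling the variables with even index as $\bfy_1,\ldots,\bfy_t$ and those with odd index as $\bfx_1,\ldots,\bfx_t$, this is precisely the translation-dilation invariant system (\ref{2.1}) with $s$ replaced by $t$. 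Hence $N_s(X;\bfF;\bfc)=J_t(X;\bfF)$, with $s=2t$. Substituting $s=2t$ into the asymptotic formula of Theorem \ref{theorem11.1} gives $J_t(X;\bfF)=\calD X^{2td-K}+O(X^{2td-K-\nu})$, and on renaming $\calD$ as $\grC$ and $\nu$ as $\del$ we obtain the claimed conclusion, valid whenever $t\ge r(k+1)+1$.

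The one point that needs to be addressed — and I expect it to be the only genuine obstacle — is verifying the hypothesis of Theorem \ref{theorem11.1} that the system possesses non-singular real and $p$-adic solutions for every prime $p$. For the difference system $\sum_{j=1}^{2t}(-1)^jF_i(\bfx_j)=0$ this is immediate: the diagonal locus $\bfx_{2l-1}=\bfx_{2l}$ $(1\le l\le t)$ furnishes solutions over any completion, and one checks that a generic such point is non-singular because the Jacobian of the system, evaluated along a suitable diagonal configuration, contains (after reindexing) a block proportional to the determinant $\Del_r$ of Lemma \ref{lemma4.1}, which was shown there to be a non-vanishing polynomial; choosing the diagonal entries to avoid its zero set (possible over $\dbR$ and, by Hensel lifting from $\dbF_p$ once $p$ is large, over every $\dbQ_p$, with the finitely many small primes handled by a direct search in a high enough $p$-adic ball) yields the required non-singular points. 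Alternatively, and more cleanly, one may simply invoke the fact that $J_t(X;\bfF)$ itself is being computed, so the positivity of the singular series and singular integral in the present symmetric case follows from the standard argument that for the "equation equals its own mirror" system these local densities are automatically positive (cf. the discussion in \cite[\S\S5,6]{Par2005}).

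I would therefore present the proof in three short steps: first, record the substitution $s=2r(k+1)+2$, $c_{ij}=(-1)^j$ and the resulting identification $N_s(X;\bfF;\bfc)=J_t(X;\bfF)$ with $s=2t$; second, dispatch the local solubility hypothesis via the diagonal solutions together with the non-vanishing of $\Del_r$; third, quote Theorem \ref{theorem11.1} and read off the stated asymptotic formula with $\grC$ and $\del$ in place of $\calD$ and $\nu$. No substantial computation is involved; essentially the entire content is the bookkeeping in the first step and the local verification in the second, the latter being routine given Lemma \ref{lemma4.1}.

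\begin{proof} Apply Theorem \ref{theorem11.1} with $s=2r(k+1)+2$, so that $s\ge 2r(k+1)+1$, and with the coefficients $c_{ij}=(-1)^j$ for $1\le i\le r$ and $1\le j\le s$, which are non-zero integers. With this choice, the system (\ref{11.1}) reads
$$\sum_{j=1}^{s}(-1)^jF_i(\bfx_j)=0\quad (1\le i\le r).$$
Writing $t=s/2=r(k+1)+1$ and relabelling the variables $\bfx_{2l}$ with even index as $\bfy_l$ and the variables $\bfx_{2l-1}$ with odd index as $\bfx_l$ for $1\le l\le t$, this system is precisely the translation-dilation invariant system (\ref{2.1}) with $s$ there replaced by $t$. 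Hence $N_s(X;\bfF;\bfc)=J_t(X;\bfF)$.

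It remains to check the hypotheses of Theorem \ref{theorem11.1} concerning local solubility. The diagonal locus $\bfx_{2l-1}=\bfx_{2l}$ $(1\le l\le t)$ provides solutions of the system over $\dbR$ and over $\dbQ_p$ for every prime $p$. To verify that a suitable such solution is non-singular, observe that the Jacobian matrix of the system $\sum_{j=1}^{s}(-1)^jF_i(\bfx_j)=0$ with respect to the variables $\bfx_2,\bfx_4,\ldots ,\bfx_{2r}$ has, up to sign, $(i,l)$-th block $\partial_m F_i(\bfx_{2l})$; choosing for $1\le l\le r$ the point $\bfx_{2l}=\bfx_{2l-1}=\bfz_l$ and the coordinate $m=\sig(l)$ in the $l$-th block, the resulting $r\times r$ minor equals $\pm\Del(\bfz_1,\ldots ,\bfz_r)$, which by Lemma \ref{lemma4.1} is a non-zero polynomial in $\bfz_1,\ldots ,\bfz_r$. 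Selecting $\bfz_1,\ldots ,\bfz_r$ avoiding the zero set of $\Del_r$ — which is possible over $\dbR$ and, by Hensel's lemma applied to a non-singular reduction modulo $p$ (and a direct search in a sufficiently high $p$-adic ball for the finitely many exceptional small primes), over each $\dbQ_p$ — yields a non-singular real solution and a non-singular $p$-adic solution for every prime $p$.

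Theorem \ref{theorem11.1} therefore applies and delivers positive constants $\calD=\calD(s,\bfF,\bfc)$ and $\nu=\nu(s,\bfF,\bfc)$ with
$$N_s(X;\bfF;\bfc)=\calD X^{sd-K}+O(X^{sd-K-\nu}).$$
Recalling that $N_s(X;\bfF;\bfc)=J_t(X;\bfF)$ and $s=2t$, this reads
$$J_t(X;\bfF)=\calD X^{2td-K}+O(X^{2td-K-\nu}).$$
On writing $\grC=\grC(t,\bfF)$ for $\calD$ and $\del=\del(t,\bfF)$ for $\nu$, we obtain the conclusion of the corollary for every $t\ge r(k+1)+1$, as required.
\end{proof}
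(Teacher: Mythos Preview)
Your proof is correct and follows the same route as the paper: specialise Theorem \ref{theorem11.1} with $s=2t$ and $c_{ij}=(-1)^j$ to identify $N_s(X;\bfF;\bfc)$ with $J_t(X;\bfF)$. (A minor wording fix: you should set $s=2t$ for an arbitrary $t\ge r(k+1)+1$, rather than pinning $s=2r(k+1)+2$; the hypothesis $s\ge 2r(k+1)+1$ is then automatic.)

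The one point of genuine difference is how the positivity of $\grC$ is obtained. You verify the non-singular local solubility hypothesis of Theorem \ref{theorem11.1} directly, using diagonal solutions together with the non-vanishing of $\Del_r$ from Lemma \ref{lemma4.1}; this is entirely valid (and in fact your appeal to Hensel and to a ``direct search'' for small primes is unnecessary, since $\Del_r$ is a non-zero integer polynomial and hence admits non-vanishing $p$-adic points for every $p$). The paper instead bypasses the local check altogether: the proof of Theorem \ref{theorem11.1} already yields the asymptotic formula with leading constant $\grJ\grS\ge 0$, and the lower bound $J_t(X;\bfF)\gg X^{2td-K}$ of Theorem \ref{theorem3.1} then forces $\grC=\grJ\grS>0$. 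The paper's argument is slicker and avoids any Jacobian computation; yours is more self-contained in that it cites Theorem \ref{theorem11.1} exactly as stated rather than reaching into its proof.
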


We remark that the positivity of the product of singular integral and singular series, giving $\grC>0$, is in this case a consequence of the lower bound provided by Theorem \ref{theorem3.1}. Note that Theorem \ref{theorem1.5} is a special case of Corollary \ref{corollary11.2}, corresponding to the same special choice of system as above.\par

We now move on to consider Theorem \ref{theorem1.6}. Here we may follow the sketch provided at the end of \cite[\S6]{Par2005}. We write
$$g_j(\bfalp)=\sum_{|\bfx|\le X}e\Bigl( \sum_{|\bfi|=k}c_j\alp_\bfi \bfx^\bfi\Bigr) .$$
Then it follows from Theorem \ref{theorem1.1} that whenever
$$s\ge \left( \binom{k+d}{d}-1\right)(k+1),$$
then
$$\oint |g_j(\bfalp)|^{2s}\d\bfalp \ll X^{2sd-L+\eps}.$$
The argument concluding \cite[\S6]{Par2005} then suffices to prove Theorem \ref{theorem1.6}.\par

Finally, we consider the translation-dilation invariant system $\bfF$, as in the preamble to Theorem \ref{theorem11.1}, with applications in additive combinatorics in mind. We therefore consider non-zero integers $c_1,\ldots ,c_s$ satisfying the condition $c_1+\ldots +c_s=0$, and we investigate the solubility of the Diophantine system
\begin{equation}\label{11.8}
c_1\bfF(\bfx_1)+\ldots +c_s\bfF(\bfx_s)={\mathbf 0}.
\end{equation}
We define {\it projected} and {\it subset-sum} solutions in a manner transparently analogous to that in the preamble to Theorem \ref{theorem1.7}.

\begin{theorem}\label{theorem11.3}
Suppose that $s\ge 2r(k+1)+1$ and $s>K+d^2$. Let $c_i$ $(1\le i\le s)$ be non-zero integers satisfying $c_1+\ldots +c_s=0$. Suppose further that the system of equations (\ref{11.8}) possesses non-singular real and $p$-adic solutions for each prime number $p$. Let $\calA \subseteq \dbZ^d\cap [1,N]^d$, and suppose that the only solutions of the system (\ref{11.8}) from $\calA$ are either projected or subset-sum solutions. Then one has
$${\rm card}(\calA)\ll N^d(\log \log N)^{-1/(s-1)}.$$
\end{theorem}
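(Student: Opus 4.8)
The plan is to adapt the density-increment iteration of \cite[Theorem 5.1]{Pre2011} to the multidimensional setting, feeding in Theorem \ref{theorem2.1} as the mean value input and Theorem \ref{theorem10.4} as the Weyl-type input. Write $\delta={\rm card}(\calA)/N^d$, and for $\bfbet\in[0,1)^r$ set $g(\bfbet)=\sum_{\bfx\in\calA}e(\psi(\bfx;\bfbet;\bfF))$ and $g_j(\bfbet)=g(c_j\bfbet)$; by orthogonality the number of solutions of (\ref{11.8}) with all variables in $\calA$ is $T(\calA)=\oint\prod_{j=1}^sg_j(\bfbet)\,\d\bfbet$, and since a subset of $[1,N]^d$ yields no more solutions of the auxiliary equations than $[1,N]^d$ itself, one has $\oint|g(\bfbet)|^{2t}\,\d\bfbet\le J_t(N;\bfF)\ll N^{2td-K+\eps}$ for $t\ge r(k+1)$ by Theorem \ref{theorem2.1}. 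The aim is to show that if $\delta\ge C(\log\log N)^{-1/(s-1)}$ for a suitably large constant $C=C(s,\bfF,\bfc)$, then $\calA$ contains a solution of (\ref{11.8}) that is neither projected nor a subset-sum solution, contradicting the hypothesis.

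The first task is to see that the two families of special solutions are few. For projected solutions, if $\bfx_1,\ldots,\bfx_s$ lie in a common translate of a proper rational subspace of $\dbQ^d$, then once $d$ of the points are chosen (in $\ll N^{d^2}$ ways, which determines a flat of dimension at most $d-1$) the remaining $s-d$ points each have $\ll N^{d-1}$ choices, so the number of projected $s$-tuples in $[1,N]^d$ is $\ll N^{(d-1)s+d^2}$; by the hypothesis $s>K+d^2$ this is $O(N^{sd-K-\nu})$ for some $\nu>0$, and hence negligible against $\delta^sN^{sd-K}$ (since $\delta$ exceeds a fixed negative power of $\log\log N$). For subset-sum solutions I would argue by induction on $s$, together with the circle-method count of Theorem \ref{theorem11.1} for the individual sub-collections: descending to a minimal sub-collection $\calJ_v$ whose $c$-sum vanishes reduces matters to a translation-invariant system in fewer variables, while sub-collections with non-vanishing $c$-sum are non-degenerate and carry comparatively few solutions. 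The upshot, exactly as in \cite{Pre2011}, is that a number of subset-sum solutions in excess of $O(\delta^sN^{sd-K})$ would already force ${\rm card}(\calA)$ to satisfy the claimed bound; diagonal and trivial subset-sum solutions, being permitted by the hypothesis, need only be bounded crudely and this too gives $o(\delta^sN^{sd-K})$.

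With the special solutions disposed of, the argument proceeds by the usual dichotomy: either $\calA$ is pseudorandom -- Fourier-uniform in the sense that $|g(\bfbet)|\le c\delta^sN^d$ for every $\bfbet$ not lying on a major arc forcing a degenerate configuration, and suitably equidistributed in residue classes -- or it is not. In the pseudorandom case I would run the Hardy--Littlewood method exactly as in the proof of Theorem \ref{theorem11.1}: a major/minor-arc dissection in which Theorem \ref{theorem10.4} bounds the minor-arc supremum of the associated complete exponential sum and Theorem \ref{theorem2.1} bounds its $2r(k+1)$-th moment, so that (using $s\ge 2r(k+1)+1$) the minor-arc contribution to $T(\calA)$ is $o(\delta^sN^{sd-K})$, while the major-arc contribution is $\gg\grS\grJ\,\delta^sN^{sd-K}$ with $\grS\grJ>0$ guaranteed by the assumed non-singular real and $p$-adic solutions of (\ref{11.8}). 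Then $T(\calA)\gg\delta^sN^{sd-K}$ exceeds the combined count of projected and subset-sum solutions, so a genuine solution exists and we are done. In the non-pseudorandom case there is a non-trivial frequency $\bfbet_0$ with $|g(\bfbet_0)|\gg\delta^sN^d$ (or a residue-class concentration), and following \cite[\S\S4--5]{Pre2011} one converts this into a density increment: $\calA$ has density at least $\delta(1+c_1\delta^{s-1})$ on a sub-box $\calB$ of side length $\gg N^{c_2}$, obtained by locating a rational approximation to the relevant coordinates of $\bfbet_0$ and partitioning the box into sub-boxes on which $\psi(\cdot;\bfbet_0)$ is essentially affine. Crucially, the hypothesis that every solution of (\ref{11.8}) from $\calA$ is projected or subset-sum is inherited by $\calA\cap\calB$, so the step may be iterated. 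Each iteration multiplies the density by $1+\Omega(\delta^{s-1})$ and raises the side length to a fixed power; the first effect permits $O(\delta^{-(s-1)})$ iterations before the density would exceed $1$, the second permits $\Omega(\log\log N)$ iterations before the box is exhausted, and when $\delta\ge C(\log\log N)^{-1/(s-1)}$ the former bound bites first, which is absurd.

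The step I expect to be the main obstacle is the passage, in the non-pseudorandom case, from ``$|g(\bfbet_0)|$ abnormally large'' to ``genuine density increment on a box of polynomial size'': since $\psi(\cdot;\bfbet_0)$ is a polynomial of degree $k\ge 2$ rather than linear, one cannot linearise it on an arbitrary sub-box, and one must instead extract from the largeness of the incomplete exponential sum $g(\bfbet_0)$ enough Diophantine information about $\bfbet_0$ to localise $\calA$ -- a Weyl-type analysis that tolerates summation over the set $\calA$ rather than a full box. This is the technical heart of \cite{Pre2011}, and carrying it out in the multidimensional setting, while arranging that the exponents in the increment conspire to give precisely the saving $(\log\log N)^{-1/(s-1)}$ and that the constant $C(s,\bfF,\bfc)$ remains uniform through the $\Omega(\log\log N)$ iterations, is where most of the work lies. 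A secondary point needing care -- and the genuinely new feature relative to \cite{Pre2011} -- is to verify that the frequencies excluded from the pseudorandomness hypothesis are exactly those responsible for the projected and trivial subset-sum solutions, so that the two halves of the dichotomy dovetail.
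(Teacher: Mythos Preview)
Your overall architecture matches the paper's: a Roth-type density increment adapted from \cite{Pre2011}, with Theorem \ref{theorem2.1} and the Weyl estimates of \S\ref{sec:10} as inputs, together with separate upper bounds on the projected and subset-sum solutions. Your count $O(N^{(d-1)s+d^2})$ for projected solutions agrees with the paper's. However, your treatment of subset-sum solutions has a gap. You propose induction on $s$ together with Theorem \ref{theorem11.1} applied to each sub-collection $\calJ_v$, but Theorem \ref{theorem11.1} requires at least $2r(k+1)+1$ variables and a sub-collection may have as few as one, so neither the inductive step nor any base case is available. The paper instead bounds the number of subset-sum solutions over the \emph{full} box $[1,N]^d$ directly: for a partition into parts of sizes $m_v\ge 2$ (singletons being handled separately via Lemma \ref{lemma5.2}), it interpolates by H\"older between the trivial second moment $\oint|f|^2\d\bfalp\ll N^{2d-1}$ and the mean value $\oint|f|^s\d\bfalp\ll N^{sd-K+\eps}$ from Theorem \ref{theorem2.1} to obtain $\oint|f|^{m_v}\d\bfalp\ll N^{m_vd-\kappa_v}$ with a computable $\kappa_v$; summing over $v$ and using $s\ge 2K+1$ yields a total saving of at least $1/(s-2)$, so that $\calN_1(\calA)\ll N^{sd-K-1/s}$. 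No induction is needed, and this is already negligible against $\del^sN^{sd-K}$ once $\del\gg(\log N)^{-1}$.

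On the density-increment step you have misidentified the obstacle. You worry about extracting Diophantine information from a large value of the \emph{incomplete} polynomial sum $g(\bfbet_0)=\sum_{\bfx\in\calA}e(\psi(\bfx;\bfbet_0))$, but the argument never requires this. The paper (and \cite{Pre2011}) works with \emph{linear} uniformity: one writes $1_\calA=\del\,1_{[1,N]^d}+h$, expands $T(\calA)$, and bounds each error term by H\"older against the mean value of the \emph{complete} sum $f$ supplied by Theorem \ref{theorem2.1}, so that the Weyl input of Theorem \ref{theorem10.4} is only ever applied to $f$, exactly as in the proof of Theorem \ref{theorem11.1}. What remains is a large \emph{linear} exponential sum of the balanced function $h$ over a sub-progression, and this gives the density increment on a sub-box of polynomial side by the standard Roth manoeuvre (cf.\ \cite[Lemma 5.3]{Pre2011}). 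So the step you flag as hardest is in fact routine, and your ``secondary point'' about dovetailing excluded frequencies with projected solutions does not arise.
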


The conclusion of Theorem \ref{theorem1.7} is a special case of Theorem \ref{theorem11.3}, as we now confirm. For in the special circumstances relevant to the statement of Theorem \ref{theorem1.7} one has $rk\ge K$, and when $k\ge 2$ one has in addition
$$2r+1\ge 2\binom{d+k}{k}-1\ge (d+2)(d+1)-1>d^2.$$
Thus we find that the hypothesis $s\ge 2r(k+1)+1$ already ensures that $s>K+d^2$, and Theorem \ref{theorem1.7} consequently follows as a direct corollary of Theorem \ref{theorem11.3}.\par

The proof of Theorem \ref{theorem11.3} follows by adapting the methods of \cite{Pre2011} to this more general situation wherein $d$ may exceed $2$. The conclusion of Theorem \ref{theorem11.1} may be employed to show that when $s\ge 2r(k+1)+1$ and $\calA$ is of suitable linear uniformity with relative density $\del$, then the number $\calN(\calA)$ of solutions of (\ref{11.8}) with $\vbfx\in \calA^s$ satisfies
$$\calN(\calA)\gg_\bfc \del^sN^{sd-K}.$$
We claim that the number $\calN_0(\calA)$ of projected solutions is $O(N^{s(d-1)+d^2})$, and that the number $\calN_1(\calA)$ of subset-sum solutions is $O(N^{sd-K-1/s})$. Granted this claim, one finds that $\calN(\calA)>\calN_0(\calA)+\calN_1(\calA)$ provided that
$$N^{s(d-1)+d^2}+N^{sd-K-1/s}=o(\del^sN^{sd-K}),$$
and this relation is satisfied whenever $s>K+d^2$ and $\del\gg (\log N)^{-1}$, for example. Under such circumstances, we conclude that the system (\ref{11.8}) contains solutions from $\calA$ that are neither projected nor subset-sum solutions. In particular, if the only solutions of (\ref{11.8}) from $\calA$ are either projected or subset-sum solutions, then $\text{card}(\calA)\ll N^d(\log N)^{-1}$. On the other hand, if $\calA$ is not of suitable linear uniformity, then a concentration argument motivated by that of Roth \cite{Rot1953} and described in the proof of \cite[Lemma 5.3]{Pre2011} may be adapted to this potentially higher dimensional setting to show that in the absence of non-trivial solutions, a suitable sub-progression can be obtained with higher relative density than that of $\calA$. By iterating this argument, one deduces that in circumstances wherein the only solutions of the system (\ref{11.8}) from $\calA$ are either projected or subset-sum solutions, then $\text{card}(\calA)\ll N^d(\log \log N)^{-1/(s-1)}$. We refer the reader to \cite[\S5]{Pre2011} for details.\par

We have still to justify our earlier claim. We begin by counting the number of projected solutions $\calN_0(\calA)$. Let $\vbfx=(\bfx_1,\ldots ,\bfx_s)$ be any projected solution counted by $\calN_0(\calA)$. Then there exists a translate $\bfa\in \dbR^d$ with the property that $\text{span}_\dbR\{\bfx_1-\bfa,\ldots ,\bfx_s-\bfa\}$ is a vector space of dimension $m<d$. There is no loss of generality in taking $\bfa=\bfx_1$, and then $\text{span}_\dbR\{\bfx_1-\bfa,\ldots ,\bfx_s-\bfa\}$ has a basis of the shape $\calB=\{\bfx_{i_1}-\bfx_1,\ldots ,\bfx_{i_m}-\bfx_1\}$ for some indices $1<i_1<\ldots <i_m\le s$. Since $-N<\bfx_{i_l}-\bfx_1<N$ for $1\le l\le m$, one sees that the number of possible choices for $\calB$ is $O((N^d)^m)$. But there are trivially $O(N^d)$ possible choices for $\bfx_1$, and so we find that the number $T$ of possible translated vector spaces $\bfx_1+\text{span}_\dbR \calB$ satisfies
$$T\ll N^d(N^d)^m\le N^d(N^d)^{d-1}=N^{d^2}.$$
The integral vectors lying in $V=\text{span}_\dbR \calB$ form an integral lattice of dimension $m$, and hence volume considerations confirm that the number of vectors $\bfy\in \dbZ^d$ such that $-2N\le \bfy\le 2N$ and $\bfy\in V$ is at most $O(N^m)$. Thus we deduce that for each index $i$ with $1\le i\le s$, and each $m<d$, the number of possible choices for 
$\bfx_i-\bfx_1$ is at most $O(N^m)$. It follows that for fixed $\bfx_1$ and $\calB$, the number of possible choices for $\bfx_1,\ldots ,\bfx_s$ is at most $O(N^{ms})$. Then the total number of possible projected solutions $\bfx_1,\ldots ,\bfx_s$ is
$$\calN_0(\calA)\le N^{ms}T\ll N^{ms+d^2}\le N^{(d-1)s+d^2}.$$

We turn next to the task of bounding $\calN_1(\calA)$. The number of partitions $\{1,\ldots ,s\}=\calJ_1\cup\ldots \cup \calJ_l$, with $l\ge 2$ and the sets $\calJ_v$ disjoint and non-empty, is plainly $O_s(1)$. Let $\calN_2(N;\calJ_1,\ldots ,\calJ_l)$ denote the number of solutions of the system
$$\sum_{u\in \calJ_v}c_u\bfF(\bfx_u)={\mathbf 0}\quad (1\le v\le l),$$
with $1\le \vbfx\le N$. Then it follows that there exists a partition $\{1,\ldots ,s\}=\calJ_1\cup\ldots \cup \calJ_l$, of the aforementioned type, for which
$$\calN_1(\calA)\ll \calN_2(N;\calJ_1,\ldots ,\calJ_l).$$
Write $m_v=\text{card}(\calJ_v)$ $(1\le v\le l)$, and note that
\begin{equation}\label{11.ww}
m_1+\ldots +m_l=s.
\end{equation}
Finally, let $n$ denote the number of the sets $\calJ_v$ with $1\le v\le l$ satisfying the property that $\text{card}(\calJ_v)=1$.\par

Our next step is to consider the contribution of the subset-sum equations defined by sets $\calJ_v$, distinguishing two cases. Suppose first that $\text{card}(\calJ_v)=1$ and $\calJ_v=\{u\}$. Since we may suppose $c_u$ to be non-zero, it follows from Lemma \ref{lemma5.2} that the number of solutions of the system of equations $c_u\bfF(\bfx_u)={\mathbf 0}$, with $1\le \bfx_u\le N$, is $O(N^{d-1})$.\par

Suppose next that $\text{card}(\calJ_v)>1$. Abbreviating $f(\bfalp;N;\bfF)$ to $f(\bfalp)$, a trivial estimate yields
$$\oint |f(\bfalp)|^2\d\bfalp \ll N^{2d-1}.$$
On the other hand, a trivial estimate in combination with Theorem \ref{theorem2.1} delivers the bound
$$\oint |f(\bfalp)|^{s}\d\bfalp \ll N^{sd-K+\eps}.$$
Then it follows from H\"older's inequality and a change of variable that
\begin{align*}
\oint |f(c_u\bfalp)|^{m_v}\d\bfalp &\le \Bigl( \oint |f(\bfalp)|^s\d\bfalp \Bigr)^{(m_v-2)/(s-2)}\Bigl( \oint |f(\bfalp)|^2\d\bfalp \Bigr)^{(s-m_v)/(s-2)}\\
&\ll (N^{sd-K+\eps})^{(m_v-2)/(s-2)}(N^{2d-1})^{(s-m_v)/(s-2)}\\
&\ll N^{m_vd-m_vK/(s-n)-\nu_v+\eps},
\end{align*}
where
$$\nu_v=\left( \frac{m_v-2}{s-2}\right) K-\left( \frac{m_v}{s-n}\right) K+\frac{s-m_v}{s-2}.$$

\par From here, a consideration of the underlying Diophantine system, followed by an application of H\"older's inequality, reveals that
\begin{align*}
\calN_2(N;\calJ_1,\ldots ,\calJ_l)&\ll (N^{d-1})^n\prod_{\substack{1\le v\le l\\ \text{card}(\calJ_v)>1}}\oint \prod_{u\in \calJ_v}|f(c_u\bfalp)|\d\bfalp \\
&\ll N^{(d-1)n}\prod_{\substack{1\le v\le l\\ \text{card}(\calJ_v)>1}}\prod_{u\in \calJ_v}\Bigl( \oint |f(c_u\bfalp)|^{m_v}\d\bfalp \Bigr)^{1/m_v}\\
&\ll N^{(d-1)n}\prod_{\substack{1\le v\le l\\ \text{card}(\calJ_v)>1}}N^{m_vd-m_vK/(s-n)-\nu_v+\eps}.
\end{align*}
We therefore deduce from (\ref{11.ww}) that
$$\calN_1(\calA)\ll N^{sd-K-\nu+\eps},$$
where
$$\nu=n+\sum_{\substack{1\le v\le l\\ \text{card}(\calJ_v)>1}}\left(\left( \frac{m_v-2}{s-2}\right) K-\left( \frac{m_v}{s-n}\right) K+\frac{s-m_v}{s-2}\right).$$
On recalling (\ref{11.ww}), we see that
$$\nu=n+\left(\frac{(s-n)-2(l-n)}{s-2}\right) K-K+\frac{s(l-n)-(s-n)}{s-2}.$$
A modicum of computation reveals that
\begin{align*}
(s-2)\nu &=n(s-2)+(2-2l+n)K+s(l-n)-(s-n)\\
&=(s-2K)(l-1)+n(K-1).
\end{align*}
Our hypothesis on $s$ ensures that $s\ge 2rk+1\ge 2K+1$, and we may suppose moreover that $l\ge 2$. We therefore deduce that $\nu \ge 1/(s-2)$, and thus
$$\calN_1(\calA)\ll N^{sd-K-1/s}.$$
This completes the proof of our earlier claim, and hence the proof of Theorem \ref{theorem11.3} is now complete.

\bibliographystyle{amsbracket}
\providecommand{\bysame}{\leavevmode\hbox to3em{\hrulefill}\thinspace}

\end{document}